\numberwithin{equation}{subsection}
\let\realequation\equation
\def\equation{\setcounter{equation}{\arabic{subsubsection}}%
   \refstepcounter{subsubsection}%
   \realequation}
  \newcommand\imCMsym[4][\mathord]{%
  \DeclareFontFamily{U} {#2}{}
  \DeclareFontShape{U}{#2}{m}{n}{
    <-6> #25
    <6-7> #26
    <7-8> #27
    <8-9> #28
    <9-10> #29
    <10-12> #210
    <12-> #212}{}
  \DeclareSymbolFont{CM#2} {U} {#2}{m}{n}
  \DeclareMathSymbol{#4}{#1}{CM#2}{#3}
}
\newcommand\alsoimCMsym[4][\mathord]{\DeclareMathSymbol{#4}{#1}{CM#2}{#3}}
\theoremstyle{plain}
\newtheorem*{theoremu}{Theorem}
\newtheorem{theorem}[subsubsection]{Theorem}
\newtheorem{proposition}[subsubsection]{Proposition}
\newtheorem{corollary}[subsubsection]{Corollary}
\newtheorem*{corollaryu}{Corollary}
\newtheorem{lemma}[subsubsection]{Lemma}
\newtheorem{conjecture}[subsubsection]{Conjecture}
\theoremstyle{definition}
\newtheorem{definition}[subsubsection]{Definition}
\newtheorem{convention}[subsubsection]{Convention}
\newtheorem{notation}[subsubsection]{Notation}
\newtheorem{setup}[subsubsection]{Setup}
\theoremstyle{remark}
\newtheorem{remark}[subsubsection]{Remark}
\newtheorem{example}[subsubsection]{Example}
\newtheorem*{claimu}{Claim}
\newtheorem{warning}[subsubsection]{Warning}
\newcommand{\N}{{\mathbb N}}
\newcommand{\Z}{{\mathbb Z}}
\newcommand{\Q}{{\mathbb Q}}
\newcommand{\R}{{\mathbb R}}
\newcommand{\C}{{\mathbb C}}
\newcommand{\B}{{\mathbb B}}
\newcommand{\G}{{\mathbb G}}
\renewcommand{\P}{{\mathbb P}}
\newcommand{\A}{{\mathbb A}}
\newcommand{\D}{{\mathbb D}}
\newcommand{\sD}{\mathscr{D}}
\newcommand{\cO}{\mathcal{O}}
\newcommand{\cM}{\mathcal{M}}
\newcommand{\cV}{\mathcal{V}}
\newcommand{\fP}{\mathfrak{P}}
\newcommand{\fQ}{\mathfrak{Q}}
\newcommand{\fr}[1]{\mathfrak{#1}}
\newcommand{\bD}{{\bf D}}
\renewcommand{\lim}[1]{\underset{#1}{\mathrm{lim}}\,}
\newcommand{\isomto}{\overset{\cong}{\longrightarrow}}
\newcommand{\from}{\colon}
\renewcommand{\to}{\rightarrow}
\newcommand{\hol}{\mathrm{hol}}
\newcommand{\coh}{\mathrm{coh}}
\newcommand{\weak}[1]{\langle #1\rangle^\dagger}
\newcommand{\tate}[1]{\langle #1 \rangle}
\newcommand{\spec}[1]{\mathrm{Spec}\left(#1\right)}
\newcommand{\spa}[1]{\mathrm{Spa}\left(#1\right)}
\newcommand{\spf}[1]{\mathrm{Spf}\left(#1\right)}
\newcommand{\Norm}[1]{\left\Vert #1\right\Vert}
\newcommand{\norm}[1]{\left\vert#1\right\vert}
\newcommand{\tube}[1]{\left]#1\right[}
\newcommand{\isoc}[1]{\mathrm{Isoc}^\dagger(#1)}
\newcommand{\rig}{\mathrm{rig}}
\newcommand{\dR}{\mathrm{dR}}
\newcommand{\et}{\mathrm{\acute{e}t}}
\newcommand{\Tr}{\mathrm{Tr}}
\newcommand{\EU}[1]{\EuScript{#1}}
\title{A comparison between compactly supported rigid and $\pmb{\mathscr{D}}$-module cohomology}
\newcounter{saveenum}
\author{Tomoyuki Abe}
       \address{Kavli Institute for the Physics and Mathematics of the Universe (WPI) \\ University of Tokyo
 \\ 5-1-5 Kashiwanoha \\ Kashiwa \\ Chiba \\ 277-8583 \\ Japan}
       \email{tomoyuki.abe@ipmu.jp}
\author{Christopher Lazda}
       \address{Department of Mathematics\\ Harrison Building \\ Streatham Campus
 \\ University of Exeter \\ North Park Road \\ Exeter  \\ EX4 4QF \\ United Kingdom }
       \email{c.d.lazda@exeter.ac.uk}
\begin{document}

\begin{abstract} The goal of this article is to prove a comparison theorem between rigid cohomology and cohomology computed using the theory of arithmetic $\mathscr{D}$-modules. To do this, we construct a specialisation functor from Le Stum's category of constructible isocrystals to the derived category of arithmetic $\mathscr{D}$-modules. For objects `of Frobenius type', we show that the essential image of this functor consists of overholonomic $\mathscr{D}^\dagger$-modules, and lies inside the heart of the dual constructible t-structure. We use this to give a more global construction of Caro's specialisation functor $\mathrm{sp}_+$ for overconvergent isocrystals, which enables us to prove the comparison theorem for compactly supported cohomology.
\end{abstract}

\maketitle 

\tableofcontents

\section*{Introduction}

Berthelot's theory of rigid cohomology \cite{Ber96b} was introduced to provide a $p$-adic Weil cohomology theory for varieties in characteristic $p$, enjoying the same formal properties as $\ell$-adic \'etale cohomology for $\ell\neq p$. It is constructed as a version of de\thinspace Rham cohomology, and its local systems of coefficients, overconvergent isocrystals, are therefore analogues of vector bundles with integrable connection in characteristic $0$. 

To obtain a good formalism of cohomological operations, it is necessary to go beyond categories of local systems, and work more generally with constructible coefficient objects. For algebraic de\thinspace Rham cohomology (in characteristic $0$) this leads to the study of regular holonomic $\mathscr{D}$-modules, and, inspired by this, Berthelot introduced in \cite{Ber02} a theory of \emph{arithmetic} $\mathscr{D}$-modules, which should play the same role in rigid cohomology as that played by constructible sheaves in $\ell$-adic \'etale cohomology. At least for objects admitting a Frobenius structure, it was proved by Caro--Tsuzuki in \cite{CT12} that the category of overholonomic $\mathscr{D}^\dagger$-modules does indeed support a formalism of Grothendieck's 6 operations.

Of course, in order to know that the theory of arithmetic $\mathscr{D}$-modules and their cohomological operations really does generalise the theory of overcovergent isocrystals and rigid cohomology, it is important to know that an overconvergent isocrystal can be viewed as a particular kind of arithmetic $\mathscr{D}$-module. For smooth varieties, Caro constructed in \cite{Car09a} a functor $\mathrm{sp}_+$ from overconvergent isocrystals to arithmetic $\mathscr{D}$-modules, and proved almost of all its expected properties, in particular compatibility with pullback and duality. It still remains open, however, whether or not the rigid cohomology of some overconvergent isocrystal $E$ can be computed in terms of the associated $\mathscr{D}^\dagger$-module $\mathrm{sp}_+E$. One of the main reasons that this question is so hard is the local nature of the construction of $\mathrm{sp}_+$, which makes \emph{global} comparisons (such as are necessary to relate the two cohomologies) difficult. Our main result in this article is to achieve this comparison for compactly supported cohomology.

A different framework in which to try to understand the relationship between overconvergent isocrystals and arithmetic $\mathscr{D}$-modules is provided by Le Stum's theory of constructible isocrystals \cite{LS14,LS16}. Let $\mathcal{V}$ be a complete discrete valuation ring, of mixed characteristic, with fraction field $K$ and perfect residue field $k$. Fix a smooth formal $\mathcal{V}$-scheme $\mathfrak{P}$, with generic fibre $\mathfrak{P}_K$. Then Le Stum's category of constructible isocrystals on $\mathfrak{P}$ is a different kind of rigid cohomological analogue of the category of constructible $\ell$-adic sheaves, and consists of convergent $\nabla$-modules on $\mathfrak{P}_K$, which become locally free on the tubes of each stratum in some stratification of $\fr{P}$. The expectation is then that there should be a functor
\[ \mathbf{R}\mathrm{sp}_*\colon \mathrm{Isoc}_\mathrm{cons}(\mathfrak{P}) \rightarrow {\bf D}^b_\mathrm{coh}(\mathscr{D}^\dagger_{\mathfrak{P}\Q}) \]
which should recover Caro's functor $\mathrm{sp}_+$ on those constructible isocrystals which are overconvergent isocrystals supported on locally closed subschemes of the special fibre $P$. In fact, as we shall see, it will recover $\mathrm{sp}_+$ up to shifts and duality. One of the advantages of this point of view is that it makes comparisons between de\thinspace Rham cohomology on the rigid and formal sides extremely simple, since it is more or less immediate that
\[ {\rm H}^i(\fr{P}_K, \mathscr{F} \otimes \Omega^\bullet_{\fr{P}_K}) \cong {\rm H}^i(\fr{P} , \mathbf{R}\mathrm{sp}_*\mathscr{F} \otimes \Omega^\bullet_{\fr{P}}) .\]
for any $\nabla$-module $\mathscr{F}$ on $\fr{P}_K$. However, the construction of such a functor $\mathbf{R}\mathrm{sp}_*$ is rather more subtle than it may first appear. One can easily define a functor
\[ \mathbf{R}\mathrm{sp}_*\colon \mathrm{Isoc}_\mathrm{cons}(\mathfrak{P}) \rightarrow {\bf D}^b(\mathscr{D}_{\mathfrak{P}\Q}) \]
simply as the derived functor of $\mathrm{sp}_*$, but lifting it to ${\bf D}^b(\mathscr{D}^\dagger_{\mathfrak{P}\Q})$ is quite a delicate problem.

Our first goal in this article is to explain how to construct such a lifting. The strategy is to localise $\mathbf{R}\mathrm{sp}_*$ along a stratification of $\fr{P}$ over which a given constructible isocrystal $\mathscr{F}$ becomes locally free. This allows us to define certain explicit $\mathrm{sp}_*$-acyclic resolutions of $\mathscr{F}$ via which we can compute $\mathbf{R}\mathrm{sp}_*\mathscr{F}$. We then show that these resolutions admit what we call a `pre-$\mathscr{D}^\dagger$-module structure', which enables us to put a $\mathscr{D}_{\mathfrak{P}\Q}^\dagger$-module structure on $\mathbf{R}\mathrm{sp}_*\mathscr{F}$ extending the natural $\mathscr{D}_{\mathfrak{P}\Q}$-module structure. This then gives rise to a functor
\[ \mathrm{sp}_{!} := \mathbf{R}\mathrm{sp}_* [\dim \fr{P}] \colon \mathrm{Isoc}_\mathrm{cons}(\mathfrak{P})\rightarrow {\bf D}^b(\mathscr{D}^\dagger_{\fr{P}\Q}) \]
lifting the $\mathscr{D}_{\fr{P}\Q}$-linear version. The shift by the dimension of $\fr{P}$ is necessary in order ensure that the functor has the correct essential image. Interestingly enough, this image turns out \emph{not} to be contained in the direct $\mathscr{D}^\dagger$-module analogue of the category of constructible sheaves, but instead in the \emph{dual} of this category. We refer to \S\ref{sec: finite} for a detailed discussion of what we call dual constructible $\mathscr{D}^\dagger$-modules, but roughly speaking these should be thought of as a $\mathscr{D}^\dagger$-module analogue of complexes of $\ell$-adic sheaves whose Verdier dual is a constructible sheaf concentrated in degree zero. 

To state our first main result on $\mathrm{sp}_!$, we say that a constructible isocrystal is of Frobenius type if it is an iterated extension of objects admitting some Frobenius structure.\footnote{This has previously been called `$F$-able' in the literature.} We also call a $k$-variety or formal $\cV$-scheme `realisable' if it admits a locally closed immersion into a smooth and proper formal $\cV$-scheme. 

\begin{theoremu}[\ref{theo: DCon}] Let $\fr{P}$ be a realisable smooth formal $\mathcal{V}$-scheme, and $\mathscr{F}\in \mathrm{Isoc}_\mathrm{cons}(\fr{P})$ a constructible isocrystal of Frobenius type. Then $\mathrm{sp}_{!}\mathscr{F}$ is a dual constructible complex of overholonomic $\mathscr{D}^\dagger_{\fr{P}\Q}$-modules. If $\mathscr{F}$ is supported on some locally closed subscheme $X\hookrightarrow P$, then so is $\mathrm{sp}_!\mathscr{F}$.  
\end{theoremu}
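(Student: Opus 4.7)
The plan is to proceed by d\'evissage along a stratification of $\fr{P}$ adapted to $\mathscr{F}$, ultimately reducing to the case of a single Frobenius-type overconvergent isocrystal on a smooth locally closed subscheme, which can be handled using Caro's earlier work.

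First I would choose a stratification $\{S_\alpha\}$ of $P$ by smooth locally closed subschemes such that $\mathscr{F}|_{\tube{S_\alpha}}$ extends to an overconvergent isocrystal $E_\alpha$ on $S_\alpha$; the Frobenius-type hypothesis on $\mathscr{F}$ passes to each $E_\alpha$. Picking a closed stratum $Z\subset P$ with open complement $U$, one obtains a short exact sequence
\[
0 \to j_!(\mathscr{F}|_U) \to \mathscr{F} \to i_*(\mathscr{F}|_Z) \to 0
\]
in $\mathrm{Isoc}_\mathrm{cons}(\fr{P})$, where $j$, $i$ denote the respective inclusions. Applying $\mathrm{sp}_!$ yields a distinguished triangle in $\bD^b(\sD^\dagger_{\fr{P}\Q})$. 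Since both overholonomicity and membership in the heart of the dual constructible $t$-structure are stable under extensions in the triangulated category, induction on the number of strata reduces the problem to the single-stratum case.

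For this base case, let $E$ be a Frobenius-type overconvergent isocrystal on a smooth locally closed subscheme $X\hookrightarrow P$. The explicit $\mathrm{sp}_*$-acyclic resolutions used to construct $\mathrm{sp}_!$, together with their pre-$\sD^\dagger$-module structure, produce a local model for $\mathrm{sp}_! E$ which can be compared directly with Caro's construction of $\mathrm{sp}_+$ in \cite{Car09a}. The aim is to establish a quasi-isomorphism of the form $\mathrm{sp}_! E \simeq \mathbb{D}(\mathrm{sp}_+ E^\vee)$ (or an analogous Verdier-dual-and-shift variant), where $\mathbb{D}$ denotes Verdier duality on arithmetic $\sD^\dagger$-modules. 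Caro--Tsuzuki \cite{CT12} then imply that $\mathrm{sp}_+ E^\vee$ is overholonomic and concentrated in a single degree, and stability of overholonomicity under $\mathbb{D}$ shows that $\mathrm{sp}_! E$ is overholonomic and, by construction together with the shift by $\dim\fr{P}$, lies in the heart of the dual constructible $t$-structure.

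For the final assertion, one refines the stratification to be compatible with the decomposition $P = X \sqcup (P\setminus X)$; the strata outside $X$ contribute zero, and the d\'evissage triangles force $\mathrm{sp}_!\mathscr{F}$ to be supported on $X$. The hard part will be the base-case identification of $\mathrm{sp}_! E$ with a shift of $\mathbb{D}(\mathrm{sp}_+ E^\vee)$: this requires matching the pre-$\sD^\dagger$-module resolutions used to define $\mathrm{sp}_!$ with Caro's local description of $\mathrm{sp}_+$ via limits over levels of differential operators, and tracking carefully the shift by $\dim\fr{P}$ and the compatibility of Verdier duality with both constructions.
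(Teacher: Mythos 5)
Your opening d\'evissage is the same first move the paper makes: Lemma~\ref{lemma: triangle sp} plus stability of overholonomicity under extensions (\cite[Proposition 3.3]{Car09b}) reduces Theorem~\ref{theo: DCon} to the case of $i_!j_*E_\fr{P}$ for $E$ a (partially) overconvergent isocrystal on a smooth locally closed $X\hookrightarrow P$ (this is Theorem~\ref{theo: DCon2}). Beyond that point, however, your proposal is essentially a restatement of what needs to be proved, not a proof, and the route you sketch for the base case is the wrong way round logically.

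You want to handle the base case by showing directly that $\mathrm{sp}_!E\simeq\mathbf{D}(\mathrm{sp}_+E^\vee)$ and then invoking Caro--Tsuzuki. But in the paper this identification (Theorem~\ref{theo: sp_+ comp}) is a \emph{consequence} of Theorem~\ref{theo: DCon}, not an ingredient: once one knows $\mathrm{sp}_!E$ lands in the abelian category $\mathrm{DCon}(X,Y)$, which satisfies h-descent, the comparison can be checked h-locally, where one can choose frames in which both sides are computed by the same complex. Without first knowing that $\mathrm{sp}_!E$ is overholonomic and in the heart, you have no abelian category to descend in, and the comparison to $\mathrm{sp}_+$ becomes exactly the sort of hard global-versus-local problem the paper is careful to avoid. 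Moreover, Caro's $\mathrm{sp}_+$ is built from a formal lifting of (a neighbourhood of) $X$, whereas $\mathrm{sp}_!E$ lives on the given ambient $\fr{P}$; when $X$ is only locally closed in $P$ (the case $Y\neq P$), there is no ``local model'' on $\fr{P}$ that compares tautologically to Caro's construction.

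What actually closes the gap in the paper is a chain of reductions that your sketch omits entirely. Step~1 handles $Y=P$ (so $X$ open in $P$); here $\mathrm{sp}_!i_!\mathscr{F}=\mathrm{sp}_*\mathscr{F}[\dim\fr{P}]$ and \cite[Theorem~2.3.15]{CT12} and \cite[Propositions~4.1.8, 4.3.1]{Car09a} apply directly. Step~2 treats $\fr{P}=\widehat{\A}^d_{\fr{Y}}$ with $Y$ smooth and lifted, by an explicit Koszul-type resolution of $i_!\mathscr{F}$ by terms coming from Step~1. Step~3 reduces smooth $Y$ to Step~2 by local finite \'etale maps to affine space (Proposition~\ref{prop: comm finite etale}). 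Steps~4--5 handle general $Y$ via de~Jong alterations and, crucially, the $\mathscr{D}^\dagger$-linear trace compatibility of Theorem~\ref{theo: tr conc fin et}, which is what lets one transport overholonomicity along a smooth projective $u\colon\fr{P}'\to\fr{P}$ using $u_+$. This trace compatibility, built in \S\ref{sec: trace Ddagger} out of the pre-$\mathscr{D}^\dagger$-module machinery, is the genuine technical heart of the argument; your proposal contains nothing that would substitute for it.

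Finally, two smaller points. First, the short exact sequence for a closed/open decomposition of a constructible isocrystal has $i_{Z!}i_Z^{-1}\mathscr{F}$ (with $\tube{Z}_\fr{P}$ an \emph{open} subspace of $\fr{P}_K$) on the left and $j_U^\dagger\mathscr{F}$ on the right; writing $j_!(\mathscr{F}|_U)\to\mathscr{F}\to i_*(\mathscr{F}|_Z)$ as for ordinary constructible sheaves inverts the roles, since tubes of closed subschemes are open and vice versa. Second, passing the ``Frobenius type'' hypothesis through restriction to a stratum is not automatic: the definition involves iterated extensions, and the paper handles this by the structure of the d\'evissage rather than by asserting that the strata inherit the property.
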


The strategy here is to reduce to the fundamental result of Caro--Tsuzuki \cite{CT12}. We first reduce to the case of a (partially) overconvergent isocrystal supported on a locally closed subscheme $X\hookrightarrow\fr{P}$, and using de\thinspace Jong's theory of alterations we can assume that there is a diagram of frames
\[ \xymatrix{ X'\ar[r] \ar[d] & Y' \ar[r]\ar[d] & \fr{P}'\ar[d] \\ X\ar[r]& Y \ar[r] & \fr{P}  }\]
with $X'\rightarrow X$ finite \'etale, $Y'$ smooth over $k$ and proper over $Y$, and $\fr{P}'\rightarrow\fr{P}$ smooth and projective. The result of Caro--Tsuzuki implies that the overholonomicity of $\mathrm{sp}_!$ holds for the pullback of $\mathscr{F}$ to $X'$, and the key point is then to compare the constructions of $\mathrm{sp}_!$ on $\fr{P}$ and $\fr{P}'$. This requires a rather delicate verification of the compatibility of the functor $\mathrm{sp}_!$ with the rigid analytic trace map defined in \cite{AL20}. 

We then prove the compatibility of $\mathrm{sp}_!$ with arbitrary pullback, as well as pushforward along a locally closed immersion. This explains the slightly odd-looking exactness and adjunction properties of the various pullback and pushforward functors for constructible isocrystals defined in \cite{LS16}: they are precisely dual to the familiar ones encountered for constructible sheaves. It also seems reasonable to conjecture that $\mathrm{sp}_!$ is actually an equivalence between $\mathrm{Isoc}_\mathrm{cons}(\fr{P})$ and the category of dual constructible $\mathscr{D}^\dagger_{\fr{P}\Q}$-modules (at least for objects of Frobenius type), as was predicted by Le Stum. We intend to return to this question in future work.

By embedding a $k$-variety inside a smooth and proper formal $\mathcal{V}$-scheme, this gives rise to a functor
\[ \mathrm{sp}_!\colon \mathrm{Isoc}^\dagger(X) \rightarrow {\bf D}^b_\mathrm{hol}(X), \]
from the category of overconvergent isocrystals on $X$ (of Frobenius type), to the category of overholonomic complexes on $X$ in the sense of \cite{AC18a}. We can then show that this recovers the dual of Caro's functor $\mathrm{sp}_+$.

\begin{theoremu}[\ref{theo: sp_+ comp}] Let $X$ be a realisable smooth $k$-variety, and $E$ an overconvergent isocrystal of Frobenius type on $X$. Then there is a natural isomorphism
\[  \mathrm{sp}_+E \cong \mathbf{D}_X \mathrm{sp}_!E^\vee \,[\dim X] \]
in ${\bf D}^b_\mathrm{hol}(X)$.
\end{theoremu}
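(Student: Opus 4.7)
\emph{Proof proposal.} The plan is to show that both $\mathrm{sp}_+ E$ and $\mathbf{D}_X \mathrm{sp}_! E^\vee[\dim X]$ are concentrated in a single cohomological degree on the smooth variety $X$, compute each side explicitly, and identify the resulting $\sD^\dagger$-module structures at the level of underlying data.

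By Caro's construction \cite{Car09a}, $\mathrm{sp}_+ E$ is a single $\sD^\dagger$-module with underlying $\cO$-module essentially $\mathrm{sp}_* E$. On the other hand, since $E^\vee$ is a locally free $\nabla$-module on a strict neighbourhood of $X$ in a smooth formal ambient scheme $\fr{P}$, the direct image $\mathrm{sp}_*(E^\vee)$ is exact, so $\mathrm{sp}_! E^\vee = \mathrm{sp}_*(E^\vee)[\dim \fr{P}]$ is also concentrated in a single cohomological degree. A standard Verdier duality computation for locally free $\sD^\dagger$-modules arising from an integrable connection shows that $\mathbf{D}_X \mathrm{sp}_*(E^\vee)$ is quasi-isomorphic to $\mathrm{sp}_* E$ as a $\sD^\dagger$-module, up to shifts which match the stated $[\dim X]$ factor together with the intrinsic normalisation of $\mathbf{D}_X$ on $X$ (as opposed to the ambient $\fr{P}$). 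Both sides thereby reduce to $\mathrm{sp}_* E$ equipped with the $\sD^\dagger$-module structure induced by the connection on $E$.

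The main technical obstacle is identifying Caro's local $\sD^\dagger$-module recipe (via frames and $j^\dagger$-localisation along divisors) with the $\sD^\dagger$-module structure produced by the present paper's construction of $\mathrm{sp}_!$ (via pre-$\sD^\dagger$-module structures on $\mathrm{sp}_*$-acyclic resolutions). Verifying that these produce the same $\sD^\dagger$-structure on $\mathrm{sp}_* E$ after dualisation will require an explicit local calculation on an affine chart with global coordinates, where both recipes become fully transparent. Once this matching is established, Theorem~\ref{theo: DCon} places both sides in the heart of the dual constructible $t$-structure, so the resulting comparison map is uniquely characterised by its restriction to a dense open stratum; naturality in $E$ and compatibility with restriction along open immersions then upgrade the local identification into the asserted global natural isomorphism in $\bD^b_\hol(X)$.
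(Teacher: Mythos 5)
Your overall strategy---reduce to a smooth affine chart where everything lifts, identify both sides with $\mathrm{sp}_*E$ carrying its natural $\sD^\dagger$-module structure, and invoke Caro's duality result---is the same as the paper's, and the reference to Theorem~\ref{theo: DCon} to place both sides in a heart is also used. But there are two genuine gaps in the way you propose to carry this out.

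First, the reduction to the liftable case is not justified. The paper reduces to the case where the \emph{compactification} $Y$ (not merely $X$) is smooth, affine, and lifts to a smooth formal scheme $\fr{Y}$, and the mechanism is h-descent: $\mathrm{sp}_+$ is \emph{defined} via h-descent in \cite{Abe19}, and the right-hand side commutes with pullback by Corollary~\ref{cor: funct RSp}, so the claim may be checked after any h-cover, in particular after an alteration making $Y$ smooth. Your proposal never mentions alterations or h-descent. Zariski localization on $X$ alone gives a lift of $X$, but not a smooth $Y\supset X$; and when the ambient $\fr{P}$ in the frame is not itself a lift of $Y$, the computation of $\mathrm{sp}_!E^\vee$ necessarily involves the extension-by-zero $i_{Y!}$ inside $\fr{P}_K$ and no longer reduces to the single-degree object $\mathrm{sp}_*(E^\vee)[\dim\fr{P}]$ that your argument requires.

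Second, your gluing step is unsound as stated: a morphism in $\mathrm{Con}(X)$ is \emph{not} uniquely characterised by its restriction to a dense open stratum, since restriction along a dense open immersion fails to be injective on $\mathrm{Hom}$-groups (a map into an object supported on the complement dies upon restriction). The paper avoids needing any such rigidity precisely by routing the comparison through the descent-theoretic definition of $\mathrm{sp}_+$, so that compatibility across the cover is built in.

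Finally, the ``main technical obstacle'' you identify---matching Caro's local $\sD^\dagger$-module recipe with the pre-$\sD^\dagger$-module construction---is not actually an obstacle: once both sides are identified as the same underlying $\sD_{\fr{Y}\Q}$-module $\mathrm{sp}_*\mathscr{F}$, the continuous extension of the $\sD$-structure to a $\sD^\dagger$-structure is unique by the density argument underlying Lemma~\ref{lemma: ff top fre}, so the two $\sD^\dagger$-structures agree for free, with no local coordinate computation required.
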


Here $\mathbf{D}_X$ is the duality functor on ${\bf D}^b_\mathrm{hol}(X)$. In fact, thanks to a descent construction of the first author \cite{Abe19}, there is no need to restrict to smooth varieties here, since the functor $\mathrm{sp}_+[-\dim X]$ extends to a fully faithful functor
\[ \rho\colon \mathrm{Isoc}^\dagger(X)\rightarrow {\bf D}^b_\mathrm{hol}(X)   \]
defined for all $k$-varieties, whose essential image can be explicitly described. Then Theorem \ref{theo: sp_+ comp} still holds with $\rho[\dim X]$ in place of $\mathrm{sp}_+$.\footnote{In fact, in \S\ref{sec: comp} we redefine $\mathrm{sp}_+$ to be the shifted version of Caro's functor, which then itself extends to non-smooth varieties; we do not use the notation $\rho$ at all outside this introduction.} Given this descent result, and given compatibility with pullbacks, the proof of Theorem \ref{theo: sp_+ comp} is actually relatively straightforward. The point is that the objects involved lie in an abelian category which satisfies h-descent. Thus we may construct such an isomorphism locally for the h-topology on $X$. We may therefore again use alterations to reduce to the case when $X$ has a smooth compactification, which then locally lifts to a smooth formal $\mathcal{V}$-scheme. It is then possible to compare $\mathrm{sp}_!$ with $\mathrm{sp}_+$ more or less directly. 

Finally, we can use the fact that our functor $\mathrm{sp}_!$ has a genuinely global definition to prove a comparison theorem for compactly supported cohomology. For an overholonomic complex $E\in {\bf D}^b_\mathrm{hol}(X)$, we define its compactly supported cohomology to be
\[ {\rm H}^*_{c,\mathscr{D}}(X,E) := {\rm H}^*(f_!E)\]
where $f\colon X \rightarrow \spec{k}$ is the structure morphism. Similarly, we define its (ordinary) cohomology to be
\[ {\rm H}^*_{\mathscr{D}}(X,E) := {\rm H}^*(f_+E).\]
We say that a $k$-variety is realisable if it embeds inside a smooth and proper formal $\mathcal{V}$-scheme.

\begin{theoremu}[\ref{theo: cohom comp 1}] Let $X/k$ be a realisable $k$-variety, and $E$ an overconvergent isocrystal on $X$ of Frobenius type. Then there is a canonical isomorphism
\[  {\rm H}^{*}_{c,\rig} (X,E) \cong {\rm H}^*_{c,\mathscr{D}}(X,\rho(E))   \]
of finite dimensional, graded $K$-vector spaces.
\end{theoremu}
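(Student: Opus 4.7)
The plan is to deduce the compactly supported comparison from a much easier \emph{ordinary} cohomology comparison for $\mathrm{sp}_!$, using Verdier duality on the arithmetic $\mathscr{D}$-module side together with Poincar\'e duality on the rigid side. Concretely, by Theorem~\ref{theo: sp_+ comp} we may identify $\rho(E)$ with $\mathbf{D}_X\mathrm{sp}_!(E^\vee)$; combined with the standard identity $f_!\mathbf{D}_X \cong \mathbf{D}_k f_+$ for the structure morphism $f\colon X\to\spec{k}$, this gives
\[ f_!\rho(E) \cong \mathbf{D}_k\bigl( f_+\mathrm{sp}_!(E^\vee)\bigr), \]
so that $\mathrm{H}^*_{c,\mathscr{D}}(X,\rho(E))$ is identified with the $K$-linear dual of $\mathrm{H}^{-*}(f_+\mathrm{sp}_!(E^\vee))$ as graded $K$-vector spaces.

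The next step is to compute $\mathrm{H}^*(f_+\mathrm{sp}_!(E^\vee))$ by reducing it to rigid-analytic de Rham cohomology on an ambient formal scheme. Fix a locally closed embedding $u\colon X\hookrightarrow\fr{P}$ into a smooth and proper formal $\cV$-scheme (possible by realisability), and let $\mathscr{F}^\vee$ be the constructible isocrystal on $\fr{P}$ obtained by extending $E^\vee$ via the appropriate pushforward in Le Stum's category. The compatibility of $\mathrm{sp}_!$ with pushforward along locally closed immersions (established earlier in the paper) then yields a natural isomorphism $u_+\mathrm{sp}_!(E^\vee) \cong \mathrm{sp}_!(\mathscr{F}^\vee)$, and since $\fr{P}$ is proper we have $f_+ = f_{\fr{P}+}\circ u_+$ and hence
\[ f_+\mathrm{sp}_!(E^\vee) \cong f_{\fr{P}+}\mathrm{sp}_!(\mathscr{F}^\vee). \]
Now $\mathrm{sp}_!(\mathscr{F}^\vee)$ is, by definition, just a shift of $\mathbf{R}\mathrm{sp}_*(\mathscr{F}^\vee)$; the elementary de Rham comparison noted in the introduction then identifies $f_{\fr{P}+}\mathrm{sp}_!(\mathscr{F}^\vee)$ with (a shift of) $\mathbf{R}\Gamma(\fr{P}_K,\mathscr{F}^\vee\otimes\Omega^\bullet_{\fr{P}_K})$, and under the appropriate choice of extension $\mathscr{F}^\vee$ this computes the rigid cohomology $\mathrm{H}^*_\rig(X,E^\vee)$.

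Combining the two steps with Poincar\'e duality for rigid cohomology, $\mathrm{H}^*_{c,\rig}(X,E) \cong \mathrm{Hom}_K\bigl(\mathrm{H}^{2d-*}_\rig(X,E^\vee),K\bigr)$ with $d=\dim X$ (working modulo Tate twists, as the statement concerns only graded $K$-vector spaces), and matching degrees using the shift conventions in the definitions of $\mathrm{sp}_!$ and $f_+$, delivers the desired canonical isomorphism.

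The most delicate point is the middle step: identifying precisely which pushforward extension in $\mathrm{Isoc}_\mathrm{cons}(\fr{P})$ corresponds under $\mathrm{sp}_!$ to the $\mathscr{D}^\dagger$-module pushforward $u_+$, and verifying that its rigid de Rham cohomology recovers the \emph{ordinary} (as opposed to compactly supported) rigid cohomology of $E^\vee$ on $X$. This requires unwinding the \emph{dual} adjunction formalism satisfied by the pushforward functors in Le Stum's category, as well as a careful bookkeeping of all degree shifts. Independence of the choice of compactification $\fr{P}$ then follows from the canonical nature of the duality isomorphisms invoked.
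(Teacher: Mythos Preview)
Your overall architecture matches the paper's proof almost exactly: dualise via Theorem~\ref{theo: sp_+ comp} and the identity $f_!\mathbf{D}_X\cong\mathbf{D}_k f_+$, reduce to computing $f_+\mathrm{sp}_!(E^\vee)$, identify this with the de\thinspace Rham cohomology of the extension-by-zero constructible isocrystal on a proper $\fr{P}_K$, and then dualise back on the rigid side. The gap is in your final two steps.

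You assert that $\mathbf{R}\Gamma(\fr{P}_K,\mathscr{F}^\vee\otimes\Omega^\bullet_{\fr{P}_K})$, for $\mathscr{F}^\vee$ the extension by zero $i_{X!}E^\vee_\fr{P}$, computes the \emph{ordinary} rigid cohomology ${\rm H}^*_\rig(X,E^\vee)$, and then invoke Poincar\'e duality ${\rm H}^*_{c,\rig}(X,E)\cong{\rm H}^{2d-*}_\rig(X,E^\vee)^\vee$ with $d=\dim X$. Both of these are only valid when $X$ is smooth. For general $X$, the de\thinspace Rham cohomology of the extension-by-zero isocrystal on proper $\fr{P}_K$ is, by definition, the rigid \emph{Borel--Moore homology} ${\rm H}^{\mathrm{BM}}_{*,\rig}(X,E^\vee)$ (Definition~\ref{defn: BM}), and the correct duality statement is Proposition~\ref{prop: BM}, which pairs Borel--Moore homology with compactly supported cohomology and holds for all realisable $X$. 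This is precisely what the paper does: it identifies ${\rm H}^{-*}(f_+\mathrm{sp}_!E^\vee)$ with ${\rm H}^{\mathrm{BM}}_{*,\rig}(X,E^\vee)$ and dualises via Proposition~\ref{prop: BM} rather than via Poincar\'e duality. In the smooth case the two dualities coincide (and your argument then goes through), but Theorem~\ref{theo: cohom comp 1} is stated for arbitrary realisable $X$; indeed the paper derives the smooth-or-proper ordinary-cohomology comparison (Corollary~\ref{cor: cohom comp 2}) \emph{from} the compactly supported one, not the other way around.
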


Given an embedding of $i_X:X\hookrightarrow \fr{P}$ into a smooth and proper formal $\mathcal{V}$-scheme, the proof of this ultimately boils down to computing the de\thinspace Rham cohomology of the  constructible $\nabla$-module $i_{X!}E$ on $\fr{P}_K$. A key ingredient in achieving this is again the trace morphism in rigid analytic geometry defined in \cite{AL20}, which enables us to interpret the de\thinspace Rham cohomology of $i_{X!}E$ as the rigid Borel--Moore homology of $X$ with coefficients in $E$. Of course, Theorem \ref{theo: cohom comp 1} together with Poincar\'e duality, implies a similar result in ordinary cohomology. 

\begin{corollaryu}[\ref{cor: cohom comp 2}] Let $X$ be a realisable $k$-variety, which is \emph{either} smooth \emph{or} proper, and $E\in \mathrm{Isoc}^\dagger(X)$ an overconvergent isocrystal of Frobenius type. Then there is a canonical isomorphism
\[  {\rm H}^*_{\rig}(X,E) \cong {\rm H}^*_{\mathscr{D}}(X,\rho(E))  \]
of finite dimensional, graded $K$-vector spaces.
\end{corollaryu}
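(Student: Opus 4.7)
The plan is to deduce the corollary directly from Theorem \ref{theo: cohom comp 1} by combining it with Poincaré duality in both cohomology theories, treating the two hypotheses on $X$ separately.

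If $X$ is proper, both sides of the desired isomorphism coincide with their compactly supported counterparts. On the rigid side, ${\rm H}^*_{\rig}(X,E) = {\rm H}^*_{c,\rig}(X,E)$ for any proper $X$; on the arithmetic $\mathscr{D}$-module side, the natural transformation $f_! \to f_+$ is an isomorphism for the proper structure morphism $f\colon X \to \spec{k}$, so ${\rm H}^*_{\mathscr{D}}(X,\rho(E)) = {\rm H}^*_{c,\mathscr{D}}(X,\rho(E))$. The corollary thus reduces immediately to Theorem \ref{theo: cohom comp 1}.

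If $X$ is smooth of dimension $d$, I pass through the dual isocrystal $E^\vee$ and invoke Poincaré duality on both sides. Rigid Poincaré duality gives ${\rm H}^i_{\rig}(X,E) \cong {\rm H}^{2d-i}_{c,\rig}(X,E^\vee)^\vee$, and applying Theorem \ref{theo: cohom comp 1} to $E^\vee$ identifies the right-hand side with ${\rm H}^{2d-i}(f_!\rho(E^\vee))^\vee$. On the $\mathscr{D}$-module side, the standard Poincaré-duality isomorphism $\mathbf{D} \circ f_+ \cong f_! \circ \mathbf{D}_X$ gives ${\rm H}^i_{\mathscr{D}}(X,\rho(E)) \cong {\rm H}^{-i}(f_! \mathbf{D}_X \rho(E))^\vee$. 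Theorem \ref{theo: sp_+ comp} furnishes (for smooth $X$) an identification $\mathbf{D}_X \rho(E) \cong \rho(E^\vee)[2d]$, obtained by applying the theorem to both $E$ and $E^\vee$ and using biduality; substituting this yields ${\rm H}^{-i}(f_! \rho(E^\vee)[2d])^\vee = {\rm H}^{2d-i}(f_! \rho(E^\vee))^\vee$, matching the expression obtained from the rigid side.

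The main technical hurdle is to verify that the two Poincaré-duality pairings are compatible with the comparison morphism of Theorem \ref{theo: cohom comp 1}, so that the chain of identifications assembles into a single canonical isomorphism rather than merely an abstract one. Since the proof of Theorem \ref{theo: cohom comp 1} proceeds via the trace morphism of \cite{AL20}, which interprets compactly supported rigid cohomology as Borel--Moore homology, and since Poincaré duality on both sides descends from its respective trace along $f$, this compatibility should be tractable by tracking through the construction. The most delicate point is pinning down the normalisations and shifts---particularly the $2d$-fold shift in the identification $\mathbf{D}_X \rho(E) \cong \rho(E^\vee)[2d]$, which encodes the $\mathscr{D}$-module incarnation of Poincaré duality for the overconvergent isocrystal $E$ itself.
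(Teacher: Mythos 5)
Your approach matches the paper's (which dispatches the corollary in one sentence by invoking Theorem~\ref{theo: cohom comp 1} and Poincar\'e duality), and your unpacking of the two cases is correct. However, one link in your smooth-case chain is misattributed. The identity $\mathbf{D}_X\rho(E)\cong\rho(E^\vee)[2d]$ does \emph{not} follow from applying Theorem~\ref{theo: sp_+ comp} to $E$ and $E^\vee$ together with biduality: that procedure only yields $\mathbf{D}_X\rho(E)\cong\mathrm{sp}_!E^\vee$ and $\mathbf{D}_X\rho(E^\vee)\cong\mathrm{sp}_!E$, and to convert either of these into the shifted identity you additionally need to know that $\mathrm{sp}_!F\cong\rho(F)[2d]$ for $F$ an overconvergent isocrystal on a smooth $X$ of dimension $d$. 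This last fact is what is actually used; it is the duality compatibility $\mathbf{D}_X\widetilde{\mathrm{sp}}_+E\cong\widetilde{\mathrm{sp}}_+E^\vee$ of Caro (\cite[Proposition 4.3.1]{Car09a}, which appears inside the \emph{proof} of Theorem~\ref{theo: sp_+ comp} but is not a formal consequence of its statement), unwound through the shift $\rho=\widetilde{\mathrm{sp}}_+[-\dim X]$. Once this is repaired the argument is sound and agrees with the paper's intent; the remaining compatibility-of-pairings issue you flag is real but is also left implicit in the paper.
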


Let us now give a summary of the contents of the article. In \S\ref{sec: prelim} we recall some of the basic constructions in rigid cohomology, making note of the changes that need to be made when working with adic spaces (as we do throughout). In \S\ref{sec: calc Rsp} we show how to calculate $\mathbf{R}\mathrm{sp}_*\mathscr{F}$ whenever $\mathscr{F}$ is a constructible $\mathcal{O}_{\fr{P}_K}$-module, via explicit complexes built out of restriction and pushforward along a suitable stratification of $\fr{P}$. In \S\ref{sec: pre Dd} we introduce the notion of a pre-$\mathscr{D}^\dagger$-module on the tube of some closed subset of $\fr{P}$, and show that the explicit complexes constructed in \S\ref{sec: calc Rsp} can be canonically upgraded to complexes of pre-$\mathscr{D}^\dagger$-modules. This enables us to define a $\mathscr{D}^\dagger$-linear version of $\mathbf{R}\mathrm{sp}_*$. In \S\ref{sec: trace} we recall the properties of the trace morphism constructed in \cite{AL20}, and use this to interpret de\thinspace Rham cohomology of certain constructible isocrystals as rigid Borel--Moore homology. In \S\ref{sec: trace Ddagger} 
we show compatibility of $\mathrm{sp}_!$ with the trace morphism, at least in some very special cases. This is a key ingredient of the eventual proof in \S\ref{sec: finite} that $\mathrm{sp}_!\mathscr{F}$ is overholonomic, and has the expected support. We also show that it lies in the heart of the dual constructible t-structure on ${\bf D}^b_\mathrm{hol}(\mathscr{D}^\dagger_{\fr{P}\Q})$, and is compatible with pullback, as well as pushforward along locally closed immersions. Finally in \S\ref{sec: comp} we prove the main comparison theorems described above.

\subsection*{Acknowledgements} To be added after the referee process.

\subsection*{Notation and convetions}

\begin{itemize}
\item A variety over a given locally Noetherian scheme $S$ is a separated $S$-scheme of finite type. 
\item We will denote by $K$ a complete, discretely valued field of characteristic $0$, whose residue field $k$ is perfect of characteristic $p$. While the general formalism of rigid cohomology does not require our non-archimedean field $K$ to be discretely valued, or $k$ to be perfect, the theory of arithmetic $\mathscr{D}^\dagger$-modules is generally developed under these assumptions.\footnote{Although the requirement that $k$ is perfect has recently been lifted by \cite{Car19}.} Since our main point of interest is comparisons between the two, we will impose this hypothesis from the beginning.
\item We will let $\mathcal{V}$ denote the ring of integers of $K$, $\mathfrak{m}$ its maximal ideal, and $\varpi$ a uniformiser. We will assume that $K$ admits a lift of the $q=p^a$ Frobenius on $k$, and `Frobenius' will always mean the $q$-power Frobenius.
\item A formal schemes will always mean a separated and (topologically) finite type formal scheme over $\spf{\mathcal{V}}$. Given a formal scheme $\fr{P}$, we will denote by $\fr{P}_K$ its generic fibre as an adic space. We will generally use fraktur letters to denote formal schemes, and the corresponding roman letters for their special fibres, e.g. $P=\fr{P}_k$.
\item  If $\rho\in\sqrt{\norm{K}}$ we will denote by $\D_K(0;\rho)$ the closed disc of radius $\rho$ over $K$, and by $\D_K(0;\rho^-)$ the open disc of radius $\rho$.
\item If $f:\mathscr{X}\rightarrow \mathscr{Y}$ is a smooth morphism of adic spaces (or schemes, or formal schemes \&c.), and $\mathscr{F}$ is an $\mathcal{O}_{\mathscr{X}}$-module with integrable connection relative to $\mathscr{Y}$, we will denote by $\mathbf{R}^qf_{\dR*}\mathscr{F}$ the higher direct images of the de\thinspace Rham complex $\Omega^\bullet_{\mathscr{X}/\mathscr{Y}} \otimes_{\mathcal{O}_{\mathscr{X}}} \mathscr{F}$.
\item For any additive category $\mathcal{A}$ we will denote by $\mathbf{Ch}^{\#}(\mathcal{A})$ the category of chain complexes in $\mathcal{A}$ with boundedness condition $\#\in \{\emptyset,+,-,b\}$. If $\mathcal{A}$ is abelian we denote by ${\bf D}^{\#}(\mathcal{A})$ the corresponding derived category. If $\mathcal{A}$ is the category of sheaves on a topological space $X$, we will usually write $\mathbf{Ch}^{\#}(X)$ and ${\bf D}^{\#}(X)$ instead, and if $\mathcal{A}$ is the category of $\mathcal{O}_X$-modules on a ringed space $(X,\mathcal{O}_X)$, we will write $\mathbf{Ch}^{\#}(\mathcal{O}_X)$ and ${\bf D}^{\#}(\mathcal{O}_X)$.
\end{itemize}

\section{Preliminaries} \label{sec: prelim}

In this section we will recall some of the basic results and constructions we will need from the various theories of adic spaces, rigid cohomology, and arithmetic $\mathscr{D}$-modules.

\subsection{Adic spaces} \label{sec: adic spaces}

For us, analytic varieties will always be considered as adic spaces. We will therefore let $\mathbf{An}_K$ denote the category of adic spaces separated, locally of finite type, and taut\footnote{that is, the closure of every quasi-compact open is quasi-compact} over $\spa{K,\mathcal{V}}$, and refer to such objects as \emph{analytic varieties} over $K$. By \cite[\S1.1.11]{Hub96} there is an equivalence of categories
\begin{align*}
 (-)_0\colon\mathbf{An}_K &\rightarrow \mathbf{Rig}_K \\
 \mathscr{X} &\mapsto \mathscr{X}_0
\end{align*}
between $\mathbf{An}_K$ and the category of separated and taut rigid analytic spaces over $K$ in the sense of Tate \cite{Tat71}. Denote a quasi-inverse to this functor by $r(-)$. If we let $\mathscr{X}_{\mathrm{an}}$ denote the analytic site of $\mathscr{X}$ (that is, the category of open subsets of $\mathscr{X}$ equipped with its canonical topology), and $\mathscr{X}_{0,G}$ the $G$-site of $\mathscr{X}_0$ (that is, the category of admissible opens equipped with the topology of admissible open coverings), then the functor
\begin{align*}
\mathscr{X}_{\mathrm{an}} &\leftarrow \mathscr{X}_{0,G} \\
r(U) &\mapsfrom U
\end{align*}
induces an equivalence
\begin{align*}
 \mathbf{Sh}(\mathscr{X}) &\isomto \mathbf{Sh}_G(\mathscr{X}_0) \\
 \mathscr{F} &\mapsto \mathscr{F}_0
\end{align*}
of toposes, functorial in $\mathscr{X}$ \cite[\S1.1.11]{Hub96}. Thus we have isomorphisms in cohomology
\[ {\rm H}^q(\mathscr{X},\mathscr{F}) \isomto {\rm H}^q(\mathscr{X}_0,\mathscr{F}_0)\]
for any abelian sheaf $\mathscr{F}$. Since $\left(\mathcal{O}_{\mathscr{X}}\right)_0\isomto \mathcal{O}_{\mathscr{X}_0}$, we also deduce an equivalence of categories
\[ \mathbf{Coh}(\mathscr{X}) \isomto \mathbf{Coh}(\mathscr{X}_0)\]
between coherent sheaves of $\mathscr{X}$ and $\mathscr{X}_0$.

\subsection{Germs of adic spaces} The following notion of a germ of an adic space was introduced in \cite{AL20}.

\begin{definition} \begin{enumerate}
\item A pre-germ is a pair $(S,\mathscr{X})$ where $\mathscr{X}$ is an adic space, and $S\subset \mathscr{X}$ is a closed subset. 
\item A morphism $f\colon(S,\mathscr{X})\rightarrow (T,\mathscr{Y})$ of pre-germs is a morphism $f\colon\mathscr{X}\rightarrow \mathscr{Y}$ of adic spaces such that $f(S)\subset T$.
\item A morphism $f\colon (S,\mathscr{X})\rightarrow (T,\mathscr{Y})$ of pre-germs is called a strict neighbourhood if $f\colon\mathscr{X}\hookrightarrow \mathscr{Y}$ is an open immersion inducing a homeomorphism $f\colon S\isomto T$.
\item The category of germs of adic spaces $\mathbf{Germ}$ is the localisation of the category of pre-germs at the class of strict neighbourhoods.
\end{enumerate} 
\end{definition}

We will denote by $\mathbf{Germ}_K$ the category of germs which are separated, locally of finite type, and taut over $\spa{K,\mathcal{V}}$, in the sense of \cite[\S2.2]{AL20}.

\begin{remark} The `separated' and `taut' conditions in this definition are essentially topological conditions on $S$, whereas the `locally of finite type' condition is one on the ambient adic space $\mathscr{X}$.
\end{remark}

We will often talk about morphisms of germs being smooth or partially proper, or other similar adjectives. This should always be understood in the sense of \cite[\S1.10]{Hub96}. For example, being smooth means that there exists a representative $f\colon(S,\mathscr{X}) \rightarrow (T,\mathscr{Y})$ at the level of pre-germs such that $f\colon \mathscr{X}\rightarrow \mathscr{Y}$ is smooth and $S=f^{-1}(T)$. (Partial properness is more involved, see \cite[Definition 1.10.15]{Hub96}.)

Any germ $S$ can be viewed as a ringed space by equipping it with the restriction
\[ \mathcal{O}_S:= \mathcal{O}_{\mathscr{X}}|_S\]
of the structure sheaf from its ambient adic space. Similarly, if $f\colon S\rightarrow T$ is a morphism in $\mathbf{Germ}_K$, we can define the relative de\thinspace Rham complex $\Omega^\bullet_{S/T}$ via restriction from the ambient adic space. Almost all the germs we consider in this article have the following property. 

\begin{definition} A germ $(S,\mathscr{X})$ is overconvergent if $S$ is closed under generalisation in $\mathscr{X}$.
\end{definition}

\subsection{Norms and valuations}

Write $\norm{\,\cdot\,}\colon K\rightarrow \R_{\geq0}$ for the given (multiplicative) discrete valuation on $K$, normalised so that $\norm{p}=p^{-1}$. This extends uniquely to any algebraic extension of $K$, or any completion thereof. If $x\in \mathscr{X}$ is a point of an analytic variety over $K$, let
\[ v_x\colon \mathcal{O}_{\mathscr{X}\hspace{-.7mm},x} \rightarrow \left\{ 0 \right\} \cup \Gamma_x \]
denote the corresponding valuation on the local ring at $x$, written multiplicatively. Write $[x]$ for the maximal generalisation of $x$ in the sense of \cite[\S1.1.9]{Hub96}, this is a rank $1$ valuation that we will implicitly view as taking values in $\R_{\geq 0}$, again normalising so that $v_{[x]}(p)=p^{-1}$. 

For any $\rho \in \sqrt{\norm{K}} \subset \R_{\geq0}$, say $\rho =\norm{\alpha}^{\frac{1}{n}}$ with $\alpha\in K$, we will abuse notation and write ``$v_x(f) \leq \rho$'' as shorthand for ``$v_x(\alpha^{-1}f^n) \leq 1$''. Since $v_x$ is multiplicative, this does not depend on the choice of $\alpha$ and $n$. We will employ a similar abuse of notation with $\leq$ replaced by $<, \geq$ or $>$. If $f\in \Gamma(\mathscr{X},\mathcal{O}_{\mathscr{X}})$, then
\[ \mathscr{X}(\rho^{-1}f):= \left\{ \left. x\in \mathscr{X} \right\vert v_x(f)\leq \rho  \right\} \]
is an open subset of $\mathscr{X}$, and is affinoid whenever $\mathscr{X}$ itself is affinoid. We will adopt the following convention throughout.

\begin{convention} \label{conv: real num} The radius of any disc considered in this article will be assumed to lie in $\sqrt{\norm{K}}\subset \R_{\geq0}$.
\end{convention}

If $\rho\in \sqrt{\norm{K}}\subset \R_{\geq0}$ is such a real number, and $R$ is a Banach $K$-algebra, we will write
\[ R\tate{\rho^{-1}T} := \left\{ \left. \sum_{i\geq0} r_iT^i \right\vert \Norm{r_i}\rho^i \rightarrow0 \right\}, \]
this is a strictly $K$-affinoid algebra whenever $R$ itself is a strictly $K$-affinoid algebra.

\subsection{Frames and tubes}

The basic objects of rigid cohomology are frames and tubes. Since the theory is generally phrased in the language of rigid analytic spaces, we will briefly discuss the changes that need to be made when using adic spaces instead.

\begin{definition} \begin{enumerate} 
\item A pair $(X,Y)$ over $k$ consists of an open immersion $X\overset{j}{\hookrightarrow} Y$ of $k$-varieties. 
\item A frame $(X,Y,\fr{P})$ over $\mathcal{V}$ consists of a pair $(X,Y)$ together with a closed immersion $Y\overset{i}{\hookrightarrow} \fr{P}$ of formal schemes over $\mathcal{V}$, such that $\fr{P}$ is flat over $\mathcal{V}$.
\end{enumerate}
There is the obvious notion of a morphism of pairs or frames.
\end{definition}

Let $(X,Y,\fr{P})$ be a frame, and consider the continuous specialisation map
\[ \mathrm{sp}\colon\fr{P}_K \rightarrow \fr{P}\cong P. \]
We let $[\fr{P}_K]$ denote the set of maximal points of $\fr{P}_K$, equipped with the quotient topology via the map
\[ \mathrm{sep} \colon \fr{P}_K\rightarrow [\fr{P}_K] \]
taking a point to its maximal generalisation. Then the restriction
\[ [\mathrm{sp}] \colon[\fr{P}_K]\rightarrow \fr{P} \]
of $\mathrm{sp}$ to $[\fr{P}_K]$ is anticontinuous, in the sense that preimages of open sets are closed an vice versa \cite[Chapter II, \S4.2,4.3]{FK18}. We may therefore define an anticontinuous map
\[ \widetilde{\mathrm{sp}}\colon \fr{P}_K\rightarrow \fr{P} \]
as the composition of $ \mathrm{sep}$ and $[\mathrm{sp}]$. For any constructible subset $T\subset P$ we define the tube
\[ \tube{T}_\fr{P}:= \widetilde{\mathrm{sp}}^{-1}(T). \]
Note that this will \emph{not} be an adic space in general (unless $T$ is closed in $P$), If $T$ is locally closed, however, then $\tube{T}_\fr{P}$ will be a germ, since if $\overline{T}$ is the closure of $T$ in $P$, then $\tube{T}_\fr{P}$ will be a closed subset of the adic space $\tube{\overline{T}}_\fr{P}$. If $a \colon T\hookrightarrow T'$ is an inclusion of constructible subsets of $P$, we will generally abuse notation and also write $a \colon \tube{T}_\fr{P}\rightarrow \tube{T'}_\fr{P}$ for the induced morphism of tubes.

Now let $i_{D}\colon D\hookrightarrow Y$ be a closed complement to $X$ in $Y$. Then the open immersion $j\colon X\hookrightarrow Y$ induces a morphism
\[ j \colon \tube{X}_\fr{P} \rightarrow \tube{Y}_\fr{P} \]
of germs. Topologically this is the inclusion of a closed subset, and is complementary to the open immersion $i_{D}\colon \tube{D}_\fr{P}\rightarrow \tube{Y}_\fr{P}$ of adic spaces. As a germ, the structure sheaf
\[ \mathcal{O}_{\tube{X}_\fr{P}} = \mathcal{O}_{\tube{Y}_\fr{P}}|_{\tube{X}_\fr{P}} \]
is the restriction of that on $\tube{Y}_\fr{P}$.

\begin{definition} Define endofunctors
\begin{align*}
j_X^\dagger \colon \mathbf{Sh}(\tube{Y}_\fr{P}) &\rightarrow \mathbf{Sh}(\tube{Y}_\fr{P}) \\
\underline{\Gamma}^\dagger_D \colon \mathbf{Sh}(\tube{Y}_\fr{P}) &\rightarrow \mathbf{Sh}(\tube{Y}_\fr{P})
\end{align*}
by $j_X^\dagger:=j_{*}j_X^{-1}$ and $\underline{\Gamma}^\dagger_D:=i_{D!}i_D^{-1}$.
\end{definition}

These are both exact, and there is an exact sequence
\[ 0\rightarrow \underline{\Gamma}^\dagger_D \rightarrow \mathrm{id} \rightarrow j_X^\dagger  \rightarrow 0 \]
of endofunctors of $\mathbf{Sh}(\tube{Y}_\fr{P})$. The following lemma is elementary.

\begin{lemma} The functors 
\[ j_{*}\colon \mathbf{Coh}(\mathcal{O}_{\tube{X}_\fr{P}}) \leftrightarrows \mathbf{Coh}(j_X^\dagger\mathcal{O}_{\tube{Y}_\fr{P}}) \colon j^{-1} \]
are inverse equivalences of categories, and induce isomorphisms
\[ {\rm H}^*(\tube{Y}_\fr{P},\mathscr{E}) \isomto {\rm H}^*(\tube{X}_\fr{P},j^{-1}\mathscr{E})\]
for any coherent $j_X^\dagger\mathcal{O}_{\tube{Y}_\fr{P}}$-module $\mathscr{E}$.
\end{lemma}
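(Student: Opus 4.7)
The plan is to exploit the fact that $j\colon \tube{X}_\fr{P} \to \tube{Y}_\fr{P}$ is, as remarked just before the lemma, a closed immersion at the level of topological spaces, with open complement $i_D\colon \tube{D}_\fr{P}\hookrightarrow \tube{Y}_\fr{P}$. First I would recall the standard facts from topology: for the inclusion $j$ of a closed subset, the pushforward $j_*$ is exact on abelian sheaves, the unit $\mathrm{id} \to j^{-1} j_*$ is an isomorphism, and the counit $j_* j^{-1} \mathscr{E} \to \mathscr{E}$ is an isomorphism whenever $\mathscr{E}$ is supported on the closed subset.

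Next I would show that any $j_X^\dagger \mathcal{O}_{\tube{Y}_\fr{P}}$-module $\mathscr{E}$ is automatically supported on $\tube{X}_\fr{P}$. Indeed, $j_X^\dagger \mathcal{O}_{\tube{Y}_\fr{P}} = j_*\mathcal{O}_{\tube{X}_\fr{P}}$ has vanishing stalks at points of $\tube{D}_\fr{P}$ (since $\tube{X}_\fr{P}$ is closed with complement $\tube{D}_\fr{P}$), so for any point $y\in \tube{D}_\fr{P}$ the stalk $\mathscr{E}_y$ is a module over the zero ring and hence zero. Combined with the topological facts above, this immediately gives the adjunction unit and counit as isomorphisms, proving the equivalence at the level of all module categories; it remains to verify that both functors preserve coherence. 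This is straightforward because $j^{-1} j_* \mathscr{F} = \mathscr{F}$ identifies $j_*$ on modules with the tautological identification between sheaves on $\tube{X}_\fr{P}$ and sheaves on $\tube{Y}_\fr{P}$ that vanish off $\tube{X}_\fr{P}$, and local generators/finite local presentations transfer between the two descriptions verbatim via the stalk isomorphisms.

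For the cohomology statement, the exactness of $j_*$ for a closed topological inclusion means that the Leray spectral sequence degenerates, giving
\[ {\rm H}^*(\tube{Y}_\fr{P}, j_*\mathscr{F}) \isomto {\rm H}^*(\tube{X}_\fr{P}, \mathscr{F}) \]
for every abelian sheaf $\mathscr{F}$ on $\tube{X}_\fr{P}$. Applying this to $\mathscr{F} = j^{-1}\mathscr{E}$ and using the counit isomorphism $j_* j^{-1}\mathscr{E} \cong \mathscr{E}$ from the previous step yields the desired comparison.

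There is no real obstacle here; the whole argument is essentially a book-keeping exercise once one has identified $j$ as a topologically closed inclusion and noted that $j_X^\dagger \mathcal{O}_{\tube{Y}_\fr{P}}$ itself is supported on $\tube{X}_\fr{P}$. The only mild subtlety worth stating carefully is that although $\tube{X}_\fr{P}$ is only a germ (not an adic space), everything used above is purely topological/sheaf-theoretic and so goes through for the ringed space $(\tube{X}_\fr{P}, \mathcal{O}_{\tube{Y}_\fr{P}}|_{\tube{X}_\fr{P}})$ with no modification.
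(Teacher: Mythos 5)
Your argument is correct, and is precisely the elementary sheaf-theoretic reasoning the paper has in mind (it declares the lemma elementary and omits a proof). One small terminological slip worth fixing: for the adjunction $(j^{-1}, j_*)$ the natural transformation you want in the first bullet is the \emph{counit} $j^{-1}j_* \to \mathrm{id}$ (an isomorphism for any closed inclusion), and the one in the second is the \emph{unit} $\mathscr{E}\to j_*j^{-1}\mathscr{E}$ (an isomorphism precisely when $\mathscr{E}$ is supported on the closed subset) — you have both the names and the arrow directions reversed, though the substance of what you invoke is correct and the rest of the argument, including the key observation that the stalks of $j_X^\dagger\mathcal{O}_{\tube{Y}_\fr{P}}$ vanish on $\tube{D}_\fr{P}$ so that every module over it is automatically supported on $\tube{X}_\fr{P}$, goes through exactly as written.
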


Let $\mathfrak{P}_{K0}$ denote the rigid analytic generic fibre of $\fr{P}$ in the sense of \cite[\S0.2]{Ber96b}, in the notation of \S\ref{sec: adic spaces} above this is the rigid analytic space $(\fr{P}_K)_0$. Let $\tube{Y}_{\fr{P}0}\subset \fr{P}_{K0}$ denote the rigid analytic tube in the sense of Berthelot \cite[\S1.1]{Ber96b}, and $j_X^\dagger$, $\underline{\Gamma}^\dagger_D$ the corresponding endofunctors of $\mathbf{Sh}_G(\tube{Y}_{\fr{P}0})$ as defined in \cite[\S2.1]{Ber96b}.

\begin{proposition} \label{prop: adic rig} There is a canonical isomorphism $\left(\tube{Y}_\fr{P}\right)_0 \isomto  \tube{Y}_{\fr{P}0}$ of rigid analytic spaces over $K$. Moreover, the diagrams
 \[ \xymatrix{ \mathbf{Sh}(\tube{Y}_\fr{P}) \ar[r]^-{(-)_0} \ar[d]_{j_X^\dagger} & \mathbf{Sh}(\tube{Y}_{\fr{P}0}) \ar[d]^{j_X^\dagger} & & \mathbf{Sh}(\tube{Y}_\fr{P}) \ar[r]^-{(-)_0} \ar[d]_{\underline{\Gamma}^\dagger_D} & \mathbf{Sh}(\tube{Y}_{\fr{P}0}) \ar[d]^{\underline{\Gamma}^\dagger_D}  \\ \mathbf{Sh}(\tube{Y}_\fr{P}) \ar[r]^-{(-)_0} & \mathbf{Sh}(\tube{Y}_{\fr{P}0}) & & \mathbf{Sh}(\tube{Y}_\fr{P}) \ar[r]^-{(-)_0} & \mathbf{Sh}(\tube{Y}_{\fr{P}0}) } \]
commute up to natural isomorphism.
\end{proposition}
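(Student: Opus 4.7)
The proof should be essentially an unravelling of definitions, combined with the general equivalence of toposes $\mathbf{Sh}(\mathscr{X}) \simeq \mathbf{Sh}_G(\mathscr{X}_0)$ recalled in \S\ref{sec: adic spaces}. The plan is to reduce everything to a comparison of two \emph{topological} constructions (the adic and rigid tubes together with their inclusions), and then appeal to the fact that the functors $j_*, j^{-1}, i_{D!}, i_D^{-1}$ are defined purely topologically on both sides.

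First I would identify the underlying set of $(\tube{Y}_\fr{P})_0$ with $\tube{Y}_{\fr{P}0}$. By construction the rigid generic fibre $\fr{P}_{K0}$ corresponds under $(-)_0$ to the subset $[\fr{P}_K]$ of rank-one (maximal) points of $\fr{P}_K$, and Berthelot's specialisation $\sp\colon \fr{P}_{K0}\to \fr{P}$ is exactly the restriction of the adic specialisation to $[\fr{P}_K]$. Thus for either $Y$ closed or locally closed in $P$, the subset $\tube{Y}_{\fr{P}0} = \sp^{-1}(Y)\subset \fr{P}_{K0}$ corresponds under the equivalence $\mathbf{An}_K\simeq \mathbf{Rig}_K$ to $\widetilde{\sp}^{-1}(Y)\cap [\fr{P}_K]$, which is the maximal point locus of the germ $\tube{Y}_\fr{P}$. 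The compatibility of the two structure sheaves, in the closed case, then follows because in both settings the structure sheaf of the tube is the restriction of the structure sheaf of $\fr{P}_K$ (respectively $\fr{P}_{K0}$). In the locally closed case one does the same after reducing to the closed case via the closure $\overline{Y}$.

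Next I would verify that under the identification above, the inclusion of germs $j\colon \tube{X}_\fr{P}\hookrightarrow \tube{Y}_\fr{P}$ (a closed immersion on the level of topological spaces) corresponds to the rigid inclusion $j\colon \tube{X}_{\fr{P}0}\hookrightarrow \tube{Y}_{\fr{P}0}$, and similarly for the open immersion $i_D\colon \tube{D}_\fr{P}\hookrightarrow \tube{Y}_\fr{P}$. This is purely topological: both come from the set-theoretic decomposition $Y = X\sqcup D$ of $Y$ into an open and closed piece, transported through $\widetilde{\sp}^{-1}$ (respectively $\sp^{-1}$). Once this is in hand, the commutativity of the two diagrams reduces to the statement that the equivalence $\mathbf{Sh}(\tube{Y}_\fr{P})\simeq \mathbf{Sh}_G(\tube{Y}_{\fr{P}0})$ (obtained from the equivalence of toposes for the ambient adic spaces by restricting to the appropriate subset) intertwines $j_*, j^{-1}, i_{D!}, i_D^{-1}$ on the two sides. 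But $j_X^\dagger = j_*j^{-1}$ and $\underline{\Gamma}^\dagger_D = i_{D!}i_D^{-1}$ on both sides are built from these standard topological operations, which are preserved by any equivalence of toposes induced by a homeomorphism of sites.

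The main obstacle is in the first step: one has to be careful about how the \emph{germ} $\tube{Y}_\fr{P}$ corresponds to the Berthelot tube, and in particular one must check that strict neighbourhoods of the germ (which are the morphisms localised in the definition of $\mathbf{Germ}$) correspond to the admissible coverings in the rigid $G$-topology used by Berthelot to define $\tube{Y}_{\fr{P}0}$ as a strict neighbourhood system. The cleanest way to handle this is to reduce to the closed case (where $\tube{Y}_\fr{P}$ is an honest adic space and the equivalence $\mathbf{An}_K\simeq \mathbf{Rig}_K$ applies directly), and then deduce the locally closed case by restriction to the closed subset, exploiting that the functors $j_X^\dagger$ and $\underline{\Gamma}^\dagger_D$ only depend on this restriction.
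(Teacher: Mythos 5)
The first part of your proposal---identifying $(\tube{Y}_\fr{P})_0$ with $\tube{Y}_{\fr{P}0}$ via maximal points and the correspondence of specialisation maps---is correct in spirit, though the paper implements it slightly differently, reducing to the affine case and comparing the explicit pointwise descriptions of the two tubes via [FK18, Prop. II.4.2.11] and [Ber96b, Prop. 1.1.1]. Either route works.

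The gap is in the second part. You assert that ``$j_X^\dagger = j_*j^{-1}$ and $\underline{\Gamma}^\dagger_D = i_{D!}i_D^{-1}$ on both sides are built from these standard topological operations, which are preserved by any equivalence of toposes.'' This is false as stated, and it glosses over the real content of the proposition. On the adic side the paper does \emph{define} $j_X^\dagger := j_*j^{-1}$ for the inclusion $j\colon \tube{X}_\fr{P}\hookrightarrow\tube{Y}_\fr{P}$ of a \emph{closed} subset. On the rigid side, however, $\tube{X}_{\fr{P}0}$ is an admissible \emph{open} in $\tube{Y}_{\fr{P}0}$, and Berthelot's $j_X^\dagger$ is defined as $\operatorname{colim}_V j_{V*}j_V^{-1}$, the colimit over strict neighbourhoods $V$ of $\tube{X}_{\fr{P}0}$ in $\tube{Y}_{\fr{P}0}$. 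These two constructions are not formally the same thing, and they are certainly not interchanged by the topos equivalence for trivial reasons; the map $j$ on the two sides is not even the same kind of morphism (closed on one side, open on the other). The agreement of the adic-side $j_*j^{-1}$ with Berthelot's strict-neighbourhood colimit is precisely what Le Stum proves in [LS07, Proposition 5.3 and Proposition 5.1.12], and this is the citation that carries the proof in the paper.

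You do flag a concern about ``strict neighbourhoods of the germ'' versus ``admissible coverings,'' but your proposed fix---reduce to the closed case---does not address it: in the proposition $Y$ is already closed (so $\tube{Y}_\fr{P}$ is already an honest adic space), and the issue sits at $\tube{X}_\fr{P}$, which is a closed subset of $\tube{Y}_\fr{P}$ on the adic side but an admissible open with its strict neighbourhood system on the rigid side. Restriction to a closed subset does not reconcile the two descriptions of $j_X^\dagger$; one genuinely needs Le Stum's comparison of the colimit over strict neighbourhoods with the closed-subspace sections functor. Without that input the argument is incomplete.
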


\begin{proof}
Note that we can identify $\mathfrak{P}_{K0}$ with the set of \emph{rigid} points of $\mathfrak{P}_{K}$, and the functor $U\mapsto U\cap \mathfrak{P}_{K0}$ gives a one-to-one correspondence between admissible open subsets of $\mathfrak{P}_{K}$ (in the sense of \cite[Definition II.B.1.1]{FK18}) and admissible open subsets of $\mathfrak{P}_{K0}$ (in the sense of the $G$-topology). Since tube open subsets of $\fr{P}$ are admissible by \cite[Proposition II.B.1.7]{FK18}, the first claim is reduced to showing that $\tube{Y}_\fr{P} \cap \fr{P}_{K0}=\tube{Y}_{\fr{P}0}$ as subsets of $\fr{P}$. The question is now local on $\fr{P}$, which we may assume to be affine. Let $f_1,\ldots,f_r\in \Gamma(\fr{P},\mathcal{O}_\fr{P})$ be such that $Y=V(\varpi,f_1,\ldots,f_r)$. Then by \cite[Proposition II.4.2.11]{FK18} we can identify
 \[ \tube{Y}_\fr{P} = \left\{\left. x\in \fr{P}_K \right\vert v_{[x]}(f_i)<1 \;\forall i \right\}, \]
 and since rigid points $x$ satisfy $x=[x]$, we can conclude by applying \cite[Proposition 1.1.1]{Ber96b}. For the second claim, there are exact sequences
\[ 0\rightarrow \underline{\Gamma}^\dagger_D \rightarrow \mathrm{id} \rightarrow j_X^\dagger  \rightarrow 0 \]
 of functors on both $\mathbf{Sh}(\tube{Y}_\fr{P})$ and $\mathbf{Sh}(\tube{Y}_{\fr{P}0})$, it therefore suffices to consider $j_X^\dagger$. In this case, since the topos of an analytic variety is equivalent to that of the associated rigid space, the claim fact follows from the alternative definition of $j_X^\dagger$ given in \cite[Proposition 5.3]{LS07} (for the equivalence with Berthelot's definition, see \cite[Proposition 5.1.12]{LS07}).
\end{proof}

From this we may deduce:
\begin{enumerate}
\item there is a canonical isomorphism $\left( j_X^\dagger\mathcal{O}_{\tube{Y}_\fr{P}}\right)_0 \cong j_X^\dagger\mathcal{O}_{\tube{Y}_{\fr{P}0}}$;
\item the functor $\mathscr{F}\mapsto \mathscr{F}_0$ induces an equivalence of categories
\[ \mathbf{Coh}(j_X^\dagger\mathcal{O}_{\tube{Y}_\fr{P}}) \isomto  \mathbf{Coh}(j_X^\dagger\mathcal{O}_{\tube{Y}_{\fr{P}0}}).\] 
\end{enumerate}
We may use this to transport all results proved for overconvergent sheaves in the language of rigid analytic spaces into the adic context, for example the following.

\begin{proposition}[Proposition 2.1.10, \cite{Ber96b}] \label{prop: coherent sheaves colimit neighbourhoods} The inverse image functor induces an equivalence of categories
\[ \varinjlim_V \mathbf{Coh}(\mathcal{O}_V) \isomto \mathbf{Coh}(\mathcal{O}_{\tube{X}_\fr{P}})   \]
where $V$ ranges over all open neighbourhoods of $\tube{X}_\fr{P}$ in $\tube{Y}_\fr{P}$.  
\end{proposition}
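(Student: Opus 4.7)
The plan is to import Berthelot's original rigid analytic statement (Proposition 2.1.10 of \cite{Ber96b}) through the equivalence of categories set up in \S\ref{sec: adic spaces} and in Proposition \ref{prop: adic rig}. Concretely, for any open subset $V\subset \tube{Y}_\fr{P}$ (an adic space), one has a canonical equivalence $\mathbf{Coh}(\mathcal{O}_V)\isomto \mathbf{Coh}(\mathcal{O}_{V_0})$, and by Proposition \ref{prop: adic rig} the rigid analytic space $V_0$ is an admissible open subset of $\tube{Y}_{\fr{P}0}$ that contains the rigid tube $\tube{X}_{\fr{P}0}$. Similarly, $\mathbf{Coh}(\mathcal{O}_{\tube{X}_\fr{P}})\isomto \mathbf{Coh}(\mathcal{O}_{\tube{X}_{\fr{P}0}})$. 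Passing to the filtered colimit, it therefore suffices to check that the system of $V_0$ obtained by restricting $V$ is cofinal among the strict neighbourhoods of $\tube{X}_{\fr{P}0}$ in $\tube{Y}_{\fr{P}0}$ in the sense of Berthelot.

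For the cofinality, I would first check that any adic open neighbourhood $V$ of $\tube{X}_\fr{P}$ in $\tube{Y}_\fr{P}$ restricts to a strict neighbourhood of the rigid tube. Since $\tube{X}_\fr{P}$ is overconvergent (closed under generisation in $\tube{Y}_\fr{P}$), standard properties of adic spaces show that every adic open containing $\tube{X}_\fr{P}$ also contains an open of the form $\{v_x(f_i)\le \rho\}$ for suitable $f_i$ and $\rho<1$, and such opens are exactly those whose rigid incarnations define strict neighbourhoods in Berthelot's sense (compare the explicit description of tubes following \cite[Proposition II.4.2.11]{FK18} used in the proof of Proposition \ref{prop: adic rig}). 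Conversely, any strict neighbourhood in Berthelot's sense is the rigid generic fibre of such an explicit open, hence of an adic open. This gives the desired cofinality.

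Given the cofinality, the proposition reduces directly to Berthelot's rigid analytic equivalence $\varinjlim_{V_0}\mathbf{Coh}(\mathcal{O}_{V_0})\isomto \mathbf{Coh}(\mathcal{O}_{\tube{X}_{\fr{P}0}})$, combined with the compatibility of the equivalences $(-)_0$ with inverse image. The only real work is the cofinality statement in the previous paragraph; once one has the correct dictionary between adic opens containing an overconvergent closed set and Berthelot's strict neighbourhoods, everything else is formal. This verification is the main (and essentially only) obstacle.
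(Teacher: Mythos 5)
Your proposal takes essentially the same route as the paper: the proposition is obtained by transporting Berthelot's rigid analytic statement through the equivalence of toposes and coherent-sheaf categories established in \S\ref{sec: adic spaces} and Proposition \ref{prop: adic rig}, together with the observation that adic open neighbourhoods of $\tube{X}_\fr{P}$ correspond cofinally to strict neighbourhoods in Berthelot's sense. The paper itself only gestures at this deduction, so your fleshing-out of the cofinality step is the right thing to do; one small inaccuracy is that the cofinal family of strict neighbourhoods should be described as subsets of the form $\{v_x(g)\geq\lambda\}\cap\tube{Y}_\fr{P}$ (where $g$ cuts out $Y\setminus X$ in $Y$ and $\lambda<1$), rather than $\{v_x(f_i)\leq\rho\}$, but this does not affect the substance of the argument.
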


\subsubsection{}

As well as the open tube $\tube{Y}_\fr{P}$ of a closed subscheme $Y\hookrightarrow P$ defined above, we will also need the variants $\left[Y\right]_{\mathfrak{P}\eta}$ and $\tube{Y}_{\mathfrak{P}\eta}$, which are defined for $\eta<1$ sufficiently close to $1$. When $\fr{P}$ is affine, and $f_1,\ldots,f_r\in \Gamma(\fr{P},\mathcal{O}_\fr{P})$ are such that $Y=V(\varpi,f_1,\ldots,f_r)$, define 
 \begin{align*} \left[Y\right]_{\mathfrak{P}\eta}&:= \left\{\left. x\in \fr{P}_K \right\vert v_{x}(f_i)\leq \eta \;\forall i \right\} \\  
 \tube{Y}_{\mathfrak{P}\eta}&:= \left\{\left. x\in \fr{P}_K \right\vert v_{[x]}(f_i)< \eta \;\forall i \right\}.
 \end{align*}
When $\norm{\varpi}<\eta<1$ these do not depend on the choice of the $f_i$, and hence glue together over an affine covering of $\fr{P}$, see \cite[\S1.1.8]{Ber96b}.

\subsection{Partitions and stratifications}

We use the notions of partition and stratification as defined in \cite[\href{https://stacks.math.columbia.edu/tag/09XY}{Tag 09XY}]{stacks}. 

\begin{definition} A partition of Noetherian topological space $X$ is a finite decomposition $X=\bigsqcup_{\alpha\in A }X_\alpha$ into locally closed subsets $X_\alpha$. It is called a stratification if $X_\alpha\cap \overline{X}_\beta \neq \emptyset \implies X_\alpha\subset \overline{X}_\beta$. 
\end{definition}

This is in fact called a (finite) good stratification in \cite[\href{https://stacks.math.columbia.edu/tag/09XY}{Tag 09XY}]{stacks}. There is an obvious notion of a refinement of a partition or stratification, and it is easy to see that every partition can be refined to a stratification. 

\subsection{Constructible isocrystals} \label{sec: constructible}

We recall the theory of constructible isocrystals as described in \cite{LS16}. Let $(X,Y,\mathfrak{P})$ be a frame over $\mathcal{V}$.

\begin{definition} An $\mathcal{O}_{\tube{X}_\fr{P}}$-module $\mathscr{F}$ is called \emph{constructible} if there exists a partition $\left\{ X_i\right\}_{i\in I}$ of $X$ such that each $\mathscr{F}|_{\tube{X_i}_{\fr{P}}}$ is a coherent $\mathcal{O}_{\tube{X_i}_{\fr{P}}}$-module.
\end{definition}

Now suppose that $\fr{P}$ is smooth over $\mathcal{V}$ in a neighbourhood of $X$, let $p_1,p_2\colon\tube{X}_{\fr{P}^2}\rightarrow \tube{X}_\fr{P}$ denote the two projection maps, and $\Delta\colon\tube{X}_\fr{P}\rightarrow \tube{X}_{\fr{P}^2}$ the diagonal. Let $\mathscr{F}$ be an $\mathcal{O}_{\tube{X}_\fr{P}}$-module.

\begin{definition} An overconvergent stratification on $\mathscr{F}$ is an isomorphism $\epsilon\colon  p_2^*\mathscr{F}\isomto p_1^*\mathscr{F}$ of $\mathcal{O}_{\tube{X}_{\fr{P}^2}}$-modules, restricting to the identity on $\Delta(\tube{X}_\fr{P})$, and satisfying the cocycle condition on $\tube{X}_{\fr{P}^3}$. 
\end{definition}

Note that `overconvergent' here might be more appropriately termed `partially overconvergent along $Y\setminus X$', since we are not assuming that $Y$ is proper over $k$. For brevity we will generally use the former term, unless there is scope for ambiguity. When $X=Y$, however, we will usually speak of convergent stratifications instead.

\begin{definition} A constructible isocrystal on the pair $(X,Y)$ is a constructible $\mathcal{O}_{\tube{X}_\fr{P}}$-module $\mathscr{F}$ together with an overconvergent stratification $\epsilon$.
\end{definition}

As the language suggests, the category of such objects is independent of the formal embedding $\fr{P}$ up to canonical equivalence (under the assumption that $\fr{P}$ is smooth around $X$), and is also functorial in the pair $(X,Y)$. To emphasise this fact, we will employ the following notation.

\begin{notation} If $(X,Y,\fr{P})$ is a frame, with $\fr{P}$ smooth around $X$, and $E$ is a constructible isocrystal on $(X,Y)$, we will write $E_\fr{P}$ for the realisation of $E$ on $\tube{X}_\fr{P}$.
\end{notation}

It follows from the results of \cite{LS16} that the category of constructible isocrystals on $(X,Y)$ embeds fully faithfully in the category of constructible $\mathcal{O}_{\tube{X}_\fr{P}}$-modules with integrable connection. We will therefore speak of an (integrable) connection on a constructible $\cO_{\tube{X}_\fr{P}}$-module as being (over)convergent, that is, in the essential image of this functor.

\begin{lemma} \label{lemma: flat} Let $\mathscr{F}$ be a constructible isocrystal on $(X,Y)$. Then $\mathscr{F}$ is a flat $\mathcal{O}_{\tube{X}_\fr{P}}$-module. 
\end{lemma}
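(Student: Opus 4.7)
The plan is to reduce the question to a stalk-local one, and then invoke the classical fact that a coherent module with integrable connection on a smooth (adic) space is locally free. Flatness of an $\mathcal{O}_{\tube{X}_\fr{P}}$-module can be checked at stalks, so fix a point $x \in \tube{X}_\fr{P}$. There is a unique index $i$ with $x\in \tube{X_i}_\fr{P}$. Because, for any locally closed $T\subset P$, the structure sheaf $\mathcal{O}_{\tube{T}_\fr{P}}$ is defined as the restriction of $\mathcal{O}_{\fr{P}_K}$, the local rings
\[ \mathcal{O}_{\tube{X}_\fr{P},x} = \mathcal{O}_{\fr{P}_K,x} = \mathcal{O}_{\tube{X_i}_\fr{P},x} \]
coincide, and the stalks $\mathscr{F}_x$ and $(\mathscr{F}|_{\tube{X_i}_\fr{P}})_x$ are identified as modules over this common local ring. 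It therefore suffices to prove that each restriction $\mathscr{F}|_{\tube{X_i}_\fr{P}}$ is flat.

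By hypothesis, $\mathscr{F}|_{\tube{X_i}_\fr{P}}$ is a coherent $\mathcal{O}_{\tube{X_i}_\fr{P}}$-module, and it inherits from $\mathscr{F}$ an overconvergent stratification, and in particular an integrable connection. By Proposition \ref{prop: coherent sheaves colimit neighbourhoods}, applied to the frame $(X_i,\overline{X_i},\fr{P})$ (where $\overline{X_i}$ denotes the closure of $X_i$ in $Y$), this extends to a coherent sheaf with integrable connection on some open neighbourhood $V$ of $\tube{X_i}_\fr{P}$ in $\tube{\overline{X_i}}_\fr{P}$. The ambient adic space $V\subset \fr{P}_K$ is smooth over $K$, and the classical fact that a coherent $\mathcal{O}$-module with integrable connection on a smooth adic variety is locally free (an application of Fitting ideals being preserved by a connection, implemented in this setting e.g.\ as in \cite{LS16}) then implies that $\mathscr{F}|_{\tube{X_i}_\fr{P}}$ is locally free, hence flat.

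The main subtlety in carrying this out is the stalk comparison in the first paragraph: one must be careful that the sub-germ $\tube{X_i}_\fr{P}$ really does share its local rings with the ambient $\tube{X}_\fr{P}$ at points of $\tube{X_i}_\fr{P}$, which rests on the fact that the structure sheaves on germs are defined purely by restriction from $\fr{P}_K$. The remaining ingredient, local freeness of a coherent connection on a smooth adic space, is then quoted from the classical literature; if one wished to avoid this, an alternative would be to first refine the partition $\{X_i\}$ to a stratification where each stratum is smooth and then appeal directly to Le Stum's local description of coherent isocrystals on a smooth stratum.
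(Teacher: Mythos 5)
Your proof is correct and follows essentially the same route as the paper's terser argument: flatness is checked at stalks, where $\mathscr{F}$ agrees with its coherent restriction to the appropriate stratum, and coherence plus an integrable connection on a smooth space forces local freeness. You are simply filling in details (the stratum-by-stratum stalk identification and the extension to an open neighbourhood via Proposition~\ref{prop: coherent sheaves colimit neighbourhoods}) that the paper leaves implicit.
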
 

\begin{proof}
Since flatness can be checked on stalks, we can reduce to the case when $\mathscr{F}$ is a coherent $\mathcal{O}_{\tube{X}_\fr{P}}$-module. Since it admits an integrable connection, it is locally free, and hence flat.
\end{proof}

\begin{definition} \label{defn: support cons}We say that a constructible isocrystal $E$ is supported on some locally closed subscheme $Z\hookrightarrow X$ if $E_{\mathfrak{P}}|_{\tube{X\setminus Z}_\fr{P}}=0$. Again, this only depends on $(X,Y)$ and not on $\fr{P}$.
\end{definition}

Since most of our main results will ultimately depend on \cite{CT12}, we will need to consider objects with Frobenius structures.

\begin{definition} \label{defn: F type} A Frobenius structure on a constructible isocrystal $E$ is an isomorphism $F^{n*}E\cong E$ for some $n\geq 1$. A constructible isocrystal is said to be \emph{of Frobenius type} if it is an iterated extension of objects admitting Frobenius structures. 
\end{definition}

\begin{remark} Even if $\fr{P}$ does not admit a lift of Frobenius, the fact that constructible isocrystals are functorial in $(X,Y)$ means that there is always a canonically defined Frobenius pullback functor. 
\end{remark}

We will denote by $\mathrm{Isoc}_\mathrm{cons}(X,Y)$ the category of constructible isocrystals \emph{of Frobenius type} on the pair $(X,Y)$. 

\begin{warning} The reader should be warned that this notation is rather non-standard. Although the results of \S\ref{sec: calc Rsp} and \S\ref{sec: pre Dd} don't require Frobenius structures, the key results of \S\ref{sec: finite} and \S\ref{sec: comp} do, so we shall impose the condition of being `of Frobenius type' throughout. For the sake of brevity, we shall, however, suppress this condition from the notation. The reader should also be warned that our condition of being `of Frobenius type' is much stronger than that of the same name occuring in \cite{LS14}.
\end{warning}

When $Y$ is proper over $k$, the category of constructible isocrystals on $(X,Y)$ only depends on $X$ up to canonical equivalence; we will call such objects constructible isocrystals on $X$, and will denote by $\mathrm{Isoc}^\dagger_\mathrm{cons}(X)$ the category of such objects which are of Frobenius type. When $X=Y=P$, we will often write $\mathrm{Isoc}_{\mathrm{cons}}(\mathfrak{P})$, and talk about constructible isocrystals on $\fr{P}$, rather than on $(P,P)$.

The full subcategory of $\mathrm{Isoc}_\mathrm{cons}(X,Y)$ consisting of objects whose underlying $\mathcal{O}_{\tube{X}_\fr{P}}$-module is coherent is precisely the category of (partially) overconvergent isocrystals on $(X,Y)$ of Frobenius type (in the senes of Definition \ref{defn: F type}), and will be denoted by $\mathrm{Isoc}(X,Y)$. When $Y$ is proper, this induced the inclusion 
\[ \mathrm{Isoc}^\dagger(X)\subset \mathrm{Isoc}_\mathrm{cons}^\dagger(X) \]
from the category of overconvergent isocrystals on $X$ of Frobenius type, to the category of constructible isocrystals on $X$ of Frobenius type.

If $(f,g):(X,Y)\rightarrow (X',Y')$ is a morphism of pairs, there is an exact pullback functor
\[ f^*:\mathrm{Isoc}_\mathrm{cons}(X',Y')\rightarrow\mathrm{Isoc}_\mathrm{cons}(X,Y),\]
which only depends on the morphism $X\rightarrow X'$ when $Y$ and $Y'$ are proper. If moreover $Y\hookrightarrow Y'$ is a closed immersion (thus $X\hookrightarrow X'$ is a locally closed immersion), there is an exact functor
\[ f_!: \mathrm{Isoc}_\mathrm{cons}(X,Y)\rightarrow\mathrm{Isoc}_\mathrm{cons}(X',Y')\]
which on realisations is defined to be the extension by zero along the map of tubes $f:\tube{X}_\fr{P}\rightarrow \tube{X'}_\fr{P}$. This induces an equivalence of categories between $\mathrm{Isoc}_\mathrm{cons}(X,Y)$ and the full subcategory of $\mathrm{Isoc}_\mathrm{cons}(X',Y')$ consisting of objects supported on $X$. If $f:X\hookrightarrow X'$ is a closed immersion, then $f_!$ is a \emph{left} adjoint to $f^*$, and if $f:X\hookrightarrow X'$ is an open immersion, then it is a \emph{right} adjoint to $f^*$.

In particular, if $a\colon Z\rightarrow X$ is a locally closed immersion, and $\overline{Z}$ denotes the closure of $Z$ in $Y$, then $a_!$ induces a fully faithful functor
\[ a_!\colon \mathrm{Isoc}(Z,\overline{Z})\rightarrow \mathrm{Isoc}_\mathrm{cons}(X,Y) \]
from overconvergent isocrystals on $(Z,\overline{Z})$ to constructible isocrystals on $(X,Y)$. Moreover, it follows from \cite[Proposition 3.6]{LS16} that every object $E\in \mathrm{Isoc}(X,Y)$ is an iterated extension of objects in the essential image of such functors (for varying $Z$).

\subsection{Arithmetic \texorpdfstring{$\pmb{\mathscr{D}}$}{D}-modules on pairs}

We recall the definition of overholonomic complexes on pairs from \cite{AC18a}. Given a smooth formal scheme $\fr{P}$, we let ${\bf D}^b_{\mathrm{hol}}(\mathscr{D}^\dagger_{\fr{P}\Q})$ denote the category of overholonomic complexes of $\mathscr{D}^\dagger_{\fr{P}\Q}$-modules as defined in \cite{Car09b}, and 
\[{\bf D}^b_{\mathrm{hol}}(\fr{P})\subset {\bf D}^b_{\mathrm{hol}}(\mathscr{D}^\dagger_{\fr{P}\Q}) \]
its full subcategory of objects of Frobenius type, that is, objects whose cohomology sheaves are iterated extensions of objects admitting Frobenius structures.

\begin{definition} \label{defn: lp frame}
\begin{enumerate} 
\item  An l.p.\ frame is a frame $(X,Y,\fr{P})$ over $\mathcal{V}$ such that $\fr{P}$ is smooth and admits a locally closed immersion $\fr{P}\hookrightarrow \fr{Q}$ into a smooth and proper formal scheme over $\mathcal{V}$.
\item  A pair $(X,Y)$ over $k$ is called realisable if there exists an l.p.\ frame $(X,Y,\fr{P})$ over $\mathcal{V}$.
\item A variety $X$ over $k$ is called realisable if the pair $(X,X)$ is realisable. This just says that there exists an immersion $X\hookrightarrow \fr{P}$ with $\fr{P}$ smooth and proper over $\mathcal{V}$. More generally, we say that a formal $\cV$-scheme is realisable if it admits an immersion into a smooth and proper formal $\cV$-scheme.
\item We will say that 
\end{enumerate} 
\end{definition}

Note that while our definition of an l.p. frame is slightly more general than that used in \cite{AC18a}, where the map $\fr{P}\rightarrow \fr{Q}$ is required to be an open immersion, the resulting notion of a pair being realisable is the same. Also note that the above notation of realisability for varieties is stronger than one often appearing in the literature, which requires the existence of a frame $(X,Y,\fr{P})$ such that $Y$ is proper and $\fr{P}$ is smooth around $X$. 

If $(X,Y)$ is a realisable pair, and $(X,Y,\fr{P})$ is an l.p.\ frame, we denote by 
\[ \mathbf{R}\underline{\Gamma}_X^\dagger:= \mathbf{R}\underline{\Gamma}_Y^\dagger \circ (^\dagger Y\setminus X) \colon {\bf D}^b_{\mathrm{hol}}(\mathscr{D}^\dagger_{\fr{P}\Q}) \rightarrow {\bf D}^b_{\mathrm{hol}}(\mathscr{D}^\dagger_{\fr{P}\Q}) \]
the functor of sections with support on $X$. 

\begin{definition} \label{defn: support Dmod} We say that $\mathcal{M}\in {\bf D}^b_{\mathrm{hol}}(\mathscr{D}^\dagger_{\fr{P}\Q})$ is supported on $X$ if there exists an isomorphism $\mathcal{M} \isomto \mathbf{R}\underline{\Gamma}_X^\dagger\mathcal{M}$ in ${\bf D}^b_{\mathrm{hol}}(\mathscr{D}^\dagger_{\fr{P}\Q})$.
\end{definition}

The category ${\bf D}^b_{\mathrm{hol}}(X,Y)$ of overholonomic complexes on $(X,Y)$ of Frobenius type is defined to be the full-subcategory of ${\bf D}^b_{\mathrm{hol}}(\fr{P})$ consisting of objects which are supported on $X$. That this does not depend on the choice of l.p.\ frame, up to canonical equivalence, is shown for the more restricted version of an l.p.\ frame in \cite{AC18a}. This invariance is easily seen to extend to the marginally more general version we use here. If $Y$ is proper over $k$, ${\bf D}^b_{\mathrm{hol}}(X,Y)$ only depends on $X$ up to canonical equivalence, in which case we will denote it by ${\bf D}^{b}_{\mathrm{hol}}(X)$.

The categories ${\bf D}^b_{\mathrm{hol}}(-)$ support a suitable formalism of Grothendieck's six operations. For any morphism of couples $(f,g):(X',Y')\rightarrow (X,Y)$ there are functors
\[ f^!,f^+\colon {\bf D}^b_{\mathrm{hol},F}(X,Y) \rightarrow {\bf D}^b_{\mathrm{hol}}(X',Y'), \]
as well as functors
\[ f_+,f_!\colon {\bf D}^b_{\mathrm{hol}}(X',Y') \rightarrow {\bf D}^b_{\mathrm{hol}}(X,Y) \]
whenever $g$ is proper (note the slight abuse of notation in the pushforward and pullback functors associated to a morphism of couples). We also have the duality functor, written (again, slightly abusively) as
\[ \mathbf{D}_{X} \colon {\bf D}^b_{\mathrm{hol}}(X,Y)^\mathrm{op} \rightarrow {\bf D}^b_{\mathrm{hol}}(X,Y). \]
We refer to \cite[\S1]{AC18a} and the references therein for more details.

\begin{remark} The fact that $\bD^b_{\hol}(X,Y)$ is defined using an l.p. frame is the source of the `realisability' hypotheses in Theorems \ref{theo: DCon} and \ref{theo: sp_+ comp}. We are relatively confident that this hypothesis could be removed from Theorem \ref{theo: DCon} with a little extra effort, however, this would have no effect on Theorem \ref{theo: sp_+ comp}.

A natural question to ask in this direction would be whether it is possible to define $\bD^b_\hol(X,Y)$ for pairs $(X,Y)$ with $Y$ admitting a closed immersion into a smooth formal $\cV$-scheme $\fP$. That us, we do not assume that $\fP$ admits a locally closed immersion into a smooth and \emph{proper} formal scheme over $\cV$. If this could be done, it would likely enable us to slightly weaken the `realisability' assumption in Theorem \ref{theo: sp_+ comp}.

The key step that is missing in order to achieve this is a more general version of \cite[Th\'eor\`eme 3.9]{Car09b}. What is needed is to prove the following: if $u\from \fP\to \fQ$ is a morphism of smooth formal schemes, and is $\cM$ an overholonomic $\sD^\dagger_{\fP\Q}$-module whose support is proper over $\fQ$, then $u_+\cM$ is overholonomic on $\fQ$. Proving this would, in turn, require showing the following: if $\cM$ is a coherent $\sD^\dagger_{\fP\Q}$-module whose support is proper over $\fQ$, then:
\begin{enumerate}
\item $u_+\cM\in \bD^b_\coh(\sD^\dagger_{\fQ\Q})$,
\item there is an isomorphism $u_+(\bD_{\fP}\cM)  \isomto \bD_{\fQ}(u_+\cM)$ in $\bD^b_\coh(\sD^\dagger_{\fQ\Q})$.
\end{enumerate}
\end{remark}

\section{Calculating \texorpdfstring{$\mathbf{R}\mathrm{sp}_*$}{Rsp*}} \label{sec: calc Rsp}

Let $\mathfrak{P}$ be a formal scheme, flat over $\mathcal{V}$, and $\mathscr{F}$ a constructible $\mathcal{O}_{\mathfrak{P}_K}$-module. Our goal in this section is to calculate $\mathbf{R}\mathrm{sp}_*\mathscr{F}$ explicitly. There are two key ingredients in this calculation.

The first is a procedure for `localising' cohomology on $\fr{P}_K$ over a stratification of $P$. This bears certain similarities with the exit-path construction from \cite{Tre09}, used to describe constructible sheaves as representations in a similar fashion to the well-known description for locally constant sheaves.

The second is a method for computing pushforwards along maps of tubes, based upon the Roos complexes used to compute derived inverse limits.

\subsection{Localising cohomology over a stratification} \label{sec: loc strat}

Since constructible modules on $\mathfrak{P}_K$ are controlled over partitions of $P$, we will need to be able to calculate cohomology locally with respect to such a partition. We explain in this section how to do so, at least when our partition is actually a stratification.

Let $i\colon Y\hookrightarrow P$ be a closed subscheme, $\mathscr{F}$ a sheaf on $\tube{Y}_\mathfrak{P}$, and $\left\{Y_\alpha \right\}_{\alpha\in A}$ a finite stratification of $Y$. Even though we will eventually be interested in the case $Y=P$, it will be helpful to allow more general $Y$ in order to make inductive proofs work more easily. For any $\alpha\in A$, let $i_{\alpha}\colon Y_\alpha\rightarrow Y$ denote the given locally closed immersion, as well as the induced map $i_{\alpha}\colon ]Y_\alpha[_\mathfrak{P}\rightarrow ]Y[_\mathfrak{P}$ on tubes.  

The index set $A$ is endowed with a natural partial order such that $\alpha \leq \beta \iff Y_\beta \subset \overline{Y}_\alpha$ (here $\overline{Y}_\alpha$ is the closure of $Y_\alpha$ in $Y$). We may therefore define a new poset $\mathrm{sd}(A)$ consisting of \emph{chains} in $A$ (i.e. totally ordered subsets of $A$), where the order on $\mathrm{sd}(A)$ is simply given by inclusion of chains. Note that chains do not need to be `complete', for example, we can have the chain $\{\alpha<\gamma\}$ contained in the chain $\{\alpha<\beta<\gamma\}$.

We thus obtain a functor
\[ \mathbf{R}i_{Y_\bullet*}i_{Y_\bullet}^{-1}\mathscr{F} : \mathrm{sd}(A) \rightarrow {\bf D}(\tube{Y}_\mathfrak{P}) \]
defined by setting
\[ \left(\mathbf{R}i_{Y_\bullet*}i_{Y_\bullet}^{-1}\mathscr{F} \right)_{\{\alpha_0 < \ldots < \alpha_r \}}= \mathbf{R}i_{\alpha_0*}i_{\alpha_0}^{-1}\ldots \mathbf{R}i_{\alpha_r*}i_{\alpha_r}^{-1}\mathscr{F},  \]
with the map associated to a given inclusion of chains coming from the adjunction between pushforward and pullback. Although $\mathbf{R}i_{Y_\bullet*}i_{Y_\bullet}^{-1}\mathscr{F}$ is defined as a diagram in ${\bf D}(\tube{Y}_\mathfrak{P})$, it is easy to lift it to a diagram in $\mathbf{Ch}^+(\tube{Y}_\mathfrak{P})$ (for example, by using Godement resolutions) and hence consider it canonically as an object of the derived category
\[ {\bf D}\left(\mathbf{Sh}(\tube{Y}_\mathfrak{P})^{\mathrm{sd}(A)}\right)\]
of $\mathrm{sd}(A)$-shaped diagrams in $\mathbf{Sh}(\tube{Y}_\mathfrak{P})$.

Let $\mathrm{sd}(A)^{\vartriangleleft}$ denote the cone category, that is, the category obtained from $\mathrm{sd}(A)$ by freely adjoining an initial object. Then the natural adjunction maps 
\[ \mathscr{F} \rightarrow \mathbf{R}i_{\alpha_0*}i_{\alpha_0}^{-1}\ldots \mathbf{R}i_{\alpha_r*}i_{\alpha_r}^{-1}\mathscr{F} \]
given an extension of $\mathbf{R}i_{Y_\bullet*}i_{Y\bullet}^{-1}\mathscr{F}$ to a functor
\[ 	\mathrm{sd}(A)^{\vartriangleleft} \rightarrow {\bf D}(\tube{Y}_\fr{P}), \]
which, as above, can be canonically lifted to give an object of the derived category
\[ {\bf D}\left(\mathbf{Sh}(\tube{Y}_\mathfrak{P})^{\mathrm{sd}(A)^{\vartriangleleft}}\right)\]
of $\mathrm{sd}(A)^{\vartriangleleft}$-shaped diagrams in $\mathbf{Sh}(\tube{Y}_\mathfrak{P})$. The following proposition then tells us how to localise the cohomology of $\mathscr{F}$ along the given stratification of $Y$.

\begin{proposition} \label{prop: coh loc strat} The natural map
\[\mathscr{F}\rightarrow \mathrm{holim}_{\mathrm{sd}(A)} \mathbf{R}i_{Y_\bullet*}i_{Y_\bullet}^{-1}\mathscr{F} \]
is an isomorphism in ${\bf D}(\tube{Y}_\mathfrak{P})$.
\end{proposition}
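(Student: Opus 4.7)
\emph{Proof plan.} The plan is to induct on $|A|$, using the recollement associated to a maximal stratum at each step. The base case $|A|=1$ is trivial: $\mathrm{sd}(A)$ reduces to a point and $i_\alpha=\mathrm{id}$.

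For the inductive step, choose a maximal element $\alpha\in A$, so that by the stratification axiom $Y_\alpha$ is closed in $Y$ and $\{Y_\beta\}_{\beta\in A'}$ (with $A':=A\setminus\{\alpha\}$) stratifies $U:=Y\setminus Y_\alpha$. By anticontinuity of $\widetilde{\mathrm{sp}}$, the tube $\tube{Y_\alpha}_\fr{P}$ is open in $\tube{Y}_\fr{P}$ with closed complement $\tube{U}_\fr{P}$; denote by $j:=i_\alpha\colon\tube{Y_\alpha}_\fr{P}\hookrightarrow\tube{Y}_\fr{P}$ and $i:=i_U\colon\tube{U}_\fr{P}\hookrightarrow\tube{Y}_\fr{P}$ the resulting open and closed immersions. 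The recollement then furnishes a homotopy pullback square
\[ \xymatrix{ \mathscr{F}\ar[r]\ar[d] & \mathbf{R}j_*j^{-1}\mathscr{F}\ar[d] \\ i_*i^{-1}\mathscr{F}\ar[r] & i_*i^{-1}\mathbf{R}j_*j^{-1}\mathscr{F} } \]
in $\mathbf{D}(\tube{Y}_\fr{P})$: the fibers of the two horizontal arrows are both $i_*i^!\mathscr{F}$, by the recollement triangle $i_*i^!\mathscr{F}\to\mathscr{F}\to\mathbf{R}j_*j^{-1}\mathscr{F}$ together with the identity $i^{-1}i_*=\mathrm{id}$.

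In parallel, the poset $\mathrm{sd}(A)$ decomposes as a set into the sub-poset $\mathrm{sd}(A')$ of chains not containing $\alpha$ and the sub-poset of chains containing $\alpha$; since $\alpha$ is maximal, each such chain has the form $C'\cup\{\alpha\}$ for $C'\in\mathrm{sd}(A')\cup\{\emptyset\}$, identifying the latter sub-poset with $\mathrm{sd}(A')^\vartriangleleft$. These two pieces are related in $\mathrm{sd}(A)$ only by the inclusions $C'\subset C'\cup\{\alpha\}$, and a direct unwinding of the universal property of the limit shows that $\mathrm{holim}_{\mathrm{sd}(A)}\mathbf{R}i_{Y_\bullet*}i_{Y_\bullet}^{-1}\mathscr{F}$ is the homotopy fibered product of $\mathrm{holim}_{\mathrm{sd}(A')}\mathbf{R}i_{C*}i_C^{-1}\mathscr{F}$ and $\mathrm{holim}_{\mathrm{sd}(A')^\vartriangleleft}\mathbf{R}i_{(C'\cup\{\alpha\})*}i_{C'\cup\{\alpha\}}^{-1}\mathscr{F}$ over the ``overlap'' $\mathrm{holim}_{\mathrm{sd}(A')}\mathbf{R}i_{C*}i_C^{-1}\mathbf{R}j_*j^{-1}\mathscr{F}$. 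Each of these three homotopy limits can then be identified using the inductive hypothesis. For $C\in\mathrm{sd}(A')$ the functor $\mathbf{R}i_{C*}i_C^{-1}$ factors as $i_*\circ\mathbf{R}i^U_{C*}i_C^{U,-1}\circ i^{-1}$, where $i^U_\beta\colon\tube{Y_\beta}_\fr{P}\hookrightarrow\tube{U}_\fr{P}$ denotes the corresponding immersion into $\tube{U}_\fr{P}$; since $i_*$ is exact and commutes with homotopy limits, applying the inductive hypothesis on $\tube{U}_\fr{P}$ with stratification $A'$ to the sheaves $i^{-1}\mathscr{F}$ and $i^{-1}\mathbf{R}j_*j^{-1}\mathscr{F}$ gives the values $i_*i^{-1}\mathscr{F}$ and $i_*i^{-1}\mathbf{R}j_*j^{-1}\mathscr{F}$ respectively. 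The homotopy limit over $\mathrm{sd}(A')^\vartriangleleft$ collapses to the value at its initial element $\{\alpha\}$, namely $\mathbf{R}j_*j^{-1}\mathscr{F}$. Substituting these identifications into the homotopy fibered product recovers precisely the recollement square above, hence equals $\mathscr{F}$; tracking the adjunction morphisms confirms that the resulting isomorphism matches the canonical map of the statement.

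\emph{Main obstacle.} The principal technical step is establishing the homotopy fibered product description of $\mathrm{holim}_{\mathrm{sd}(A)}$ from the poset decomposition; this requires careful bookkeeping of the connecting inclusions $C'\subset C'\cup\{\alpha\}$ and verification that the universal property of the limit assembles correctly in the derived setting. Once this is in hand, everything else is routine manipulation with the recollement and the inductive substitution.
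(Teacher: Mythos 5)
Your plan mirrors the paper's proof in outline — induction on the number of strata, decompose $\mathrm{sd}(A)$ by the presence of a distinguished element $\alpha$, exhibit the homotopy limit as a recollement pullback square, and assemble via the inductive hypothesis. The recollement square and the identification of its two fibres with $i_*i^!\mathscr{F}$ are correctly carried out. However, you make the \emph{opposite} choice of stratum, and this creates a real problem.

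You peel off a \emph{maximal} element $\alpha$, so $Y_\alpha$ is closed in $Y$, and the complement $U=Y\setminus Y_\alpha$ is open in $Y$ — hence only \emph{locally closed} in $P$. But the proposition (and so the inductive hypothesis) is stated only for a \emph{closed} subscheme $Y\hookrightarrow P$, and you then need to invoke it for $U$, which is not closed in $P$. The paper sidesteps exactly this by instead peeling off an \emph{open} (minimal) stratum $Y_\alpha$, so that the closed complement $Z=Y\setminus Y_\alpha$ remains a closed subscheme of $P$ and the recursion stays within the hypotheses; this is precisely the reason the paper formulates the proposition for closed $Y\subsetneq P$ rather than just for $Y=P$. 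If you want to pursue the closed-stratum induction, you would first need to generalise the statement to locally closed $Y$. A second, subtler slip: you identify ``chains containing $\alpha$'' with $\mathrm{sd}(A')^\vartriangleleft$, but for $C'\cup\{\alpha\}$ to be a chain every element of $C'$ must be \emph{comparable} to $\alpha$; maximality of $\alpha$ does not make it a maximum, so this set is really $\mathrm{sd}(A'_{<\alpha})^\vartriangleleft$ with $A'_{<\alpha}=\{\beta:\beta<\alpha\}$, which can be a proper subset of $A'$ — and then your ``overlap'' term no longer matches what the inductive hypothesis computes. Finally, the ``careful bookkeeping'' step you flag is where the real content lies: the paper proves the fiber-product decomposition by exhibiting a map $\mathrm{N}(\mathrm{sd}(A))\to\Lambda^2_2$ and checking it is a Cartesian fibration, then applying Lurie's pointwise Kan extension result; your corresponding map (chain contains $\alpha$ / equals $\{\alpha\}$ / neither) is also a Cartesian fibration, but this needs to be stated and verified rather than deferred as bookkeeping.
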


\begin{remark} For the general theory of homotopy limits and colimits, see either \cite[Appendix A.2]{Lur09} or \cite[Part I]{Rie14}. Here we are endowing the category $\mathbf{Ch}(\tube{Y}_\fr{P})$ of unbounded chain complexes with the injective model structure constructed in \cite{Hov01}.
\end{remark}

\begin{proof}
We will induct on the number of strata $\norm{A}$, and to do so rigorously we will need to make use of the language of $\infty$-categories. Let $\EU{D}(\tube{Y}_\fr{P})$ denote the bounded below derived $\infty$-category of sheaves on $\fr{P}_K$, that is, the $\infty$-category presented by the model category $\mathbf{Ch}(\fr{P}_K)$. Then $\EU{D}(\tube{Y}_\fr{P})$ is a presentable $\infty$-category, and our diagram
\[ \mathbf{R}i_{Y_\bullet*}i_{Y_\bullet}^{-1}\mathscr{F} : \mathrm{sd}(A)\rightarrow \mathbf{Ch}(\tube{Y}_\mathfrak{P}) \]
constructed using (for example) Godement resolutions gives rise to a diagram
\[ \mathbf{R}i_{Y_\bullet*}i_{Y_\bullet}^{-1}\mathscr{F}: \mathrm{N}(\mathrm{sd}(A))\rightarrow \EU{D}(\tube{Y}_\fr{P}). \]
By \cite[Theorem 4.2.4.1]{Lur09} the homotopy limit of our original diagram $\mathbf{R}i_{Y_\bullet*}i_{Y_\bullet}^{-1}\mathscr{F}$ is equivalent to the $\infty$-categorical limit of $\mathbf{R}i_{Y_\bullet*}i_{Y_\bullet}^{-1}\mathscr{F}$, which in turn is equivalent, essentially by definition, to the right Kan extension of $\mathbf{R}i_{Y_\bullet*}i_{Y_\bullet}^{-1}\mathscr{F}$ along the unique map
\[  \mathrm{N}(\mathrm{sd}(A))\rightarrow \Delta^0.  \]
If $\norm{A}=1$ then we have the trivial stratification $\left\{ Y \right\}$ and there is nothing to prove. If $\norm{A}>1$, then we choose an open stratum $i_{\alpha}:Y_{\alpha} \hookrightarrow Y$, and let $i_{Z} : Z\hookrightarrow Y$ be its closed complement. We consider the map
\[ F:\mathrm{N}(\mathrm{sd}(A)) \rightarrow \Lambda^2_2\] defined by
\[ \begin{cases} \beta_0<\ldots <\beta_r &\mapsto 0 \;\;\;\; (\beta_0\neq \alpha) \\  \alpha &\mapsto 1  \\ \alpha<\beta_1<\ldots <\beta_r &\mapsto 2 \;\;\;\;(r\geq 1) \end{cases}\]
and the factorisation
\[ \mathrm{N}(\mathrm{sd}(A)) \overset{F}{\rightarrow} \Lambda^2_2 \overset{G}{\rightarrow} \Delta^0. \]
Note that $F$ is easily checked to be a Cartesian fibration. If we denote right Kan extension by $(-)_*$, it follows from \cite[Proposition 4.3.2.15, Proposition 4.3.3.7]{Lur09} that
\[ \mathrm{lim}_{\mathrm{sd}(A)} \mathbf{R}i_{Y_\bullet*}i_{Y_\bullet}^{-1}\mathscr{F} \simeq  (G\circ F)_*\mathbf{R}i_{Y_\bullet*}i_{Y_\bullet}^{-1}\mathscr{F} \simeq  G_*F_*\mathbf{R}i_{Y_\bullet*}i_{Y_\bullet}^{-1}\mathscr{F}  \simeq  \mathrm{lim}_{\Lambda^2_2}F_*\mathbf{R}i_{Y_\bullet*}i_{Y_\bullet}^{-1}\mathscr{F}. \]
Now applying \cite[Proposition 4.3.3.10]{Lur09} we see that $F_*\mathbf{R}i_{Y_\bullet*}i_{Y_\bullet}^{-1}\mathscr{F} $ can be calculated pointwise. In other words, for a vertex $i\in \Lambda^2_2$ we have
\[ F_*\mathbf{R}i_{Y_\bullet*}i_{Y_\bullet}^{-1}\mathscr{F}(i) \simeq \mathrm{lim}_{F^{-1}(i)}\mathbf{R}i_{Y_\bullet*}i_{Y_\bullet}^{-1}\mathscr{F}. \]
%
To finish the proof, we know by induction on $\norm{A}$ that
\begin{align*} \mathrm{lim}_{F^{-1}(0)} \mathbf{R}i_{Y_\bullet*}i_{Y_\bullet}^{-1}\mathscr{F} &\simeq  \mathbf{R}i_{Z*}i_Z^{-1}\mathscr{F} \\
\mathrm{lim}_{F^{-1}(2)} \mathbf{R}i_{Y_\bullet*}i_{Y_\bullet}^{-1}\mathscr{F} &\simeq i_{\alpha*}i_{\alpha}^{-1} \mathbf{R}i_{Z*}i_Z^{-1}\mathscr{F}.
 \end{align*}
We therefore deduce that $\mathrm{lim}_{\mathrm{sd}(A)} \mathbf{R}i_{Y_\bullet*}i_{Y_\bullet}^{-1}\mathscr{F}$ is equivalent to the pullback of the diagram
\[ \xymatrix{  &   i_{\alpha*}i_{\alpha}^{-1}\mathscr{F} \ar[d] \\  \mathbf{R}i_{Z*}i_{Z}^{-1}\mathscr{F} \ar[r]  & i_{\alpha*}i_{\alpha}^{-1}\mathbf{R}i_{Z*}i_{Z}^{-1}\mathscr{F}.  }  \]
Thus we need to show that 
 \[ \xymatrix{ \mathscr{F} \ar[r]\ar[d]&   i_{\alpha*}i_{\alpha}^{-1}\mathscr{F} \ar[d] \\  \mathbf{R}i_{Z*}i_{Z}^{-1}\mathscr{F} \ar[r]  & i_{\alpha*}i_{\alpha}^{-1}\mathbf{R}i_{Z*}i_{Z}^{-1}\mathscr{F},  }  \]
is a limit diagram. But this just follows from the fact that both
\[ i_{Z!}i_{Z}^{-1}\mathscr{F} \rightarrow \mathscr{F} \rightarrow i_{\alpha*}i_{\alpha}^{-1}\mathscr{F}  \overset{+1}{\rightarrow}\]
and 
\[ i_{Z!}i_{Z}^{-1}\mathscr{F} \rightarrow \mathbf{R}i_{Z*}i_{Z}^{-1}\mathscr{F} \rightarrow i_{\alpha*}i_{\alpha}^{-1}\mathbf{R}i_{Z*}i_{Z}^{-1}\mathscr{F}  \overset{+1}{\rightarrow}\]
are exact triangles. 
\end{proof}

There is a slightly different way of expressing this construction that will end up being more useful for us. For each $\alpha\in A$, let $\overline{Y}_\alpha$ denote the closure of $Y_\alpha$ in $Y$, with closed immersion $\bar{i}_{\alpha}:\overline{Y}_\alpha\hookrightarrow Y$. If $\alpha\leq \beta$ are comparable elements of $A$, let $\overline{i}_{\beta\alpha}:\overline{Y}_\beta\hookrightarrow \overline{Y}_\alpha$ denote the given closed immersion.

\begin{lemma} For any chain $\{\alpha_0 < \ldots < \alpha_r \}\subset A$, and any sheaf $\mathscr{F}$ on $\tube{Y}_\mathfrak{P}$ the `base change' map
\[ \mathbf{R}i_{\alpha_0*}i_{\alpha_0}^{-1}\ldots \mathbf{R}i_{\alpha_r*}i_{\alpha_r}^{-1}\mathscr{F} \rightarrow \mathbf{R}\bar{i}_{\alpha_0*} j_{Y_{\alpha_0}}^\dagger \mathbf{R}\overline{i}_{\alpha_1\alpha_0*}j_{Y_{\alpha_1}}^\dagger \ldots\mathbf{R}\overline{i}_{\alpha_{r}\alpha_{r-1}*}j_{Y_{\alpha_r}}^\dagger \overline{i}^{-1}_{\alpha_r}\mathscr{F}  \]
is an isomorphism in ${\bf D}^+(\tube{Y}_\mathfrak{P})$.
\end{lemma}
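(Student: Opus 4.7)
The plan is to factor each map $i_{\alpha_k}\colon\tube{Y_{\alpha_k}}_\fr{P}\to\tube{Y}_\fr{P}$ through the tube of the closure $\overline{Y}_{\alpha_k}$, and then collapse adjacent terms in the composition using the fact that pullback inverts pushforward along an open immersion of adic spaces.

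First, since $Y_{\alpha_k}$ is open in $\overline{Y}_{\alpha_k}$ with closed complement $D_{\alpha_k}:=\overline{Y}_{\alpha_k}\setminus Y_{\alpha_k}$, I would write $i_{\alpha_k}$ as
\[ \tube{Y_{\alpha_k}}_\fr{P} \overset{j_{\alpha_k}}{\hookrightarrow} \tube{\overline{Y}_{\alpha_k}}_\fr{P} \overset{\bar{i}_{\alpha_k}}{\hookrightarrow} \tube{Y}_\fr{P}, \]
where $\bar{i}_{\alpha_k}$ is an open immersion of adic spaces (the tube of a closed subscheme of $P$ is open in $\fr{P}_K$ by anticontinuity of $\widetilde{\mathrm{sp}}$), and $j_{\alpha_k}$ is topologically the inclusion of the closed subset complementary to the open subspace $\tube{D_{\alpha_k}}_\fr{P}\hookrightarrow \tube{\overline{Y}_{\alpha_k}}_\fr{P}$. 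In particular $j_{\alpha_k*}$ is exact, and unwinding the definition $j_{Y_{\alpha_k}}^\dagger = j_{\alpha_k*}j_{\alpha_k}^{-1}$ yields
\[ \mathbf{R}i_{\alpha_k*}\,i_{\alpha_k}^{-1} \;\cong\; \mathbf{R}\bar{i}_{\alpha_k*}\,j^\dagger_{Y_{\alpha_k}}\,\bar{i}_{\alpha_k}^{-1}. \]

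Next, for each $k\geq 1$ the ordering $\alpha_{k-1}<\alpha_k$ forces $\overline{Y}_{\alpha_k}\subset \overline{Y}_{\alpha_{k-1}}$, which gives a further factorisation $\bar{i}_{\alpha_k} = \bar{i}_{\alpha_{k-1}}\circ \bar{i}_{\alpha_k\alpha_{k-1}}$ with $\bar{i}_{\alpha_k\alpha_{k-1}}$ again an open immersion of adic spaces. Because $\bar{i}_{\alpha_{k-1}}$ is an open immersion, the counit $\bar{i}_{\alpha_{k-1}}^{-1}\mathbf{R}\bar{i}_{\alpha_{k-1}*}\to \mathrm{id}$ is an isomorphism, so
\[ \bar{i}_{\alpha_{k-1}}^{-1}\,\mathbf{R}\bar{i}_{\alpha_k*} \;\cong\; \mathbf{R}\bar{i}_{\alpha_k\alpha_{k-1}*}. \]
Applying the first identity termwise in the composition and then using this second identity to absorb each adjacent pair $\bar{i}_{\alpha_{k-1}}^{-1}\mathbf{R}\bar{i}_{\alpha_k*}$ for $k=1,\ldots,r$ produces precisely the target expression on the right-hand side.

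There is no serious obstacle here beyond bookkeeping: the only point worth checking is that the composite of the natural isomorphisms constructed in this way agrees with the base-change morphism appearing in the statement, which is a routine matter of tracing adjunction units and counits through the two factorisations above.
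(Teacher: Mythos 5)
Your argument is correct and is essentially the paper's own, which it merely fleshes out: the paper's proof also consists of factoring through tubes of closures and invoking transitivity of pushforwards together with $\overline{i}_\alpha^{-1}\mathbf{R}\overline{i}_{\alpha*}\cong\mathrm{id}$ for the open immersions $\overline{i}_\alpha$. Your explicit identification $\mathbf{R}i_{\alpha_k*}i_{\alpha_k}^{-1}\cong\mathbf{R}\bar{i}_{\alpha_k*}j^\dagger_{Y_{\alpha_k}}\bar{i}_{\alpha_k}^{-1}$ and the subsequent collapsing of adjacent $\bar{i}_{\alpha_{k-1}}^{-1}\mathbf{R}\bar{i}_{\alpha_k*}$ pairs is exactly the right bookkeeping.
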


\begin{proof}
This is a straightforward calculation, which ultimately boils down to transitivity of pushforwards and the fact that $\overline{i}_\alpha^{-1}\mathbf{R}\overline{i}_{\alpha*}\cong \mathrm{id}$. 
\end{proof}

We thus define a functor
\[ \mathbf{R}i_{\overline{Y}_\bullet}j_{Y_\bullet}^\dagger\mathscr{F} : \mathrm{sd}(A)\rightarrow {\bf D}(\tube{Y}_\mathfrak{P}) \]
by setting
\[ \left( \mathbf{R}i_{\overline{Y}_\bullet}j_{Y_\bullet}^\dagger\mathscr{F}\right)_{\{\alpha_0 < \ldots < \alpha_r \}} :=\mathbf{R}\bar{i}_{\alpha_0*} j_{Y_{\alpha_0}}^\dagger \mathbf{R}\overline{i}_{\alpha_1\alpha_0*}j_{Y_{\alpha_1}}^\dagger \ldots\mathbf{R}\overline{i}_{\alpha_{r}\alpha_{r-1}*}j_{Y_{\alpha_r}}^\dagger \overline{i}^{-1}_{\alpha_r}\mathscr{F},   \]
again with the natural adjunction maps as the chain $\{\alpha_1 < \ldots < \alpha_r \}$ varies. As before, we can use Godement resolutions to lift this functor, together with the adjunction map from $\mathscr{F}$, canonically to an object of ${\bf D}\left(\mathbf{Sh}(\tube{Y}_\mathfrak{P})^{\mathrm{sd}(A)^{\vartriangleleft}}\right)$.

\begin{corollary}  \label{cor: coh loc strat}The map
\[ \mathscr{F} \rightarrow \mathrm{holim}_{\mathrm{sd}(A)}\mathbf{R}i_{\overline{Y}_\bullet}j_{Y_\bullet}^\dagger\mathscr{F} \]
is an isomorphism in ${\bf D}(\tube{Y}_\mathfrak{P})$.
\end{corollary}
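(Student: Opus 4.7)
The plan is to reduce directly to Proposition \ref{prop: coh loc strat} via the preceding lemma. The lemma provides, for each chain $\{\alpha_0 < \ldots < \alpha_r\} \in \mathrm{sd}(A)$, a natural quasi-isomorphism
\[ \mathbf{R}i_{\alpha_0*}i_{\alpha_0}^{-1}\ldots \mathbf{R}i_{\alpha_r*}i_{\alpha_r}^{-1}\mathscr{F} \isomto \mathbf{R}\bar{i}_{\alpha_0*} j_{Y_{\alpha_0}}^\dagger \ldots \mathbf{R}\overline{i}_{\alpha_r\alpha_{r-1}*}j_{Y_{\alpha_r}}^\dagger \overline{i}^{-1}_{\alpha_r}\mathscr{F},\]
whose construction is functorial in the chain in the sense that both sides transform compatibly under the adjunction maps associated to inclusions of chains, and both receive compatible morphisms from $\mathscr{F}$.

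First I would explain why the termwise isomorphism of the lemma lifts to an isomorphism of $\mathrm{sd}(A)^{\vartriangleleft}$-shaped diagrams in $\mathbf{D}(\tube{Y}_\fr{P})$. Concretely, after fixing Godement resolutions to realise both diagrams as strictly commuting diagrams in $\mathbf{Ch}^+(\tube{Y}_\fr{P})$, the natural base-change comparison map
\[ \mathbf{R}i_{Y_\bullet*}i_{Y_\bullet}^{-1}\mathscr{F} \lto \mathbf{R}i_{\overline{Y}_\bullet}j_{Y_\bullet}^\dagger\mathscr{F}  \]
is induced by the unit $\mathrm{id}\to \bar i_{\alpha*}\bar i_\alpha^{-1}$ composed with $\bar i_\alpha^{-1}\mathbf{R}\bar i_{\alpha*}\cong \mathrm{id}$, and these morphisms are compatible with all the transition maps and with the augmentation from $\mathscr{F}$ by naturality of the adjunctions involved. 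Since the lemma says this map is a pointwise quasi-isomorphism on $\mathrm{sd}(A)$, it is an isomorphism in the derived $\infty$-category of $\mathrm{sd}(A)^{\vartriangleleft}$-shaped diagrams.

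Then I would conclude by applying $\mathrm{holim}_{\mathrm{sd}(A)}$: since homotopy limits preserve termwise quasi-isomorphisms of diagrams, the induced map on homotopy limits
\[ \mathrm{holim}_{\mathrm{sd}(A)} \mathbf{R}i_{Y_\bullet*}i_{Y_\bullet}^{-1}\mathscr{F} \isomto \mathrm{holim}_{\mathrm{sd}(A)}\mathbf{R}i_{\overline{Y}_\bullet}j_{Y_\bullet}^\dagger\mathscr{F} \]
is an isomorphism in $\mathbf{D}(\tube{Y}_\fr{P})$, and this isomorphism intertwines the two canonical maps from $\mathscr{F}$. Combining with Proposition \ref{prop: coh loc strat}, which identifies the left-hand side with $\mathscr{F}$ via the canonical augmentation, yields the desired isomorphism $\mathscr{F}\isomto \mathrm{holim}_{\mathrm{sd}(A)}\mathbf{R}i_{\overline{Y}_\bullet}j_{Y_\bullet}^\dagger\mathscr{F}$.

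The only real technical point here is verifying that the comparison map is indeed a morphism of $\mathrm{sd}(A)^{\vartriangleleft}$-diagrams, i.e.\ commutes with all transition maps and with the augmentation; this is entirely formal from the naturality of the adjunction unit, and should not be a serious obstacle. Once that is checked, the corollary follows immediately from Proposition \ref{prop: coh loc strat}.
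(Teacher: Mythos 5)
Your proof is correct and is precisely the argument the paper leaves implicit: the lemma gives a termwise quasi-isomorphism $\mathbf{R}i_{Y_\bullet*}i_{Y_\bullet}^{-1}\mathscr{F} \to \mathbf{R}i_{\overline{Y}_\bullet}j_{Y_\bullet}^\dagger\mathscr{F}$ of $\mathrm{sd}(A)^{\vartriangleleft}$-shaped diagrams (compatible with augmentations, by naturality of the adjunction units), and since homotopy limits preserve levelwise quasi-isomorphisms, the conclusion follows directly from Proposition \ref{prop: coh loc strat}. The paper states the corollary without proof for exactly this reason.
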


\subsection{Good stratifications and Roos complexes} \label{sec: Roos new}

To be able to make use of Corollary \ref{cor: coh loc strat} to calculate $\mathbf{R}\mathrm{sp}_*$ of a constructible $\mathcal{O}_{\mathfrak{P}_K}$-module, we will need to explicitly calculate $\mathbf{R}i_{\overline{Y}_\bullet}j_{Y_\bullet}^\dagger\mathscr{F}$, at least for suitable stratifications. To do so, we shall need to calculate $\mathbf{R}i_{Z*}$ when $i_Z:Z \hookrightarrow Y$ is an inclusion of closed subschemes of $P$. This can be achieved by a version of the usual Roos complexes computing countable inverse limits. 

So suppose that we have such a closed immersion $i_Z:Z\hookrightarrow Y$. For every $n\geq 1$, we set $\eta_n:=\norm{\varpi}^{\frac{1}{n+1}}$. Thus $\norm{\varpi}<\eta_n<1$, and the closed tubes $\left[Z\right]_n:=\left[ Z \right]_{\mathfrak{P}\eta_n}$ are well-defined. We clearly have $\tube{Z}_\mathfrak{P}=\bigcup_{n\geq 1} \left[Z \right]_n$. We denote by
\[ i_{Z,n}\colon\left[Z\right]_n \hookrightarrow \tube{Y}_\mathfrak{P} \]
the inclusion. For any sheaf $\mathscr{F}$ on $\tube{Z}_\mathfrak{P}$, and any $n\geq 1$, we therefore obtain natural restriction maps
\[ \pi_{n+1,n} \colon i_{Z,n+1*}\left(\mathscr{F}|_{\left[ Z \right]_{n+1}}\right)\rightarrow i_{Z,n*}\left(\mathscr{F}|_{\left[ Z \right]_n}\right) \]
of sheaves on $\tube{Y}_\mathfrak{P}$.

\begin{definition} We define the Roos complex 
\[ \mathcal{R}_{i_Z}(\mathscr{F}):=\left[ \prod_{n\geq 1} i_{Z,n*}\left(\mathscr{F}|_{\left[ Z \right]_n}\right) \overset{\pi-\mathrm{id}}{\longrightarrow}  \prod_{n\geq 1} i_{Z,n*}\left(\mathscr{F}|_{\left[ Z \right]_n}\right) \right] \]
in the usual way, where $\pi:=\prod_{n\geq 2}\pi_{n+1,n}$ is the product of the restriction maps. If $n_0\geq 1$ then we can define the analogous Roos complex
\[ \mathcal{R}_{i_Z,n_0}(\mathscr{F}):= \left[ \prod_{n\geq n_0} i_{Z,n*}\left(\mathscr{F}|_{\left[ Z \right]_n}\right) \overset{\pi-\mathrm{id}}{\longrightarrow}  \prod_{n\geq n_0} i_{Z,n*}\left(\mathscr{F}|_{\left[ Z \right]_n}\right)  \right] \]
starting at $n_0$. By passing to the colimit in $n_0$, we define the infinite level Roos complex
\[ \mathcal{R}_{i_Z,\infty}(\mathscr{F}):= \mathrm{colim}_{n_0} \mathcal{R}_{i_Z,n_0}(\mathscr{F}). \]
\end{definition} 

The reason for considering the latter two variants is to be able to endow these complexes with the structure of pre-$\mathscr{D}^\dagger$-modules in \S\ref{sec: pre Dd} below, since (in the cases of interest to us) no individual $\mathcal{R}_{i_Z,n_0}(\mathscr{F})$ will admit such a structure. In order to describe $\mathbf{R}i_{\overline{Y}_\bullet}j_{Y_\bullet}^\dagger\mathscr{F}$ in terms of these Roos complexes, it will be helpful to introduce the following terminology.

\begin{definition} A sheaf $\mathscr{F}$ on an open subspace $U\subset \fr{P}_K$ is called \emph{affinoid-acyclic} if ${\rm H}^q(W,\mathscr{F})=0$ for every open affinoid $W\subset U$, and every $q>0$.
\end{definition}

The fundamental example of such a sheaf is the following.

\begin{example} \label{exa: aff-acy} Suppose that $Y$ is a closed subscheme of $P$, and $X\hookrightarrow Y$ is a open immersion, locally (on $Y$) complementary to a hypersurface $V(f)\subset Y$. Then any coherent $j_X^\dagger\mathcal{O}_{\tube{Y}_\mathfrak{P}}$-module $\mathscr{F}$ is affinoid-acyclic on $\tube{Y}_\mathfrak{P}$. Indeed, since $Y\setminus X$ is locally a hypersurface, it follows that, for any open affinoid $W\subset \tube{Y}_\mathfrak{P}$, there exists a cofinal system of open neighbourhoods $\{W_\lambda\}$ of $\tube{X}_\mathfrak{P}\cap W$ in $W$, such that each $W_\lambda$ is also affinoid. Moreover, we can assume that $\mathscr{F}$ extends to a compatible family $\{\mathscr{F}_\lambda\}$ of coherent $\mathcal{O}_{W_\lambda}$-modules. If we let $j_{\lambda}:W_\lambda\rightarrow W$ denote the inclusion, then
\[ \mathscr{F} \cong \mathrm{colim}_\lambda j_{\lambda^*}\mathscr{F}_\lambda \cong \mathrm{colim}_\lambda \mathbf{R}j_{\lambda*}\mathscr{F}_\lambda  \]
since $j_\lambda$ is an open immersion of affinoids and $\mathscr{F}_\lambda$ is coherent. Thus
\[ {\rm H}^q(W,\mathscr{F}) \cong {\rm H}^q(W,\mathrm{colim}_\lambda \mathbf{R}j_{\lambda*}\mathscr{F}_\lambda) \cong \mathrm{colim}_\lambda {\rm H}^q(W_\lambda,\mathscr{F}_\lambda) = 0 \]
if $q>0$.
\end{example}

Open immersions  $X\hookrightarrow Y$ of the above form will play an important role in this article, and the following terminology was suggested to us by B. Le Stum. 

\begin{definition} An open immersion $j:X\hookrightarrow Y$ of schemes is called strongly affine if, locally on $Y$, there exists $f\in \mathcal{O}_Y$ such that $X=D(f)$. 
\end{definition}

Note that if $Y$ is regular, then every affine open immersion is strongly affine. We then have the following stabilities of affinoid-acyclicity.

\begin{lemma} \label{lemma: aff stab}\begin{enumerate} 
\item Let $V\subset U \subset \fr{P}_K$ be open subspaces, and $\mathscr{F}$ an affinoid-acyclic sheaf on $U$. Then $\mathscr{F}|_{V}$ is affinoid-acyclic.
\item Let $\left\{\mathscr{F}_i \right\}_{i\in I}$ a family of affinoid-acyclic sheaves on $U\subset \fr{P}_K$. Then $\left\{\mathscr{F}_i \right\}_{i\in I}$ is $\prod_{i\in I}$-acyclic, and moreover $\prod_{i\in I}\mathscr{F}_i$ is also affinoid-acyclic.
\item Let $Y$ be a closed subscheme of $P$, and $X\hookrightarrow Y$ a strongly affine open immersion. If $\mathscr{F}$ is an affinoid-acyclic sheaf on $\tube{Y}_\mathfrak{P}$, then so is $j_X^\dagger\mathscr{F}$. 
\item If $\mathscr{F}$ is an affinoid-acyclic sheaf on $\mathfrak{P}_K$, then $\mathscr{F}$ is $\mathrm{sp}_*$-acyclic in the usual sense.
\end{enumerate}
\end{lemma}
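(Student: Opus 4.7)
The plan is to verify each of the four parts in sequence, all reducing to standard sheaf-theoretic arguments in the adic setting. Part (1) is immediate: any open affinoid of $V$ is already an open affinoid of $U$, so the hypothesis on $\mathscr{F}$ transfers directly to $\mathscr{F}|_V$. For part (2), I would use that products of injective sheaves remain injective, so termwise products of injective resolutions of the $\mathscr{F}_i$ give an injective resolution of $\prod_i \mathscr{F}_i$; since products of abelian groups are exact and commute with the cohomology of complexes, one obtains ${\rm H}^q(W, \prod_i \mathscr{F}_i) = \prod_i {\rm H}^q(W, \mathscr{F}_i) = 0$ for any open affinoid $W \subset U$ and $q > 0$, which simultaneously yields the $\prod_i$-acyclicity and the affinoid-acyclicity of the product.

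Part (3) is the main content, and I would essentially replay the argument of Example \ref{exa: aff-acy}. Given an open affinoid $W \subset \tube{Y}_\fr{P}$, the strong affineness assumption produces, locally on $Y$, a section $f \in \cO_Y$ with $X = D(f)$, and the subsets $W_\lambda := W \cap \{v_x(f) \geq \lambda\}$ for $\lambda \in \sqrt{\norm{K}}$ with $\lambda < 1$ form a cofinal system of affinoid strict neighbourhoods of $\tube{X}_\fr{P} \cap W$ in $W$. Using the intrinsic description of $j_X^\dagger$ as a colimit of pushforwards from strict neighbourhoods, the quasi-compactness of $W$ so that sheaf cohomology commutes with filtered colimits, and the vanishing of $\mathbf{R}^q j_{\lambda *}$ applied to $\mathscr{F}|_{W_\lambda}$ (which in turn follows from the affinoid-acyclicity of $\mathscr{F}$ on all affinoids of $W_\lambda$ together with part (1)), the computation collapses to ${\rm H}^q(W, j_X^\dagger \mathscr{F}) = \mathrm{colim}_\lambda {\rm H}^q(W_\lambda, \mathscr{F}|_{W_\lambda}) = 0$. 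I expect the main obstacle to be the local nature of the choice of $f$: globalising the cofinal system requires patching over a finite affine open cover of $Y$ adapted to $W$, together with a Mayer--Vietoris-type argument to control the higher cohomology on intersections.

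Finally, for part (4), if $U = \spf{A}$ is an open affine of $\fr{P}$ then $\sp^{-1}(U) = U_K$ is an open affinoid of $\fr{P}_K$, so the hypothesis gives ${\rm H}^q(\sp^{-1}(U), \mathscr{F}) = 0$ for $q > 0$. Since such $U$ form a base for the topology of $\fr{P}$, a direct limit argument on stalks forces $\mathbf{R}^q \sp_* \mathscr{F} = 0$ for $q > 0$, which is the required $\sp_*$-acyclicity.
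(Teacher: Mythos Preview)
Your treatment of parts (1), (3), and (4) is correct and essentially matches the paper. For (3) the paper simply says ``entirely similar to Example~\ref{exa: aff-acy}'', and your sketch fills this in; the localisation worry you raise about the choice of $f$ is real but minor, since one works on a single affinoid $W$ at a time.

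There is, however, a genuine gap in your argument for part (2). You claim that since products of injective sheaves are injective, termwise products of injective resolutions of the $\mathscr{F}_i$ give an injective resolution of $\prod_i \mathscr{F}_i$. The first clause is correct, but the inference is circular: the complex $\prod_i \mathscr{I}_i^\bullet$ is indeed a complex of injectives, yet asserting that it is a \emph{resolution} of $\prod_i \mathscr{F}_i$ --- i.e.\ exact in positive degrees --- is exactly the statement $\mathbf{R}^q\prod_i \mathscr{F}_i = 0$ for $q>0$, which is one of the things you are trying to prove. Products of sheaves are not exact in general, so this step genuinely needs an argument that uses the affinoid-acyclicity hypothesis.

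The paper proceeds differently: it establishes $\prod$-acyclicity first, by observing that for any short exact sequence $0 \to \mathscr{F}_i \to \mathscr{G}_i \to \mathscr{H}_i \to 0$, affinoid-acyclicity of $\mathscr{F}_i$ forces exactness of $0 \to \Gamma(W,\mathscr{F}_i) \to \Gamma(W,\mathscr{G}_i) \to \Gamma(W,\mathscr{H}_i) \to 0$ on every affinoid $W$, and hence (affinoids forming a base) exactness of the product sequence of sheaves. Taking $\mathscr{G}_i$ injective and noting that the cokernel is again affinoid-acyclic, one iterates to obtain $\mathbf{R}^q\prod_i \mathscr{F}_i = 0$ for all $q>0$. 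Only then does the identification ${\rm H}^q(W,\prod_i\mathscr{F}_i) = {\rm H}^q(W,\mathbf{R}\prod_i\mathscr{F}_i) = \prod_i {\rm H}^q(W,\mathscr{F}_i)$ follow, via commutation of cohomology with derived products.
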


\begin{proof}
\begin{enumerate}
\item Trivial.
\item Suppose that $0\rightarrow \mathscr{F}_i \rightarrow \mathscr{G}_i \rightarrow \mathscr{H}_i\rightarrow 0$ is an arbitrary $I$-indexed family of exact sequences of sheaves on $U$, and $W\subset U$ is an open affinoid. Then the fact that $\mathscr{F}_i$ is affinoid-acyclic implies that the sequence
\[0\rightarrow  \Gamma(W,\mathscr{F}_i) \rightarrow \Gamma(W,\mathscr{G}_i) \rightarrow \Gamma(W,\mathscr{H}_i)\rightarrow 0 \]
of sections is exact, hence so is the sequence
\[ 0\rightarrow  \prod_{i\in I}\Gamma(W,\mathscr{F}_i) \rightarrow \prod_{i\in I}\Gamma(W,\mathscr{G}_n) \rightarrow \prod_{i\in I}\Gamma(W,\mathscr{H}_i)\rightarrow 0 .\]
Thus the sequence 
\[ 0\rightarrow  \prod_{i\in I}\mathscr{F}_i \rightarrow \prod_{i\in I} \mathscr{G}_i \rightarrow \prod_{i\in I}\mathscr{H}_i\rightarrow 0 \]
of sheaves is exact, and since $\left\{ \mathscr{G}_i\right\}$ and $\left\{ \mathscr{H}_i\right\}$ were arbitrary, we deduce that $\mathbf{R}^q\prod_{i\in I}\mathscr{F}_i=0$ for $q>0$. Then, since cohomology commutes with derived products, we see that
\[ {\rm H}^q(W,\prod_{i\in I}\mathscr{F}_i)={\rm H}^q(W,\mathbf{R}\prod_{i\in I}\mathscr{F}_i)=\prod_{i\in I}{\rm H}^q(W,\mathscr{F}_i)=0\]
if $q>0$ and $W\subset \mathscr{X}$ is open affinoid. 
\item Entirely similar to Example \ref{exa: aff-acy} above.
\item This is again trivial, since $\mathfrak{P}$ has a basis of opens $\mathfrak{U}$ such that $\mathrm{sp}^{-1}(\mathfrak{U})$ is affinoid.
\end{enumerate}\end{proof}

\begin{proposition} \label{prop: roos aff acy} Let $i_Z:Z\hookrightarrow Y$ be an inclusion between closed subschemes of $P$, and $\mathscr{F}$ an affinoid-acyclic sheaf on $\tube{Z}_{\mathfrak{P}}$. Then, for any $n_0\geq 1$, $\mathcal{R}_{i_Z,n_0}(\mathscr{F})$ is a complex of affinoid-acyclic sheaves on $\tube{Y}_\mathfrak{P}$,  quasi-isomorphic to $\mathbf{R}i_{Z*} \mathscr{F}$. 
\end{proposition}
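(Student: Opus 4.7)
The plan is to establish the two claims separately: first, that each term of the Roos complex is affinoid-acyclic on $\tube{Y}_\fr{P}$, and second, that the complex computes $\mathbf{R}i_{Z*}\mathscr{F}$.

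For affinoid-acyclicity, I would begin by observing that locally on $\fr{P}$ each closed tube $[Z]_n$ is an affinoid subdomain of $\fr{P}_K$ cut out by the rational conditions $v_x(f_i)\leq \eta_n$, so small open affinoids $V\subset \fr{P}_K$ contained in an affine piece of $\fr{P}$ have $V\cap [Z]_n$ again affinoid. This forces $\mathbf{R}^q i_{Z,n*}(\mathscr{F}|_{[Z]_n}) = 0$ for $q>0$, since the presheaf $V\mapsto {\rm H}^q(V\cap [Z]_n,\mathscr{F}|_{[Z]_n})$ (whose sheafification is $\mathbf{R}^q i_{Z,n*}$) vanishes on the cofinal basis of such small opens by Lemma \ref{lemma: aff stab}(1). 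The Leray spectral sequence for $i_{Z,n}$ then collapses, yielding ${\rm H}^q(W, i_{Z,n*}(\mathscr{F}|_{[Z]_n})) \cong {\rm H}^q(W\cap [Z]_n, \mathscr{F}|_{[Z]_n})$ for any open affinoid $W\subset \tube{Y}_\fr{P}$. One concludes vanishing in positive degrees via a \v{C}ech computation against the finite affinoid cover of $W\cap [Z]_n$ coming from an affine cover of $\fr{P}$, using that affinoid subdomains of a separated adic space are closed under finite intersection. Lemma \ref{lemma: aff stab}(2) then immediately propagates affinoid-acyclicity to the products appearing in the Roos complex.

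For the quasi-isomorphism with $\mathbf{R}i_{Z*}\mathscr{F}$, I would appeal to the classical Roos resolution of the derived inverse limit of a countable inverse system. Since $\tube{Z}_\fr{P} = \bigcup_{n\geq n_0}[Z]_n$ is an ascending union, injective sheaves on $\tube{Z}_\fr{P}$ restrict to flabby sheaves, so for any injective resolution $\mathscr{F}\to \mathcal{I}^\bullet$ the restriction maps $\Gamma(V\cap [Z]_{n+1},\mathcal{I}^k)\to \Gamma(V\cap [Z]_n,\mathcal{I}^k)$ are surjective. The resulting Mittag--Leffler condition yields
\[\mathbf{R}\Gamma(V\cap \tube{Z}_\fr{P},\mathscr{F}) \cong \mathbf{R}\varprojlim_n \mathbf{R}\Gamma(V\cap [Z]_n,\mathscr{F})\]
for every open $V\subset \tube{Y}_\fr{P}$, and passing to sheaves gives $\mathbf{R}i_{Z*}\mathscr{F} \cong \mathbf{R}\varprojlim_n \mathbf{R}i_{Z,n*}(\mathscr{F}|_{[Z]_n})$. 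Replacing $\mathbf{R}i_{Z,n*}$ by $i_{Z,n*}$ using the vanishing from the first step, and recalling that the two-term Roos complex computes the derived inverse limit of a countable system, produces the desired quasi-isomorphism with $\mathcal{R}_{i_Z,n_0}(\mathscr{F})$.

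The main obstacle I anticipate is pinning down the affinoid-acyclicity of the individual terms $i_{Z,n*}(\mathscr{F}|_{[Z]_n})$. The subsets $W\cap [Z]_n$ arising in the Leray step need not themselves be affinoid when $\fr{P}$ is non-affine, so one cannot directly invoke the hypothesis on $\mathscr{F}$; the \v{C}ech approach via an affine cover of $\fr{P}$ requires carefully controlling the acyclicity of the resulting \v{C}ech complex, which is the most delicate part of the argument. Once this is in hand, the rest of the proof is essentially a formal exercise in the Roos calculus combined with the good behaviour of affinoid-acyclic sheaves under products.
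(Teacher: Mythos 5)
Your proof is correct and follows the same overall strategy as the paper: establish affinoid-acyclicity of the Roos terms via the Leray spectral sequence for the open immersions $i_{Z,n}$, then identify the Roos complex with $\mathbf{R}i_{Z*}\mathscr{F}$ via derived inverse limits. The one real divergence is that you anticipate $W\cap[Z]_n$ might fail to be affinoid for non-affine $\fr{P}$ and insert a \v{C}ech computation over an affine cover of $\fr{P}$ to compensate. That worry is unfounded, and the paper takes its negation as the key observation: the ideal sheaf of $Z$ extends to a coherent ideal on $\fr{P}_K$, so over any affinoid $W\subset\tube{Y}_\fr{P}$ it is generated by finitely many global sections $g_1,\ldots,g_m\in\Gamma(W,\cO_W)$ (Kiehl's Theorem A), and the independence of the tube from the choice of local generators (noted in \S1.4) then realises $W\cap[Z]_n=\{x\in W : v_x(g_i)\leq\eta_n \text{ for all } i\}$ as a rational subdomain of $W$, hence affinoid. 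This is what lets the paper dispense with the \v{C}ech step entirely. For the second half, the paper also avoids the appeal to flabbiness of restricted injectives: it checks exactness of the two-term Roos resolution directly on sections over an affinoid $V\subset\tube{Z}_\fr{P}$, where quasi-compactness of $V$ forces $V\cap[Z]_n$ to stabilise for $n\gg 0$, so the system $\{\Gamma(V\cap[Z]_n,\mathscr{F})\}_n$ is eventually constant and $\mathbf{R}^1\varprojlim$ vanishes for elementary reasons. Your Mittag--Leffler argument reaches the same conclusion; in either approach one must still pass from derived to honest products using the affinoid-acyclicity of the terms together with Lemma~\ref{lemma: aff stab}(2), which you correctly flag in your closing sentence.
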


\begin{proof} We start by showing that the individual terms of $\mathcal{R}_{i_Z,n_0}(\mathscr{F})$ are affinoid-acyclic. The key point is that whenever $V\subset \tube{Y}_\mathfrak{P}$ is open affinoid, so is $V\cap \left[Z\right]_n$. From this it follows easily that each 
\[ i_{Z,n*}\left(\mathscr{F}|_{\left[ Z \right]_n} \right) = \mathbf{R}i_{Z,n*}\left(\mathscr{F}|_{\left[ Z \right]_n} \right) \]
is affinoid-acyclic, and hence by Lemma \ref{lemma: aff stab} that 
\begin{equation} \label{eqn: exact}  \prod_{n\geq n_0} i_{Z,n*}\left(\mathscr{F}|_{\left[ Z \right]_n} \right) = \mathbf{R}\!\!\prod_{n\geq n_0}\mathbf{R}i_{Z,n*}\left(\mathscr{F}|_{\left[ Z \right]_n} \right)
 \end{equation}
is affinoid-acyclic, as claimed. To see that 
\[ \mathbf{R}i_{Y*} \mathscr{F} \simeq  \mathcal{R}_{i_Z,n_0}(\mathscr{F}), \]
we will let $i_n\colon  \left[ Z \right]_n\rightarrow \tube{Z}_\mathfrak{P}$ denote the inclusion from the closed tube into the open one, so that $i_{Z,n}=i_Z\circ i_n$.

\begin{claimu} The sequence
\[ 0 \rightarrow \mathscr{F} \rightarrow\prod_{n\geq n_0} i_{n*}\left(\mathscr{F}|_{\left[ Z \right]_n} \right)\overset{\pi-\mathrm{id}}{\longrightarrow}\prod_{n\geq n_0} i_{n*}\left(\mathscr{F}|_{\left[ Z \right]_n} \right) \rightarrow 0 \]
is exact on $\tube{Z}_\fr{P}$.
\end{claimu}

\begin{proof}
If we take sections on some open affinoid $V\subset \tube{Z}_\fr{P}$ we obtain the sequence
\[ 0 \rightarrow \Gamma(V,\mathscr{F}) \rightarrow \prod_{n\geq n_0} \Gamma( V\cap \left[Z\right]_n,\mathscr{F})\overset{\pi-\mathrm{id}}{\longrightarrow}  \prod_{n\geq n_0} \Gamma( V\cap \left[Z\right]_n,\mathscr{F}) \rightarrow 0,  \]
we shall show that this sequence is exact. First, we observe that the latter two terms form the standard Roos complex explicitly computing the derived inverse limit $\mathbf{R}\!\varprojlim_n \Gamma(V\cap\left[Z\right]_n ,\mathscr{F})$. We can see that $\varprojlim_n \Gamma(V\cap\left[Z\right]_n ,\mathscr{F}) = \Gamma(V,\mathscr{F})$ simply by the sheaf axiom for $\mathscr{F}$, since $V=\bigcup_n V\cap \left[Z \right]_n$. By quasi-compactness of $V$ there exists some $n_1$ such that $V\cap \left[Z\right]_n=V\cap \left[Z\right]_{n_1}$ for all $n\geq n_1$. In particular the inverse system $\left\{\Gamma(V\cap\left[Z\right]_n,\mathscr{F}) \right\}_{n}$ is eventually constant, and so $\mathbf{R}^1\!\varprojlim_n \Gamma(V\cap\left[Z\right]_n ,\mathscr{F}) =0$. The sequence is therefore exact as claimed.
\end{proof}

In particular,
\[ \prod_{n\geq n_0} i_{n*}\left(\mathscr{F}|_{\left[ Z \right]_n} \right)\overset{\pi-\mathrm{id}}{\longrightarrow}\prod_{n\geq n_0} i_{n*}\left(\mathscr{F}|_{\left[ Z \right]_n} \right) \]
is a resolution of $\mathscr{F}$. Now arguing exactly as in the proof of the equality (\ref{eqn: exact}) above, we can show that this resolution of  $\mathscr{F}$ is quasi-isomorphic to
\[ \left[ \mathbf{R}\!\!\prod_{n\geq n_0} \mathbf{R}i_{n*}\left(\mathscr{F}|_{\left[ Z \right]_n} \right)\overset{\pi-\mathrm{id}}{\longrightarrow}\mathbf{R}\!\!\prod_{n\geq n_0} \mathbf{R}i_{n*}\left(\mathscr{F}|_{\left[ Z \right]_n} \right) \right].   \]
Since (derived) pushforwards commute with (derived) products, we therefore find that
\begin{align*} \mathbf{R}i_{Z*}\mathscr{F} &\cong \left[ \mathbf{R}i_{Z*}\mathbf{R}\!\!\prod_{n\geq n_0} \mathbf{R}i_{n*}\left(\mathscr{F}|_{\left[ Z \right]_n} \right)\overset{\pi-\mathrm{id}}{\longrightarrow}\mathbf{R}i_{Z*}\mathbf{R}\!\!\prod_{n\geq n_0} \mathbf{R}i_{n*}\left(\mathscr{F}|_{\left[ Z \right]_n} \right) \right] \\
&\cong   \left[ \mathbf{R}\!\!\prod_{n\geq n_0} \mathbf{R}i_{Z,n*}\left(\mathscr{F}|_{\left[ Z \right]_n} \right)\overset{\pi-\mathrm{id}}{\longrightarrow}\mathbf{R}\!\!\prod_{n\geq n_0} \mathbf{R}i_{Z,n*}\left(\mathscr{F}|_{\left[ Z \right]_n} \right) \right] \\
&\cong   \left[ \prod_{n\geq n_0} i_{Z,n*}\left(\mathscr{F}|_{\left[ Z \right]_n} \right)\overset{\pi-\mathrm{id}}{\longrightarrow}\prod_{n\geq n_0} i_{Z,n*}\left(\mathscr{F}|_{\left[ Z \right]_n} \right) \right] \\
 &= \mathcal{R}_{i_Z,n_0}(\mathscr{F})
 \end{align*}
 as required.
\end{proof}

\subsection{Resolutions of constructible modules}

To tie together the constructions of \S\ref{sec: loc strat} and \S\ref{sec: Roos new}, we make the following definition.

\begin{definition} Let $\mathscr{F}$ be a constructible $\mathcal{O}_{\mathfrak{P}_K}$-module. A finite stratification $\left\{ P_\alpha \right\}_{\alpha\in A}$ of $P$ is said to be good with respect to $\mathscr{F}$ if, for all $\alpha\in A$:
\begin{enumerate}
\item the the open immersion $P_{\alpha}\hookrightarrow \overline{P}_\alpha$ is strongly affine;
\item $\mathscr{F}|_{\tube{P_\alpha}_\mathfrak{P}}$ is a coherent $\mathcal{O}_{\tube{P_\alpha}_\mathfrak{P}}$-module.
\end{enumerate}
\end{definition}

Clearly, any constructible $\mathcal{O}_{\mathfrak{P}_K}$-module admits a good stratification, and the category of good stratifications for a fixed constructible $\mathcal{O}_{\mathfrak{P}_K}$-module $\mathscr{F}$ is filtered. As in \S\ref{sec: loc strat} above, given such a good stratification we let $\overline{i}_\alpha:\overline{P}_\alpha \hookrightarrow P$ and $\overline{i}_{\beta\alpha}:\overline{P}_\beta \hookrightarrow \overline{P}_\alpha$ denote the given closed immersions. Then starting with Example \ref{exa: aff-acy}, and repeatedly applying Lemma \ref{lemma: aff stab} and Proposition \ref{prop: roos aff acy}, we arrive at the following. 

\begin{theorem} Let $\mathscr{F}$ be a constructible $\mathcal{O}_{\mathfrak{P}_K}$-module, and $\left\{P_{\alpha}\right\}_{\alpha\in A}$ a good stratification for $\mathscr{F}$. Then, for every chain $\{\alpha_0<\ldots < \alpha_r\}\subset A$,
\[  \mathrm{Tot}\left( \mathcal{R}_{\overline{i}_{\alpha_0},\infty}j^\dagger_{P_{\alpha_0}}\mathcal{R}_{\overline{i}_{\alpha_1\alpha_0},\infty}j^\dagger_{P_{\alpha_1}}\ldots \mathcal{R}_{\overline{i}_{\alpha_r\alpha_{r-1}},\infty}j^\dagger_{P_{\alpha_r}}\overline{i}^{-1}_{\alpha_r}\mathscr{F}\right)\]
is a complex of $\mathrm{sp}_*$-acyclic sheaves on $\mathfrak{P}_K$, quasi-isomorphic to
\[\mathbf{R}\bar{i}_{\alpha_0*} j_{P_{\alpha_0}}^\dagger \mathbf{R}\overline{i}_{\alpha_1\alpha_0*}j_{P_{\alpha_1}}^\dagger \ldots\mathbf{R}\overline{i}_{\alpha_{r}\alpha_{r-1}*}j_{P_{\alpha_r}}^\dagger \overline{i}^{-1}_{\alpha_r}\mathscr{F}  . \]
\end{theorem}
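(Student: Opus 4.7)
The plan is to induct on $r$ from innermost to outermost, peeling off the operators one layer at a time. At each stage we maintain the invariant that the intermediate expression is a bounded complex of affinoid-acyclic sheaves on the relevant closed tube, quasi-isomorphic to the corresponding truncation of the iterated derived pushforward. The engine of each Roos step is Proposition \ref{prop: roos aff acy}; the stability properties of affinoid-acyclicity (under $j^\dagger$, products, and restriction) come from Lemma \ref{lemma: aff stab}; and Lemma \ref{lemma: aff stab}(4) at the very end upgrades affinoid-acyclicity to $\mathrm{sp}_*$-acyclicity on $\fP_K$.

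The base of the induction is $j^\dagger_{P_{\alpha_r}}\overline{i}^{-1}_{\alpha_r}\mathscr{F}$, which is a coherent $j^\dagger_{P_{\alpha_r}}\cO_{\tube{\overline{P}_{\alpha_r}}_\fP}$-module by the second hypothesis on a good stratification, and to which Example \ref{exa: aff-acy} applies thanks to the first hypothesis that $P_{\alpha_r}\hookrightarrow \overline{P}_{\alpha_r}$ is strongly affine, giving affinoid-acyclicity on $\tube{\overline{P}_{\alpha_r}}_\fP$. For the inductive step, suppose we are given a bounded complex $\mathscr{G}^\bullet$ of affinoid-acyclic sheaves on $\tube{\overline{P}_{\alpha_j}}_\fP$ quasi-isomorphic to the appropriate iterated pushforward. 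We apply $\mathcal{R}_{\overline{i}_{\alpha_j\alpha_{j-1}},n_0}$ termwise to $\mathscr{G}^\bullet$ and totalise. The resulting complex is bounded (each Roos complex is $2$-term), and its terms, being finite sums of products of the form $\prod_{n\geq n_0}\overline{i}_{\alpha_j\alpha_{j-1},n*}(\mathscr{G}^i|_{[\overline{P}_{\alpha_j}]_n})$, are affinoid-acyclic by Proposition \ref{prop: roos aff acy} combined with Lemma \ref{lemma: aff stab}(2); a standard double-complex argument (applying Proposition \ref{prop: roos aff acy} in each column) yields a quasi-isomorphism onto $\mathbf{R}\overline{i}_{\alpha_j\alpha_{j-1}*}\mathscr{G}^\bullet$. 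Passing to the filtered colimit in $n_0$ then preserves both affinoid-acyclicity and the quasi-isomorphism, and applying $j^\dagger_{P_{\alpha_{j-1}}}$ (when $j\geq 1$) is exact and preserves affinoid-acyclicity by Lemma \ref{lemma: aff stab}(3), using the strong affinity of $P_{\alpha_{j-1}}\hookrightarrow \overline{P}_{\alpha_{j-1}}$. After $r+1$ such iterations we land on $\tube{P}_\fP = \fP_K$, and Lemma \ref{lemma: aff stab}(4) concludes.

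The main subtle point is the passage from $\mathcal{R}_{\overline{i}_{\alpha_j\alpha_{j-1}},n_0}$ to $\mathcal{R}_{\overline{i}_{\alpha_j\alpha_{j-1}},\infty}$: one must check that the filtered colimit over $n_0$ preserves both affinoid-acyclicity and the quasi-isomorphism to the derived pushforward. Both rest on the fact that affinoid opens in the tubes are quasi-compact and quasi-separated, so cohomology there commutes with filtered colimits of abelian sheaves; this in turn implies that a filtered colimit of quasi-isomorphisms between complexes of affinoid-acyclic sheaves remains a quasi-isomorphism, and that the colimit itself stays affinoid-acyclic. Once this is in place, the rest of the argument is essentially bookkeeping with Proposition \ref{prop: roos aff acy} and Lemma \ref{lemma: aff stab}.
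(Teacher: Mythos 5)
Your proposal is correct and follows exactly the route the paper intends: the paper's ``proof'' is the one-line remark preceding the statement (``starting with Example \ref{exa: aff-acy}, and repeatedly applying Lemma \ref{lemma: aff stab} and Proposition \ref{prop: roos aff acy}''), and you have filled in that induction accurately, including the correct observation that quasi-compactness of affinoid opens justifies passing to the colimit over $n_0$ from $\mathcal{R}_{-,n_0}$ to $\mathcal{R}_{-,\infty}$ without losing either affinoid-acyclicity or the quasi-isomorphism.
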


Thus setting
\[ \left( \mathcal{R}_{\overline{P}_{\bullet},\infty}j^\dagger_{P_\bullet}\mathscr{F}\right)_{\left\{ \alpha_0<\ldots <\alpha_r \right\}} = \mathrm{Tot}\left( \mathcal{R}_{\overline{i}_{\alpha_0},\infty}j^\dagger_{P_{\alpha_0}}\mathcal{R}_{\overline{i}_{\alpha_1\alpha_0},\infty}j^\dagger_{P_{\alpha_1}}\ldots \mathcal{R}_{\overline{i}_{\alpha_r\alpha_{r-1}},\infty}j^\dagger_{P_{\alpha_r}}\overline{i}^{-1}_{\alpha_r}\mathscr{F}\right) \]
gives us a(nother) particular lifting
\[ \mathcal{R}_{\overline{P}_{\bullet},\infty}j^\dagger_{P_\bullet} \mathscr{F}: \mathrm{sd}(A) \rightarrow \mathbf{Ch}(\mathfrak{P}_K)\]
of the diagram $\mathbf{R}i_{\overline{P}_\bullet *}j^\dagger_{P_\bullet}\mathscr{F}$ to a diagram of $\mathrm{sp}_*$-acyclic chain complexes on $\mathfrak{P}$.

\subsubsection{} \label{sec: concrete holim}

We next describe a procedure for constructing an explicit representative of the homotopy limit of an $\mathrm{sd}(A)$-shaped diagram. Suppose that $V$ is a topological space, $A$ is a finite poset, and
\[ \mathscr{K}_\bullet: \mathrm{sd}(A) \rightarrow \mathbf{Ch}(V)\]
is a diagram indexed by the poset of chains in $A$. For each $r\geq 0$ we consider the complex
\[ \prod_{{\left\{ \alpha_0<\ldots <\alpha_r \right\}}\in \mathrm{sd}(A)}  \mathscr{K}_{\left\{ \alpha_0<\ldots <\alpha_r \right\}} \]
and define maps
\[ \prod_{{\left\{ \alpha_0<\ldots <\alpha_r \right\}}\in \mathrm{sd}(A)}  \mathscr{K}_{\left\{ \alpha_0<\ldots <\alpha_r \right\}} \longrightarrow \prod_{{\left\{ \beta_0<\ldots <\beta_{r+1} \right\}}\in \mathrm{sd}(A)}  \mathscr{K}_{\left\{ \beta_0<\ldots <\beta_{r+1}\right\}} \]
whose $({\left\{ \alpha_0<\ldots <\alpha_r \right\}},{\left\{ \beta_0<\ldots <\beta_{r+1}\right\}})$-component is defined to be
$(-1)^s$ times the natural restriction map if $\{\alpha_0<\ldots <\alpha_r\}=\left\{ \beta_0<\ldots <\hat{\beta}_s<\ldots<\beta_{r+1}\right\}$, and zero otherwise. This gives rise to a double complex, and we define $\mathrm{Tot}_{\mathrm{sd}(A)}\mathscr{K}_\bullet$ to be the associated simple complex.

\begin{lemma} The complex $\mathrm{Tot}_{\mathrm{sd}(A)}\mathscr{K}_\bullet$ is a representative in $\mathbf{Ch}(V)$ of the homotopy limit $\mathrm{holim}_{\mathrm{sd}(A)} \mathscr{K}_\bullet\in {\bf D}(V)$.
\end{lemma}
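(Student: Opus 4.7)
The plan is to identify $\mathrm{Tot}_{\mathrm{sd}(A)}\mathscr{K}_\bullet$ with a standard cosimplicial model for $\mathrm{holim}_{\mathrm{sd}(A)}\mathscr{K}_\bullet$.

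First I would reinterpret $\mathrm{Tot}_{\mathrm{sd}(A)}\mathscr{K}_\bullet$ as the Moore totalization of a cosimplicial object $C^\bullet$ in $\mathbf{Ch}(V)$ whose $r$-th level is $\prod_{|\sigma|=r+1}\mathscr{K}_\sigma$ and whose cofaces $d^s$ are the restriction maps $\mathscr{K}_\sigma\to\mathscr{K}_\tau$ coming from chain extensions $\sigma\subset\tau$ in $\mathrm{sd}(A)$ with $|\tau|=|\sigma|+1$, where $s$ is the index of the inserted element. The paper's alternating-sign differential is then precisely the Moore boundary of $C^\bullet$, so the two totalizations coincide on the nose.

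Second, I would invoke the Bousfield--Kan theorem: for any small category $I$ and diagram $F\colon I\to \mathbf{Ch}(V)$, $\mathrm{holim}_I F$ is quasi-isomorphic in the injective model structure to the totalization of the cosimplicial replacement $n\mapsto\prod_{\sigma\colon [n]\to I}F(\sigma(0))$. Specialised to $I=\mathrm{sd}(A)$ and restricted to its non-degenerate (strict) part, this yields a double complex $BK^\bullet$ with terms $\prod_{\sigma_0\subsetneq\cdots\subsetneq\sigma_n}\mathscr{K}_{\sigma_0}$, whose totalization represents $\mathrm{holim}_{\mathrm{sd}(A)}\mathscr{K}_\bullet$ in $\bD(V)$.

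Third, I would exhibit a natural zig-zag of quasi-isomorphisms between $\mathrm{Tot}(C^\bullet)$ and $\mathrm{Tot}(BK^\bullet)$ via a ``flag'' double complex $F^{\bullet,\bullet}$ indexed by strict chains $\sigma_0\subsetneq\cdots\subsetneq\sigma_n$ in $\mathrm{sd}(A)$ and taking values in $\mathscr{K}_{\sigma_n}$. This admits a natural augmentation from $C^\bullet$ via the diagonal $\sigma_0=\cdots=\sigma_n=\sigma$ and a natural map to $BK^\bullet$ induced by the restriction $\mathscr{K}_{\sigma_0}\to\mathscr{K}_{\sigma_n}$. Both maps are quasi-isomorphisms by contracting-homotopy arguments based on the contractibility of the nerve of the category of strict refinements of a fixed chain, respectively the category of strict chains beginning at a fixed $\sigma_0$; both of these categories have an obvious terminal/initial object, giving the required contractions.

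The main obstacle is the bookkeeping in the third step, particularly verifying compatibility of the comparison maps with both the cosimplicial and internal differentials. As an alternative route, one could proceed by induction on $|A|$: removing a maximal element $\alpha_*\in A$ yields a decomposition $\mathrm{sd}(A)=\mathrm{sd}(A\setminus\{\alpha_*\})\sqcup\{\sigma:\alpha_*\in\sigma\}$ and a short exact sequence of cosimplicial objects whose associated distinguished triangle matches the presentation of $\mathrm{holim}_{\mathrm{sd}(A)}\mathscr{K}_\bullet$ as an iterated homotopy pullback. The base case $|A|=1$ is tautological, and the decomposition naturally mirrors the cone structure already implicit in the proof of Proposition~\ref{prop: coh loc strat}.
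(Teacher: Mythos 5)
Your alternative route in the final paragraph is essentially the paper's proof: the paper also inducts on $|A|$, strips off an extremal element $\alpha$ (choosing it \emph{minimal} rather than maximal, mirroring the open-stratum step in Proposition~\ref{prop: coh loc strat}), and then verifies directly that $\mathrm{Tot}_{\mathrm{sd}(A)}\mathscr{K}_\bullet$ is isomorphic as a complex to the mapping fibre of $f-g\colon\mathscr{K}_\alpha\oplus\mathrm{Tot}_{\mathrm{sd}(B)}\mathscr{K}_\bullet\to\mathrm{Tot}_{\mathrm{sd}(B)}\mathscr{K}_{\alpha<\bullet}$, which models the required homotopy pullback. That route is sound, and it is the paper's.

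Your primary route, via cosimplicial replacement and Bousfield--Kan, is left as a sketch, and the admitted ``bookkeeping'' step hides a genuine gap. Several specific problems. The object $C^\bullet$ is only \emph{semi}-cosimplicial --- there is no codegeneracy contracting a chain in $A$ --- so the statements about Moore totalizations and about Bousfield--Kan cosimplicial replacements need to be reformulated before they can even be invoked. For a covariant diagram $F$ the Bousfield--Kan $n$-th level is $\prod_{i_0\to\cdots\to i_n}F(i_n)$, not $F(\sigma(0))$ as you write; you then silently switch to $\mathscr{K}_{\sigma_n}$ for the flag complex, so the objects you propose to compare are set up with inconsistent variance. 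Most seriously, the proposed diagonal augmentation from $C^\bullet$ to $F^{\bullet,\bullet}$ does not exist as stated: the constant flag $\sigma_0=\cdots=\sigma_n$ is degenerate for $n\geq 1$, and even at $n=0$ the total degrees do not match, since a chain $\sigma$ of size $r+1$ contributes to $\mathrm{Tot}_{\mathrm{sd}(A)}\mathscr{K}_\bullet$ at total degree $r$ plus internal degree, whereas the singleton flag $(\sigma)$ sits at total degree $0$ plus internal degree in the Bousfield--Kan complex. The comparison you are after does exist --- it is the quasi-isomorphism induced by the last-vertex map from $N(\mathrm{sd}(A))$ (the barycentric subdivision of $N(A)$) to $N(A)$, with the varying coefficient system $\mathscr{K}$ carried along --- but setting it up and proving it is substantially more involved than the paper's short induction, which sidesteps the subdivision comparison entirely.
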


\begin{proof}
We follow through the proof of Proposition \ref{prop: coh loc strat} explicitly. Pick a minimal element $\alpha\in A$, and let $B=A\setminus \{\alpha\}$. We then have the two diagrams
\begin{align*} \mathscr{K}_\bullet: \mathrm{sd}(B) &\rightarrow \mathrm{Ch}(V) \\
\mathscr{K}_{\alpha<\bullet}: \mathrm{sd}(B) &\rightarrow \mathrm{Ch}(V)
 \end{align*}
given by sending $(\beta_0<\ldots<\beta_r)$ to $\mathscr{K}_{\beta_0<\ldots<\beta_r}$ and $\mathscr{K}_{\alpha<\beta_0<\ldots<\beta_r}$ respectively. Moreover, these fit into a diagram of complexes
\[  \xymatrix{ & \mathscr{K}_{\alpha} \ar[d]^{f} \\ \mathrm{Tot}_{\mathrm{sd}(B)} \mathscr{K}_\bullet \ar[r]^-{g} & \mathrm{Tot}_{\mathrm{sd}(B)}\mathscr{K}_{\alpha<\bullet} } \]
in a natural way. By induction on $\norm{A}$, it therefore suffices to exhibit $\mathrm{Tot}_{\mathrm{sd}(A)}\mathscr{K}_{\bullet}$ as a homotopy fibre product of the above diagram of chain complexes. We can check directly from the definitions that $\mathrm{Tot}_{\mathrm{sd}(A)}\mathscr{K}_{\bullet}$ is isomorphic (not just quasi-isomorphic) to the mapping fibre\footnote{that is, the shifted mapping cone} of the difference map
\[ f-g\colon\mathscr{K}_{\alpha} \oplus  \mathrm{Tot}_{\mathrm{sd}(B)} \mathscr{K}_\bullet \rightarrow \mathrm{Tot}_{\mathrm{sd}(B)}\mathscr{K}_{\alpha<\bullet} . \]
It remains to show, then, that for any diagram of (bounded below) complexes
\[  \xymatrix{ & A  \ar[d]^f\\B \ar[r]^-g & C, }\]
the homotopy fibre product $A\times^h_C B$ can be computed as the mapping fibre of
\[ f-g:A\oplus B \rightarrow C.\]
This is surely well know, but let us give a short proof. 

Denote this mapping fibre of $f-g$ by $X$. The problem is unchanged upon taking a fibrant replacement of $f$ and $B$ (for the injective model structure on chain complexes described in \cite{Hov01}), hence we may assume that $f$ is a fibration and $B$ is fibrant. In this case the homotopy fibre product can be calculated simply as the fibre product $A\times_C B$, and we can explicitly check that the chain map
\[ A\times_C B \rightarrow X \]
given in degree $n$ by
\[ A^n\times_{C^n} B^n \rightarrow A^n\oplus B^n \oplus C^{n-1}\;\;\;\;(a,b)\mapsto(a,b,0)  \]
induces an isomorphism in cohomology.
\end{proof}

\begin{corollary}  Let $\mathscr{F}$ be a constructible $\mathcal{O}_{\mathfrak{P}_K}$-module, and $\left\{P	_{\alpha}\right\}_{\alpha\in A}$ a good stratification for $\mathscr{F}$. Then
\[ \mathrm{Tot}_{\mathrm{sd}(A)}\mathcal{R}_{\overline{P}_{\bullet},\infty}j^\dagger_{P_\bullet} \mathscr{F}\]
is an $\mathrm{sp}_*$-acyclic resolution of $\mathscr{F}$.
\end{corollary}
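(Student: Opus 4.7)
The proof is essentially a matter of assembling the three ingredients established immediately beforehand, so the plan is short and direct.

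First, I would verify that each individual term of $\mathrm{Tot}_{\mathrm{sd}(A)}\mathcal{R}_{\overline{P}_{\bullet},\infty}j^\dagger_{P_\bullet} \mathscr{F}$ is $\mathrm{sp}_*$-acyclic. By construction, in each degree the total complex is a \emph{finite} direct sum (indexed by the finite poset $\mathrm{sd}(A)$) of terms of the form $(\mathcal{R}_{\overline{P}_\bullet,\infty}j^\dagger_{P_\bullet}\mathscr{F})^n_{\{\alpha_0<\ldots<\alpha_r\}}$. The theorem immediately preceding the corollary states that each chain-indexed piece is already a complex of $\mathrm{sp}_*$-acyclic sheaves, and finite direct sums of $\mathrm{sp}_*$-acyclics (equivalently, finite products, cf.\ Lemma \ref{lemma: aff stab}(2)) are $\mathrm{sp}_*$-acyclic. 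Hence the total complex consists of $\mathrm{sp}_*$-acyclic sheaves in each degree.

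Second, I would identify what this total complex computes. By the lemma of \S\ref{sec: concrete holim}, $\mathrm{Tot}_{\mathrm{sd}(A)}\mathscr{K}_\bullet$ represents $\mathrm{holim}_{\mathrm{sd}(A)}\mathscr{K}_\bullet$ in ${\bf D}(\fr{P}_K)$, for any diagram $\mathscr{K}_\bullet\colon \mathrm{sd}(A)\to \mathbf{Ch}(\fr{P}_K)$. Applied to $\mathscr{K}_\bullet = \mathcal{R}_{\overline{P}_\bullet,\infty}j^\dagger_{P_\bullet}\mathscr{F}$, combined with the preceding theorem (which gives a chain-level quasi-isomorphism to the diagram $\mathbf{R}\bar{i}_{\alpha_0*}j^\dagger_{P_{\alpha_0}}\cdots \bar{i}^{-1}_{\alpha_r}\mathscr{F}$), we find
\[ \mathrm{Tot}_{\mathrm{sd}(A)}\mathcal{R}_{\overline{P}_\bullet,\infty}j^\dagger_{P_\bullet}\mathscr{F}\;\simeq\;\mathrm{holim}_{\mathrm{sd}(A)}\mathbf{R}i_{\overline{P}_\bullet *}j^\dagger_{P_\bullet}\mathscr{F}  \]
in ${\bf D}(\fr{P}_K)$.

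Third, I would invoke Corollary \ref{cor: coh loc strat} (with $Y=P$ and the given good stratification) to conclude that the natural map
\[ \mathscr{F} \lto \mathrm{holim}_{\mathrm{sd}(A)}\mathbf{R}i_{\overline{P}_\bullet *}j^\dagger_{P_\bullet}\mathscr{F} \]
is an isomorphism. Combined with the previous step, this shows that the adjunction map $\mathscr{F}\to \mathrm{Tot}_{\mathrm{sd}(A)}\mathcal{R}_{\overline{P}_\bullet,\infty}j^\dagger_{P_\bullet}\mathscr{F}$ is a quasi-isomorphism, yielding the desired resolution. There is no real obstacle here; the only mild bookkeeping point is ensuring that one uses the chain-level lifting (via Godement resolutions) to obtain a genuine morphism of complexes rather than merely a morphism in the derived category, but this is exactly what has been set up in the paragraphs immediately preceding the corollary.
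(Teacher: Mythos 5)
Your proposal is correct and matches the argument the paper clearly intends: you combine the theorem on chain-level $\mathrm{sp}_*$-acyclicity of $\mathcal{R}_{\overline{P}_\bullet,\infty}j^\dagger_{P_\bullet}\mathscr{F}$, the lemma of \S\ref{sec: concrete holim} identifying $\mathrm{Tot}_{\mathrm{sd}(A)}$ as a representative of $\mathrm{holim}_{\mathrm{sd}(A)}$, and Corollary \ref{cor: coh loc strat}. One small notational point: the construction in \S\ref{sec: concrete holim} forms \emph{products} over chains, not direct sums, but since $A$ (hence $\mathrm{sd}(A)$) is finite these agree, as you observe; citing Lemma \ref{lemma: aff stab}(2) is unnecessary but harmless, since the finiteness alone (or the affinoid-acyclicity bookkeeping) already gives that the terms of $\mathrm{Tot}_{\mathrm{sd}(A)}$ inherit $\mathrm{sp}_*$-acyclicity from the chain-indexed pieces.
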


Thus we find
\[ \mathbf{R}\mathrm{sp}_*\mathscr{F}\isomto \mathrm{sp}_*\mathrm{Tot}_{\mathrm{sd}(A)} \mathcal{R}_{\overline{P}_{\bullet},\infty}j^\dagger_{P_\bullet} \mathscr{F}=\mathrm{Tot}_{\mathrm{sd}(A)} \mathrm{sp}_*\mathcal{R}_{\overline{P}_{\bullet},\infty}j^\dagger_{P_\bullet} \mathscr{F} \]
since $\mathrm{sp}_*$ commutes with products. This construction is functorial with respect to refinements of the stratification $\{P_\alpha\}_{\alpha\in A}$, in a way which we leave to the reader to make precise. 

\section{Pre-\texorpdfstring{$\mathscr{D}^\dagger$}{Ddagger}-modules } \label{sec: pre Dd}

Let $\fr{P}$ be a smooth formal scheme. In the previous section we showed how to explicitly compute $\mathbf{R}\mathrm{sp}_*\mathscr{F}$ for any constructible $\mathcal{O}_{\mathfrak{P}_K}$-module $\mathscr{F}$. Our next task is to show, given a convergent connection on $\mathscr{F}$, how to endow $\mathbf{R}\mathrm{sp}_*\mathscr{F}$ with a $\mathscr{D}^\dagger_{\mathfrak{P}\Q}$-module structure lifting the natural $\mathscr{D}_{\mathfrak{P}\Q}$-module structure. 

he key tool for doing so will be the notion, introduced below, of a pre-$\mathscr{D}^\dagger$-module. For $Y\hookrightarrow P$ a closed immersion, and $\mathrm{sp}_Y:\tube{Y}_\mathfrak{P}\rightarrow \fr{P}$ the restriction of the specialisation map, this will be an extra structure on a $\mathscr{D}_{\tube{Y}_\mathfrak{P}}$-module, which will enable us to upgrade $\mathrm{sp}_{Y*}\mathscr{F}$ from a $\mathscr{D}_{\fr{P}\Q}$-module to a $\mathscr{D}^\dagger_{\fr{P}\Q}$-module.

Crucially, this extra structure can be transported along the functors $j^\dagger_{-}$ and $\mathcal{R}_{-,\infty}$ considered in the previous section, which enables us to show that the complexes $\mathrm{Tot}_{\mathrm{sd}(A)}\mathcal{R}_{\overline{P}_{\bullet},\infty}j^\dagger_{P_\bullet}\mathscr{F}$ resolving $\mathscr{F}$ are naturally pre-$\mathscr{D}^\dagger$-modules. We therefore obtain a $\mathscr{D}^\dagger_{\fr{P}\Q}$-module structure on
\[ \mathbf{R}\mathrm{sp}_*\mathscr{F}=\mathrm{sp}_*\mathrm{Tot}_{\mathrm{sd}(A)}\mathcal{R}_{\overline{P}_{\bullet},\infty}j^\dagger_{P_\bullet}\mathscr{F}. \]
The notion of a pre-$\mathscr{D}^\dagger$-module is quite delicate, and dependent on endowing the sheaves under consideration with topologies. We therefore start by recalling how this can be done. 

\subsection{Fr\'echet $\pmb{\mathcal{O}_\mathscr{X}}$- and $\pmb{\mathscr{D}_{\mathscr{X}}}$-modules}

Let $\mathbf{LC}_K$ denote the category of locally convex $K$-vector spaces. An object of $\mathbf{LC}_K$ is therefore a $K$-vector space $V$, equipped with a uniform topology, admitting a neighbourhood basis of $0$ consisting of $\mathcal{V}$-submodules $L \subset V$ spanning $V$. The category $\mathbf{LC}_K$ admits all limits and colimits, both of which commute with the forgetful functor
\[\mathbf{LC}_K \rightarrow \mathbf{Vec}_K \]
to the category of abstract vector spaces. It follows that an $\mathbf{LC}_K$-valued presheaf on a site is a sheaf if and only if its underlying $\mathbf{Vec}_K$-valued presheaf is a sheaf. In an entirely similar manner, we may replace $\mathbf{LC}_K$ by the category $\mathbf{LCA}_K$ of locally convex $K$-algebras (that is, $K$-algebras $A$ with a locally convex topology for which multiplication $A\times A\rightarrow A$ is continuous), and thus speak about sheaves of locally convex $K$-algebras on a site. If $\mathscr{A}$ is such a sheaf, then a locally convex $\mathscr{A}$-module is a sheaf of locally convex $K$-vector spaces $\mathscr{F}$ together with a morphism
\[ \mathscr{A}\times \mathscr{F} \rightarrow \mathscr{F}. \]
of $\mathbf{LC}_K$-valued sheaves, giving rise to an action of $\mathscr{A}$ on $\mathscr{F}$. Be warned that `locally convex' here refers to $\mathscr{F}$ being locally convex \emph{over} $K$. It does \emph{not} mean that we have chosen a suitable integral subring $\mathscr{A}^+\subset \mathscr{A}$ and require each $\Gamma(V,\mathscr{F})$ to have a neighbourhood base consisting $\Gamma(V,\mathscr{A}^+)$-lattices.

\begin{example} If $\mathscr{X}\in\mathbf{An}_K$, then $\mathcal{O}_{\mathscr{X}}$ is naturally a sheaf of locally convex $K$-algebras on $\mathscr{X}$.
\end{example}

\begin{definition} Let $\mathscr{X}\in\mathbf{An}_K$ be an analytic variety, and $\mathscr{A}$ a sheaf of locally convex $K$-algebras on $\mathscr{X}$. A Fr\'echet $\mathscr{A}$-module is a locally convex $\mathscr{A}$-module $\mathscr{F}$ such that, for every open affinoid $V\subset\mathscr{X}$, the locally convex $K$-vector space $\Gamma(V,\mathscr{F})$ is a Fr\'echet space.
\end{definition}

Note that this property only depends on the underling sheaf of locally convex $K$-vector spaces. The reason for restricting to analytic varieties is the existence of the natural base of quasi-compact opens given by the open affinoids in $\mathscr{X}$.

As a first example, we see that $\mathcal{O}_\mathscr{X}$ is a Fr\'echet module over itself, since if $V\subset \cO_{\mathscr{X}}$ is affinoid, then $\Gamma(V,\cO_\mathscr{X})$ is a Banach space over $K$.

\begin{remark} Since finite limits of Fr\'echet spaces are Fr\'echet spaces, the property of a locally convex $\cO_{\mathscr{X}}$-module being a Fr\'echet module can be checked locally on $\mathscr{X}$. 
\end{remark}

We can consider $\mathscr{D}_\mathscr{X}$ as a locally convex $\mathcal{O}_\mathscr{X}$-module with the initial locally convex topology. Concretely, if $x_1,\ldots,x_d$ are \'etale co-ordinates on $\mathscr{X}$, with corresponding derivations $\partial_1,\ldots,\partial_d$, then 
\[ \mathscr{D}_\mathscr{X} = \underset{n}{\mathrm{colim}} \bigoplus_{k_1+\ldots+k_d\leq n} \mathcal{O}_\mathscr{X}\cdot \partial_1^{k_1}\ldots\partial_d^{k_d} \]
is given the direct limit topology coming from the direct sum topology on each piece. If $\mathscr{F}$ is a locally convex $\mathcal{O}_\mathscr{X}$-module together with a compatible (abstract) $\mathscr{D}_{\mathscr{X}}$-module structure, then $\mathscr{F}$ is a locally convex $\mathscr{D}_{\mathscr{X}}$-module iff all local derivations act continuously on sections of $\mathscr{F}$. v

We can similarly talk about sheaves of locally convex $K$-modules or algebras on $\fr{P}$. If $\mathscr{A}$ is a sheaf of locally convex $K$-algebras on $\fr{P}$ we can talk about locally convex or Fr\'echet $\mathscr{A}$-modules, the latter by requiring sections on open affines in $\fr{P}$ to be Fr\'echet spaces. For example, $\cO_{\fr{P}\Q}$ will be a Fr\'echet $\mathscr{D}_{\fr{P}\Q}$-module.

\subsection{The radial filtration and $\bm{\eta}$-admissible neighbourhoods}

The difficulty in upgrading $\mathbf{R}\mathrm{sp}_*$ to take values in ${\bf D}^b(\mathscr{D}^\dagger_{\mathfrak{P}\Q})$ essentially comes from the fact that there is no natural action of $\mathrm{sp}^{-1}\mathscr{D}^\dagger_{\mathfrak{P}\Q}$ on $\mathcal{O}_{\mathfrak{P}_K}$. To get around this problem, we introduce the notion of an $\eta$-admissible neighbourhood, which itself is defined in terms of what we call the radial filtration on the ring $\mathscr{D}^\dagger_{\mathfrak{P}\Q}$ of arithmetic differential operators. While this filtration generally has worse ring-theoretic properties than the usual filtration by the level $m$, it is better adapted to rigid analytic calculations. This is essentially because $\widehat{\mathscr{D}}^{(m)}_{\mathfrak{P}\Q}$ cannot (locally) be expressed as the set of power series in the divided powers $\partial^{[k]}$ of derivations, with a prescribed radius of convergence.

\subsubsection{} Let $p_1,_2:\tube{P}_{\fr{P}^2}\rightarrow \fr{P}_K$ be the projections, and $\mathrm{sp}:\fr{P}_K\rightarrow \fr{P}$ the specialisation map. We first construct a natural pairing
\begin{equation} \label{eqn: pair}  \mathscr{D}^\dagger_{\fr{P}\Q} \times \mathrm{sp}_*p_{1*}\mathcal{O}_{\tube{P}_{\fr{P}^2}} \rightarrow \mathcal{O}_{\fr{P}\Q}
\end{equation}
of $\mathcal{O}_{\fr{P}\Q}$-modules. To do so, we denote by $\mathscr{P}_{i,(m)}^n$ the sheaf of principal parts of level $m$ and order $n$ on $P_i:=\fr{P}\otimes_{\mathcal{V}} \mathcal{V}/\varpi^{i+1}$, as defined in \cite[\S1]{Ber02}. Thus by definition
\begin{align*}
\mathscr{D}^{(m)}_{P_i,n} &:= \mathrm{Hom}_{\mathcal{O}_{P_i}}(\mathscr{P}_{i,(m)}^n,\mathcal{O}_{P_i}) 
 & \widehat{\mathscr{D}}^{(m)}_{\fr{P}} &:=\mathrm{lim}_i \mathscr{D}^{(m)}_{P_i} & \mathscr{D}^\dagger_{\fr{P}\Q} &:= \mathrm{colim}_m \widehat{\mathscr{D}}^{(m)}_{\fr{P}\Q} \\ \mathscr{D}^{(m)}_{P_i} &:= \mathrm{colim}_n \mathscr{D}^{(m)}_{P_i,n} & \widehat{\mathscr{D}}^{(m)}_{\fr{P}\Q} &:=\widehat{\mathscr{D}}^{(m)}_{\fr{P}} \otimes_{\Z} \Q
\end{align*}
where $\mathscr{P}_{i,(m)}^n$ is considered as an $\mathcal{O}_{P_i}$-module via the first projection. On the other hand, we can consider the $m$-PD envelope $\mathscr{P}_{i,(m)}$ of $P_i$ inside $P_i\times_{\mathcal{V}/\varpi^{i+1}} P_i$. This maps in a compatible way to each $\mathscr{P}^n_{i,(m)}$, and so we have a pairing
\[  \mathscr{D}^{(m)}_{P_i} \times \mathscr{P}_{i,(m)} \rightarrow \mathcal{O}_{P_i}. \]
If we define
\begin{align*} 
\widehat{\mathscr{P}}_{(m)}&:=\mathrm{lim}_i\mathscr{P}_{i,(m)} \\ \widehat{\mathscr{P}}_{(m),\Q} &:= \widehat{\mathscr{P}}_{(m)}\otimes_{\Z}\Q  
\end{align*}
then we get natural pairings
\begin{align*}
\widehat{\mathscr{D}}^{(m)}_{\fr{P}} \times \widehat{\mathscr{P}}_{(m)} &\rightarrow \mathcal{O}_\fr{P} \\
\widehat{\mathscr{D}}^{(m)}_{\fr{P}\Q} \times \widehat{\mathscr{P}}_{(m),\Q} &\rightarrow \mathcal{O}_{\fr{P}\Q} \\
\mathscr{D}^\dagger_{\fr{P}\Q} \times \mathrm{lim}_m\widehat{\mathscr{P}}_{(m),\Q} &\rightarrow \mathcal{O}_{\fr{P}\Q}.
\end{align*}
The pairing (\ref{eqn: pair}) we are after then comes from identifying $\mathrm{lim}_m\widehat{\mathscr{P}}_{(m),\Q}$ with $\mathrm{sp}_*p_{1*}\mathcal{O}_{\tube{P}_{\fr{P}^2}}$, which follows, for example, from \cite[(2.3)]{Cre04}.

\subsubsection{} 
If we have any local section $P\in \mathscr{D}^\dagger_{\fr{P}\Q}$, defined on some open subset of $\fr{P}$, this then induces a map
\[  \mathrm{sp}_*p_{1*}\mathcal{O}_{\tube{P}_{\fr{P}^2}} \rightarrow \mathcal{O}_{\fr{P}\Q} \]
defined on that same open subset. 

\begin{definition} For $\norm{\varpi}<\eta<1$ we define the subsheaf $\mathscr{D}^\eta_{\fr{P}\Q}\subset \mathscr{D}^\dagger_{\fr{P}\Q}$ to consist of those sections which extend to a continuous $\mathcal{O}_{\fr{P}\Q}$-linear map
\[ \mathrm{sp}_*p_{1*}\mathcal{O}_{\left[P\right]_{\fr{P}^2\eta}} \rightarrow \mathcal{O}_{\fr{P}\Q}. \]
\end{definition}

Note that $\mathrm{sp}_*p_{1*}\mathcal{O}_{\left[P\right]_{\fr{P}^2\eta}}$ is naturally a sheaf of Fr\'echet $\mathcal{O}_{\fr{P}\Q}$-modules, and so the continuity condition makes sense. Also note that since $\mathrm{sp}_*p_{1*}\mathcal{O}_{\tube{P}_\fr{P}^2}$ is dense in $\mathrm{sp}_*p_{1*}\mathcal{O}_{\left[P\right]_{\fr{P}^2\eta}}$ such an extension, if it exists, is unique.

\subsubsection{}

If we are given local co-ordinates $x_1,\ldots,x_d$ on $\fr{P}$, then $\mathscr{D}^\eta_{\fr{P}\Q}$ has a relatively concrete description. Indeed, if we let $\partial_1,\ldots,\partial_d$ be the corresponding derivations, and, for any $k=(k_1,\ldots,k_d)\in \Z_{\geq0}^d$, set
\begin{align*}
\norm{k} &=k_1+\ldots+k_d \\
\partial^{[k]}&=\frac{1}{k_1!\ldots k_d!}\partial_1^{k_1}\ldots\partial_d^{k_d}
\end{align*}  
in the usual way, and fix an affinoid norm $\Norm{\,\cdot\,}$ on local sections of $\mathcal{O}_{\fr{P}\Q}$, then
\[ 	\mathscr{D}^\eta_{\mathfrak{P}\Q} = \left\{ \sum_{k} a_{k} \partial^{[k]} \left\vert a_{k}\in \mathcal{O}_{\mathfrak{P}\Q},\;\exists c \in \R_{\geq0} \text{ s.t. }\Norm{a_{k}}\leq c\eta^{\norm{k}}  \right. \right\}.\] 
Without necessarily assuming that $\mathfrak{P}$ admits co-ordinates, this implies that $\mathscr{D}^\eta_{\fr{P}\Q}$ is a sheaf of Fr\'echet $\mathcal{O}_{\fr{P}\Q}$-modules, and moreover that
\[ \mathscr{D}^\dagger_{\fr{P}\Q}= \mathrm{colim}_{\eta<1}\mathscr{D}^\eta_{\fr{P}\Q}\]
by \cite[4.1.1.1]{Ber02}. It also follows from this local description of $\mathscr{D}^\eta_{\fr{P}\Q}$ that it is a \emph{subring} of $\mathscr{D}^\dagger_{\fr{P}\Q}$, since
\[ \partial^{[k]}\partial^{[k']} = \frac{(k+k')!}{k!(k')!} \partial^{[k+k']}\]
and $\Norm{\frac{(k+k')!}{k!(k')!}}\leq 1$. 

This radial filtration on $\mathscr{D}^\dagger_{\fr{P}\Q}$ is closely connected with the following notion of an `$\eta$-admissible' subset of $\fr{P}_K$, which, roughly speaking, amounts to saying that their ring of functions is acted on by $\mathscr{D}^\eta_{\fr{P}\Q}$.

\begin{definition} Let $W\subset \mathfrak{P}_K$ be an open subset, and $\norm{\varpi}<\eta<1$. Then $W$ is called $\eta$-admissible if
\[ p_2\left(p_1^{-1}(W)\cap\left[P\right]_{\fr{P}^2\eta}\right)\subset W,\]
where $p_i:\tube{P}_{\fr{P}^2}\rightarrow \fr{P}_K$ are the two projections. 
\end{definition}

\begin{remark}  \label{rem: m adm inter} \begin{enumerate} \item  Suppose that $\mathfrak{P}$ is affine, and that $W\subset\mathfrak{P}_K$ is affinoid. Given the definition of $\mathscr{D}^\eta_{\mathfrak{P}\Q}$, we see that $W$ is $\eta$-admissible if and only if the $\Gamma(\mathfrak{P},\mathscr{D}_{\mathfrak{P}\Q})$-module structure on $\Gamma(W,\mathcal{O}_W)$ extends to a continuous $\Gamma(\mathfrak{P},\mathscr{D}^\eta_{\mathfrak{P}\Q})$-module structure. Note that since $\Gamma(\mathfrak{P},\mathscr{D}^\eta_{\mathfrak{P}\Q})$ and $\Gamma(W,\mathcal{O}_W)$ are both Banach spaces over $K$, and $\Gamma(\mathfrak{P},\mathscr{D}_{\mathfrak{P}\Q})$ is dense in $\Gamma(\mathfrak{P},\mathscr{D}^\eta_{\mathfrak{P}\Q})$, such an extension, if it exists, is necessarily unique. 
\item  \label{rem: m adm inter num} It is clear from the definition that the intersection of two $\eta$-admissible subsets of $\mathfrak{P}_K$ is $\eta$-admissible.  
\end{enumerate}
\end{remark}

\begin{example}\begin{enumerate}
\item  Let $\mathfrak{P}=\widehat{\A}^1_\mathcal{V}=\spf{\mathcal{V}\tate{z}}$ and $U=D(z)=\G_{m,k}\subset P =\A^1_k$. A cofinal system of neighbourhoods of $]U[_\mathfrak{P}$ inside $\mathfrak{P}_K=\D_K(0;1)$ is given by
\[ W_\lambda:= \left\{x \in \fr{P}_K \mid v_x(z) \geq \lambda \right\} \subset \D_K(0;1)  \]
for $0<\lambda<1$, and $W_\lambda$ is $\eta$-admissible if and only if $\lambda\geq \eta$.
\item More generally, suppose that $\mathfrak{P}$ admits \'etale co-ordinates, and $U\subset P$ is open affine, with complement $D$. Set $W_\lambda$ to be the interior of $\mathfrak{P}_K\setminus \tube{D}_{\mathfrak{P}\lambda}$. Thus, if $D$ is defined by $f=0$, then $W_\lambda$ is defined by $v(f)\geq \lambda$. It then follows from \cite[Lemma 4.3.10]{LS07} that $W_\lambda$ is $\eta$-admissible for all $\lambda$ sufficiently close to $1$.
\end{enumerate}
\end{example}

\subsection{Pre-$\pmb{\mathscr{D}^\dagger}$-modules} \label{sec: pre Ddag sub}

We can now introduce the key (and rather delicate) notion of a pre-$\mathscr{D}^\dagger$-module, which will be our main technical tool in upgrading $\mathbf{R}\mathrm{sp}_*$ to take values in ${\bf D}^b(\mathscr{D}^\dagger_{\mathfrak{P}\Q})$. Let $Y\hookrightarrow P$ a closed immersion.

\begin{definition} \label{defn: pre Ddag} A pre-$\mathscr{D}^\dagger$-module on $\tube{Y}_\fr{P}$ is an ind-object $\left\{ \mathscr{F}_i \right\}_{i\in I}$ in the category of Fr\'echet $\mathscr{D}_{\tube{Y}_\mathfrak{P}}$-modules, such that:
\begin{itemize}
\item for all $\eta<1$ sufficiently close to $1$, and all open affine subschemes $\mathfrak{P}'\subset \mathfrak{P}$, there exists $i_\eta\in I$, and $T_\eta\subset \tube{Y}_{\mathfrak{P}'}$ an open affinoid, such that:
\begin{itemize}
\item for all affine open immersions $U\hookrightarrow P'$, all $\eta$-admissible affinoid neighbourhoods $W$ of $\tube{U}_{\mathfrak{P}'}$ inside $\mathfrak{P}'$, all $i\geq i_\eta$, and all affinoids $T_\eta\subset T \subset \tube{Y}_{\mathfrak{P}'} $, the $\Gamma(\mathfrak{P},\mathscr{D}_{\mathfrak{P}'\Q})$-module structure on $\Gamma(T\cap W,\mathscr{F}_{i})$ extends to a continuous $\Gamma(\mathfrak{P},\mathscr{D}^\eta_{\mathfrak{P}'\Q})$-module structure.
\end{itemize}
\end{itemize}

\end{definition}

Again, since $\Gamma(T\cap W,\mathscr{F}_{i})$ is a Fr\'echet space, the $\Gamma(\mathfrak{P},\mathscr{D}^\eta_{\mathfrak{P}'\Q})$-module structure on $\Gamma(T\cap W,\mathscr{F}_{i})$ is unique if it exists.  A morphism of pre-$\mathscr{D}^\dagger$-modules is simply a morphism of ind-objects.

\begin{definition}If $\mathscr{F}$ is a $\mathscr{D}_{\tube{Y}_\mathfrak{P}}$-module, then a pre-$\mathscr{D}^\dagger$-module structure on $\mathscr{F}$ is a pre-$\mathscr{D}^\dagger$-module $\left\{\mathscr{F}_i\right\}_{i\in I}$, together with an isomorphism 
\[ \mathrm{colim}_{i\in I} \mathscr{F}_i \isomto \mathscr{F} \]
of $\mathscr{D}_{\tube{Y}_\mathfrak{P}}$-modules.
\end{definition} 

If $\mathscr{F}, \mathscr{G}$ are endowed with pre-$\mathscr{D}^\dagger$-module structures $\left\{ \mathscr{F}_i\right\}_{i\in I}$ and $\{\mathscr{G}_j\}_{j\in J}$ respectively, and $\alpha:\mathscr{F}\rightarrow \mathscr{G}$ is a morphism, then a pre-$\mathscr{D}^\dagger$-lift of $\alpha$ is morphism of pre-$\mathscr{D}^\dagger$-modules which recovers $\alpha$ upon taking the colimit.

Let $\mathrm{sp}_{Y}\colon\tube{Y}_\mathfrak{P}\rightarrow \mathfrak{P}$ denote the restriction of the specialisation map to $\tube{Y}_\mathfrak{P}$, and, as in \S\ref{sec: Roos new} above, let $\left[Y \right]_n$ for $n\geq 1$ denote the closed tube of radius $\eta_n$. Let $\mathrm{sp}_{Y,n}:\left[Y \right]_n \rightarrow\mathfrak{P}_K$ again denote the restriction of the specialisation map. 

\begin{lemma} \label{lemma: pre Ddag sp} Any pre-$\mathscr{D}^\dagger$-module structure on $\mathscr{F}$ induces a $\mathscr{D}^\dagger_{\mathfrak{P}\Q}$-module structure on $\mathrm{sp}_{Y*}\mathscr{F}$ extending the natural $\mathscr{D}_{\mathfrak{P}\Q}$-module structure. If $\alpha:\mathscr{F}\rightarrow  \mathscr{G}$ is a morphism of $\mathscr{D}_{\tube{Y}_\fr{P}}$-modules, then any pre-$\mathscr{D}^\dagger$-lift of $\alpha$ induces a $\mathscr{D}^\dagger_{\fr{P}\Q}$-linear lift of $\mathrm{sp}_{Y*}\alpha$.
\end{lemma}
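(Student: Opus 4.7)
The plan is to construct, for each $\eta<1$ sufficiently close to $1$, a continuous action of $\mathscr{D}^\eta_{\fr{P}\Q}$ on $\mathrm{sp}_{Y*}\mathscr{F}$ extending the given $\mathscr{D}_{\fr{P}\Q}$-action, and then to pass to the colimit $\mathscr{D}^\dagger_{\fr{P}\Q}=\mathrm{colim}_{\eta\to 1}\mathscr{D}^\eta_{\fr{P}\Q}$. The whole argument will be governed by a single uniqueness principle: since $\Gamma(\mathfrak{U},\mathscr{D}_{\mathfrak{U}\Q})$ is dense in the Fréchet algebra $\Gamma(\mathfrak{U},\mathscr{D}^\eta_{\mathfrak{U}\Q})$ (as noted in Remark \ref{rem: m adm inter}), any continuous extension of the $\mathscr{D}_{\mathfrak{U}\Q}$-action on a Fréchet module to $\mathscr{D}^\eta_{\mathfrak{U}\Q}$ is automatic in its uniqueness, and compatibility of such extensions with every natural map in sight comes for free.

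First, work affine-locally on $\fr{P}$: fix an affine open $\mathfrak{U}\subset\fr{P}$, so that $\mathfrak{U}_K$ is an affinoid. The key observation is that Definition \ref{defn: pre Ddag} applies with $\mathfrak{P}'=\mathfrak{U}$, with $U=\mathfrak{U}\otimes k$ viewed as an affine open immersion into itself, and with $W=\mathfrak{U}_K$, which is trivially an $\eta$-admissible affinoid neighbourhood of $\tube{U}_\mathfrak{U}=\mathfrak{U}_K$. For any affinoid $T\supset T_\eta$ inside $\tube{Y}_\mathfrak{U}$ and $i\geq i_\eta$, the pre-$\mathscr{D}^\dagger$-structure supplies a continuous action of $\Gamma(\mathfrak{U},\mathscr{D}^\eta_{\mathfrak{U}\Q})$ on $\Gamma(T,\mathscr{F}_i)$. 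Cover $\tube{Y}_\mathfrak{U}$ by the nested closed tubes $[Y]_{\mathfrak{U},n}$ for $n$ large enough that $[Y]_{\mathfrak{U},n}\supset T_\eta$; since each $[Y]_{\mathfrak{U},n}$ is quasi-compact, filtered colimits commute with sections, so
\[ \Gamma([Y]_{\mathfrak{U},n},\mathscr{F})=\mathrm{colim}_{i\geq i_\eta}\Gamma([Y]_{\mathfrak{U},n},\mathscr{F}_i) \]
inherits a $\Gamma(\mathfrak{U},\mathscr{D}^\eta_{\mathfrak{U}\Q})$-action, and these actions are compatible under restriction in $n$ by the uniqueness principle. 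Taking the inverse limit in $n$ (via the sheaf property for $\mathscr{F}$) then yields the sought action on $\Gamma(\mathfrak{U},\mathrm{sp}_{Y*}\mathscr{F})=\Gamma(\tube{Y}_\mathfrak{U},\mathscr{F})$.

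Next, I would check that these affine-local actions assemble into a genuine $\mathscr{D}^\dagger_{\fr{P}\Q}$-module structure on the sheaf $\mathrm{sp}_{Y*}\mathscr{F}$. Compatibility under restriction to smaller affines $\mathfrak{U}'\subset\mathfrak{U}$, compatibility as $\eta$ varies (using $\mathscr{D}^\eta\subset\mathscr{D}^{\eta'}$ for $\eta\leq\eta'$, which follows directly from the radial description of $\mathscr{D}^\eta_{\fr{P}\Q}$), and agreement with the pre-existing $\mathscr{D}_{\fr{P}\Q}$-action are all instances of the same uniqueness argument for continuous extensions of $\mathscr{D}_{\mathfrak{U}\Q}$-linear maps of Fréchet spaces. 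For the assertion about morphisms, a pre-$\mathscr{D}^\dagger$-lift of $\alpha$ unpacks to a family of continuous $\mathscr{D}_{\tube{Y}_\fr{P}}$-linear maps of Fréchet spaces $\Gamma(T,\mathscr{F}_i)\to\Gamma(T,\mathscr{G}_{j(i)})$ on each affinoid piece; the same density argument forces each such map to be $\mathscr{D}^\eta_{\mathfrak{U}\Q}$-equivariant, and the assembly procedure above produces a $\mathscr{D}^\dagger_{\fr{P}\Q}$-linear lift of $\mathrm{sp}_{Y*}\alpha$. The main technical obstacle will not be the existence of any particular action, which is forced upon us by density, but rather the bookkeeping required to verify that all these local structures glue consistently across affine covers, indices $i\in I$, and values of $\eta$; this is shepherded in every case by the uniqueness of continuous extensions from a dense subring.
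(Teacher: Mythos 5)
Your proposal is correct and follows essentially the same route as the paper's proof: both specialise Definition \ref{defn: pre Ddag} to the trivial case $U=P'$, $W=\mathfrak{P}'_K$, obtain a $\Gamma(\mathfrak{P}',\mathscr{D}^\eta_{\mathfrak{P}'\Q})$-action on $\Gamma(\mathfrak{P}'_K\cap[Y]_n,\mathscr{F}_i)$ from the pre-$\mathscr{D}^\dagger$-structure, pass to the colimit over $i$ on the quasi-compact closed tubes $[Y]_n$, take the inverse limit over $n$ to recover $\mathrm{sp}_{Y*}\mathscr{F}$, and let density of $\mathscr{D}_{\mathfrak{P}\Q}$ in $\mathscr{D}^\eta_{\mathfrak{P}\Q}$ (together with Hausdorffness of Fréchet spaces) enforce every required compatibility — across opens, indices, radii, and for morphisms. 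The only difference is the order in which you interleave the (co)limit steps, which is immaterial given the uniqueness principle you correctly identify as the engine of the argument.
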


\begin{proof} To see that $\mathrm{sp}_{Y*}\mathscr{F}$ is a $\mathscr{D}^\dagger_{\mathfrak{P}\Q}$-module, we simply take $U=\mathfrak{P}'_k$ in the definition and let $n$ be large enough so that $T_\eta \subset \left[Y \right]_n$. Thus for all suitably large $i$ (depending on $\eta$), $\Gamma(\mathfrak{P}'_K\cap \left[Y \right]_n,\mathscr{F}_i)$ is naturally a $\Gamma(\mathfrak{P},\mathscr{D}^\eta_{\mathfrak{P}'\Q})$-module. Continuity implies that these structures are compatible as $\mathfrak{P}'$ varies, hence $ \mathrm{sp}_{Y,n*}\left(\mathscr{F}_i|_{\left[Y\right]_n}\right)$ is naturally a $\mathscr{D}^\eta_{\mathfrak{P}\Q}$-module. Again, by continuity, these structures are compatible as $i$ varies, hence $\mathrm{sp}_{Y,n*}\left(\mathscr{F}|_{\left[Y\right]_n}\right)=\mathrm{colim}_{i\in I} \mathrm{sp}_{Y,n*}\left(\mathscr{F}_i|_{\left[Y\right]_n}\right)$ is again a $\mathscr{D}^\eta_{\mathfrak{P}\Q}$-module (for $n$ sufficiently large). Once more, by continuity, these structures are compatible as $n$ and $\eta$ vary, so
\[ \mathrm{sp}_{Y*}\mathscr{F}=\mathrm{lim}_n \mathrm{sp}_{Y,n*}\left(\mathscr{F}|_{\left[Y\right]_n}\right)\]
is a $\mathscr{D}^\dagger_{\mathfrak{P}\Q}$-module. Again, functoriality in $\mathscr{F}$ follows from continuity.
\end{proof}

Given the proof of the lemma, the definition of a pre-$\mathscr{D}^\dagger$-module may seem like overkill, since we only used the particular case when $U=P'$. However, the full force of Definition \ref{defn: pre Ddag} will be required to prove that pre-$\mathscr{D}^\dagger$-modules can be transported along the functors $j_{-}^\dagger$ and $\mathcal{R}_{-,\infty}$ considered in \S\ref{sec: Roos new} above (see \S\ref{sec: Ddagger stab} below).

\subsection{Pre-$\mathscr{D}^\dagger$-module structures on overconvergent isocrystals}  \label{sec: fund}

The fundamental example of a pre-$\mathscr{D}^\dagger$-module structure is that on an overconvergent isocrystal, which we describe in this section. 

Let $Y\hookrightarrow P$ be a closed immersion, $X\hookrightarrow Y$ be a strongly affine open immersion, and $\mathscr{F}$ a coherent $j_X^\dagger\mathcal{O}_{\tube{Y}_\mathfrak{P}}$-module with (over)convergent integrable connection. As in \S\ref{sec: Roos new} above, set $\left[Y\right]_n:=\left[Y\right]_{\mathfrak{P}\eta_n}$ to be the closed tube of radius $\eta_n=\norm{\varpi}^{\frac{1}{n+1}}$. 

For $\lambda<1$ sufficiently close to $1$, we let $V_\lambda$ be the subspace of $\fr{P}_K$ locally defined by $v(f)\geq \lambda$, where $f\in\cO_\fr{P}$ is such that $X=Y\cap D(f)$. Then a cofinal system of neighbourhoods of $\tube{X}_\mathfrak{P}$ inside $\tube{Y}_\mathfrak{P}$ is given by the collection of open subsets
\[ V_{\underline{\lambda}}:=\bigcup_{n\geq 0} \left[Y\right]_n \cap V_{\lambda_n} \]
indexed by \emph{increasing} sequences $\underline{\lambda}:=\{\lambda_n\}_{n\geq 1}$ of real numbers $\lambda_n < 1$. Let $(\mathcal{S},\leq)$ denote the poset of such sequences, endowed with the usual partial order, that is, the one where $\underline{\mu}\geq \underline{\lambda}$ iff $\mu_n\geq\lambda_n$ for all $n$. 

We will need to consider a slightly different partial order on the set of such sequences. Given a sequence $\underline{\lambda}$ we define
\[ m(\underline{\lambda}):= \inf \left\{n\mid \lambda_n \neq \lambda_1 \right\}.\]
We then say that $\underline{\mu}\gtrsim \underline{\lambda}$ if and only if:
\begin{enumerate}
\item $\mu_n\geq \lambda_n$ for all $n$;
\item $m(\underline{\mu})\geq m(\underline{\lambda})$.
\end{enumerate}
The second condition here says that the `initially constant' segment of $\underline{\mu}$ has to be at least as long as that of $\underline{\lambda}$. There is an obvious map $(\mathcal{S},\lesssim)\rightarrow (\mathcal{S},\leq)$ of posets, which is the identity on underlying sets. 

\begin{lemma} \label{lemma: funny order}\begin{enumerate}
\item The map $(\mathcal{S},\lesssim)\rightarrow (\mathcal{S},\leq)$ is cofinal.
\item For any $n_0\in \N$, there exists $\underline{\lambda}_0\in\mathcal{S}$ such that for all $\underline{\lambda}\gtrsim\underline{\lambda}_0$, and all $n\geq n_0$, there exists an open cover of $[Y]_{n}\cap V_{\underline{\lambda}}$ by open affinoids of the form $[Y]_{n'}\cap V_\lambda$ with $n'\geq n_0$ and $\lambda\geq \lambda_{n'}$. 
\end{enumerate}  
\end{lemma}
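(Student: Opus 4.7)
The plan for (1) is to reduce to showing that $(\mathcal{S},\lesssim)$ is filtered. Since the functor $F\colon(\mathcal{S},\lesssim)\to(\mathcal{S},\leq)$ is the identity on underlying sets and $\underline{\mu}\gtrsim\underline{\lambda}$ implies $\underline{\mu}\geq\underline{\lambda}$, every slice category $\underline{\lambda}/F$ is non-empty, and filteredness of $(\mathcal{S},\lesssim)$ then gives the desired cofinality (one can always take the common upper bound as the dominating element). To build a common $\lesssim$-upper bound of $\underline{\mu}_1,\underline{\mu}_2$, I would set $M:=\max(m(\underline{\mu}_1),m(\underline{\mu}_2))$ and $c:=\max(\mu_{1,M},\mu_{2,M})$, then define $\nu_n=c$ for $n\leq M$ and $\nu_n=\max(\mu_{1,n},\mu_{2,n},c)$ for $n>M$. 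This gives an increasing sequence of reals in $[0,1)$ with $\nu_n\geq\mu_{i,n}$ for every $n$ (for $n\leq M$ one uses $\mu_{i,n}\leq\mu_{i,M}\leq c$) and $m(\underline{\nu})\geq M+1\geq m(\underline{\mu}_i)$, so $\underline{\nu}\gtrsim\underline{\mu}_i$.

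For (2) the key observation is that the condition $m(\underline{\lambda})\geq n_0+1$ forces $\lambda_1=\lambda_2=\cdots=\lambda_{n_0}$. I would therefore choose $\underline{\lambda}_0\in\mathcal{S}$ with $m(\underline{\lambda}_0)\geq n_0+1$, so that any $\underline{\lambda}\gtrsim\underline{\lambda}_0$ inherits this property. Working locally on $\fr{P}$ (which we may assume to be affine), I decompose
\[ [Y]_n\cap V_{\underline{\lambda}}=\bigcup_{n'\geq 1}\bigl([Y]_{\min(n,n')}\cap V_{\lambda_{n'}}\bigr)=\bigcup_{1\leq n'\leq n}\bigl([Y]_{n'}\cap V_{\lambda_{n'}}\bigr)\;\cup\;\bigl([Y]_n\cap V_{\lambda_{n+1}}\bigr), \]
using $[Y]_{n'}\subset[Y]_n$ for $n'\leq n$ and that $V_{\lambda_{n'}}$ is decreasing in $n'$, so the union over $n'>n$ collapses to its largest term $V_{\lambda_{n+1}}$. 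For $n'<n_0$ one has $[Y]_{n'}\cap V_{\lambda_{n'}}\subset[Y]_{n_0}\cap V_{\lambda_{n_0}}$, since $\lambda_{n'}=\lambda_{n_0}$ and $[Y]_{n'}\subset[Y]_{n_0}$; these redundant pieces can be dropped, leaving an open cover by the sets $[Y]_{n'}\cap V_{\lambda_{n'}}$ for $n_0\leq n'\leq n$ together with $[Y]_n\cap V_{\lambda_{n+1}}$ (where $\lambda_{n+1}\geq\lambda_n$). Each of these is an intersection of finitely many rational subsets of $\fr{P}_K$, hence an open affinoid of the required form.

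The only real subtlety is the design of the refined order $\lesssim$: the bound $m(\underline{\lambda})\geq n_0+1$ is precisely what is needed to absorb the low-index pieces $[Y]_{n'}\cap V_{\lambda_{n'}}$ ($n'<n_0$) into the $n'=n_0$ piece, which has the correct form. Once this observation is in place, the verification is purely combinatorial. Part (1) then ensures that when computing sections of sheaves over the cofinal system $\{V_{\underline{\lambda}}\}$ of neighbourhoods of $\tube{X}_\fr{P}$, we lose nothing by passing to $(\mathcal{S},\lesssim)$, which is precisely what is needed for the subsequent pre-$\mathscr{D}^\dagger$-module constructions.
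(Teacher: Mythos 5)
Your proof is correct; the paper offers only ``These are both easily verified,'' so there is no written argument to compare against, but your write-up supplies exactly the kind of details the authors are leaving implicit. A few small remarks. For (1), the substance is right: non-emptiness of the comma categories is trivial (take $\underline{\mu}=\underline{\lambda}$ itself), and filteredness of $(\mathcal{S},\lesssim)$ gives connectedness because a $\lesssim$-upper bound of $\underline{\mu}_1,\underline{\mu}_2$ is automatically a $\leq$-upper bound and hence stays in the comma category. Your construction of $\underline{\nu}$ is correct: it is increasing, bounded by $1$, dominates both $\underline{\mu}_i$ (for $n\leq M$ via monotonicity of each $\underline{\mu}_i$), and has $m(\underline{\nu})\geq M+1 > m(\underline{\mu}_i)$. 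For (2), the choice $m(\underline{\lambda}_0)\geq n_0+1$ is precisely the right condition, and your set-theoretic decomposition of $[Y]_n\cap V_{\underline{\lambda}}$ and the absorption of the low-index pieces $n'<n_0$ into the $n'=n_0$ term is correct. One cosmetic point: the extra term $[Y]_n\cap V_{\lambda_{n+1}}$ you retain is in fact already contained in the $n'=n$ piece $[Y]_n\cap V_{\lambda_n}$ (since $\lambda_{n+1}\geq\lambda_n$ implies $V_{\lambda_{n+1}}\subset V_{\lambda_n}$), so the whole union over $n'>n$ is redundant and could be dropped entirely; keeping it costs nothing, as you note it still has the required form. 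Your explicit localisation to $\fr{P}$ affine to identify the pieces as affinoids is appropriate, matching the way the lemma is invoked inside the (local) proof of Theorem \ref{theo: pre Ddag over}.
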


\begin{proof}
These are both easily verified.
\end{proof}

Note that the first claim shows that we can take colimits with respect to either $\leq$ or $\lesssim$. The second claim in the lemma is not true for the usual partial order $\leq$, and is the reason for introducing $\lesssim$.

Let us denote by $j_{\underline{\lambda}}\colon V_{\underline{\lambda}} \rightarrow  \tube{Y}_\mathfrak{P}$ the given immersions. After possibly passing to a cofinal subset of $\mathcal{S}$, we can assume that our overconvergent isocrystal $\mathscr{F}$ extends to a coherent sheaf with integrable connection $\mathscr{F}_{\underline{\lambda}}$ on each $V_{\underline{\lambda}}$, thus
\[  \mathrm{colim}_{ \underline{\lambda}\in\mathcal{S}} j_{\underline{\lambda}*}\mathscr{F}_{\underline{\lambda}}\isomto \mathscr{F} .\]
Moreover, if $T\subset \tube{Y}_\mathfrak{P}$ is affinoid, then $T\cap V_{\underline{\lambda}}$ is a finite union of affinoids (because $T$ is quasi-compact), so
\[ \Gamma(T,j_{\underline{\lambda}*}\mathscr{F}_{\underline{\lambda}}) = \Gamma(T\cap V_{\underline{\lambda}},\mathscr{F}_{\underline{\lambda}}  ) \]
is a finite limit of Banach spaces, hence a Banach space. Thus $\left\{ j_{\underline{\lambda}*}\mathscr{F}_{\underline{\lambda}}\right\}_{\underline{\lambda}\in\mathcal{S}}$ is an ind-object in the category of Fr\'echet $\mathscr{D}_{\tube{Y}_\mathfrak{P}}$-modules. 

\begin{theorem} \label{theo: pre Ddag over} The ind-object $\left\{ j_{\underline{\lambda}*}\mathscr{F}_{\underline{\lambda}}\right\}_{\underline{\lambda}\in\mathcal{S}}$ together with the isomorphism
\[ \mathrm{colim}_{ \underline{\lambda}\in\mathcal{S}} j_{\underline{\lambda}*}\mathscr{F}_{\underline{\lambda}} \isomto \mathscr{F} \]
defines a pre-$\mathscr{D}^\dagger$-module structure on $\mathscr{F}$.
 \end{theorem}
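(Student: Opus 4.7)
The plan is to verify the admissibility condition of Definition \ref{defn: pre Ddag} directly by translating the overconvergence of the connection on $\mathscr{F}$ into the required growth estimates on the action of the divided-power derivations. First I would reduce to the local situation in which $\fr{P}'=\fr{P}$ is affine and admits \'etale coordinates $x_1,\ldots,x_d$ with associated derivations $\partial_1,\ldots,\partial_d$, and in which $X=Y\cap D(f)$ for a single $f\in\Gamma(\fr{P},\cO_\fr{P})$. In this local picture the sequences $\underline{\lambda}\in\mathcal{S}$ parametrise very explicit neighbourhoods $V_{\underline{\lambda}}$ of $\tube{X}_\fr{P}$, and each section space can be controlled term-by-term through the Taylor formula for the connection.

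The key input is the overconvergent stratification $\epsilon\colon p_2^*\mathscr{F}\isomto p_1^*\mathscr{F}$. After possibly replacing $\mathcal{S}$ by a cofinal sub-poset, this restricts on each $V_{\underline{\lambda}}$ to an isomorphism $\epsilon_{\underline{\lambda}}$ defined on an open neighbourhood of the diagonal in $V_{\underline{\lambda}}\times V_{\underline{\lambda}}$, whose radius of definition tends to $1$ as $\underline{\lambda}$ grows. More precisely, for $\eta<1$ given there exists $\underline{\lambda}_\eta\in\mathcal{S}$ so that, for every $\underline{\lambda}\gtrsim\underline{\lambda}_\eta$, the map $\epsilon_{\underline{\lambda}}$ is defined on the intersection $p_1^{-1}(V_{\underline{\lambda}})\cap p_2^{-1}(V_{\underline{\lambda}})\cap[P]_{\fr{P}^2\eta}$. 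Extracting the coefficients of the Taylor expansion of $\epsilon_{\underline{\lambda}}$ yields the decisive estimate: for any affinoid $V\subset V_{\underline{\lambda}}$ there is a constant $C_V>0$ such that $\Norm{\partial^{[k]}(s)}_V\le C_V\eta^{-\norm{k}}\Norm{s}_V$ for every multi-index $k$ and every section $s\in\Gamma(V,\mathscr{F}_{\underline{\lambda}})$.

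With these estimates in hand, I would take $T_\eta$ to be a closed tube $[Y]_{n_\eta}$ with $n_\eta$ large enough that, using Lemma \ref{lemma: funny order}(2), any intersection $T\cap W\cap V_{\underline{\lambda}}$ (for $T\supset T_\eta$ affinoid, $W$ an $\eta$-admissible affinoid neighbourhood of $\tube{U}_{\fr{P}'}$, and $\underline{\lambda}\gtrsim\underline{\lambda}_\eta$) can be covered by finitely many basic affinoids of the form $[Y]_{n'}\cap V_\lambda$ on which the above estimates apply. A series $P=\sum_k a_k\partial^{[k]}\in\Gamma(\fr{P},\mathscr{D}^\eta_{\fr{P}'\Q})$ satisfying $\Norm{a_k}\le c\,\eta^{\norm{k}}$ therefore defines, piece by piece, a convergent series of bounded operators on $\Gamma(T\cap W,j_{\underline{\lambda}*}\mathscr{F}_{\underline{\lambda}})$. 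By density of $\mathscr{D}_{\fr{P}'\Q}$ in $\mathscr{D}^\eta_{\fr{P}'\Q}$, this recipe uniquely extends the given $\mathscr{D}_{\fr{P}'\Q}$-action, continuously, to a $\mathscr{D}^\eta_{\fr{P}'\Q}$-action, which is exactly what is required by Definition \ref{defn: pre Ddag}.

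The main obstacle I expect is controlling the uniformity of these estimates as the data $(U,W,T,\underline{\lambda})$ vary: the constants $C_V$ produced by overconvergence depend on the chosen affinoid $V$, and the $\eta$-admissibility of $W$ is precisely what prevents $V\cap W$ from shrinking in an uncontrolled way when one applies successive derivations. This is why the refined partial order $\lesssim$ on $\mathcal{S}$ is unavoidable: insisting on an initially constant segment of length at least $m(\underline{\lambda}_\eta)$ guarantees that all the affinoid pieces $[Y]_{n'}\cap V_\lambda$ furnished by Lemma \ref{lemma: funny order}(2) have Taylor-convergence radius uniformly bounded below by $\eta$. Making this compatibility precise, and verifying that the partial-sum limits patch correctly into a sheaf-valued $\mathscr{D}^\eta_{\fr{P}'\Q}$-action, will be the technical heart of the argument.
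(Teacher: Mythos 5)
Your overall strategy is the same as the paper's: reduce to the local situation with \'etale coordinates, take $T_\eta$ to be a closed tube, use the refined order $\lesssim$ and Lemma~\ref{lemma: funny order}(2) to cover by basic affinoids, and extract convergence estimates for the divided-power derivations from overconvergence of the stratification. However, there is a genuine gap in the decisive estimate, and it is exactly the point where the $\eta$-admissibility of $W$ has to enter.

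You claim that overconvergence gives, for each affinoid $V\subset V_{\underline{\lambda}}$ and each $s\in\Gamma(V,\mathscr{F}_{\underline{\lambda}})$, a bound $\Norm{\partial^{[k]}(s)}_V\leq C_V\eta^{-\norm{k}}\Norm{s}_V$. Two problems. First, this is only a \emph{boundedness} estimate, i.e.\ $\Norm{\partial^{[k]}(s)}_V\eta^{\norm{k}}\leq C_V\Norm{s}_V$; but combined with $\Norm{a_k}\leq c\,\eta^{\norm{k}}$ it only gives $\Norm{a_k\partial^{[k]}(s)}_V\leq cC_V\Norm{s}_V$, which is bounded but does not tend to $0$, so the series $\sum_k a_k\partial^{[k]}(s)$ need not converge. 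One needs the sharper statement that $\Norm{\partial^{[k]}(s)}_V\,\eta_n^{\norm{k}}\to 0$ for the \emph{larger} radius $\eta_n\geq\eta$ attached to the tube $[Y]_n$ — the extra factor $(\eta/\eta_n)^{\norm{k}}$ is what makes the summand go to zero. This is precisely what \cite[Theorem 4.3.9]{LS07} provides.

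Second and more fundamentally, this estimate controls $\partial^{[k]}$ on $\Gamma([Y]_n\cap V_\lambda,\mathscr{F}_{\lambda_n})$, but what you actually have to act on is $\Gamma([Y]_n\cap W\cap V_\lambda,\mathscr{F}_{\lambda_n})$, which is a \emph{strictly larger} space (it is generated over $\Gamma([Y]_n\cap W\cap V_\lambda,\mathcal{O})$ by $\Gamma([Y]_n\cap V_\lambda,\mathscr{F}_{\lambda_n})$). Writing a general section as $fm$ with $f$ in the bigger structure ring and $m$ a section over $[Y]_n\cap V_\lambda$, the Leibniz rule gives
\[
\Norm{a_k\partial^{[k]}(fm)}\leq\max_{k_1+k_2=k} c\,\eta^{\norm{k_1}}\Norm{\partial^{[k_1]}(f)}\,\eta^{\norm{k_2}}\Norm{\partial^{[k_2]}(m)},
\]
and you then need a \emph{separate} estimate $\eta^{\norm{k_1}}\Norm{\partial^{[k_1]}(f)}\to 0$ for $f\in\Gamma([Y]_n\cap W\cap V_\lambda,\mathcal{O})$. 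This is where $\eta$-admissibility of $W$ is actually used: translated via the Taylor map it becomes the containment $p_2([Y]_n\cap W\cap V_\lambda\times\D^d_K(0;\eta))\subset [Y]_n\cap W\cap V_\lambda$, proved by intersecting the two separate inclusions coming from $\eta$-admissibility of $W$ and from \cite[Lemma 4.3.10]{LS07} for $[Y]_n\cap V_\lambda$. Your proposal correctly flags that $\eta$-admissibility must prevent $V\cap W$ from "shrinking uncontrollably", but it does not isolate this second structure-sheaf estimate nor the Leibniz-rule splitting that lets the two estimates be combined; without that, the estimate on $\mathscr{F}_{\underline{\lambda}}$ alone does not yield convergence of $P\cdot(-)$ on sections over $T\cap W\cap V_{\underline{\lambda}}$.
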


\begin{proof}
We need to prove that $\left\{ j_{\underline{\lambda}*}\mathscr{F}_{\underline{\lambda}}\right\}_{\underline{\lambda}\in\mathcal{S}}$ is a pre-$\mathscr{D}^\dagger$-module. The question is local on $\mathfrak{P}$, which we may assume therefore assume to be affine, say $\mathfrak{P}=\spf{R}$. We may also assume that $\mathfrak{P}$ admits \'etale co-ordinates $x_1,\ldots x_d$, with corresponding derivations $\partial_1,\ldots,\partial_d$. We will check that the conditions of $\left\{ j_{\underline{\lambda}*}\mathscr{F}_{\underline{\lambda}}\right\}_{\underline{\lambda}\in\mathcal{S}}$ being a pre-$\mathscr{D}^\dagger$-module are satisfied for $\mathfrak{P}'=\mathfrak{P}$. 

Given $\eta$, we will take $T_\eta$ to be the closed tube $\left[  Y \right]_{\eta}$. We need to choose a sequence $\underline{\lambda}$, such that for all open affines $U\hookrightarrow P$, all $\eta$-admissible affinoid neighbourhoods $\tube{U}_\mathfrak{P}\subset W \subset \mathfrak{P}$, all $\underline{\mu}\gtrsim \underline{\lambda}$, and all open affinoids $\left[Y\right]_{\eta} \subset T\subset \tube{Y}_\mathfrak{P}$,
\[ \Gamma(T \cap W,j_{\underline{\mu}*}\mathscr{F}|_{V_{\underline{\mu}}})=\Gamma(T\cap W \cap V_{\underline{\mu}},\mathscr{F}_{\underline{\mu}})\]
is naturally a $\Gamma(\fr{P},\mathscr{D}^\eta_{\fr{P}\Q})$-module.

A cofinal system of such affinoids $T$ is given by the family $\left[Y\right]_n$ for $n$ such that $\eta_n\geq \eta$. Similarly, if we let $g\in \Gamma(\mathfrak{P},\mathcal{O}_{\mathfrak{P}})$ be a function such that $X=D(g)\cap Y$, then a cofinal family of possible such affinoids $W$ is given by
\[ W_\rho:= \left\{ \left. x\in \mathfrak{P}_K \right\vert v_x(g) \geq \rho \right\} \]
for $\rho<1$ sufficiently close to $1$. Hence in constructing our sequence $\underline{\lambda}$ it suffices to consider $T$ and $W$ of this form.

Take $n_0$ to be minimal such that $\eta_{n_0}\geq \eta$. By Lemma \ref{lemma: funny order}, for all sufficiently large $\underline{\lambda}$,\footnote{that is, all $\underline{\lambda}\gtrsim \underline{\lambda}_0$ for some fixed $\underline{\lambda}_0$} and all $n\geq n_0$, we can cover $[Y]_n\cap V_{\underline{\lambda}}$ with open affinoids of the form $[Y]_{n'}\cap V_\lambda$ for $n'\geq n_0$ and $\lambda\geq \lambda_{n'}$. Hence it suffices to show the following:
\begin{itemize}\item[$(\star)$] for all $n\geq n_0$ there exists $\lambda_n$ such that;
\begin{enumerate}
\item $\mathscr{F}$ extends to a module with integrable connection $\mathscr{F}_{\lambda_n}$ on $[Y]_n\cap V_{\lambda_n}$;
\item for all $\eta$-admissible $W_\rho$, all $\lambda\geq \lambda_n$, and all sections
\begin{align*}
m &\in \Gamma(\left[Y\right]_n \cap W_\rho \cap V_\lambda, \mathscr{F}_{\lambda_n}) \\
P=\sum_k a_k\partial^{[k]} &\in \Gamma(\mathfrak{P},\mathscr{D}^\eta_{\mathfrak{P}\Q}),
\end{align*} 
the sequence $a_k\partial^{[k]}(m)$ converges to zero in $\Gamma(\left[Y\right]_n \cap W_\rho \cap V_\lambda, \mathscr{F}_{\lambda_n})$. 
\end{enumerate} 
\end{itemize}
To find $\lambda_n$, we extend some fixed Banach norm $\Norm{\,\cdot\,}$ on $R_{\Q}$ to compatible norms:
\begin{itemize}
\item $\Norm{\,\cdot\,}_{n,\lambda}$ on each $\Gamma(\left[Y\right]_n\cap V_\lambda,\mathcal{O}_{\fr{P}_K})$;
\item $\Norm{\,\cdot\,}_{\rho,n,\lambda}$ on each $\Gamma(\left[Y\right]_n\cap W_\rho \cap V_\lambda,\mathcal{O}_{\fr{P}_K})$;
\item $\Norm{\,\cdot\,}_{n,\lambda}$ on each $\Gamma(\left[Y\right]_n\cap V_\lambda,\mathscr{F}_{\lambda_n})$ as a Banach $\Gamma(\left[Y\right]_n\cap V_\lambda,\mathcal{O}_{\fr{P}_K})$-module;
\item $\Norm{\,\cdot\,}_{\rho,n,\lambda}$ on each $\Gamma(\left[Y\right]_n\cap  W_\rho \cap V_\lambda,\mathscr{F}_{\lambda_n})$ as a Banach $\Gamma(\left[Y\right]_n\cap  W_\rho \cap V_\lambda,\mathcal{O}_{\fr{P}_K})$-module.
\end{itemize}
Appealing to \cite[Theorem 4.3.9]{LS07}, we see that for all $n$, there exists some $\lambda'_n$ such that for all $\lambda\geq \lambda'_n$, and all $m\in\Gamma(\left[Y\right]_n\cap V_\lambda,\mathscr{F}_{\lambda'_n})$, 
\[ \Norm{\partial^{[k]}(m)}_{n,\lambda} \eta_n^{\norm{k}} \rightarrow 0\text{ as }\norm{k}\rightarrow \infty.\]
Since $\left[Y\right]_n\cap V_\lambda$ and $\left[Y\right]_n\cap W_\rho \cap V_\lambda$ are affinoids, we know that for all $\lambda\geq \lambda'_n$, the space of sections $\Gamma(\left[Y\right]_n\cap W_\rho \cap V_\lambda,\mathscr{F}'_{\lambda_n})$ is generated by $\Gamma(\left[Y\right]_n\cap V_\lambda,\mathscr{F}'_{\lambda_n})$ as a $\Gamma(\left[Y\right]_n\cap W_\rho\cap V_\lambda,\mathcal{O}_{\fr{P}_K})$-module. If we choose $c$ such that $\Norm{a_{k}}\leq c\eta^{\norm{k}}$, then, for all $f\in \Gamma(\left[Y\right]_n\cap W_\rho\cap V_\lambda,\mathcal{O}_{\fr{P}_K})$ and $m\in \Gamma(\left[Y\right]_n\cap V_\lambda,\mathscr{F}'_{\lambda_n})$, we have
\[ \Norm{a_{k}\partial^{[k]}(fm)}_{\rho,n,\lambda}  \leq \max_{k_1+k_2=k} c\eta^{\norm{k_1}}\Norm{\partial^{[k_1]}(f)}_{\rho,n,\lambda}  \eta^{\norm{k_2}}\Norm{\partial^{[k_2]}(m)}_{n,\lambda}  \]
by the Leibniz rule. Provided that $\eta\leq \eta_n$ and $\lambda\geq \lambda'_n$, the latter term always $\rightarrow 0$ as $\norm{k}\rightarrow \infty$. Hence, by writing each $m$ as a suitable $\Gamma(  \left[Y\right]_n \cap W_\rho \cap V_\lambda,\mathcal{O}_{\fr{P}_K})$-linear combination of elements of $\Gamma(\left[Y\right]_n \cap V_{\lambda_n},\mathscr{F}_{\lambda_n})$, it suffices to show that we can choose $\lambda_n\geq \lambda'_n$ such that for all $n\geq n_0$, all $\eta$-admissible $W_\rho$, all $\lambda \geq \lambda_n$, and all $f\in \Gamma(\left[Y\right]_n\cap W_{\rho}\cap V_{\lambda},\mathcal{O}_{\fr{P}_K})$,
\[ \eta^{\norm{k}}\Norm{\partial^{[k]}(f)}_{\rho,n,\lambda} \rightarrow0 \]
as $\norm{k}\rightarrow\infty$. Now, the \'etale co-ordinates $x_1,\ldots,x_d \colon \fr{P}\rightarrow \widehat{\A}^d_{\mathcal{V}}$ induce an isomorphism
\[ \tube{P}_{\fr{P}^2} \isomto \fr{P}_K \times_K \D^d_K(0;1^-)  \]
between the tube of the diagonal and the $d$-dimensional open unit polydisc over $\fr{P}$, with co-ordinates on $\D^d_K(0;1^-)$ being given by $\chi_i=1\otimes x_i-x_i\otimes 1$. Then first projection $p_1$ induces the structure morphism
\begin{align*}
 R_{\Q} &\rightarrow \Gamma(\fr{P}_K \times_K \D_K^d(0;1^-),\mathcal{O}_{\fr{P}_K \times_K \D_K^d(0;1^-)}) = \mathrm{lim}_{\delta<1} R_{\Q}\tate{\delta^{-1}\chi_1,\ldots,\delta^{-1}\chi_d},
 \end{align*}
and the second projection $p_2$ induces the Taylor series morphism
\begin{align*}
 R_{\Q} &\rightarrow \mathrm{lim}_{\delta<1} R_{\Q}\tate{\delta^{-1}\chi_1,\ldots,\delta^{-1}\chi_d} \\
 f &\mapsto \sum_{k} (-1)^{\norm{k}}\partial^{[k]}(f)\chi^{k}. 
 \end{align*}
Hence we may translate what we need to prove as follows:
\begin{itemize}
\item[$(\star\star)$] there exist $\lambda_n\geq \lambda'_{n} $, such that for all $\eta$-admissible $W_\rho$, all $n\geq n_0$, and all $\lambda\geq \lambda_n$,
\[ p_2(\left[Y\right]_n\cap  W_{\rho}\cap V_\lambda \times_K \D^d_K(0;\eta)) \subset \left[Y\right]_n\cap  W_{\rho}\cap V_\lambda.\]
\end{itemize}
The fact that $W_\rho$ is $\eta$-admissible implies that
\[ p_2(W_\rho\times_K \D^d_K(0;\eta)) \subset W_\rho, \]
and it follows from \cite[Lemma 4.3.10]{LS07} that there exists $\lambda_n$ such that
\[  p_2(\left[Y\right]_n\cap V_\lambda \times \D^d_K(0;\eta_n)) \subset \left[Y\right]_n\cap  V_\lambda \]
for $\lambda\geq \lambda_n$. Since $\eta_n\geq 	\eta$ by our choice of $n_0$, we therefore conclude that
\begin{align*} p_2(\left[Y\right]_n\cap W_{\rho}\cap V_\lambda \times_K \D^d_K(0;\eta_{n}) ) &\subset p_2(W_\rho \times_K \D^d_K(0;\eta)) \cap p_2(\left[Y\right]_n \cap V_\lambda \times_K \D_K^d(0;\eta_n))  \\ &\subset \left[Y\right]_n\cap W_{\rho} \cap V_\lambda \end{align*}
as required.
\end{proof}

\subsection{Stability of pre-$\pmb{\mathscr{D}^\dagger}$-module structures} \label{sec: Ddagger stab}

The next key point will be to show how to transport pre-$\mathscr{D}^\dagger$-module structures along the functors $j_X^\dagger$ and $\mathcal{R}_{i_Z,\infty}$ used to construct the explicit $\mathrm{sp}_*$-acyclic resolutions $\mathrm{Tot}_{\mathrm{sd}(A)}\mathcal{R}_{\overline{P}_{\bullet},\infty}j^\dagger_{P_\bullet}\mathscr{F}$ of a constructible isocrystal $\mathscr{F}$ in \S\ref{sec: calc Rsp} above. 

\subsubsection{} We first consider the case of $j_X^\dagger$. Suppose that $Y\hookrightarrow P$ is a closed immersion, and that $X\rightarrow Y$ is a strongly affine open immersion. Let $\mathscr{F}$ be a $\mathscr{D}_{\tube{Y}_\mathfrak{P}}$-module equipped with a pre-$\mathscr{D}^\dagger$-module structure $\left\{\mathscr{F}_i\right\}_{i\in I}$. 

As in \S\ref{sec: fund} above, for $\lambda<1$ sufficiently close to $1$, we let $V_\lambda$ be the subspace of $\fr{P}_K$ locally defined by $v(f)\geq \lambda$, where $f\in\cO_\fr{P}$ is such that $X=Y\cap D(f)$. Then a cofinal system of neighbourhoods of $\tube{X}_\mathfrak{P}$ inside $\tube{Y}_\mathfrak{P}$ is given by the collection of open subsets
\[ V_{\underline{\lambda}}:=\bigcup_{n\geq 0} \left[Y\right]_n \cap V_{\lambda_n} \]
indexed by \emph{increasing} sequences $\underline{\lambda}:=\{\lambda_n\}_{n\geq 1}$ of real numbers $\lambda_n < 1$. 

Let $j_{\underline{\lambda}}:V_{\underline{\lambda}}\rightarrow \tube{Y}_\mathfrak{P}$ denote the given inclusion. Then
\[ j_X^\dagger \mathscr{F}=\mathrm{colim}_{\underline{\lambda}\in\mathcal{S}} j_{\underline{\lambda}*}j^{-1}_{\underline{\lambda}*}\mathscr{F} =  \mathrm{colim}_{i\in I}\mathrm{colim}_{\underline{\lambda}\in\mathcal{S}} j_{\underline{\lambda}*}j^{-1}_{\underline{\lambda}*}\mathscr{F}_i \]
and we can consider $\left\{j_{\underline{\lambda}*}j^{-1}_{\underline{\lambda}*}\mathscr{F}_i\right\}_{(\underline{\lambda},i)\in\mathcal{S}\times I}$ as an ind-$\mathscr{D}_{\tube{Y}_\mathfrak{P}}$-module. Moreover, if $W\subset \tube{Y}_\mathfrak{P}$ is affionid, then $W\cap V_{\underline{\lambda}}$ is a finite union of affinoids. Thus
\[ \Gamma(W, j_{\underline{\lambda}*}j^{-1}_{\underline{\lambda}*}\mathscr{F}_i)= \Gamma(W\cap V_{\underline{\lambda}}, \mathscr{F}_i)\]
is a finite inverse limit of Fr\'echet spaces, and thus a Fr\'echet space. Hence we can consider $\left\{j_{\underline{\lambda}*}j^{-1}_{\underline{\lambda}*}\mathscr{F}_i\right\}_{(\underline{\lambda},i)\in\mathcal{S}\times I}$ as an ind-object in the category of Fr\'echet $\mathscr{D}_{\tube{Y}_\mathfrak{P}}$-modules.

\begin{proposition} \label{prop: Ddag jdag}The ind-object $\left\{j_{\underline{\lambda}*}j^{-1}_{\underline{\lambda}*}\mathscr{F}_i\right\}_{(\underline{\lambda},i)\in\mathcal{S}\times I}$  together with the isomorphism
\[ \mathrm{colim}_{i\in I}\mathrm{colim}_{\underline{\lambda}\in\mathcal{S}} j_{\underline{\lambda}*}j^{-1}_{\underline{\lambda}*}\mathscr{F}_i  \isomto j_X^\dagger \mathscr{F} \]
defines a pre-$\mathscr{D}^\dagger$-module structure on $j_X^\dagger\mathscr{F}$.
\end{proposition}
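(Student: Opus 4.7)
The plan is to reduce the pre-$\mathscr{D}^\dagger$ condition for the ind-object $\{j_{\underline{\lambda}*}j^{-1}_{\underline{\lambda}}\mathscr{F}_i\}$ to the pre-$\mathscr{D}^\dagger$ condition already assumed for $\{\mathscr{F}_i\}$, by covering each set $T\cap W\cap V_{\underline{\lambda}}$ by finitely many affinoids to which the hypothesis on $\mathscr{F}$ directly applies. Since the condition is local on $\fr{P}$, I will fix an affine open $\fr{P}'\subset\fr{P}$ and, after shrinking if necessary, use the strong affineness of $X\hookrightarrow Y$ to produce a global $f\in\Gamma(\fr{P}',\cO_{\fr{P}'})$ whose reduction defines $X\cap(Y\cap P')$ inside $Y\cap P'$. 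Then $V_\lambda=\{v(f)\geq\lambda\}$ is globally defined on $\fr{P}'_K$, and the second example following Remark \ref{rem: m adm inter} ensures that $V_\lambda$ is $\eta$-admissible for $\lambda$ sufficiently close to $1$.

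Given $\eta<1$, the plan is to take $(i_\eta^\ast,T_\eta^\ast):=(i_\eta,T_\eta)$ from the pre-$\mathscr{D}^\dagger$ structure on $\mathscr{F}$, and then to construct an appropriate $\underline{\lambda}_\eta^\ast\in\mathcal{S}$. First, since $T_\eta$ is quasi-compact and $\tube{Y}_{\fr{P}'}=\bigcup_n[Y]_n$, I will choose $n_0$ with $T_\eta\subset[Y]_{n_0}$. Then, applying Lemma \ref{lemma: funny order}(2), I will select $\underline{\lambda}_\eta^\ast$ so that (i) $V_{\lambda_n}$ is $\eta$-admissible for all $n\geq n_0$, and (ii) for every $\underline{\lambda}\gtrsim\underline{\lambda}_\eta^\ast$ and every $n\geq n_0$, the open $[Y]_n\cap V_{\underline{\lambda}}$ admits a finite affinoid cover by pieces $[Y]_{n'}\cap V_\lambda$ with $n'\geq n_0$ and $\lambda\geq\lambda_{n'}$. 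Property (ii) is the crucial use of the partial order $\lesssim$, reshuffling the cover so that every index $n'$ exceeds the prescribed threshold $n_0$.

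With these choices made, fix $\underline{\lambda}\gtrsim\underline{\lambda}_\eta^\ast$, $i\geq i_\eta$, an affine open $U\hookrightarrow P'$, an $\eta$-admissible affinoid neighbourhood $W$ of $\tube{U}_{\fr{P}'}$, and an affinoid $T_\eta\subset T\subset\tube{Y}_{\fr{P}'}$. Intersecting with $T$ the cover provided by (ii) produces a finite affinoid cover of $T\cap V_{\underline{\lambda}}$ by pieces $T_\alpha\cap V_{\lambda_\alpha}$ where $T_\alpha:=T\cap[Y]_{n_\alpha}$ with $n_\alpha\geq n_0$ and $\lambda_\alpha\geq\lambda_{n_\alpha}$. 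Each $T_\alpha$ is affinoid and contains $T_\eta$ (since $T_\eta\subset[Y]_{n_0}\subset[Y]_{n_\alpha}$ and $T_\eta\subset T$), while $W_\alpha:=W\cap V_{\lambda_\alpha}$ is the intersection of two $\eta$-admissible affinoids, hence itself an $\eta$-admissible affinoid. Crucially, $W_\alpha$ contains $\tube{U\cap D(f)}_{\fr{P}'}$, because on this tube the function $f$ reduces to a unit and so $v_x(f)=1\geq\lambda_\alpha$. Since $U\cap D(f)$ is affine open in $P'$, the pre-$\mathscr{D}^\dagger$ hypothesis on $\mathscr{F}$ applied to the tuple $(i,T_\alpha,U\cap D(f),W_\alpha)$ supplies the continuous $\Gamma(\fr{P}',\mathscr{D}^\eta_{\fr{P}'\Q})$-module structure on each $\Gamma(T_\alpha\cap W_\alpha,\mathscr{F}_i)$.

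The main obstacle will be the identification of $W_\alpha$ as an $\eta$-admissible affinoid neighbourhood of the tube of an affine open of $P'$, and the verification that the admissibility example for $V_\lambda$ integrates cleanly with the reshuffled cover supplied by Lemma \ref{lemma: funny order}(2); everything else is formal. Once these are in place, the required continuity of the action on
\[
\Gamma(T\cap W\cap V_{\underline{\lambda}},\mathscr{F}_i)\;=\;\Gamma(T\cap W,\,j_{\underline{\lambda}*}j^{-1}_{\underline{\lambda}}\mathscr{F}_i)
\]
follows by realising the latter as the equalizer of a finite \v{C}ech complex of continuous $\Gamma(\fr{P}',\mathscr{D}^\eta_{\fr{P}'\Q})$-modules, the restriction maps being automatically $\mathscr{D}^\eta_{\fr{P}'\Q}$-linear by the uniqueness of continuous extensions from dense subalgebras of Fr\'echet modules. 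Compatibility of the action as $(\underline{\lambda},i)$ varies over $\mathcal{S}\times I$ is then automatic from functoriality of the construction.
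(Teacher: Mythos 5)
Your proof is correct and takes essentially the same approach as the paper's: both use Lemma \ref{lemma: funny order}(2) to cover $T\cap W\cap V_{\underline{\lambda}}$ by finitely many affinoids of the form $T'\cap W'$ with $T_\eta\subset T'$ and $W'$ an $\eta$-admissible affinoid neighbourhood of the tube of an affine open of $P'$, then feed each piece into the pre-$\mathscr{D}^\dagger$ hypothesis on $\{\mathscr{F}_i\}$ and conclude via the finite-limit structure. Your write-up is more explicit about how the lemma is applied (tracking $n_0$, $n_\alpha$, and the passage from covers of $[Y]_n\cap V_{\underline{\lambda}}$ to covers of $T\cap V_{\underline{\lambda}}$, where you should also state that by quasi-compactness $T\subset[Y]_n$ for some $n\geq n_0$), but the substance matches the paper's proof.
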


\begin{proof}
We need to check that $\left\{j_{\underline{\lambda}*}j^{-1}_{\underline{\lambda}*}\mathscr{F}_i\right\}_{(\underline{\lambda},i)\in\mathcal{S}\times I}$ is a pre-$\mathscr{D}^\dagger$-module. 

Let $\eta<1$ be sufficiently close to $1$, and replace $\mathfrak{P}$ by its open affine subscheme $\mathfrak{P}'$. Choose a divisor $D\subset P$ such that $Y\setminus X=Y\cap D$. 

Since $\left\{\mathscr{F}_i\right\}_{i\in I}$ is a pre-$\mathscr{D}^\dagger$-module, we may choose $i_\eta$, $T_\eta\subset \tube{Y}_\mathfrak{P}$ such that for all affine open immersions $U\hookrightarrow P$, all $\eta$-admissible neighbourhoods $W$ of $\tube{U}_{\mathfrak{P}}$ inside $\mathfrak{P}_K$, all $i\geq i_\eta$, and all affinoids $T_\eta\subset T \subset \tube{Y}_{\mathfrak{P}} $, the $\Gamma(\mathfrak{P},\mathscr{D}_{\mathfrak{P}\Q})$-module structure on $\Gamma(T\cap W,\mathscr{F}_{i})$ extends to a continuous $\Gamma(\mathfrak{P},\mathscr{D}_{\mathfrak{P}\Q}^\eta)$-module structure.

By Lemma \ref{lemma: funny order}, we may choose $\underline{\lambda}_\eta$ such that for every $\underline{\lambda}\gtrsim \underline{\lambda}_\eta$, and every open affinoid $T_\eta\subset T\subset \tube{Y}_\mathfrak{P}$, the intersection $T\cap V_{\underline{\lambda}}$ has a finite cover by opens of the form $T' \cap V_\lambda$, where $T_\eta \subset T'\subset\tube{Y}_\mathfrak{P}$ is open affinoid, and $V_\lambda$ is an $\eta$-admissible open affinoid neighbourhood of $\tube{P\setminus D}_{\mathfrak{P}_K}$ in $\mathfrak{P}_K$.

We claim that for any affine open $U\hookrightarrow P$, any $\eta$-admissible neighbourhood $W$ of $\tube{U}_{\mathfrak{P}}$ inside $\mathfrak{P}$, any $i\geq i_\eta$, any $\underline{\lambda}\gtrsim \underline{\lambda}_\eta$, and any affinoid $T_\eta\subset T \subset \tube{Y}_{\mathfrak{P}}$,
\[ \Gamma(T\cap W,j_{\underline{\lambda}*}j^{-1}_{\underline{\lambda}*}\mathscr{F}_i) = \Gamma(T\cap W\cap V_{\underline{\lambda}},\mathscr{F}_i) \] 
is naturally a $\Gamma(\mathfrak{P},\mathscr{D}_{\mathfrak{P}\Q}^{\eta})$-module. To see this, we note that by the choice of $\underline{\lambda}_\eta$, $T\cap W\cap V_{\underline{\lambda}}$ is a finite union of opens of the form $T' \cap W\cap V_\lambda$, where $\lambda$ is close enough to $1$ that $V_\lambda$ is an $\eta$-admissible open neighbourhood of $\tube{P\setminus D}_{\mathfrak{P}_K}$ in $\mathfrak{P}_K$, and $T_\eta\subset T'\subset \tube{Y}_\mathfrak{P}$ is open affinoid. Thus by Remark \ref{rem: m adm inter}(\ref{rem: m adm inter num}) we see that $T\cap W\cap V_{\underline{\lambda}}$ is a finite union of opens of the form $T'\cap  W'$, where $W'$ is an $\eta$-admissible open neighbourhood of $\tube{U \setminus D}_{\mathfrak{P}_K}$ in $\mathfrak{P}_K$, and $T_\eta\subset T'\subset \tube{Y}_\mathfrak{P}$ is open affinoid. Hence $\Gamma(T\cap W,j_{\underline{\lambda}*}j^{-1}_{\underline{\lambda}*}\mathscr{F}_i)$ is a finite limit of $\Gamma(\mathfrak{P},\mathscr{D}_{\mathfrak{P}\Q}^{\eta})$-modules, thus a $\Gamma(\mathfrak{P},\mathscr{D}_{\mathfrak{P}\Q}^{\eta})$-module.
\end{proof}

\subsubsection{}

We next consider the case of $\mathcal{R}_{i_Z,\infty}$. So let $i_Z:Z\hookrightarrow Y$ be an inclusion of closed subschemes of $P$, and $\mathscr{F}$ a $\mathscr{D}_{\tube{Z}_\mathfrak{P}}$-module equipped with a pre-$\mathscr{D}^\dagger$-module structure $\left\{\mathscr{F}_i \right\}_{i\in I}$. Let $i_{Z,n}:\left[ Z\right]_n \rightarrow \tube{Y}_\mathfrak{P}$ denote the inclusion of the closed tube of radius $\eta_n$. Then we find
\begin{align*}\mathrm{colim}_{n_0} \prod_{n\geq n_0} i_{Z,n*}\left( \mathscr{F}|_{\left[ Z\right]_n}\right) &\cong \mathrm{colim}_{n_0} \prod_{n\geq n_0} i_{Z,n*}\left( \mathrm{colim}_i \mathscr{F}_i|_{\left[ Z\right]_n}\right) \\
&\cong \mathrm{colim}_{n_0} \prod_{n\geq n_0}\mathrm{colim}_i i_{Z,n*} \left( \mathscr{F}_i|_{\left[ Z\right]_n}\right) \\
&\cong \mathrm{colim}_{n_0}\mathrm{colim}_{\underline{i}\in I^{\N_{\geq n_0}}} \prod_{n\geq n_0} i_{Z,n*} \left( \mathscr{F}_{i_n}|_{\left[ Z\right]_n}\right)
\end{align*}
where $I^{\N_{\geq n_0}}$ denotes the set of sequences in $I$, starting at $i_{n_0}$. If $W\subset \tube{Y}_\mathfrak{P}$ is affinoid, then $W\cap \left[ Z\right]_n$ is also affinoid, so
\[ \Gamma(W,\prod_{n\geq n_0} i_{Z,n*} \mathscr{F}_{i_n}|_{\left[ Z\right]_n}) = \prod_{n\geq n_0} \Gamma(W\cap \left[ Z \right]_n,\mathscr{F}_{i_n}) \]  
is a countable product of Fr\'echet spaces, and therefore a Fr\'echet space. Hence we may consider $\left\{\prod_{n\geq n_0} i_{Z,n*} \left( \mathscr{F}_{i_n}|_{\left[ Z\right]_n}\right) \right\}_{n_0\geq 1, \underline{i}\in I^{\N_{\geq n_0}}}$ as an ind-object in the category of Fr\'echet $\mathscr{D}_{\tube{Y}_\mathfrak{P}}$-modules.

\begin{proposition} \label{prop: Ddag Roos} The ind-object $\left\{\prod_{n\geq n_0} i_{Z,n*} \left( \mathscr{F}_{i_n}|_{\left[ Z\right]_n}\right) \right\}_{n_0\geq 1, \underline{i}\in I^{\N_{\geq n_0}}}$ together with the isomorphism
\[ \mathrm{colim}_{n_0} \prod_{n\geq n_0} i_{Z,n*}\left( \mathscr{F}|_{\left[ Z\right]_n}\right) \cong \mathrm{colim}_{n_0}\mathrm{colim}_{\underline{i}\in I^{\N_{\geq n_0}}} \prod_{n\geq n_0} i_{Z,n*} \left( \mathscr{F}_{i_n}|_{\left[ Z\right]_n}\right) \]
defines a pre-$\mathscr{D}^\dagger$-module structure on $\mathrm{colim}_{n_0} \prod_{n\geq n_0} i_{Z,n*}\left( \mathscr{F}|_{\left[ Z\right]_n}\right)$.
\end{proposition}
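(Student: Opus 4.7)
The plan is to verify Definition \ref{defn: pre Ddag} for the ind-object $\{\prod_{n \geq n_0} i_{Z,n*}(\mathscr{F}_{i_n}|_{[Z]_n})\}_{n_0,\underline{i}}$ by extracting the required data directly from the pre-$\mathscr{D}^\dagger$-structure $\{\mathscr{F}_i\}_{i \in I}$ on $\mathscr{F}$. The guiding observation is that since $\tube{Z}_\fr{P} = \bigcup_n [Z]_n$ and the ind-category allows us to push the starting index $n_0$ arbitrarily high, a continuous $\Gamma(\fr{P}', \mathscr{D}^\eta_{\fr{P}'\Q})$-action on the product should be assemblable componentwise from the actions supplied by the hypothesis on $\{\mathscr{F}_i\}$.

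So first, given $\eta < 1$ close to $1$ and an open affine $\fr{P}' \subset \fr{P}$, I would apply the pre-$\mathscr{D}^\dagger$-structure on $\mathscr{F}$ to produce $i_\eta \in I$ and an open affinoid $T_\eta^0 \subset \tube{Z}_{\fr{P}'}$, and then pick $n_0$ with $T_\eta^0 \subset [Z]_{n_0}$ (possible by quasi-compactness of $T_\eta^0$). I would then declare $T_\eta := T_\eta^0$, now viewed as an open affinoid in $\tube{Y}_{\fr{P}'}$ via the inclusion $\tube{Z}_{\fr{P}'} \subset \tube{Y}_{\fr{P}'}$, and take as the ``starting index'' in the ind-category the pair $(n_0, \underline{i}_0)$ with $\underline{i}_0 = (i_\eta)_{n \geq n_0}$ constant.

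To verify the defining condition, given any $(n_0', \underline{i})$ dominating $(n_0, \underline{i}_0)$, any affine open $U \hookrightarrow P'$, any $\eta$-admissible affinoid neighbourhood $W$ of $\tube{U}_{\fr{P}'}$, and any open affinoid $T_\eta \subset T \subset \tube{Y}_{\fr{P}'}$, one identifies
\[ \Gamma\left(T \cap W,\, \prod_{n \geq n_0'} i_{Z,n*}\bigl(\mathscr{F}_{i_n}|_{[Z]_n}\bigr)\right) = \prod_{n \geq n_0'} \Gamma(T \cap W \cap [Z]_n, \mathscr{F}_{i_n}). \]
For each $n$ in the range, the intersection $T \cap [Z]_n$ is an open affinoid of $\tube{Z}_{\fr{P}'}$ (since $[Z]_n$ is affinoid when $\fr{P}'$ is affine) containing $T_\eta^0$, so the pre-$\mathscr{D}^\dagger$-structure on $\mathscr{F}$, applied with the affinoid $T \cap [Z]_n$ and the same $\eta$-admissible $W$, equips each factor with a continuous $\Gamma(\fr{P}', \mathscr{D}^\eta_{\fr{P}'\Q})$-action. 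A countable product of continuous actions on Fr\'echet spaces remains continuous in the product Fr\'echet topology, yielding the desired extension.

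The delicate point I expect is ensuring that $T \cap [Z]_n$ really is an allowed affinoid playing the role of ``$T$'' in Definition \ref{defn: pre Ddag} applied to $\mathscr{F}$ --- this is precisely why the definition is formulated to accept \emph{arbitrary} affinoids containing $T_\eta^0$, rather than a single fixed one. The $\eta$-admissibility of $W$ is a condition on $W$ as a subset of $\fr{P}'_K$, independent of the ambient tube, and so transfers between the $\tube{Y}$- and $\tube{Z}$-versions of the structure without modification.
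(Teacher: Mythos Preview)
The proposal is correct and follows essentially the same argument as the paper: extract $i_\eta$ and $T_\eta$ from the given pre-$\mathscr{D}^\dagger$-structure on $\tube{Z}_\fr{P}$, choose the starting index $n_0$ large enough that $[Z]_n\supset T_\eta$ for $n\geq n_0$, set the starting sequence to be constantly $i_\eta$, and then observe that each factor $\Gamma([Z]_n\cap T\cap W,\mathscr{F}_{i_n})$ inherits a continuous $\Gamma(\fr{P}',\mathscr{D}^\eta_{\fr{P}'\Q})$-action because $[Z]_n\cap T$ is an admissible affinoid containing $T_\eta$. Your remark on why the definition quantifies over all affinoids $T\supset T_\eta$ is exactly the point the paper is exploiting.
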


\begin{proof}
We need to check that $\left\{\prod_{n\geq n_0} i_{Z,n*} \left( \mathscr{F}_{i_n}|_{\left[ Z\right]_n}\right) \right\}_{n_0\geq 1, \underline{i}\in I^{\N_{\geq n_0}}}$ is a pre-$\mathscr{D}^\dagger$-module. 

So let $\eta<1$, and replace $\mathfrak{P}$ by its open affine subscheme $\mathfrak{P}'$. Since $\left\{\mathscr{F}_i\right\}_{i\in I}$ is a pre-$\mathscr{D}^\dagger$-module, we may choose $i_\eta$, $T_\eta\subset \tube{Z}_\mathfrak{P}$ such that for all affine open immersions $U\hookrightarrow P$, all $\eta$-admissible affinoid neighbourhoods $W$ of $\tube{U}_{\mathfrak{P}}$ inside $\mathfrak{P}_K$, all $i\geq i_\eta$, and all affinoids $T_\eta\subset T \subset \tube{Z}_{\mathfrak{P}} $, the $\Gamma(\mathfrak{P},\mathscr{D}_{\mathfrak{P}\Q})$-module structure on $\Gamma(T\cap W,\mathscr{F}_{i})$ extends to a continuous $\Gamma(\mathfrak{P},\mathscr{D}_{\mathfrak{P}\Q}^{\eta})$-module structure.

Choose $n_\eta$ such that $n\geq n_\eta \Rightarrow \left[Z\right]_n \supset T_\eta$, and choose a sequence $\underline{i}$ so that  $i_n\geq i_\eta$ for every $n\geq n_\eta$. Then for any affine open immersions $U\hookrightarrow P$, any $\eta$-admissible affinoid neighbourhood $W$ of $\tube{U}_{\mathfrak{P}}$ inside $\mathfrak{P}_K$, any $n_0\geq n_\eta$, any $\underline{i}'\geq \underline{i}$, and any affinoid $T_\eta\subset T \subset \tube{Y}_{\mathfrak{P}}$, we have
\[ \Gamma(T\cap W ,\prod_{n\geq n_0} i_{Z,n*} \mathscr{F}_{i_n}|_{\left[ Z\right]_n}) = \prod_{n\geq n_0} \Gamma(\left[ Z \right]_n\cap T \cap W   ,\mathscr{F}_{i_n}) . \]
Since $\left[ Z\right]_n\cap T \subset \tube{Z}_\mathfrak{P}$ is an open affinoid containing $T_\eta$, and $i_n\geq i_\eta$, we know that each $\Gamma(\left[ Z \right]_n \cap T\cap W  ,\mathscr{F}_{i_n})$ is naturally a  $\Gamma(\mathfrak{P},\mathscr{D}_{\mathfrak{P}\Q}^{\eta})$-module. Thus so is their product.
\end{proof}

Thus we obtain a pre-$\mathscr{D}^\dagger$-module structure on the complex $\mathcal{R}_{i_Z,\infty}(\mathscr{F})$ (or more precisely, on each of the two terms in the complex, and on the differential between them). Starting with Theorem \ref{theo: pre Ddag over} and repeatedly applying Propositions \ref{prop: Ddag jdag} and \ref{prop: Ddag Roos}, we obtain the:

\begin{theorem} Let $\mathfrak{P}$ be a smooth formal $\mathcal{V}$-scheme, $\mathscr{F}$ a constructible $\mathcal{O}_{\mathfrak{P}_K}$-module with convergent (integrable) connection, and $\left\{P_{\alpha}\right\}_{\alpha\in A}$ a good stratification for $\mathscr{F}$. Then, for every chain $\{\alpha_0<\ldots < \alpha_r\}\subset A$, the complex
\[  \mathrm{Tot}\left( \mathcal{R}_{\overline{i}_{\alpha_0},\infty}j^\dagger_{P_{\alpha_0}}\mathcal{R}_{\overline{i}_{\alpha_1\alpha_0},\infty}j^\dagger_{P_{\alpha_1}}\ldots \mathcal{R}_{\overline{i}_{\alpha_r\alpha_{r-1}},\infty}j^\dagger_{P_{\alpha_r}}\overline{i}^{-1}_{\alpha_r}\mathscr{F}\right)\]
is naturally endowed with a pre-$\mathscr{D}^\dagger$-module structure.
\end{theorem}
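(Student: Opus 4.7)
The proof is a straightforward induction on the length $r$ of the chain, assembling the three preservation results just established and working from the innermost operation outward.

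For the base case, I would observe that $j^\dagger_{P_{\alpha_r}}\overline{i}^{-1}_{\alpha_r}\mathscr{F}$ is a coherent $j^\dagger_{P_{\alpha_r}}\mathcal{O}_{\tube{\overline{P}_{\alpha_r}}_\fr{P}}$-module with overconvergent integrable connection. Coherence follows from the good stratification hypothesis (which says that $\mathscr{F}|_{\tube{P_{\alpha_r}}_\fr{P}}$ is coherent) together with the equivalence between coherent sheaves on the open tube and coherent $j^\dagger$-modules on the larger tube. The overconvergent property of the connection is automatic, since $\mathscr{F}$ and its connection are defined globally on $\fr{P}_K$ and therefore extend to any open neighbourhood of $\tube{P_{\alpha_r}}_\fr{P}$ in $\tube{\overline{P}_{\alpha_r}}_\fr{P}$. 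Theorem \ref{theo: pre Ddag over} then furnishes a pre-$\mathscr{D}^\dagger$-structure on this innermost layer.

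For the inductive step, suppose that for some $0 \le k \le r$ I have already produced a bounded complex on $\tube{\overline{P}_{\alpha_k}}_\fr{P}$ whose terms carry pre-$\mathscr{D}^\dagger$-structures and whose differentials admit pre-$\mathscr{D}^\dagger$-lifts (in the sense defined just after Definition \ref{defn: pre Ddag}). Applying $\mathcal{R}_{\overline{i}_{\alpha_k\alpha_{k-1}},\infty}$ termwise gives a double complex on $\tube{\overline{P}_{\alpha_{k-1}}}_\fr{P}$ whose entries inherit pre-$\mathscr{D}^\dagger$-structures from Proposition \ref{prop: Ddag Roos}, with both the horizontal Roos differentials and the induced vertical differentials being pre-$\mathscr{D}^\dagger$-morphisms (continuity of the $\mathscr{D}^\eta$-actions forces compatibility with the ind-system morphisms extracted from the previous inductive stage). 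Totalising this double complex and then applying $j^\dagger_{P_{\alpha_{k-1}}}$ preserves the whole package termwise, by Proposition \ref{prop: Ddag jdag}. Iterating from $k = r$ down to $k = 0$ and totalising the resulting multi-indexed complex yields the theorem.

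The only piece of housekeeping not covered directly by the three previous results is that pre-$\mathscr{D}^\dagger$-structures are stable under finite direct sums, so that totalisations of double complexes of pre-$\mathscr{D}^\dagger$-modules remain pre-$\mathscr{D}^\dagger$. This is immediate from Definition \ref{defn: pre Ddag}, since given two pre-$\mathscr{D}^\dagger$-modules one can choose a common $T_\eta$ (by intersection, using Remark \ref{rem: m adm inter}(\ref{rem: m adm inter num})) and a common $i_\eta$. Consequently there is no essential obstacle beyond bookkeeping: all the analytic substance sits in Theorem \ref{theo: pre Ddag over} and Propositions \ref{prop: Ddag jdag}--\ref{prop: Ddag Roos}, and the present theorem amounts to composing their conclusions in the correct order.
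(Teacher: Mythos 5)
Your proof is correct and follows essentially the same route as the paper, which derives the theorem in a single sentence by starting with Theorem \ref{theo: pre Ddag over} for the innermost layer and then repeatedly applying Propositions \ref{prop: Ddag jdag} and \ref{prop: Ddag Roos}. You have simply made explicit the induction over the chain length, the preservation of pre-$\mathscr{D}^\dagger$-lifts of the differentials, and the (immediate) stability of pre-$\mathscr{D}^\dagger$-structures under the finite direct sums occurring in the totalisation.
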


Hence by Lemma \ref{lemma: pre Ddag sp} (and the explicit construction of \S\ref{sec: concrete holim}) we obtain the:

\begin{corollary} Let $\mathfrak{P}$ be a smooth formal $\mathcal{V}$-scheme, $\mathscr{F}$ a constructible $\mathcal{O}_{\mathfrak{P}_K}$-module with convergent (integrable) connection, and $\left\{P_{\alpha}\right\}_{\alpha\in A}$ a good stratification for $\mathscr{F}$. Then the complex
\[ \mathrm{sp}_*\mathrm{Tot}_{\mathrm{sd}(A)} \mathcal{R}_{\overline{P}_{\bullet},\infty}j^\dagger_{P_\bullet} \mathscr{F} \]
is naturally endowed with a $\mathscr{D}^\dagger_{\mathfrak{P}\Q}$-module structure.
\end{corollary}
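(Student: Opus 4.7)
The plan is to promote the pre-$\sD^\dagger$-module structures produced by the preceding theorem (one for every chain in $\mathrm{sd}(A)$) to a single pre-$\sD^\dagger$-module structure on the double complex whose totalisation is $\mathrm{Tot}_{\mathrm{sd}(A)}\mathcal{R}_{\overline{P}_{\bullet},\infty}j^\dagger_{P_\bullet}\mathscr{F}$, and then invoke Lemma \ref{lemma: pre Ddag sp} to push this forward to a $\sD^\dagger_{\fr{P}\Q}$-module structure on the $\mathrm{sp}_*$ of that complex. In view of the explicit formula in \S\ref{sec: concrete holim}, this reduces to two stability properties: (i) finite products of pre-$\sD^\dagger$-modules are again pre-$\sD^\dagger$-modules, and (ii) the restriction maps $\mathscr{K}_{\{\alpha_0<\ldots<\alpha_r\}} \to \mathscr{K}_{\{\beta_0<\ldots<\beta_{r+1}\}}$ associated to a chain inclusion admit canonical pre-$\sD^\dagger$-lifts.

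For (i), I would argue directly from Definition \ref{defn: pre Ddag}. Given pre-$\sD^\dagger$-modules $\{\mathscr{F}^{(s)}_i\}_{i\in I_s}$ for $s=1,\ldots,N$, form the ind-object $\{\prod_s \mathscr{F}^{(s)}_{i_s}\}_{(i_1,\ldots,i_N)\in\prod_s I_s}$; since $\mathrm{sd}(A)$ is finite (because $A$ is), these are genuinely finite products. Fréchet-ness of sections on an open affinoid is preserved by finite products, and, given $\eta<1$ and an open affine $\fr{P}'\subset\fr{P}$, one takes $i_{s,\eta}\in I_s$ and $T_{s,\eta}\subset\tube{Y}_{\fr{P}'}$ supplied by each factor, then forms the tuple $(i_{s,\eta})_s$ and the (finite) union of affinoid neighbourhoods of the $T_{s,\eta}$ (after replacing by a containing affinoid if necessary). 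For any $U\hookrightarrow P'$, any $\eta$-admissible $W$, and any affinoid $T$ containing this union, $\Gamma(T\cap W,\prod_s \mathscr{F}^{(s)}_{i_s})$ is the finite product of $\sD^\eta_{\fr{P}'\Q}$-modules, hence is itself a continuous $\sD^\eta_{\fr{P}'\Q}$-module, as required.

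For (ii), the restriction maps in the diagrams $\mathbf{R}i_{\overline{P}_\bullet*}j^\dagger_{P_\bullet}\mathscr{F}$ are built from the adjunction units attached to the closed immersions $\overline{i}_{\alpha_{s}\alpha_{s-1}}$ and from the natural maps $\mathrm{id}\to j^\dagger_{P_{\alpha_s}}$. At the level of Roos-complex plus $j^\dagger$ resolutions, each such structural map is induced by restriction of sections along inclusions of tubes (or of open affinoids in $V_{\underline{\lambda}}$'s) and hence is manifestly continuous on each Fréchet piece. Consequently, the canonical maps between the concrete ind-objects constructed in Proposition \ref{prop: Ddag jdag} and Proposition \ref{prop: Ddag Roos} are morphisms of ind-objects of Fréchet $\sD_{\tube{Y}_\fr{P}}$-modules, commuting with the continuous $\sD^\eta$-actions in the stable regime. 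In particular, the alternating-sign maps defining the $\mathrm{Tot}_{\mathrm{sd}(A)}$-differential are pre-$\sD^\dagger$-lifts of the underlying $\sD_{\tube{P}_\fr{P}}$-linear restrictions.

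Combining (i) and (ii) term-by-term shows that the whole double complex is a complex of pre-$\sD^\dagger$-modules, with pre-$\sD^\dagger$-linear differentials, and the same is then true of $\mathrm{Tot}_{\mathrm{sd}(A)}\mathcal{R}_{\overline{P}_{\bullet},\infty}j^\dagger_{P_\bullet}\mathscr{F}$. Applying $\mathrm{sp}_*$ and Lemma \ref{lemma: pre Ddag sp} then produces a complex of $\sD^\dagger_{\fr{P}\Q}$-modules with $\sD^\dagger_{\fr{P}\Q}$-linear differentials, which is the desired structure. The main subtlety to guard against is the bookkeeping of the index sets: the pre-$\sD^\dagger$-conditions in Definition \ref{defn: pre Ddag} involve existential quantifiers over $i_\eta$ and $T_\eta$, and one must verify that for a finite product of pre-$\sD^\dagger$-modules the same tuple $(i_{s,\eta})_s$ and the same $T_\eta$ can be used uniformly across all the conditions imposed by the structural maps; however, since $\mathrm{sd}(A)$ is finite this is just a matter of taking maxima in the ind-categories and unions of affinoids.
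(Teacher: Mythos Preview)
Your proposal is correct and follows essentially the same approach as the paper. The paper's own argument is a one-line appeal to Lemma \ref{lemma: pre Ddag sp} together with the explicit description of $\mathrm{Tot}_{\mathrm{sd}(A)}$ in \S\ref{sec: concrete holim}; you have simply unpacked what that appeal entails, namely the two stability properties (i) and (ii), and handled the bookkeeping of index sets explicitly.
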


Of course, this $\mathscr{D}^\dagger_{\mathfrak{P}\Q}$-module structure is compatible with refining the stratification, and the category of good stratifications is filtered, so we obtain the $\mathscr{D}^\dagger$-linear lifting of $\mathbf{R}\mathrm{sp}_*$ we are after.

\begin{corollary} \label{cor: dagger lifting of sp_*}Let $\mathfrak{P}$ be a smooth formal $\mathcal{V}$-scheme. There exists a canonical lifting
\[ \mathbf{R}\mathrm{sp}_*:\mathrm{Isoc}_{\mathrm{cons}}(\mathfrak{P}) \rightarrow {\bf D}^b(\mathscr{D}^\dagger_{\mathfrak{P}\Q}) \]
of the pushforward functor
\[ \mathbf{R}\mathrm{sp}_*\colon \mathrm{Isoc}_{\mathrm{cons}}(\mathfrak{P}) \rightarrow {\bf D}^b(\mathscr{D}_{\mathfrak{P}\Q}). \]
It is explicitly defined for $\mathscr{F}\in \mathrm{Isoc}_\mathrm{cons}(\mathfrak{P})$ by choosing a good stratification $\left\{P_{\alpha}\right\}_{\alpha\in A}$ for $\mathscr{F}$ and setting
\[ \mathbf{R}\mathrm{sp}_*\mathscr{F}:= \mathrm{sp}_*\mathrm{Tot}_{\mathrm{sd}(A)} \mathcal{R}_{\overline{P}_{\bullet},\infty}j^\dagger_{P_\bullet} \mathscr{F}.\]
\end{corollary}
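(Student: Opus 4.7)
The plan is to assemble the pointwise $\sD^\dagger_{\fP\Q}$-module structure supplied by the preceding corollary into a genuine functor by exploiting the fact that the category of good stratifications of $\mathscr{F}$ is filtered. Given $\mathscr{F}\in\mathrm{Isoc}_\mathrm{cons}(\fP)$ and a good stratification $\{P_\alpha\}_{\alpha\in A}$, write
\[ S_A(\mathscr{F}) := \mathrm{sp}_*\mathrm{Tot}_{\mathrm{sd}(A)} \mathcal{R}_{\overline{P}_{\bullet},\infty}j^\dagger_{P_\bullet} \mathscr{F}, \]
which by the preceding corollary is a well-defined complex of $\sD^\dagger_{\fP\Q}$-modules. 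The results of \S\ref{sec: calc Rsp} already show that, after forgetting down to $\sD_{\fP\Q}$-modules, $S_A(\mathscr{F})$ computes $\mathbf{R}\mathrm{sp}_*\mathscr{F}$ in $\bD^b(\sD_{\fP\Q})$; the content here is to upgrade this to a canonical class in $\bD^b(\sD^\dagger_{\fP\Q})$.

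The first step is to show that if $B$ refines $A$, then the natural comparison morphism $S_A(\mathscr{F})\to S_B(\mathscr{F})$ is a quasi-isomorphism which is moreover $\sD^\dagger_{\fP\Q}$-linear. Quasi-isomorphy follows because both sides are $\mathrm{sp}_*$-acyclic resolutions of $\mathscr{F}$, and $\sD^\dagger_{\fP\Q}$-linearity is inherited because the refinement map is constructed term-by-term from the adjunction units $\mathrm{id}\to i_{\alpha*}i_\alpha^{-1}$, which by Propositions \ref{prop: Ddag jdag} and \ref{prop: Ddag Roos} together with Theorem \ref{theo: pre Ddag over} are all morphisms of pre-$\sD^\dagger$-modules. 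Applying Lemma \ref{lemma: pre Ddag sp} to the induced pre-$\sD^\dagger$-lift then upgrades the refinement map to a $\sD^\dagger_{\fP\Q}$-linear map of complexes. Since any two good stratifications admit a common refinement, this defines a canonical isomorphism class in $\bD^b(\sD^\dagger_{\fP\Q})$, which we take as $\mathbf{R}\mathrm{sp}_*\mathscr{F}$.

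Next I would establish functoriality. Given a morphism $\alpha\from \mathscr{F}\to\mathscr{G}$ of constructible isocrystals, I would pick a good stratification that is good for both $\mathscr{F}$ and $\mathscr{G}$ (which exists by filteredness), and use that on strata $\alpha$ restricts to a morphism of coherent sheaves with convergent connection, so that the pre-$\sD^\dagger$-module structures of Theorem \ref{theo: pre Ddag over} are functorial. Propagating through the functors $j_X^\dagger$ and $\mathcal{R}_{i_Z,\infty}$ via Propositions \ref{prop: Ddag jdag} and \ref{prop: Ddag Roos}, and then through $\mathrm{Tot}_{\mathrm{sd}(A)}$, yields a pre-$\sD^\dagger$-lift of $S_A(\alpha)$, hence a morphism in $\bD^b(\sD^\dagger_{\fP\Q})$ by Lemma \ref{lemma: pre Ddag sp}. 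Compatibility with composition is checked in the same manner on a single good stratification refining stratifications adapted to all the relevant morphisms. Independence of the good stratification used, and the fact that composition and identities are preserved, follow from the refinement compatibility established above.

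The main obstacle is the $\sD^\dagger$-linearity of the refinement and functoriality maps, but this dissolves once one observes that \emph{every} morphism occurring in the construction (adjunction units, Roos transition maps, restriction maps for varying $n_0$ and $\underline{\lambda}$, and the morphism $\alpha$ itself on strata) is a morphism of pre-$\sD^\dagger$-modules by construction, and Lemma \ref{lemma: pre Ddag sp} transports this structure through $\mathrm{sp}_*$. Finally, to verify that the resulting functor genuinely lifts $\mathbf{R}\mathrm{sp}_*\from\mathrm{Isoc}_\mathrm{cons}(\fP)\to\bD^b(\sD_{\fP\Q})$, one notes that the forgetful functor $\bD^b(\sD^\dagger_{\fP\Q})\to \bD^b(\sD_{\fP\Q})$ applied to $S_A(\mathscr{F})$ recovers the $\sD_{\fP\Q}$-linear complex from \S\ref{sec: calc Rsp}, which is manifestly quasi-isomorphic to $\mathbf{R}\mathrm{sp}_*\mathscr{F}$.
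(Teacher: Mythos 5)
Your proposal is correct and follows the paper's approach: the paper also derives the statement directly from the preceding corollary by invoking compatibility of the $\sD^\dagger$-module structure with refinement of stratifications and filteredness of the category of good stratifications, and your expansion of that terse remark — observing that the refinement and functoriality maps are built out of morphisms of pre-$\sD^\dagger$-modules, so that Lemma \ref{lemma: pre Ddag sp} transports $\sD^\dagger$-linearity through $\mathrm{sp}_*$ — is exactly what the paper intends but leaves implicit.
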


In order to obtain a functor with the correct essential image (see \S\ref{sec: finite} below), we define
\[ \mathrm{sp}_!:=\mathbf{R}\mathrm{sp}_*[\dim \mathfrak{P}]\colon\mathrm{Isoc}_{\mathrm{cons}}(\mathfrak{P}) \rightarrow  {\bf D}^b(\mathscr{D}^\dagger_{\mathfrak{P}\Q}) \]
to be the functor $\mathbf{R}\mathrm{sp}_*$ shifted by the relative dimension of $\mathfrak{P}$ over $\mathcal{V}$. We will sometimes write $\mathrm{sp}_{\fr{P}!}$ if we want to emphasise that we are working on the formal scheme $\mathfrak{P}$. In general, we will have to work quite hard to show compatibility of $\mathrm{sp}_!$ with other cohomological functors, but there is at least one instance where things are relatively straightforward.

\begin{proposition} \label{prop: comm finite etale} Let $u:\fr{P}'\rightarrow \fr{P}$ be a finite \'etale morphism of smooth formal schemes. Then the functor $u_{K*}$ from $\mathscr{D}_{\fr{P}'_K}$-modules to $\mathscr{D}_{\fr{P}_K}$-modules preserves constructible isocrystals, and the diagram
\[ \xymatrix{ \mathrm{Isoc}_{\mathrm{cons}}(\mathfrak{P}') \ar[r]^{\mathrm{sp}_{\fr{P}'!}}\ar[d]_{u_{K*}} & {\bf D}^b(\mathscr{D}^\dagger_{\mathfrak{P}'\Q}) \ar[d]^{u_+} \\  
\mathrm{Isoc}_{\mathrm{cons}}(\mathfrak{P}) \ar[r]^{\mathrm{sp}_{\fr{P}!}}& {\bf D}^b(\mathscr{D}^\dagger_{\mathfrak{P}\Q})
 }  \]
commutes up to natural isomorphism.
\end{proposition}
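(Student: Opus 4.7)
The plan is first to verify that $u_{K*}$ preserves constructible isocrystals, and then to compare the explicit formula for $\mathrm{sp}_!$ from Corollary \ref{cor: dagger lifting of sp_*} on both sides of the diagram. For the first claim, since $u\colon \fr{P}'\to\fr{P}$ is finite étale (hence proper and étale), tubes commute with pullback: for any locally closed $X\subset P$ there is a Cartesian square whose upper horizontal map $\tube{u^{-1}X}_{\fr{P}'} \to \tube{X}_{\fr{P}}$ is again finite étale. Given a good stratification $\{P'_\beta\}$ of $P'$ for $\mathscr{F}'$, finiteness of $u$ allows us to refine it to the preimage of a stratification $\{P_\alpha\}$ of $P$, over which $u_{K*}\mathscr{F}'$ will be coherent (finite pushforward of a coherent sheaf along the induced map of tubes). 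The convergent stratification on $u_{K*}\mathscr{F}'$ is transported by functoriality of the diagonal embeddings, and the Frobenius type condition passes through finite étale pushforward.

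Next I would recall that for finite étale $u$, the pushforward $u_+$ on $\mathscr{D}^\dagger$-modules coincides with $u_{K*}$: the transfer bimodule reduces to the structure sheaf, and $u$ being proper means $u_! = u_+$. Moreover $\dim\fr{P}=\dim\fr{P}'$, so the dimension shifts in the two instances of $\mathrm{sp}_!$ agree. The commutativity of the diagram then reduces to producing a natural identification
\[ u_{K*}\bigl(\mathrm{Tot}_{\mathrm{sd}(A)}\mathcal{R}_{\overline{P}'_\bullet,\infty}j^\dagger_{P'_\bullet}\mathscr{F}'\bigr)\isomto \mathrm{Tot}_{\mathrm{sd}(A)}\mathcal{R}_{\overline{P}_\bullet,\infty}j^\dagger_{P_\bullet}(u_{K*}\mathscr{F}'), \]
compatible with $\mathrm{sp}_*$. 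I would check this term by term: $u_{K*}$ commutes with $j_X^\dagger$ since strict neighbourhoods of $\tube{u^{-1}X}_{\fr{P}'}$ may be taken as preimages of strict neighbourhoods of $\tube{X}_{\fr{P}}$; with each $i_{Z,n*}$ in the Roos complex by finite base change; and with the countable products appearing there, since a finite pushforward commutes with arbitrary limits. Post-composing with $\mathrm{sp}_*$ and using $\mathrm{sp}_{\fr{P}}\circ u_K = u\circ \mathrm{sp}_{\fr{P}'}$ together with exactness of $u_*$ yields the required isomorphism at the level of $\mathscr{D}_{\fr{P}\Q}$-modules.

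The main obstacle is upgrading this to a $\mathscr{D}^\dagger$-linear, rather than merely $\mathscr{D}$-linear, statement. The pre-$\mathscr{D}^\dagger$-module structures of \S\ref{sec: Ddagger stab} are given as ind-objects of Fréchet $\mathscr{D}$-modules whose sections on sufficiently small $\eta$-admissible affinoids carry a $\mathscr{D}^\eta$-action. The key observation is that if $W\subset \fr{P}_K$ is an $\eta$-admissible affinoid then so is $u_K^{-1}(W)\subset \fr{P}'_K$: étaleness of $u$ identifies a neighbourhood of the diagonal of $\fr{P}'^2$ with the pullback of a neighbourhood of the diagonal of $\fr{P}^2$, so $\eta$-admissibility transports in both directions. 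Consequently the ind-object defining the pre-$\mathscr{D}^\dagger$-structure on $\mathrm{Tot}_{\mathrm{sd}(A)}\mathcal{R}_{\overline{P}'_\bullet,\infty}j^\dagger_{P'_\bullet}\mathscr{F}'$ pushes forward to one realising the pre-$\mathscr{D}^\dagger$-structure on its image on $\fr{P}$, and uniqueness of continuous extensions of the $\mathscr{D}_{\fr{P}\Q}$-action to a $\mathscr{D}^\eta_{\fr{P}\Q}$-action upgrades the term-by-term comparison above to a $\mathscr{D}^\dagger_{\fr{P}\Q}$-linear isomorphism, as required.
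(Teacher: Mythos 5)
Your overall strategy matches the paper's: find a stratification of $P$ whose preimage is a good stratification for the constructible isocrystal on $\fr{P}'$, compute both sides of the diagram term-by-term via the explicit Roos complexes, and upgrade to $\mathscr{D}^\dagger$-linearity by transporting $\eta$-admissibility along $u$. However, there is a genuine gap in your first step. You assert that finiteness of $u$ lets you refine a given good stratification $\{P'_\beta\}$ of $P'$ to the preimage of a stratification $\{P_\alpha\}$ of $P$. This is false in general: take $u\colon P'\to P$ to be the trivial double cover $P\sqcup P\to P$, and stratify $P'$ asymmetrically (one copy nontrivially, the other trivially). The preimage of any stratification of $P$ is symmetric across the two copies and so can never refine the given asymmetric one. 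The paper circumvents this by first passing to the Galois closure $u''\colon \fr{P}''\to\fr{P}$ with group $G$ — a legitimate reduction because, writing $u'\colon\fr{P}''\to\fr{P}'$ for the corresponding finite étale cover, $\mathscr{F}$ is a direct summand of $u'_{K*}u'^*\mathscr{F}$ (the trace splits the unit of adjunction over $\Q$), hence $u_{K*}\mathscr{F}$ is a direct summand of $u''_{K*}(u'^*\mathscr{F})$ — and then refining the given stratification on $\fr{P}''$ to a $G$-invariant one, whose image in $P$ is a stratification with exactly the preimage property you want. Once you insert this Galois closure step, the rest of your argument — that $\dim\fr{P}'=\dim\fr{P}$ so the shifts agree, that $u_{K*}$ commutes with $j^\dagger$ and with the countable products in the Roos complexes, and that $\eta$-admissibility transports along $u$ via the Cartesian diagrams on tubes of the diagonal — is correct and is in line with the paper's reasoning.
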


\begin{proof}
That $u_{K*}$ preserves overconvergence follows from the fact that the induced map
\[ u: \tube{P'}_{\fr{P}'^2} \rightarrow \tube{P}_{\fr{P}^2} \]
is finite \'etale, and both diagrams
\[ \xymatrix{ \tube{P'}_{\fr{P}'^2} \ar[r] \ar[d]_{p_i} & \tube{P}_{\fr{P}^2} \ar[d]^{p_i} \\ \fr{P}'\ar[r] & \fr{P} }\]
for $i=1,2$ are Cartesian.

To see that $u_{K*}$ preserves constructibility, we may replace $u$ by its Galois closure, and thus assume that $u$ is Galois, with Galois group $G$. If we let $\mathscr{F}\in \mathrm{Isoc}_{\mathrm{cons}}(\mathfrak{P}')$ and pick with a good stratification of $P'$ with respect to $\mathscr{F}$, then we may always choose a refinement which is $G$-invariant. We can then see that this image of this $G$-invariant refinement will be a good stratification for $P$ with respect to $u_{K*}\mathscr{F}$, and so $u_{K*}$ preserves constructibility.

Finally, to obtain commutativity of the diagram, we can simply compute $\mathrm{sp}_{\fr{P}'!}\mathscr{F}$ and $\mathrm{sp}_{\fr{P}!}u_{K*}\mathscr{F}$ with respect to the such a $G$-equivariant stratifications.
\end{proof}

\subsection{On the definition of $\bm{\mathrm{sp}_!}$}

One of the reasons that the above definition of $\mathrm{sp}_!$ is so involved is that we have been working with general constructible isocrystals $\mathscr{F}\in\mathrm{Isoc}_\mathrm{cons}(\fr{P})$, rather than just overconvergent isocrystals supported on some locally closed subscheme of $\fr{P}$. While we are mostly interested in this latter case, constructing $\mathrm{sp}_!$ at this greater level of generality will significantly simplify certain d\'evissage arguments that will be used later on to establish the key properties of $\mathrm{sp}_!$. For overconvergent isocrystals, it is, however, possible to give a somewhat simplified definition of $\mathrm{sp}_!$, that we briefly outline here. 

Suppose that we have a frame $(X\overset{j}{\hookrightarrow}Y\overset{i}{\hookrightarrow}\fr{P})$ and a partially overconvergent isocrystal $E\in \mathrm{Isoc}(X,Y/K)$, realised as a locally free $\mathcal{O}_{\tube{X}_\fr{P}}$-module with overconvergent connection $E_\fr{P}$. Let $\mathscr{F}=j_*E_\fr{P}$ be the associated $j_X^\dagger\cO_{\tube{Y}_\fr{P}}$-module. 

First, let us assume that the open immersions $X\hookrightarrow Y$ and $P\setminus Y \hookrightarrow P$ are strongly affine. Then the simple complex associated to the double complex
\[ \mathcal{R}_{Y,\infty}\mathscr{F} \rightarrow j_{P\setminus Y}^\dagger\mathcal{R}_{Y,\infty}\mathscr{F}.\]
has a natural pre-$\mathscr{D}^\dagger$-module structure, and we can construct a quasi-isomorphism
\[ \mathrm{sp}_!i_{Y!}\mathscr{F} \cong \mathrm{sp}_*\mathrm{Tot}\left(\mathcal{R}_{Y,\infty}\mathscr{F} \rightarrow j_{P\setminus Y}^\dagger\mathcal{R}_{Y,\infty}\mathscr{F} \right)[\dim \fr{P}]\]
of complexes of $\mathscr{D}^\dagger_{\fr{P}\Q}$-modules.

If either $X\hookrightarrow Y$ or $P\setminus Y\hookrightarrow P$ are not strongly affine, we proceed by taking suitable affine covers and using Berthelot's resolution \cite[Proposition 2.1.8]{Ber96b}. We will not use this alternative construction of $\mathrm{sp}_!$ anywhere in this article.

\section{The trace map in rigid geometry} \label{sec: trace}

We have defined, for any smooth formal scheme $\fr{P}$ a canonical functor
\[ \mathrm{sp}_!\colon \mathrm{Isoc}_\mathrm{cons}(\mathfrak{P}) \rightarrow {\bf D}^b(\mathscr{D}^\dagger_{\mathfrak{P}\Q})\]
which, up to a shift, lifts the natural functor $\mathbf{R}\mathrm{sp}_*: \mathrm{Isoc}_\mathrm{cons}(\mathfrak{P}) \rightarrow {\bf D}^b(\mathscr{D}_{\mathfrak{P}\Q})$. Our next goal is to show the overholonomicity of objects in the essential image of $\mathrm{sp}_!$. The strategy will be to reduce to the fundamental result of Caro--Tsuzuki \cite{CT12}, and the key ingredient that we will need to implement this strategy will be a $\mathscr{D}^\dagger$-linear lifting of the trace morphism in rigid analytic geometry (see Theorem \ref{theo: tr conc fin et} below).

The rigid analytic trace morphism can be described most naturally using the theory of higher direct images with compact support for sheaves on (germs of) adic spaces, the basic properties of which are developed in \cite{AL20}.

\subsection{Proper pushforwards} \label{subsec: f! prop}

We recall here the key results of \cite{AL20}. For any partially proper morphism $f\colon\mathscr{X}\rightarrow \mathscr{Y}$ in $\mathbf{Germ}_K$ (in the sense of \cite[Definition 1.10.15]{Hub96}) there is a functor
\[ \mathbf{R}f_!\colon {\bf D}^+(\mathscr{X}) \rightarrow  {\bf D}^+(\mathscr{Y}), \] 
having the following properties:
\begin{enumerate}
\item $f_!:=\mathcal{H}^0(\mathbf{R}f_!)$ is the functor of sections with proper support;
\item $\mathbf{R}f_!$ is the total derived functor of $f_!$;
\item there is a natural isomorphism $\mathbf{R}g_!\circ \mathbf{R}f_! \isomto \mathbf{R}(g\circ f)_!$ whenever $f,g$ are composable, partially proper morphisms in $\mathbf{Germ}_K$; 
\item $\mathbf{R}f_!=\mathbf{R}f_*$ whenever $f$ is proper;
\item $\mathbf{R}f_!= f_!$ is the usual extension by zero functor whenever $f$ is a (partially proper) locally closed immersion. 
\setcounter{saveenum}{\value{enumi}}
\end{enumerate}
If $f:\mathscr{X}\rightarrow \spa{K,\mathcal{V}}$ is the structure map of an object of $\mathbf{Germ}_K$, we will write $\mathbf{R}\Gamma_c(\mathscr{X},-)$ for $\mathbf{R}f_!$, and ${\rm H}^i_c(\mathscr{X},-)$ for its cohomology groups.

Now suppose that $f\colon\mathscr{X}\rightarrow \mathscr{Y}$ is smooth of relative dimension $d$, partially proper in the sense of Kiehl (see \cite[Definition 4.1.4]{AL20}, in general this is a stronger condition than being partially proper, although it is equivalent for morphisms of analytic varieties over $K$). If $\mathscr{Y}$ is overconvergent, we constructed in \cite[\S5]{AL20} a trace map
\[ \mathrm{Tr}\colon \mathbf{R}^{2d}f_! \Omega^\bullet_{\mathscr{X}/\mathscr{Y}} \rightarrow \mathcal{O}_{\mathscr{Y}}\]
such that:
\begin{enumerate} \setcounter{enumi}{\value{saveenum}} 
\item \label{num: residue}when $Y=\spa{R,R^+}$ is affinoid, and $X=\D^d_Y(0;1^-)$ is the relative open unit disc over $Y$, $\Tr$ is induced by the residue map
\begin{align*} {\rm H}^d_c(X/Y,\omega_{X/Y}) \isomto R\weak{x_1^{-1},\ldots,x_d^{-1}} \;d\log x_1\wedge \ldots \wedge d\log x_d &\rightarrow R \\ \sum_{i_1,\ldots,i_d\geq 0} r_{i_1,\ldots,i_d} x_1^{-i_1}\ldots x_d^{-i_d} \;d\log x_1\wedge \ldots \wedge d\log x_d&\mapsto r_{0,\ldots,0};  \end{align*}
\item \label{num: iso} whenever $\mathscr{X}$ is locally either a $\D^d(0;1^-)$-bundle or an $\A^{d,\mathrm{an}}$-bundle over $\mathscr{Y}$, $\Tr$ is an isomorphism.
\end{enumerate} 
Moreover, we showed that $\mathbf{R}^qf_!\Omega^p_{\mathscr{X}/\mathscr{Y}}=0$ if $q>d$, and hence we can view the trace map as a morphism
\[ \mathrm{Tr}\colon \mathbf{R}f_! \Omega^\bullet_{\mathscr{X}/\mathscr{Y}}[2d] \rightarrow \mathcal{O}_{\mathscr{Y}}. \]

In general, the proper base change theorem for $\mathbf{R}f_!$ fails, but we do have the following partial result. 

\begin{proposition}[\cite{AL20}, Corollary 3.5.1, Lemma 3.5.2] \label{prop: prop base changes} Let
\[ \xymatrix{ T_\mathscr{X} \ar[r]^{g'} \ar[d]_{f'} & \mathscr{X} \ar[d]^f \\ T \ar[r]^g & \mathscr{Y} } \]
by a Cartesian diagram in $\mathbf{Germ}_K$, such that $f$ is partially proper, and $g$ is one of the following:
\begin{enumerate}
\item a locally closed immersion onto a subspace which is closed under generalisation;
\item the inclusion of a maximal point of $\mathscr{Y}$.
\end{enumerate}
Then, for any $\mathscr{F}\in {\bf D}^+(\mathscr{X})$, the base change map
\[ g^{-1}\mathbf{R}f_!\mathscr{F} \rightarrow \mathbf{R}f'_! g'^{-1}\mathscr{F} \]
is an isomorphism.  
\end{proposition}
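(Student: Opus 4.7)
The plan is to check the base change map on stalks, and to do this via a stalk formula expressing $(\mathbf{R}f_!\mathscr{F})_y$ in terms of compactly supported cohomology of the germ fibre $f^{-1}(y)$. Granting such a formula, the two hypotheses on $g$ are precisely what is needed to identify the fibres of $f$ and $f'$ over corresponding points, and to identify the restrictions of $\mathscr{F}$ and $g'^{-1}\mathscr{F}$ to these fibres.

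The first step is to establish the stalk formula: for any partially proper $f\from\mathscr{X}\to\mathscr{Y}$ in $\mathbf{Germ}_K$ and any $y\in\mathscr{Y}$, there is a natural isomorphism
\[ (\mathbf{R}f_!\mathscr{F})_y \isomto \mathbf{R}\Gamma_c(f^{-1}(y),\mathscr{F}|_{f^{-1}(y)}). \]
The idea is standard: write the germ fibre $f^{-1}(y)$ as the intersection over open neighbourhoods $V\ni y$ of the pre-images $f^{-1}(V)$, and use that $\mathbf{R}\Gamma_c$ commutes with such cofiltered intersections along partially proper morphisms. The key technical input is a ``spreading out'' property for subsets of $f^{-1}(y)$ that are proper over a germ neighbourhood of $y$: any such subset extends to a subset of $f^{-1}(V)$ for some $V$ that is proper over $V$. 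This is ultimately what the tautness plus Kiehl-style partial properness built into $\mathbf{Germ}_K$ buys us, and I expect it is the main technical point; it would not hold in general without one of these hypotheses.

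Given the stalk formula, the proof of the proposition is then a routine identification at each point $t\in T$. In case (1), since $T$ is closed under generalisation and $g$ is a locally closed immersion, the open neighbourhoods of $g(t)$ in $\mathscr{Y}$ restrict to a cofinal system of open neighbourhoods of $t$ in $T$, so the germ fibres $f^{-1}(g(t))$ and $f'^{-1}(t)$ coincide, as do the restrictions of $\mathscr{F}$ and $g'^{-1}\mathscr{F}$ to them. The stalk formula applied to both $f$ and $f'$ then identifies the two sides of the base change map. Case (2) is the special case where $T$ consists of a single rank-one point, and the same reasoning applies directly: a rank-one point is automatically closed under generalisation in itself. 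In both cases no specialisation pathologies can arise, which is why these are exactly the classes of $g$ for which the argument works; for a general closed immersion one would need to contend with partial sections supported on a specialisation of a point of $T$, which can fail to come from sections with proper support on $\mathscr{X}$.
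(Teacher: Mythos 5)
There is a genuine gap, and it is in the stalk formula itself. You assert the isomorphism $(\mathbf{R}f_!\mathscr{F})_y \isomto \mathbf{R}\Gamma_c(f^{-1}(y),\mathscr{F}|_{f^{-1}(y)})$ for \emph{every} point $y\in\mathscr{Y}$; but if this held for every $y$, then the argument you run afterwards would prove the base change isomorphism for \emph{any} morphism $g$ whatsoever, and the two hypotheses in the proposition would be vacuous. They are not: proper base change fails in general for the topological sites of adic spaces, which is precisely why [AL20] restricts to these two classes of $g$. The point at which the formula breaks down is in passing from the colimit over open neighbourhoods $V\ni y$ to the germ fibre: open neighbourhoods of $y$ intersect down to the set of \emph{generisations} of $y$ in $\mathscr{Y}$, not to $\{y\}$. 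So for a non-maximal point $y$ the colimit $\mathrm{colim}_{V\ni y}\,\mathbf{R}\Gamma_{\text{prop.\ supp.}/V}(f^{-1}(V),\mathscr{F})$ sees the fibre over the generisation closure $\mathrm{gen}(y)$, not over $\{y\}$, and your formula is wrong exactly by this discrepancy. Your own closing sentence about ``partial sections supported on a specialisation of a point of $T$'' correctly identifies the pathology that makes base change fail for general closed immersions; the irony is that your proposed proof does not escape it.

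Two further points. First, case (2) is not a special case of case (1): a maximal point of $\mathscr{Y}$ (the Gauss point of a disc, say) has many specialisations and is typically not locally closed in $\mathscr{Y}$, so the inclusion $\{y\}\hookrightarrow\mathscr{Y}$ is not a locally closed immersion, and one cannot reduce (2) to (1). The two cases really are logically independent, which is reflected in the fact that the paper cites two distinct results from \cite{AL20} (Corollary 3.5.1 for (1), Lemma 3.5.2 for (2)). Second, even after one proves a correct stalk formula for maximal $y$ (this is roughly what Lemma 3.5.2 must do, and the spreading-out issue you flag is indeed the crux there), case (1) cannot be deduced from it by checking stalks: the points of a locally closed generisation-stable $T$ are generally not maximal, so the stalk formula does not apply to them. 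Case (1) needs a global argument that exploits the fact that, for $T$ closed under generisation and locally closed, opens of $T$ are restrictions of opens of $\mathscr{Y}$ and generisations computed inside $T$ agree with generisations computed inside $\mathscr{Y}$; one cannot simply wave at stalks.
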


The functor $\mathbf{R}f_!$ preserves module structures, and we also have the following version of the projection formula for compactly supported cohomology. 

\begin{lemma}[\cite{AL20}, Corollary 3.8.2] \label{lemma: proj form} Let $f:\mathscr{X}\rightarrow \mathscr{Y}$ be a partially proper morphism in $\mathbf{Germ}_K$. For any locally free $\mathcal{O}_{\mathscr{Y}}$-module $\mathscr{E}$ of finite rank, and any complex $\mathscr{F}$ of $\mathcal{O}_{\mathscr{X}}$-modules, there is a natural isomorphism
\[ \mathscr{E}\otimes_{\mathcal{O}_\mathscr{Y}}  \mathbf{R}f_! \mathscr{F} \cong \mathbf{R}f_!(f^*\mathscr{E}\otimes_{\mathcal{O}_\mathscr{X}} \mathscr{F}).  \]
\end{lemma}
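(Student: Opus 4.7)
The plan is to construct the natural comparison map directly at the level of presheaves, reduce to the case of a free module of rank $n$ by working locally on $\mathscr{Y}$, and then conclude by additivity of $\mathbf{R}f_!$.

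First I would construct the natural map. For any $\mathcal{O}_{\mathscr{Y}}$-module $\mathscr{E}$ (not just locally free), any $\mathcal{O}_{\mathscr{X}}$-module $\mathscr{F}$, and any open $V\subset\mathscr{Y}$, a section $s\in \Gamma(V,f_!\mathscr{F})$ is a section of $\mathscr{F}$ over $f^{-1}(V)$ whose support is proper over $V$. Given $e\in \Gamma(V,\mathscr{E})$, the section $s\otimes f^*e$ of $\mathscr{F}\otimes f^*\mathscr{E}$ has the same support as $s$, and therefore is again proper over $V$. This defines a natural, underived map
\[ \mu \colon \mathscr{E}\otimes_{\mathcal{O}_{\mathscr{Y}}} f_!\mathscr{F} \to f_!\left(\mathscr{F}\otimes_{\mathcal{O}_{\mathscr{X}}} f^*\mathscr{E}\right). \]
To promote this to the derived setting in the case at hand, I would exploit the fact that $\mathscr{E}$ (and hence $f^*\mathscr{E}$) is locally free of finite rank, so tensoring by either needs no derivation; it then suffices to pick a termwise $f_!$-acyclic resolution $\mathscr{F}\to \mathscr{I}^\bullet$ and observe that $f^*\mathscr{E}\otimes \mathscr{I}^\bullet$ is again a resolution of $f^*\mathscr{E}\otimes \mathscr{F}$ by $f_!$-acyclic sheaves (this uses that $f_!$-acyclicity is local on $\mathscr{X}$, and that $\mathscr{I}^q$ remains $f_!$-acyclic after tensoring with a locally free sheaf of finite rank). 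Applying $\mu$ termwise then gives the desired map in ${\bf D}^+(\mathscr{Y})$.

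Next I would check that $\mu$ is an isomorphism. Since $\mathscr{E}$ is locally free of finite rank, the question is local on $\mathscr{Y}$, so we may assume $\mathscr{E}\cong \mathcal{O}_{\mathscr{Y}}^{\oplus n}$. Both functors $\mathscr{E}\mapsto \mathscr{E}\otimes \mathbf{R}f_!\mathscr{F}$ and $\mathscr{E}\mapsto \mathbf{R}f_!(f^*\mathscr{E}\otimes \mathscr{F})$ are additive, so they commute with finite direct sums; under the identifications $\mathcal{O}_{\mathscr{Y}}^{\oplus n}\otimes \mathbf{R}f_!\mathscr{F}\cong (\mathbf{R}f_!\mathscr{F})^{\oplus n}$ and $\mathbf{R}f_!(\mathscr{F}^{\oplus n})\cong (\mathbf{R}f_!\mathscr{F})^{\oplus n}$, the map $\mu$ reduces to the identity, which is trivially an isomorphism.

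The only real obstacle is the bookkeeping in the derived construction of $\mu$: one must confirm that $f_!$-acyclic resolutions remain $f_!$-acyclic after tensoring with $f^*\mathscr{E}$ for $\mathscr{E}$ locally free of finite rank, so that the underived map $\mu$ computes the derived map. This is a local check, and once it is in hand, the additivity argument above immediately yields the isomorphism, with naturality in both $\mathscr{E}$ and $\mathscr{F}$ inherited from the underived map.
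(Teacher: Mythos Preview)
The paper does not prove this lemma; it merely cites it as \cite[Corollary 3.8.2]{AL20}, so there is no in-paper argument to compare against. Your argument is the standard one and is correct. One small imprecision: you justify the preservation of $f_!$-acyclicity under tensoring with $f^*\mathscr{E}$ by saying ``$f_!$-acyclicity is local on $\mathscr{X}$''; this should read ``local on $\mathscr{Y}$''. Indeed, $\mathbf{R}^qf_!$ is a sheaf on $\mathscr{Y}$, so its vanishing can be checked locally on $\mathscr{Y}$; over an open of $\mathscr{Y}$ trivialising $\mathscr{E}$ one has $f^*\mathscr{E}\otimes\mathscr{I}^q\cong(\mathscr{I}^q)^{\oplus n}$, and then additivity of $\mathbf{R}^qf_!$ gives the acyclicity. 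With this correction the bookkeeping step you flagged goes through without difficulty.
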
 

If $f$ is moreover smooth of relative dimension $d$, and partially proper in the sense of Kiehl, we can use the projection formula to obtain a trace map
\[  \mathrm{Tr}_{\mathscr{F}}\colon \mathbf{R}^{2d}f_{!}\left(  \Omega^\bullet_{\mathscr{X}/\mathscr{Y}} \otimes_{\mathcal{O}_{\mathscr{X}}} f^*\mathscr{F}\right) \isomto   \mathbf{R}^{2d}f_{!}\Omega^\bullet_{\mathscr{X}/\mathscr{Y}} \otimes_{\mathcal{O}_{\mathscr{Y}}}\mathscr{F}\overset{\mathrm{Tr}\otimes \mathrm{id}}{\longrightarrow} \mathscr{F} \]
for any finite locally free $\mathcal{O}_{\mathscr{Y}}$-module $\mathscr{F}$. This can be viewed as a map
\[ \mathbf{R}f_{!}\left(  \Omega^\bullet_{\mathscr{X}/\mathscr{Y}} \otimes_{\mathcal{O}_{\mathscr{X}}} f^*\mathscr{F}\right)[2d] \rightarrow \mathscr{F}, \]
and whenever $\mathscr{F}$ has the structure of a $\mathscr{D}_{\mathscr{Y}}$-module, this map is $\mathscr{D}_{\mathscr{Y}}$-linear.

Finally, for any sheaf of rings $\mathcal{A}$ on $\mathscr{X}$, and any bilinear pairing
\[ \mathscr{E} \times \mathscr{F} \rightarrow \mathscr{G}  \]
of bounded complexes of $\mathcal{A}$-modules, there is an induced pairing
\[ \mathbf{R}f_*\mathscr{E}\times \mathbf{R}f_!\mathscr{F}\rightarrow \mathbf{R}f_!\mathscr{G}\]
in cohomology. See \cite[\S5.6]{AL20} for the case $\mathcal{A}=\mathcal{O}_\mathscr{X}$, the general case works in exactly the same way.

\subsection{The strong fibration theorem and compactly supported de\thinspace Rham cohomology} \label{sec: sft and csdr}

An important component of our eventual comparison of compactly supported rigid and $\mathscr{D}^\dagger$-module cohomology will be the computation of the compactly supported de\thinspace Rham cohomology of overconvergent isocrystals. That is, if $(X,Y,\mathfrak{P})$ is a frame with $Y$ proper over $k$ and $\mathfrak{P}$ smooth over $\mathcal{V}$ in a neighbourhood of $X$, and $\mathscr{F}$ is a coherent $\mathcal{O}_{\tube{X}_\mathfrak{P}}$-module with overconvergent connection, we are interested in calculating $\mathbf{R}\Gamma_c(]X[_\mathfrak{P},\Omega^\bullet_{\tube{X}_\mathfrak{P}}\otimes \mathscr{F})$. The reader should be warned that despite the terminology, compactly supported rigid cohomology of an overconvergent isocrystal is \emph{not} the same thing as compactly supported de\thinspace Rham cohomology of a given realisation. In fact, as we shall see, these two cohomologies are instead dual to one another.

In this section, we will lay the groundwork for this calculation by showing that, up to a shift, these compactly supported de\thinspace Rham cohomology groups only depend on $X$, and not on the choice of frame $(X,Y,\mathfrak{P})$ enclosing $X$. Let us suppose then, that we have a morphism of frames
\[\xymatrix{  X'  \ar[r]^{j'} \ar[d]^f & Y' \ar[r]^{i'} \ar[d]^g & \fr{P}' \ar[d]^u \\   X \ar[r]^{j} & Y \ar[r]^{i}   & \fr{P} } 
\]
over $\mathcal{V}$, such that $g$ is proper, and $u$ is smooth in a neighbourhood of $X$.

\begin{lemma} The induced morphism $\left]f\right[\colon \left]X'\right[_{\fr{P}'} \rightarrow \left]X\right[_\fr{P}$ of germs is smooth, and partially proper in the sense of Kiehl.
\end{lemma}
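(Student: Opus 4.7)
The strategy is to handle the two assertions separately. Smoothness is essentially formal: since $u$ is smooth on some open subscheme $\mathcal{U}\subset \fr{P}'$ containing $X'$, the induced morphism on adic generic fibres is smooth on the open subset $\mathcal{U}_K\subset \fr{P}'_K$, which contains $\tube{X'}_{\fr{P}'}$. The pre-germ morphism
\[ (\tube{X'}_{\fr{P}'}, \mathcal{U}_K) \to (\tube{X}_\fr{P}, \fr{P}_K) \]
induced by $u$ thus has a smooth ambient representative, and one verifies directly that $\tube{X'}_{\fr{P}'} = u_K^{-1}(\tube{X}_\fr{P}) \cap \mathcal{U}_K$ using that the morphism of frames is compatible with the specialisation maps. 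By the definition of smoothness for germ morphisms recalled in \cite[\S1.10]{Hub96}, this shows $]f[$ is smooth.

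For partial properness in the sense of Kiehl, the plan is to leverage the properness of $g$. First, I would show that the morphism of germs $]g[ \colon \tube{Y'}_{\fr{P}'} \to \tube{Y}_\fr{P}$ induced by $u$ (together with the closed immersions $i,i'$) is partially proper in Kiehl's sense. This is the main technical point; it can be verified either via the valuative criterion for adic spaces, or, perhaps more concretely, by noting that properness of $g$ guarantees that the preimages of strict neighbourhoods of $\tube{Y}_\fr{P}$ contain strict neighbourhoods of $\tube{Y'}_{\fr{P}'}$. Second, the result for $]f[$ follows from this by restriction: since $X\hookrightarrow Y$ is an open immersion, the tube $\tube{X}_\fr{P}$ is an overconvergent subspace of $\tube{Y}_\fr{P}$ (closed under generalisation), and the tube $\tube{X'}_{\fr{P}'}$ coincides with the preimage of $\tube{X}_\fr{P}$ under $]g[$. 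As partial properness in the sense of Kiehl is stable under base change along the inclusion of an overconvergent subspace (cf.\ the analogous stability for $\mathbf{R}f_!$ in Proposition \ref{prop: prop base changes}), the conclusion for $]f[$ follows.

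The main obstacle is the first step in the second paragraph, namely transferring the scheme-theoretic properness of $g$ to Kiehl partial properness of the tube morphism $]g[$. This is a rigid-analytic version of standard GAGA-type assertions, and should be manageable either by direct verification of the valuative criterion on the relevant germs of adic spaces, or by factoring $g$ as a closed immersion followed by a proper morphism and handling each factor in turn. Once this step is established, the smoothness and the descent to $\tube{X}_\fr{P}$ are essentially immediate.
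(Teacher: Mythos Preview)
Your partial properness argument is essentially the paper's: reduce to showing that $\tube{Y'}_{\fr{P}'}\to\tube{Y}_\fr{P}$ is Kiehl partially proper (using properness of $g$), and then pull back along the inclusion $\tube{X}_\fr{P}\hookrightarrow\tube{Y}_\fr{P}$ using the Cartesian square of tubes. That part is fine.

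The smoothness argument, however, contains a genuine error. You claim that
\[ \tube{X'}_{\fr{P}'} = u_K^{-1}(\tube{X}_\fr{P}) \cap \mathcal{U}_K, \]
but this is false in general. Since $u_K^{-1}(\tube{X}_\fr{P}) = \tube{u^{-1}(X)}_{\fr{P}'}$, your claim would force $X' = u^{-1}(X)\cap \mathcal{U}_0$, whereas in fact $X' = u^{-1}(X)\cap Y'$ (from the Cartesian left square). There is no reason for $u^{-1}(X)\cap\mathcal{U}_0$ to be contained in $Y'$. For a concrete counterexample, take $\fr{P}=\spf{\mathcal{V}}$, $\fr{P}'=\widehat{\A}^1_\mathcal{V}$, $u$ the structure map, $X=Y=\spec{k}$, and $X'=Y'=\{0\}$: then $u_K^{-1}(\tube{X}_\fr{P})\cap\mathcal{U}_K$ is the closed unit disc, while $\tube{X'}_{\fr{P}'}$ is the open unit disc. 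Consequently your proposed pre-germ representative $(\tube{X'}_{\fr{P}'},\mathcal{U}_K)\to(\tube{X}_\fr{P},\fr{P}_K)$ does \emph{not} satisfy the preimage condition in the definition of a smooth morphism of germs.

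The fix is exactly the Cartesian square you already invoke for partial properness: replace the ambient space $\mathcal{U}_K$ by $\mathcal{U}_K\cap\tube{Y'}_{\fr{P}'}$, an open neighbourhood of $\tube{X'}_{\fr{P}'}$ inside the adic space $\tube{Y'}_{\fr{P}'}$ which is smooth over $\tube{Y}_\fr{P}$. The Cartesian square then gives $\tube{X'}_{\fr{P}'}$ as the full preimage of $\tube{X}_\fr{P}$ in this neighbourhood, which is what smoothness of germs requires. This is precisely the paper's argument.
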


\begin{proof}
Recall from \cite[\S1.10]{Hub96} that a morphism of germs $T\rightarrow T'$ is smooth if it admits a representative $a:(T,\bm{T})\rightarrow (T',\bm{T}')$ such that $a:\bm{T}\rightarrow \bm{T}'$ is smooth and $T=\alpha^{-1}(T')$. Since the left hand square is Cartesian, the induced diagram
\[ \xymatrix{  \tube{X'}_{\fr{P}'} \ar[r] \ar[d] & \tube{Y'}_{\fr{P}'} \ar[d] \\ \tube{X}_\fr{P} \ar[r] & \tube{Y}_\fr{P} }\]
of germs is also Cartesian. To show smoothness, it therefore suffices to observe that $\tube{X'}_{\fr{P}'}$ admits an open neighbourhood which is smooth over $\fr{P}_K$.

Again, by the fact that the above square is Cartesian, if we want to show that $\tube{X'}_{\fr{P}'}\rightarrow \tube{X}_{\fr{P}}$ is partially proper in the sense of Kiehl, it suffices to show that $\tube{Y'}_{\fr{P}'}\rightarrow \tube{Y}_{\fr{P}}$ is. This follows from properness of $g$.
\end{proof}

We therefore obtain a canonical trace map
\[ \Tr\colon\mathbf{R}\tube{f}_{!}\Omega^\bullet_{\tube{X'}_{\fr{P}'}/\tube{X}_\fr{P}} [2d] \rightarrow \mathcal{O}_{\tube{X}_\fr{P}} \]
where $d$ is the relative dimension of $u$. As in \cite{LS11}, where it is phrased in terms of overconvergent varieties, Berthelot's strong fibration theorem \cite{Ber96b} has a particularly nice interpretation in terms of germs.

\begin{theorem}[Berthelot] \label{theo: strong}
Suppose that $f$ is an isomorphism.
\begin{enumerate}
\item If $g$ is also an isomorphism, then, locally on $X$ and on $\fr{P}$, there exists an isomorphism
\[ \tube{X}_{\fr{P}'} \isomto \tube{X}_\fr{P} \times_K \D^d_K(0;1^-) \]
of germs, identifying $\tube{f}$ with the first projection. 
\item If $u$ is \'etale in a neighbourhood of $X$, then
\[ \tube{f}\colon\tube{X}_{\fr{P}'} \isomto \tube{X}_\fr{P} \]
is an isomorphism of germs.
\end{enumerate}
\end{theorem}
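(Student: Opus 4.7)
The plan is to deduce both assertions from the classical rigid-analytic forms of Berthelot's fibration theorems \cite[Théorèmes 1.3.5, 1.3.7]{Ber96b}, and then translate the resulting rigid-analytic isomorphisms of strict neighborhoods into isomorphisms in $\mathbf{Germ}_K$ via the equivalence $r(-)$ of \S\ref{sec: adic spaces} together with Proposition \ref{prop: adic rig}. In both cases the hard geometric content is already contained in \cite{Ber96b}; only the germ-theoretic repackaging is new.

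For (1), Berthelot's strong fibration theorem furnishes, working locally on $X$ and on $\fr{P}$, admissible open strict neighborhoods $V_0 \subset \fr{P}_{K0}$ of $\tube{X}_{\fr{P}0}$ and $V'_0 \subset \fr{P}'_{K0}$ of $\tube{X}_{\fr{P}'0}$, together with a rigid-analytic isomorphism $V'_0 \cong V_0 \times_K (\D^d_K(0;1^-))_0$ that is compatible with $\tube{f}$ and identifies $\tube{X}_{\fr{P}'0}$ with $\tube{X}_{\fr{P}0} \times_K (\D^d_K(0;1^-))_0$. Applying the equivalence $r(-)$ and invoking Proposition \ref{prop: adic rig} to identify rigid tubes with adic tubes, this translates into an isomorphism of adic open subspaces $r(V'_0) \cong r(V_0) \times_K \D^d_K(0;1^-)$ carrying $\tube{X}_{\fr{P}'}$ onto $\tube{X}_\fr{P} \times_K \D^d_K(0;1^-)$ and compatible with $\tube{f}$. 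This is precisely the data of a strict-neighborhood equivalence at the pre-germ level, hence an isomorphism in $\mathbf{Germ}_K$.

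For (2) the argument is formally the same, with the weaker \cite[Théorème 1.3.5]{Ber96b} in place of \cite[Théorème 1.3.7]{Ber96b}: locally on $X$ and $\fr{P}$, the map $\tube{f}$ extends to a rigid-analytic isomorphism of strict neighborhoods of the rigid tubes, which, after passing to adic spaces, supplies a strict-neighborhood representative of $\tube{f}$ whose ambient adic morphism is already an isomorphism. Since being an isomorphism in $\mathbf{Germ}_K$ can be verified on any covering of the source by strict neighborhoods, these local isomorphisms patch to the claimed global isomorphism of germs.

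The only technical verification required is that Berthelot's isomorphisms of rigid strict neighborhoods genuinely identify the tubes themselves (and not merely some enclosing ambient opens), and that this identification is compatible with $\tube{f}$. Both points follow by direct inspection of the constructions in \cite{Ber96b}, and are the only places where one has to be careful when translating between the rigid-analytic and germ-theoretic languages.
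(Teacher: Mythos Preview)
Your proposal is correct and aligns with the paper's treatment: the paper does not give an independent proof of this statement but simply attributes it to Berthelot \cite{Ber96b}, noting (with a reference to \cite{LS11}) that the germ-theoretic formulation is a repackaging of the classical rigid-analytic strong fibration theorem. Your sketch of how to pass from Berthelot's rigid-analytic statements to the adic/germ setting via Proposition~\ref{prop: adic rig} and the equivalence $r(-)$ is exactly the translation the paper has in mind.
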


\begin{corollary} Assume that $f$ is an isomorphism. Then the trace map 
\[  \Tr\colon\mathbf{R}\tube{f}_{!}\Omega^\bullet_{\tube{X'}_{\fr{P}'}/\tube{X}_\fr{P}}[2d] \rightarrow \mathcal{O}_{\tube{X}_\fr{P}} \]
is an isomorphism in $D^b(\mathcal{O}_{\tube{X}_\fr{P}})$.
\end{corollary}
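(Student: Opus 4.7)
The question is local on $\fr{P}$ (hence on $\fr{P}'$), so we may freely shrink both formal schemes around $X$. The strategy is to exhibit $\tube{X'}_{\fr{P}'}$ locally as a $\D^d_K(0;1^-)$-bundle over $\tube{X}_\fr{P}$, and then invoke property (\ref{num: iso}) of the trace map from \S\ref{subsec: f! prop}, which asserts that $\Tr$ is an isomorphism precisely in this situation.

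To produce such a local structure, we would like to apply Case 1 of Berthelot's strong fibration theorem (Theorem \ref{theo: strong}), but this case requires \emph{both} $f$ and $g$ to be isomorphisms, whereas we are only given $f$ iso. The plan is therefore to factor the morphism of frames through an intermediate frame. Since $u \colon \fr{P}' \to \fr{P}$ is smooth of relative dimension $d$, \'etale-locally on $\fr{P}'$ it factors as $\fr{P}' \overset{v}{\to} \widehat{\A}^d_\fr{P} \overset{\pi}{\to} \fr{P}$ with $v$ \'etale and $\pi$ the projection. After a suitable choice of intermediate closed subscheme $Y'' \hookrightarrow \widehat{\A}^d_\fr{P}$, this factors the morphism of frames into two pieces whose trace maps can be handled separately.

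For the first piece, $(X', Y', \fr{P}') \to (X, Y'', \widehat{\A}^d_\fr{P})$, the formal scheme map $v$ is \'etale, so by Case 2 of Theorem \ref{theo: strong} the induced map on tubes is an isomorphism of germs; since the relative dimension is zero, the associated trace map is trivially an isomorphism. For the second piece, $(X, Y'', \widehat{\A}^d_\fr{P}) \to (X, Y, \fr{P})$, where both the $X$- and $Y$-components may be arranged to be identities, Case 1 of Theorem \ref{theo: strong} applies to give a local isomorphism $\tube{X}_{\widehat{\A}^d_\fr{P}} \cong \tube{X}_\fr{P} \times_K \D^d_K(0;1^-)$, and property (\ref{num: iso}) then yields that the trace is an isomorphism. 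The desired isomorphism for $\Tr$ follows from the compatibility of the trace map with compositions of smooth Kiehl-partially-proper morphisms, which is a standard consequence of the construction of \cite{AL20} together with the functoriality of $\mathbf{R}f_!$ recalled in \S\ref{subsec: f! prop}.

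The main obstacle is the careful setup of the intermediate frame: we need a closed subscheme $Y'' \hookrightarrow \widehat{\A}^d_\fr{P}$ containing the image of $v \circ i' \colon Y' \to \widehat{\A}^d_\fr{P}$, proper over $Y$, and with $Y'' \to Y$ becoming an isomorphism in an appropriate neighborhood of $X$. Achieving this may require either shrinking the formal schemes further, or iterating the argument along a chain of intermediate frames built from the local coordinate factorization of $u$; either way, the essentially local nature of tubes ensures that no obstruction arises beyond the bookkeeping.
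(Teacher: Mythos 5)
Your overall strategy — reduce via local coordinates to the \'etale case and the polydisc case of Theorem \ref{theo: strong}, and conclude from property (\ref{num: iso}) of the trace map — is indeed the spirit of the classical argument that the paper's proof refers to via \cite[\S6.5]{LS07}. But the specific factorization through an intermediate frame $(X, Y'', \widehat{\A}^d_\fr{P})$ does not work as you describe, and this is a genuine gap rather than bookkeeping.

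The two demands you place on $Y''$ are incompatible in general. For the first piece to be a morphism of frames you need $v(Y')\subset Y''$; for Case 1 of Theorem \ref{theo: strong} to apply to the second piece you need $Y''\to Y$ to be an isomorphism (even locally near $X$), i.e.\ $Y''$ a section of $\A^d_Y\to Y$. But $v|_{Y'}$ typically does not land inside any section. Take $\fr{P}=\spf{\mathcal{V}}$, $\fr{P}'=\widehat{\P}^1_\mathcal{V}$, $Y=\spec{k}$, $Y'=\P^1_k$, $X=\{0\}$, $d=1$: to obtain the \'etale coordinate you must restrict $\fr{P}'$ to $\widehat{\A}^1_\mathcal{V}$, after which the new $Y'$ is $\A^1_k$, whose image under $v=\mathrm{id}$ is all of $\A^1_k$ and cannot lie in any $Y''\cong\spec{k}$. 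Worse, $\A^1_k\to\spec{k}$ is no longer proper, so the shrunk frame fails the standing hypothesis that makes $]f[$ partially proper in the sense of Kiehl (Lemma just before Theorem \ref{theo: strong}); you cannot freely localise on $\fr{P}'$ to produce \'etale coordinates.

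What is actually needed is the general form of Berthelot's strong fibration theorem: with $f$ an isomorphism, $g$ merely proper, and $u$ smooth around $X$, the germ $\tube{X}_{\fr{P}'}$ is locally a polydisc bundle over $\tube{X}_\fr{P}$. The paper's Theorem \ref{theo: strong} only states the two special cases ($g$ iso, $u$ \'etale); deducing the general statement from them requires a more careful reduction (via a graph factorisation and/or the intrinsic independence of the tube from the compactification $Y'$), which is what the cited proof in Le Stum's book actually carries out. Your claim that ``no obstruction arises beyond the bookkeeping'' is precisely where the argument breaks down; the correct reduction should be looked up rather than improvised.
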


\begin{proof}
This follows from Theorem \ref{theo: strong} in exactly the same way as the proof that
\[ \mathcal{O}_{\tube{X}_\fr{P}}\rightarrow \mathbf{R}\tube{f}_{*}\Omega^\bullet_{\tube{X'}_{\fr{P}'}/\tube{X}_\fr{P}}  \]
is an isomorphism, see for example \cite[\S6.5]{LS07}.
\end{proof}

If we now take a partially overconvergent isocrystal $E\in\mathrm{Isoc}(X,Y)$, with realisation $E_\fr{P}$ on $\tube{X}_\fr{P}$, and consider its pullback $f^*E\in \mathrm{Isoc}(X',Y')$, with realisation $(f^*E)_{\fr{P}'}$ on $\tube{X'}_{\fr{P}'}$ then, via the projection formula, we obtain a trace map 
\begin{align*} \Tr_E \colon \mathbf{R}\tube{f}_! \left(  \Omega^\bullet_{\tube{X'}_{\fr{P}'}/\tube{X}_\fr{P}}\otimes_{\mathcal{O}_{\tube{X'}_{\fr{P}'}}}(f^*E)_{\fr{P}'}\right)[2d] 
&\isomto   \mathbf{R}\tube{f}_{!}( \Omega^\bullet_{\tube{X'}_{\fr{P}'}/\tube{X}_\fr{P}})\otimes_{\mathcal{O}_{\tube{X}_\fr{P}}} E_\fr{P}[2d] \\
& \rightarrow E_{\fr{P}}.
 \end{align*}
Once more, this map will be an isomorphism if $f$ is.

\subsection{de\thinspace Rham cohomology of $\bm{i_{Y!}E_\fr{P}}$ and Borel--Moore homology} 

A variety $X$ over $k$ is said to be \emph{weakly realisable}\footnote{We already introduced a stronger notion of realisability earlier on.} if there exists a frame $(X\overset{j}{\hookrightarrow}Y\overset{i}{\hookrightarrow}\fr{P})$ with $Y$ proper, and $\fr{P}$ smooth in a neighbourhood of $X$. (This is the more usual definition of realisable in the theory of rigid cohomology.) Let $d$ denote the relative dimension of $\mathfrak{P}$ around $X$, and take an overconvergent isocrystal $E\in \mathrm{Isoc}^\dagger(X)$ (of Frobenius type), with realisation $E_\fr{P}$ on $\tube{X}_\fr{P}$. Let $\mathscr{F}=j_*E_\fr{P}$ be the associated $j_X^\dagger\cO_{\tube{Y}_\fr{P}}$-module. Then using the results of \S\ref{sec: sft and csdr} above, we can argue exactly as in \cite[\S\S7.4, 8.2]{LS07} to show that the compactly supported de\thinspace Rham cohomology
\[ \mathbf{R}\Gamma_c(\tube{Y}_\fr{P}, \Omega^\bullet_{\tube{Y}_\fr{P}}\otimes_{\mathcal{O}_{\tube{Y}_\fr{P}}} \mathscr{F} )[2d] = \mathbf{R}\Gamma_c(\tube{X}_\fr{P}, \Omega^\bullet_{\tube{X}_\fr{P}}\otimes_{\mathcal{O}_{\tube{X}_\fr{P}}} \mathscr{F} )[2d] \]
only depends on $X$ and $E$, and not on the choice 
of frame $(X,Y,\fr{P})$ enclosing $X$. 

\begin{definition} \label{defn: BM} We define the rigid Borel--Moore homology complex
\[ \mathbf{R}\Gamma_{\flat,\rig}(X,E) := \mathbf{R}\Gamma_c(\tube{X}_\fr{P}, \Omega^\bullet_{\tube{X}_\fr{P}}\otimes_{\mathcal{O}_{\tube{X}_\fr{P}}} E_\fr{P} )[2d]=\mathbf{R}\Gamma_c(\tube{Y}_\fr{P}, \Omega^\bullet_{\tube{Y}_\fr{P}}\otimes_{\mathcal{O}_{\tube{Y}_\fr{P}}} \mathscr{F} )[2d] , \] 
and the rigid Borel--Moore homology groups
\[  {\rm H}^\mathrm{BM}_{q,\rig}(X,E) :=  {\rm H}^{-q}\left( \mathbf{R}\Gamma_{\flat,\rig}(X,E)\right). = \]
\end{definition}

Note that when $\fr{P}$ is proper, we have
\[  \mathbf{R}\Gamma_{\flat,\rig}(X,E) = \mathbf{R}\Gamma(\fr{P}_K,  \Omega^\bullet_{\fr{P}_K}\otimes_{\mathcal{O}_{\fr{P}_K}}i_{!}\mathscr{F})[2d]. \]

\begin{remark} As usual, extending this to non-realisable varieties requires descent and the use of simplicial embeddings.
\end{remark}

Rigid Borel--Moore homology is covariantly functorial in proper morphisms of $k$-varieties. Indeed, any such morphism can be extended to a morphism
\[ \xymatrix{ X' \ar[d]_f \ar[r] & Y' \ar[d]  \ar[r] & \fr{P}' \ar[d] \\ 
X \ar[r] & Y \ar[r] & \fr{P}  } \] 
of frames, with $Y'\rightarrow Y$ proper, the left hand square Cartesian, and $\mathfrak{P}'\rightarrow \mathfrak{P}$ smooth around $X'$, of relative dimension $d$. Then the (horizontal) trace map
\[ \mathbf{R}\tube{f}_! \left( \Omega^\bullet_{\tube{X'}_{\fr{P}'}/\tube{X}_\fr{P}}\otimes_{\mathcal{O}_{\tube{X'}_{\fr{P}'}}} \tube{f}^*E_{\fr{P}} \right)[2d] \rightarrow E_{\fr{P}} \]
induces, upon taking compactly supported de\thinspace Rham cohomology, a morphism
\[ \mathbf{R}\Gamma_{\flat,\rig}(X', f^*E) \rightarrow \mathbf{R}\Gamma_{\flat,\rig}(X,E). \]
Rigid Borel--Moore homology is also contravariantly functorial in open immersions, and for a complementary pair
\[  U \rightarrow X \leftarrow Z \]
of open and closed subschemes of $X$, there is an excision exact triangle
\[ \mathbf{R}\Gamma_{\flat,\rig}(Z, E|_Z) \rightarrow \mathbf{R}\Gamma_{\flat,\rig}(X, E) \rightarrow \mathbf{R}\Gamma_{\flat,\rig}(U, E|_U) \overset{+1}{\rightarrow}  \]
obtained by simply applying $\mathbf{R}\Gamma_c(\tube{Y}_\fr{P},\Omega^\bullet_{\tube{Y}_\fr{P}}\otimes_{\mathcal{O}_{\tube{Y}_\fr{P}}}(-))$ to the localisation exact triangle
\[  \mathbf{R}\underline{\Gamma}^\dagger_{Y\setminus U} \mathscr{F}\rightarrow \mathscr{F} \rightarrow j_U^\dagger \mathscr{F} \overset{+1}{\rightarrow}   \]
on $\tube{Y}_\fr{P}$.

\subsection{Duality}

In order to justify calling the groups ${\rm H}^\mathrm{BM}_{n,\rig}(X,E)$ rigid Borel--Moore homology, we shall show that they are canonically dual to the compactly supported rigid cohomology groups ${\rm H}^n_{c,\rig}(X,E^\vee)$. 

\subsubsection{} First of all, let us construct the duality pairing. Let $(X,Y,\fr{P})$ be a frame with $Y$ proper and $\fr{P}$ smooth around $X$, of relative dimension $d$. Let $E\in \isoc{X}$ be an overconvergent isocrystal (of Frobenius type), with realisation $E_\fr{P}$ on $\tube{X}_\fr{P}$. Set $\mathscr{F}=j_*E_\fr{P}$. Choose an open neighbourhood $i_{XV}:\tube{X}_\fr{P}\rightarrow V$ of $\tube{X}_\fr{P}$ in $\tube{Y}_\fr{P}$, and a module with connection $\mathscr{E}_V$ on $V$ such that $E_\fr{P}=i_{XV}^{-1}\mathscr{E}_V$. Let 
\[ \mathbf{R}i_{XV}^!:\mathbf{Sh}(V)\rightarrow \mathbf{Sh}(\tube{X}_\fr{P}) \]
denote the extraordinary pullback functor. In other words, if $a_{V}:V\setminus \tube{X}_\fr{P} \rightarrow V$ denotew the complement, then $\mathbf{R}j_{V}^!$ is characterised by the existence of a distinguished triangle
\[ j_{V*}\mathbf{R}j_{V}^! \rightarrow \mathrm{id} \rightarrow \mathbf{R}a_{V*}a_{V}^{-1}\overset{+1}{\rightarrow} \]
Then the complex
\[ \mathbf{R}\underline{\Gamma}_{\tube{X}_\fr{P}}\mathscr{F}:=j_*\mathbf{R}j_{V}^! \mathscr{E}_V \]
on $\tube{Y}_\fr{P}$ is easily checked not to depend on the choice of $V$. Note that the use of $\mathbf{R}\underline{\Gamma}_{\tube{X}_\fr{P}}$ is conventional in the literature, although slightly confusing in our situation. The complex $\mathbf{R}\underline{\Gamma}_{\tube{X}_\fr{P}}\mathscr{F}$ is \emph{not} the the derived functor of sections with support on $\tube{X}_\fr{P}$, applied to $\mathscr{F}$. Instead, it is the derived functor of sections with support on $\tube{X}_\fr{P}$, applied to $\mathscr{E}_V$, considered as a complex on $\tube{Y}_\fr{P}$.

We similarly denote
\[  \mathbf{R}\underline{\Gamma}_{\tube{X}_\fr{P}}\left(\Omega^\bullet_{\tube{Y}_\fr{P}}\otimes_{\mathcal{O}_{\tube{Y}_\fr{P}}}\mathscr{F}\right):=j_*\mathbf{R}j_{V}^! \left( \Omega^\bullet_V\otimes_{\mathcal{O}_V}\mathscr{E}_V \right), \]
and also set
\begin{align*} \mathbf{R}\underline\Gamma_{\tube{X}_\fr{P}}\mathcal{O}_{\tube{Y}_\fr{P}}&:=j_*\mathbf{R}j^!\mathcal{O}_{\tube{Y}_\fr{P}} \\
\mathbf{R}\underline\Gamma_{\tube{X}_\fr{P}}\Omega^\bullet_{\tube{Y}_\fr{P}}&:=j_*\mathbf{R}j^!\Omega^\bullet_{\tube{Y}_\fr{P}}.
\end{align*}
In this case $\mathbf{R}\underline\Gamma_{\tube{X}_\fr{P}}\Omega^\bullet_{\tube{Y}_\fr{P}}$ really is the derived functor of sections with support on $\tube{X}_\fr{P}$, applied to $\Omega^\bullet_{\tube{Y}_\fr{P}}$.

If $E^\vee$ denotes the dual isocrystal to $E$, and $\mathscr{F}^\vee=j_*E^\vee_\fr{P}$, then there is a horizontal pairing
\[ \mathscr{F} \times \mathbf{R}\underline{\Gamma}_{\tube{X}_\fr{P}}\mathscr{F}^\vee \rightarrow \mathbf{R}\underline{\Gamma}_{\tube{X}_\fr{P}}\mathcal{O}_{\tube{Y}_\fr{P}} \]
constructed in \cite{Ber97c}, which induces a pairing 
\[  \Omega^\bullet_{\tube{Y}_\fr{P}}  \otimes \mathscr{F} \times \mathbf{R}\underline{\Gamma}_{\tube{X}_\fr{P}}(  \Omega^\bullet_{\tube{Y}_\fr{P}} \otimes \mathscr{F}^\vee)  \rightarrow \mathbf{R}\underline{\Gamma}_{\tube{X}_\fr{P}}\Omega^\bullet_{\tube{Y}_\fr{P}}  \]
on de\thinspace Rham complexes. As discussed in \S\ref{subsec: f! prop} above, we thus obtain a pairing
\[ \mathbf{R}\Gamma_c(\tube{Y}_\fr{P}, \Omega^\bullet_{\tube{Y}_\fr{P}}  \otimes \mathscr{F}) \times \mathbf{R}\Gamma(\tube{Y}_\fr{P},\mathbf{R}\underline{\Gamma}_{\tube{X}_\fr{P}}(  \Omega^\bullet_{\tube{Y}_\fr{P}} \otimes \mathscr{F}^\vee))\rightarrow \mathbf{R}\Gamma_c(\tube{Y}_\fr{P},\mathbf{R}\underline{\Gamma}_{\tube{X}_\fr{P}}\Omega^\bullet_{\tube{Y}_\fr{P}} )\]
of complexes of $K$-vector spaces. If we choose an open neighbourhood $V'$ of $\tube{X}_\fr{P}$ which is smooth and partially proper over $K$, then the composition of the `forget supports' map and the trace map for $V'$ induces
\begin{align*}
 \mathbf{R}\Gamma_c(\tube{Y}_\fr{P},\mathbf{R}\underline{\Gamma}_{\tube{X}_\fr{P}}\Omega^\bullet_{\tube{Y}_\fr{P}}) &\isomto \mathbf{R}\Gamma_c(V',j_{V'*}\mathbf{R}j_{V'}^!\Omega^\bullet_{V'}) \\ 
 &\rightarrow \mathbf{R}\Gamma_c(V',\Omega^\bullet_{V'}) \\
  &\rightarrow K[-2d].
\end{align*}
We therefore obtain, for any $q\geq0$, a pairing
\begin{equation}
{\rm H}^\mathrm{BM}_{q,\rig}(X,E) \times {\rm H}^q_{c,\rig}(X,E^\vee) \rightarrow K \label{eqn: pairing}
\end{equation} 
where
\[ {\rm H}^q_{c,\rig}(X,E^\vee) := {\rm H}^q(\tube{Y}_\fr{P},\mathbf{R}\underline{\Gamma}_{\tube{X}_\fr{P}}(  \Omega^\bullet_{\tube{Y}_\fr{P}}\otimes \mathscr{F}^\vee))\]
denotes the compactly supported rigid cohomology with coefficients in $E^\vee$, as defined in, for example, \cite[\S\S6.4, 7.4, 8.2]{LS07}.

\begin{proposition} \label{prop: BM} The duality pairing (\ref{eqn: pairing}) is perfect.
\end{proposition}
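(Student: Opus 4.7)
The approach is to identify (\ref{eqn: pairing}) with Poincaré--Lefschetz duality with supports on the smooth, partially proper adic space $V'$ chosen in the construction of the pairing, and deduce perfectness from the classical unrestricted duality via a five-lemma argument.

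Writing $T := \tube{X}_\fr{P}$, which sits as a closed subspace of $V'$ (smooth and partially proper of dimension $d$ over $K$), the first factor $\mathbf{R}\Gamma_{\flat,\rig}(X,E)$ equals $\mathbf{R}\Gamma_c(T, \Omega^\bullet \otimes E_\fr{P})[2d]$, while the second factor equals $\mathbf{R}\Gamma_T(V', \Omega^\bullet \otimes \mathscr{E}_{V'}^\vee)$ (cohomology with supports in $T$). Unwinding the construction preceding the proposition, the pairing is obtained by combining Berthelot's horizontal pairing $\mathscr{F} \otimes \mathbf{R}\underline{\Gamma}_T \mathscr{F}^\vee \to \mathbf{R}\underline{\Gamma}_T \mathcal{O}_{\tube{Y}_\fr{P}}$ with the trace $\Tr \colon \mathbf{R}\Gamma_c(V', \Omega^\bullet)[2d] \to K$ of \cite{AL20}. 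A careful comparison shows that this agrees with the standard de\thinspace Rham cup product on the smooth adic space $V'$, restricted to sections with support in $T$.

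Next, I would apply the classical Poincaré--Lefschetz duality for smooth, partially proper adic spaces simultaneously to $V'$ and to its open complement $W := V' \setminus T$, with coefficients in $\mathscr{E}_{V'}$ and its restriction. The resulting pairings slot into the excision triangles for $(W, V', T)$ --- compactly supported on the covariant side, with supports on the contravariant side --- yielding a commutative diagram of pairings between distinguished triangles. The five-lemma then reduces perfectness of (\ref{eqn: pairing}) to perfectness of the Poincaré--Lefschetz pairings on $V'$ and on $W$. Perfectness of these in turn reduces to finite-dimensionality of the relevant cohomology groups, which follows from Kedlaya's finiteness theorem for $F$-overconvergent isocrystals, extended to objects of Frobenius type by d\'evissage on iterated extensions (the sole place the Frobenius-type hypothesis on $E$ is used).

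The main technical obstacle will be the identification in the first step: verifying that (\ref{eqn: pairing}), built from Berthelot's horizontal pairing on $\tube{Y}_\fr{P}$, coincides with the naive Poincaré--Lefschetz pairing with supports on the smooth ambient space $V'$. This amounts to a careful comparison of two a priori different cup products and trace constructions, and ultimately exploits the compatibility of the trace on $V'$ with the `forget supports' morphism from cohomology with supports in $T$ to unrestricted cohomology on $V'$.
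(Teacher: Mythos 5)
Your proposal diverges from the paper's proof and contains a genuine gap. The crucial problem is the appeal to ``classical Poincaré--Lefschetz duality for smooth, partially proper adic spaces'' applied simultaneously to $V'$ and to its open complement $W = V'\setminus T$. No such duality theorem exists as a black box for de\thinspace Rham cohomology of arbitrary modules with connection on rigid analytic spaces: de\thinspace Rham cohomology of such objects is not finite-dimensional in general (already the trivial connection on an open polydisc has infinite-dimensional $\mathrm{H}^0$), and it is precisely the overconvergence and Frobenius conditions that buy finiteness. These conditions are attached to the isocrystal $E$ on the algebraic variety $X$, not to the restriction of $\mathscr{E}_{V'}$ to the analytic open $W$: once you restrict to $W$ you lose all control, and Kedlaya's theorem simply does not apply there. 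So the five-lemma reduction terminates in a pairing on $W$ that you cannot show is perfect. A further slip: you state that perfectness ``reduces to finite-dimensionality,'' but finite-dimensionality of the two sides does not make a pairing perfect. Kedlaya's theorem \cite[Theorem~1.2.3]{Ked06a} is a genuine duality theorem — it asserts that the Poincaré pairing is perfect, not just that the groups are finite-dimensional — and that duality statement is what must be invoked.

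The paper's proof keeps everything at the level of the algebraic variety $X$: both $\mathrm{H}^{\mathrm{BM}}_{*,\rig}$ and $\mathrm{H}^*_{c,\rig}$ sit in excision triangles associated to open/closed decompositions $U\hookrightarrow X\hookleftarrow Z$, and the pairing is compatible with these. Noetherian induction on $X$ then reduces to $X$ smooth, affine, irreducible. For such $X$ one chooses a Monsky--Washnitzer frame $(X,Y,\fr{P})$ with $Y=P$, $\fr{P}$ projective and smooth around $X$, and $X$ dense in $Y$; then $\mathrm{H}^{\mathrm{BM}}_{q,\rig}(X,E)\cong \mathrm{H}^{2d-q}_\rig(X,E)$ and the pairing becomes the usual Poincaré pairing, so Kedlaya's duality theorem finishes the argument. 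The crucial difference from your proposal is that this inductive scheme never leaves the realm of overconvergent isocrystals on algebraic varieties, so the Frobenius/overconvergence hypotheses are available at every stage; your excision on the ambient adic space $V'$ leaves that realm immediately.
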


\begin{proof}
Both ${\rm H}_{q,\rig}^{\mathrm{BM}}(X,E)$ and ${\rm H}^q_{c,\rig}(X,E^\vee)$ sit in excision exact sequences, and the duality pairing is compatible with these sequences. Hence, by Noetherian induction, we can reduce to the case when $X$ is smooth, affine and irreducible. But in this case we can calculate everything in sight using a suitable `Monsky--Washnitzer' frame $(X,Y,\fr{P})$. That is, one where $\fr{P}$ is projective, and smooth over $\mathcal{V}$ in a neighbourhood of $X$, $Y=P$, and $X$ is dense in $Y$. In this case, it is clear that
\[ {\rm H}_{q,\rig}^{\mathrm{BM}}(X,E)\cong {\rm H}^{2d-q}_\rig(X,E)\]
coincides with usual rigid cohomology, and the pairing constructed above is just the usual Poincar\'e pairing. Hence the the claim follows from \cite[Theorem 1.2.3]{Ked06a}.
\end{proof}

\begin{remark} Kedlaya's theorem is only stated for objects admitting a Frobenius structure. It is easily seen to extend to objects which are of Frobenius type in our sense. This is also the first occasion on which we have actually used the assumption that our isocrystals are of Frobenius type, it will also be used crucially in the proof of Theorem \ref{theo: DCon} below.
\end{remark}

\section{The trace map and pre-\texorpdfstring{$\mathscr{D}^\dagger$}{Ddag}-modules} \label{sec: trace Ddagger}

Our next goal is to show that the essential image of $\mathrm{sp}_!$ consists of overholonomic complexes (see Theorem \ref{theo: DCon} below). A crucial component of the proof will be to show that our construction of $\mathrm{sp}_!$ is `compatible' with the trace map in a suitable sense (for a precise statement, see Theorem \ref{theo: trace Ddag} below). Our main aim in this section is to prove this compatibility.

\subsection{Statement of compatibility of $\bm{\mathrm{sp}_!}$ with $\bm{\Tr}$}

To formulate the required compatibility result, we start by defining the $\mathscr{D}$-linear trace map for constructible isocrystals. Let $(X,Y,\fr{P})$ be a frame, with $\fr{P}$ smooth over $\mathcal{V}$. Suppose that $u:\fr{P}'\rightarrow \fr{P}$ is a smooth and proper morphism of formal schemes, of relative dimension $d$, and let $\mathscr{F}$ be a constructible $\cO_{\tube{X}_\fr{P}}$-module with overconvergent connection. Let us denote by $u:\tube{u^{-1}(X)}_{\fr{P}'}\rightarrow \tube{X}_\fr{P}$ the induced smooth and proper morphism of germs.

Since $\mathscr{F}$ is flat as an $\mathcal{O}_{\tube{X}_\fr{P}}$-module by Lemma \ref{lemma: flat}, the adjunction between $\mathbf{R}u_*$ and $\mathbf{L}u^*$ induces a morphism
\[ \mathbf{R}u_{*}(\Omega^\bullet_{\tube{u^{-1}(X)}_{\fr{P}'}/\tube{X}_\fr{P}}) \otimes_{\mathcal{O}_{\tube{X}_\fr{P}}} \mathscr{F} \rightarrow \mathbf{R}u_{*}(\Omega^\bullet_{\tube{u^{-1}(X)}_{\fr{P}'}/\tube{X}_\fr{P}}\otimes_{\mathcal{O}_{\tube{u^{-1}(X)}_{\fr{P}'}}} u^*\mathscr{F} ). \]

\begin{lemma} \label{lemma: projection formula constructible} This map is an isomorphism. 
\end{lemma}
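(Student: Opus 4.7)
The plan is to combine a standard dévissage for constructible isocrystals with the classical projection formula for coherent sheaves under a smooth proper morphism. First, since $\mathscr{F}$ is flat as an $\cO_{\tube{X}_\fr{P}}$-module by Lemma \ref{lemma: flat}, and $u$ is flat as well, both the source and target of the map are triangulated functors of $\mathscr{F}$: a short exact sequence in $\mathrm{Isoc}_\mathrm{cons}(X,Y)$ yields distinguished triangles on both sides, and so the 5-lemma lets us restrict to a class of generators. By the dévissage recorded near the end of \S\ref{sec: constructible} (see \cite[Proposition 3.6]{LS16}), every constructible isocrystal on $(X,Y)$ is an iterated extension of objects $a_{!}E$ with $a\colon Z\hookrightarrow X$ a locally closed immersion and $E\in\mathrm{Isoc}(Z,\overline{Z})$. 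It therefore suffices to treat this case.

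For $\mathscr{F}=a_{!}E$, the realisation on $\tube{X}_\fr{P}$ is supported on $\tube{Z}_\fr{P}$ and, by construction, has the form $j_Z^\dagger \mathscr{G}$ for some coherent $\cO$-module with integrable connection $\mathscr{G}$ on a strict neighbourhood of $\tube{Z}_\fr{P}$ inside $\tube{\overline{Z}}_\fr{P}$. By Proposition \ref{prop: coherent sheaves colimit neighbourhoods} we can write $\mathscr{F}\cong \mathrm{colim}_V\, j_{V*}(\mathscr{G}|_V)$ as a filtered colimit over shrinking open neighbourhoods $V$ of $\tube{Z}_\fr{P}$. Using that $u$ is proper (so the relevant $\mathbf{R}u_{*}$ commutes with filtered colimits on quasi-compact spaces) and that $u^{*}$ and the tensor product commute with colimits, both sides of the map commute with this colimit in $\mathscr{F}$. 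We are thereby reduced to the case in which $\mathscr{F}$ is a coherent $\cO_{\tube{X}_\fr{P}}$-module equipped with an integrable connection, hence in particular locally free.

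In this coherent case the statement is the usual projection formula for a smooth proper morphism: it can be checked locally on $\tube{X}_\fr{P}$ where $\mathscr{F}$ is free, in which case both sides are manifestly isomorphic. The main obstacle is to justify cleanly the exchange of $\mathbf{R}u_{*}$ with the filtered colimit in the middle step; the key point is that the induced morphism of germs $u\colon \tube{u^{-1}(X)}_{\fr{P}'}\rightarrow \tube{X}_{\fr{P}}$ is proper, so that $\mathbf{R}u_{*}$ is computed on quasi-compact pieces and therefore respects filtered colimits of sheaves, a fact which fits naturally into the formalism of \S\ref{subsec: f! prop}. Granting this, the remaining arguments are standard.
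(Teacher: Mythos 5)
The opening dévissage is the same as the paper's, reducing to $\mathscr{F}=a_!\mathscr{G}$ with $a\colon Z\hookrightarrow X$ locally closed and $\mathscr{G}$ locally free on $\tube{Z}_\fr{P}$, and that part is fine. But the subsequent ``reduction to the coherent case'' does not go through. First, the characterisation of $a_!\mathscr{G}$ as ``$j_Z^\dagger\mathscr{G}$'' is imprecise: what you actually have is an extension by zero along the open immersion $\tube{\overline{Z}}_\fr{P}\hookrightarrow\tube{X}_\fr{P}$ of the $j_Z^\dagger$-construction, and Proposition \ref{prop: coherent sheaves colimit neighbourhoods} is about coherent sheaves on a germ, not about rewriting extensions by zero as colimits. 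More seriously, even granting the colimit presentation, the individual terms $j_{V*}(\mathscr{G}|_V)$ are \emph{not} coherent $\cO_{\tube{X}_\fr{P}}$-modules (pushforward along an open immersion does not preserve coherence, and these sheaves are not even supported on all of $\tube{X}_\fr{P}$). So the conclusion ``we are thereby reduced to the case in which $\mathscr{F}$ is a coherent $\cO_{\tube{X}_\fr{P}}$-module'' is false — you never arrive at the honestly coherent case, which is just the trivial case $Z=X$. Your stated justification for commuting $\mathbf{R}u_*$ with the filtered colimit (``proper, so computed on quasi-compact pieces'') is also not adequate: the germs are not quasi-compact, and properness alone is not the right hypothesis.

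The ingredient the paper actually uses, and which your proof misses entirely, is the base change statement Proposition \ref{prop: prop base changes}: since $\tube{Z}_\fr{P}$ is closed under generalisation in $\tube{X}_\fr{P}$, the natural map
\[ a^{-1}\mathbf{R}u_{*}\Omega^\bullet_{\tube{u^{-1}(X)}_{\fr{P}'}/\tube{X}_\fr{P}} \rightarrow \mathbf{R}u_{*}\Omega^\bullet_{\tube{u^{-1}(Z)}_{\fr{P}'}/\tube{Z}_\fr{P}} \]
is an isomorphism. That is what lets you commute the restriction to $\tube{Z}_\fr{P}$ past $\mathbf{R}u_*$; combined with the elementary commutation of $a_!$ with tensor and pushforward and the usual projection formula for the locally free $\mathscr{G}$, this gives the lemma directly. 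Your colimit argument is an attempt to avoid this base change, but it does not close.
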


\begin{proof} 
By d\'evissage we may reduce to the case where there exists a locally closed subscheme $a\colon Z\hookrightarrow X$, and a locally free $\mathcal{O}_{\tube{Z}_\fr{P}}$-module $\mathscr{G}$ such that $\mathscr{F}= a_!\mathscr{G}$. Let $a:u^{-1}(Z)\rightarrow u^{-1}(Z)$ denote the induced immersion. In this case, we may identify the LHS with
\[ \mathbf{R}u_{*}(\Omega^\bullet_{\tube{u^{-1}(X)}_{\fr{P}'}/\tube{X}_\fr{P}}) \otimes_{\mathcal{O}_{\tube{X}_\fr{P}}} a_!\mathscr{G} \cong  a_!\left(a^{-1}\mathbf{R}u_{*}\Omega^\bullet_{\tube{u^{-1}(X)}_{\fr{P}'}/\tube{X}_\fr{P}} \otimes_{\mathcal{O}_{\tube{Z}_\fr{P}}} \mathscr{G}\right)\]
and the RHS with
\begin{align*}
  \mathbf{R}u_{*}(\Omega^\bullet_{\tube{u^{-1}(X)}_{\fr{P}'}/\tube{X}_\fr{P}}\otimes_{\mathcal{O}_{\tube{u^{-1}(X)}_{\fr{P}'}}} u^*a_!\mathscr{G} ) &= \mathbf{R}u_{*}(\Omega^\bullet_{\tube{u^{-1}(X)}_{\fr{P}'}/\tube{X}_\fr{P}}\otimes_{\mathcal{O}_{\tube{u^{-1}(X)}_{\fr{P}'}}} a_!u^*\mathscr{G} ) \\
&= \mathbf{R}u_{*}a_!(\Omega^\bullet_{\tube{u^{-1}(Z)}_{\fr{P}'}/\tube{Z}_\fr{P}}\otimes_{\mathcal{O}_{\tube{u^{-1}(Z)}_{\fr{P}'}}} u^*\mathscr{G} ) \\
&= a_!\mathbf{R}u_{*}(\Omega^\bullet_{\tube{u^{-1}(Z)}_{\fr{P}'}/\tube{Z}_\fr{P}}\otimes_{\mathcal{O}_{\tube{u^{-1}(Z)}_{\fr{P}'}}} u^*\mathscr{G} ).
\end{align*}
Since $\mathscr{G}$ is locally free, the projection formula identifies the latter with
\[ a_!\left(\mathbf{R}u_{*} \Omega^\bullet_{\tube{u^{-1}(Z)}_{\fr{P}'}/\tube{Z}_\fr{P}} \otimes_{\mathcal{O}_{\tube{Z}_{\fr{P}}}} \mathscr{G} \right) ,\]
and it therefore suffices to show that the natural map
\[ a^{-1}\mathbf{R}u_{*}\Omega^\bullet_{\tube{u^{-1}(X)}_{\fr{P}'}/\tube{X}_\fr{P}} \rightarrow \mathbf{R}u_{*} \Omega^\bullet_{\tube{u^{-1}(Z)}_{\fr{P}'}/\tube{Z}_\fr{P}} \]
is an isomorphism. But since $\tube{Z}_\fr{P}$ is an stable under generalisation in $\tube{X}_\fr{P}$, this follows from Proposition \ref{prop: prop base changes}.
\end{proof}

We now take $X=P$, and consider the ($\mathscr{D}_{\fr{P}_K}$-linear) trace map 
\[ \mathrm{Tr}_{\fr{P}'_K/\fr{P}_K}:\mathbf{R}u_{*}(\Omega^\bullet_{\mathfrak{P}'_K/\mathfrak{P}_K})[2d] \rightarrow \mathcal{O}_{\fr{P}_K}. \] 
Tensoring with $\mathscr{F}\in\mathrm{Isoc}_\mathrm{cons}(\fr{P})$, we therefore obtain
\[  \Tr_{\mathscr{F}}\colon \mathbf{R}u_{*}(\Omega^\bullet_{\mathfrak{P}'_K/\mathfrak{P}_K}\otimes_{\mathcal{O}_{\mathfrak{P}'_K}} u^*\mathscr{F} )[2d] \rightarrow \mathscr{F}. \]
Pushing this forward along $\mathrm{sp}_\fr{P}:\fr{P}_K\rightarrow\fr{P}$, we obtain a $\mathscr{D}_{\fr{P}\Q}$-linear map
\[ u_+\mathrm{sp}_{\fr{P}'!}u^*\mathscr{F} \rightarrow \mathrm{sp}_{\fr{P}!}\mathscr{F}, \]
where
\[ u_+(-):= \mathbf{R}u_*\left( \mathscr{D}_{\fr{P}\leftarrow \fr{P}',\Q} \otimes^{\mathbf{L}}_{\mathscr{D}_{\fr{P}'\Q}}(-) \right) \cong \mathbf{R}u_*\left(  \Omega^\bullet_{\fr{P}'/\fr{P}} \otimes_{\mathcal{O}_{\fr{P}'}}(-) \right)[d] \]
is the usual direct image functor for $\mathscr{D}_{\fr{P}\Q}$-modules. This is the map that we wish to lift $\mathscr{D}^\dagger_{\fr{P}\Q}$-linearly. 

To do so, we define, for an arbitrary $\mathscr{D}_{\fr{P}'\Q}^\dagger$-module $\mathcal{M}$,
\[ u_+\mathcal{M}:= \mathbf{R}u_* \left(\mathscr{D}^\dagger_{\fr{P}\leftarrow\fr{P}',\Q} \otimes^{\mathbf{L}}_{\mathscr{D}^\dagger_{\fr{P}'\Q}} \mathcal{M} \right) \]
using the $(u^{-1}\mathscr{D}^\dagger_{\fr{P}\Q},\mathscr{D}^\dagger_{\fr{P}'\Q})$-bimodule $\mathscr{D}^\dagger_{\fr{P}\leftarrow\fr{P}',\Q}$. When $\mathcal{M}$ is coherent, this coincides with Berthelot's ind-pro definition of $u_+$ \cite[(4.3.7)]{Ber02}. Via the overconvergent Spencer resolution
\[ \mathscr{D}^\dagger_{\fr{P}\leftarrow\fr{P}',\Q}\isomto \Omega^\bullet_{\fr{P}'/\fr{P}}\otimes_{\mathcal{O}_\fr{P}'} \mathscr{D}^\dagger_{\fr{P}'\Q}[d] \] (see for example \cite[(4.2.1.1)]{Ber02}), we deduce that this also coincides with the relative de\thinspace Rham cohomology
\[ \mathbf{R}u_*\left(\Omega^\bullet_{\fr{P}'/\fr{P}} \otimes_{\mathcal{O}_\fr{P}} \mathcal{M} \right)[d].\]
In particular this shows that $u_+$ commutes with restriction of scalars along $\mathscr{D}_{(-)\Q}\rightarrow \mathscr{D}^\dagger_{(-)\Q}$. Since $\mathrm{sp}_{\fr{P}'!}u^*\mathscr{F}$ and $\mathrm{sp}_{\fr{P}!}\mathscr{F}$ are both naturally (complexes of) $\mathscr{D}^\dagger$-modules, via Corollary \ref{cor: dagger lifting of sp_*}, it makes sense to ask for a $\mathscr{D}^\dagger$-linear lifting of the trace map.

\begin{theorem} \label{theo: trace Ddag} Let $\mathfrak{P}$ and $\mathfrak{Q}$ be smooth formal schemes over $\mathcal{V}$, with $\mathfrak{Q}$ proper. Let $u\colon \mathfrak{P}':=\mathfrak{P} \times_\mathcal{V} \fr{Q}\rightarrow \fr{P}$ be the projection, and $\mathscr{F}\in \mathrm{Isoc}_\mathrm{cons}(\fr{P})$. Then there exists a $\mathscr{D}^\dagger_{\fr{P}\Q}$-linear morphism
\[ u_+\mathrm{sp}_{\fr{P}'!}u^*\mathscr{F}\rightarrow \mathrm{sp}_{\fr{P}!}\mathscr{F} \]
lifting the $\mathscr{D}_{\fr{P}\Q}$-linear trace map.
\end{theorem}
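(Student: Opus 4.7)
The plan is to construct the lift termwise on the explicit $\mathrm{sp}_*$-acyclic resolutions of \S\ref{sec: pre Dd}. Fix a good stratification $\{P_\alpha\}_{\alpha \in A}$ of $P$ adapted to $\mathscr{F}$. Since $u$ is smooth with proper fibres $\fr{Q}_k$, and strongly affine open immersions pull back to strongly affine ones, the preimages $\{u^{-1}(P_\alpha)\}_{\alpha \in A}$ form a good stratification of $P'$ adapted to $u^*\mathscr{F}$. Base change along the smooth map $u_K$ then provides compatibilities
\[ u_K^*\, \bar{\imath}_{\alpha *} \cong \bar{\imath}'_{\alpha *}\, u_K^*, \qquad u_K^*\, j^\dagger_{P_\alpha} \cong j^\dagger_{u^{-1}(P_\alpha)}\, u_K^*, \qquad u_K^*\, \mathcal{R}_{\bar{\imath}_\alpha, \infty} \cong \mathcal{R}_{\bar{\imath}'_\alpha, \infty}\, u_K^*, \]
so that applying $u_K^*$ to the explicit resolution of $\mathscr{F}$ from Corollary \ref{cor: dagger lifting of sp_*} yields the corresponding resolution of $u^*\mathscr{F}$ on $\fr{P}'_K$.

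Next, on each term of the pulled-back resolution, the projection formula (Lemma \ref{lemma: proj form}, combined with Lemma \ref{lemma: projection formula constructible} to handle the constructible case) produces a canonical isomorphism
\[ u_{K*}\bigl(\Omega^\bullet_{\fr{P}'_K/\fr{P}_K} \otimes_{\cO_{\fr{P}'_K}} u_K^*(-)\bigr) \isomto u_{K*}\Omega^\bullet_{\fr{P}'_K/\fr{P}_K} \otimes_{\cO_{\fr{P}_K}}(-), \]
and composition with $\Tr \otimes \id$ using the adic trace map of \S\ref{subsec: f! prop} yields a termwise morphism
\[ u_{K*}\bigl(\Omega^\bullet_{\fr{P}'_K/\fr{P}_K} \otimes_{\cO_{\fr{P}'_K}} u_K^*(-)\bigr)[2d] \to (-). \]
Pushing forward along $\mathrm{sp}_{\fr{P}*}$ and using the identification $u \circ \mathrm{sp}_{\fr{P}'} = \mathrm{sp}_\fr{P} \circ u_K$, together with the description $u_+(-) = \mathbf{R}u_*(\Omega^\bullet_{\fr{P}'/\fr{P}} \otimes (-))[d]$, this assembles into a $\mathscr{D}_{\fr{P}\Q}$-linear morphism $u_+ \mathrm{sp}_{\fr{P}'!} u^*\mathscr{F} \to \mathrm{sp}_{\fr{P}!}\mathscr{F}$ lifting the classical trace.

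The main obstacle, and the technical heart of the argument, is to promote this $\mathscr{D}_{\fr{P}\Q}$-linear morphism to a $\mathscr{D}^\dagger_{\fr{P}\Q}$-linear one. For this I would check that the termwise trace maps are compatible with the pre-$\mathscr{D}^\dagger$-module structures supplied by Propositions \ref{prop: Ddag jdag} and \ref{prop: Ddag Roos}. The geometric key is that the radial filtration behaves well under the product structure $\fr{P}' = \fr{P} \times_\cV \fr{Q}$: the decomposition $T_{\fr{P}'} \cong u^* T_\fr{P} \oplus T_{\fr{P}'/\fr{P}}$ together with $\tube{P'}_{\fr{P}'^2} \cong \tube{P}_{\fr{P}^2} \times_K \tube{Q}_{\fr{Q}^2}$ implies that $u_K^{-1}(W)$ is $\eta$-admissible whenever $W \subset \fr{P}_K$ is, and that pullback along $u_K$ carries pre-$\mathscr{D}^\dagger$-module structures on $\fr{P}$ to pre-$\mathscr{D}^\dagger$-module structures on $\fr{P}'$ (by an argument essentially identical to Proposition \ref{prop: Ddag jdag}). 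Dually, the relative de Rham pushforward $u_{K*}\bigl(\Omega^\bullet_{\fr{P}'_K/\fr{P}_K} \otimes_{\cO_{\fr{P}'_K}} -\bigr)$ transports pre-$\mathscr{D}^\dagger$-modules in the other direction, because derivations along $\fr{P}$ commute with the relative de Rham differential and act on pushforward sections through affinoids of product type. Finally, the continuity of $\Tr$ with respect to the Fréchet topologies can be read off from its construction in \cite{AL20} via the residue on relative unit polydiscs, which is manifestly continuous; this continuity then passes through the Kiehl fibration gluing. Once these three compatibilities are established, Lemma \ref{lemma: pre Ddag sp} upgrades the termwise trace to a $\mathscr{D}^\dagger_{\fr{P}\Q}$-linear map after applying $\mathrm{sp}_{\fr{P}*}$, delivering the required lift.
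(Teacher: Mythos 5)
Your high-level plan matches the paper's: fix a good stratification, use the pullback stratification on $\fr{P}'$, construct termwise trace maps on the explicit $\mathrm{sp}_*$-acyclic resolution, push forward along $\mathrm{sp}_{\fr{P}*}$, and then argue $\mathscr{D}^\dagger$-linearity. However, there are three concrete technical gaps in the step you call ``the technical heart of the argument''.

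First, $\mathbf{R}u_{\dR*}$ is a derived functor; to speak of a pre-$\mathscr{D}^\dagger$-module structure on the source of the trace map, or of continuity, one needs an explicit representative equipped with a topology. The paper supplies this by forming the relative \v{C}ech--de\thinspace Rham complex for an affine cover $\mathcal{U}$ of $\fr{P}'$, and then taking the truncation $\tau^{\leq 2d}$ so that the (shifted) complex maps to the target in degree zero via the trace (Definition~\ref{defn: lift pre Ddag}). Your proposal omits this entirely, so the phrase ``termwise trace maps'' is not yet meaningful.

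Second, you claim that the relative de\thinspace Rham pushforward ``transports pre-$\mathscr{D}^\dagger$-modules in the other direction'' and then invoke Lemma~\ref{lemma: pre Ddag sp}. That lemma requires both sides of the morphism to carry pre-$\mathscr{D}^\dagger$-module structures in the sense of Definition~\ref{defn: pre Ddag}, which involves the delicate $\eta$-admissible-neighbourhood condition. The paper deliberately avoids establishing this for the pushforward. Instead it shows the much weaker statement that, after applying $\mathrm{sp}_{\fr{P}*}$, the truncated \v{C}ech complex lies in $\mathrm{Ind}_{\mathbf{LC}_K}(\mathscr{D}^{\bullet}_{\fr{P}\Q})$ (using only the ring map $u^{-1}\mathscr{D}^\eta_{\fr{P}\Q}\to\mathscr{D}^\eta_{\fr{P}'\Q}$ coming from the product structure), and then deduces $\mathscr{D}^\bullet_{\fr{P}\Q}$-linearity of the map \emph{automatically} from the density/separatedness argument of Lemma~\ref{lemma: ff top fre}, because the \emph{target} is ind-Fr\'echet. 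Without this one-sided device, you would genuinely need to prove that the pushforward has a full pre-$\mathscr{D}^\dagger$-module structure, which is not obviously true and is in any case not what the paper does.

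Third, the continuity of the individual trace maps $\mathrm{Tr}_{\mathscr{F}_{\underline\lambda}}$ cannot simply be ``read off'' from the residue on polydiscs: the trace lives on the top cohomology of the \v{C}ech complex, and one must show that the quotient topology there is the canonical Banach topology on a finitely generated module, for which one needs Kiehl's finiteness theorem together with the closed-image statement (Lemma~\ref{lemma: finite closed}). Until these three points are addressed, the key step from $\mathscr{D}_{\fr{P}\Q}$- to $\mathscr{D}^\dagger_{\fr{P}\Q}$-linearity remains open.
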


Note that we are not claiming any kind of `canonicity' of the $\mathscr{D}^\dagger$-linear lift, all we shall need is its existence.

\subsection{Categories of ind-$\pmb{\mathscr{D}}$-modules} \label{sec: cat Ddag}

Suppose that we have $u:\fr{P}'=\fr{P}\times_\mathcal{V}\fr{Q}\rightarrow \fr{P}$ as in the statement of Theorem \ref{theo: trace Ddag}. Let $Y\hookrightarrow \fr{P}$ be a closed immersion, and let us write $u$ as well for the induced morphism $\tube{u^{-1}(Y)}_{\fr{P}'}\rightarrow \tube{Y}_\fr{P}$. We will need a slightly more careful analysis of the construction in \S\ref{sec: pre Ddag sub} of a $\mathscr{D}^\dagger$-module on $\mathfrak{P}$ from a pre-$\mathscr{D}^\dagger$-module on $\tube{Y}_\mathfrak{P}$. In particular we consider the following categories:
\begin{itemize}
\item the category $\mathrm{Ind}_{\mathbf{LC}_K}(\mathscr{D}_{\tube{Y}_\fr{P}})$ of ind-objects in locally convex $\mathscr{D}_{\tube{Y}_\fr{P}}$-modules;
\item the full subcategory $\mathrm{Ind}_{\mathbf{Fre}_K}(\mathscr{D}_{\tube{Y}_\fr{P}})\subset \mathrm{Ind}_{\mathbf{LC}_K}(\mathscr{D}_{\tube{Y}_\fr{P}})$ of ind-Fr\'echet $\mathscr{D}_{\tube{Y}_\fr{P}}$-modules, that is, objects $\{\mathscr{F}_i\}_{i\in I}$ such that each $\mathscr{F}_i$ is a Fr\'echet $\mathscr{D}_{\tube{Y}_\fr{P}}$-module.
\end{itemize}
Similarly, we consider the following variants on the formal scheme $\fr{P}$:
\begin{itemize}
\item the category $\mathrm{Ind}_{\mathbf{LC}_K}(\mathscr{D}_{\fr{P}\Q})$ of ind-objects in locally convex $\mathscr{D}_{\fr{P}\Q}$-modules;
\item the full subcategory $\mathrm{Ind}_{\mathbf{Fre}_K}(\mathscr{D}_{\fr{P}\Q})\subset \mathrm{Ind}_{\mathbf{LC}_K}(\mathscr{D}_{\fr{P}\Q})$ of ind-Fr\'echet $\mathscr{D}_{\fr{P}\Q}$-modules, that is, objects $\{\mathscr{F}_i\}_{i\in I}$ such that each $\mathscr{F}_i$ is a Fr\'echet $\mathscr{D}_{\fr{P}\Q}$-module.
\end{itemize}
We require two slightly more subtle analogues on $\fr{P}$, which give categories of ind-modules over the `ind-ring' $\mathscr{D}^\bullet_{\fr{P}\Q}=\{\mathscr{D}^{\eta}_{\fr{P}\Q}\}_{\eta}$.

\begin{definition} We define the category $\mathrm{Ind}_{\mathbf{LC}_K}(\mathscr{D}^{\bullet}_{\fr{P}\Q})$ of ind-locally convex $\mathscr{D}^{\bullet}_{\fr{P}\Q}$-modules as follows. An object is an ind-locally convex $\mathcal{O}_{\fr{P}\Q}$-module $\{\mathscr{F}_i\}_{i\in I}$, together with, for each $\eta$, an element $i_\eta\in I$, and a continuous $\mathscr{D}^{\eta}_{\fr{P}\Q}$-module structure on $\mathscr{F}_i$ for $i\geq i_\eta$, which are compatible as $i$ and $\eta$ vary. The morphisms are the obvious ones. 

We define the full subcategory
\[ \mathrm{Ind}_{\mathbf{Fre}_K}(\mathscr{D}^{\bullet}_{\fr{P}\Q}) \subset \mathrm{Ind}_{\mathbf{LC}_K}(\mathscr{D}^{\bullet}_{\fr{P}\Q}) \]
of ind-Fr\'echet $\mathscr{D}^{\bullet}_{\fr{P}\Q}$-modules, which consists of objects $\{\mathscr{F}_i\}_{i\in I}$ such that each $\mathscr{F}_i$ is a Fr\'echet $\mathscr{D}^{\eta}_{\fr{P}\Q}$-module whenever $i\geq i_\eta$. 
\end{definition}

Note that taking the colimit of an ind-object and then forgetting the topology gives rise to a natural functor
\[ \mathrm{colim}: \mathrm{Ind}_{\mathbf{LC}_K}(\mathscr{D}^\bullet_{\fr{P}\Q})\rightarrow \mathrm{Mod}(\mathscr{D}^\dagger_{\fr{P}\Q}).\]
A pre-$\mathscr{D}^\dagger$-module $\left\{\mathscr{F}_i \right\}_{i\in I}$ on $\tube{Y}_\fr{P}$ is, in particular, an object of $\mathrm{Ind}_{\mathbf{Fre}}(\mathscr{D}_{\tube{Y}_\fr{P}})$, and in the special case that $Y=P$, Lemma \ref{lemma: pre Ddag sp} amounts to observing that, in this case, $\left\{\mathrm{sp}_{*}\mathscr{F}_i \right\}_{i\in I}\in \mathrm{Ind}_{\mathbf{Fre}_K}(\mathscr{D}_{\fr{P}\Q})$ can be uniquely promoted to an object of $\mathrm{Ind}_{\mathbf{Fre}_K}(\mathscr{D}^{\bullet}_{\fr{P}\Q})$. The uniqueness here is a consequence of the fact that Fr\'echet spaces are separated, which implies that the forgetful functor
\[\mathrm{Ind}_{\mathbf{Fre}_K}(\mathscr{D}^{\bullet}_{\fr{P}\Q}) \rightarrow \mathrm{Ind}_{\mathbf{LC}_K}(\mathscr{D}_{\fr{P}\Q}) \]
is fully faithful. We will, however, need something slightly stronger than this.

\begin{lemma} \label{lemma: ff top fre}Let $\mathscr{F} \in \mathrm{Ind}_{\mathbf{LC}_K}(\mathscr{D}^{\bullet}_{\fr{P}\Q})$ and $\mathscr{G}\in \mathrm{Ind}_{\mathbf{Fre}_K}(\mathscr{D}^{\bullet}_{\fr{P}\Q})$. Then the map
\[ \mathrm{Hom}_{\mathrm{Ind}_{\mathbf{LC}_K}(\mathscr{D}^{\bullet}_{\fr{P}\Q})}(\mathscr{F},\mathscr{G}) \rightarrow \mathrm{Hom}_{\mathrm{Ind}_{\mathbf{LC}_K}(\mathscr{D}_{\fr{P}\Q})}(\mathscr{F},\mathscr{G}) \]
is bijective.
\end{lemma}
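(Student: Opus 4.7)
The plan is to unwind the definition of the Hom-sets in both ind-categories and reduce the comparison to a density plus separatedness argument, which is essentially the same device already used implicitly earlier in the paper (e.g.\ in Remark~\ref{rem: m adm inter} and in the uniqueness statements surrounding the definition of $\mathscr{D}^\eta_{\fr{P}\Q}$).

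First I would recall that a morphism in $\mathrm{Ind}_{\mathbf{LC}_K}(\mathscr{D}_{\fr{P}\Q})$ from $\{\mathscr{F}_i\}_{i\in I}$ to $\{\mathscr{G}_j\}_{j\in J}$ is an element of $\varprojlim_i \varinjlim_j \mathrm{Hom}^{\mathrm{cts}}_{\mathscr{D}_{\fr{P}\Q}}(\mathscr{F}_i,\mathscr{G}_j)$, i.e.\ is represented up to the standard equivalence by a system of continuous $\mathscr{D}_{\fr{P}\Q}$-linear maps $\phi_i\colon\mathscr{F}_i\to \mathscr{G}_{j(i)}$. A morphism in $\mathrm{Ind}_{\mathbf{LC}_K}(\mathscr{D}^{\bullet}_{\fr{P}\Q})$ carries exactly the same underlying data, but with the additional requirement that, for every $\eta$, the map $\phi_i$ is $\mathscr{D}^\eta_{\fr{P}\Q}$-linear for all sufficiently large $i$ (so that $i\geq i_\eta$ and $j(i)\geq j_\eta$). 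Injectivity of the comparison map is then immediate, since both Hom-sets are quotients of systems of continuous $\mathscr{D}_{\fr{P}\Q}$-linear maps by the same equivalence relation; the extra $\mathscr{D}^\bullet$-compatibility is a property, not additional data.

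For surjectivity, the heart of the matter is the following claim: given $i\geq i_\eta$ and $j(i)\geq j_\eta$, any continuous $\mathscr{D}_{\fr{P}\Q}$-linear map $\phi_i\colon \mathscr{F}_i\to \mathscr{G}_{j(i)}$ is automatically $\mathscr{D}^\eta_{\fr{P}\Q}$-linear. I would prove this using three ingredients, all already in hand: (a) $\Gamma(\fr{P},\mathscr{D}_{\fr{P}\Q})$ is dense in $\Gamma(\fr{P},\mathscr{D}^\eta_{\fr{P}\Q})$, which is recorded in the discussion following Definition of $\mathscr{D}^\eta_{\fr{P}\Q}$; (b) the $\mathscr{D}^\eta$-actions on $\mathscr{F}_i$ and $\mathscr{G}_{j(i)}$ are jointly continuous, and $\phi_i$ is continuous; (c) $\mathscr{G}_{j(i)}$ is Fr\'echet, hence separated. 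Indeed, for $P\in \mathscr{D}^\eta_{\fr{P}\Q}$ and $x\in \mathscr{F}_i$, approximating $P$ by a sequence $P_n\in \mathscr{D}_{\fr{P}\Q}$ gives
\[ \phi_i(P\cdot x) = \lim_n \phi_i(P_n\cdot x) = \lim_n P_n\cdot \phi_i(x) = P\cdot \phi_i(x), \]
the two outer equalities using (a) plus (b) plus continuity of $\phi_i$, the middle one using $\mathscr{D}_{\fr{P}\Q}$-linearity, and uniqueness of the limit using (c). Working locally (on affines and then on affinoids in the tube) and then sheafifying gives the statement on the level of sheaves.

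To finish, I would observe that once this $\mathscr{D}^\eta$-linearity holds for every $\eta$ on a cofinal system of $i$'s, the family $\{\phi_i\}$ automatically defines a morphism in $\mathrm{Ind}_{\mathbf{LC}_K}(\mathscr{D}^\bullet_{\fr{P}\Q})$, and it clearly maps to the original $\phi$ under the comparison map. The hard part is really just the density/separatedness argument above; the rest is bookkeeping in the ind-category, together with the routine verification that one can always arrange $i\geq i_\eta$ and $j(i)\geq j_\eta$ by passing to cofinal subsystems on both sides.
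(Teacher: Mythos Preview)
Your proposal is correct and follows essentially the same approach as the paper: both argue that injectivity is clear and that surjectivity reduces to showing any continuous $\mathscr{D}_{\fr{P}\Q}$-linear map is automatically $\mathscr{D}^\eta_{\fr{P}\Q}$-linear, using density of $\mathscr{D}_{\fr{P}\Q}$ in $\mathscr{D}^\eta_{\fr{P}\Q}$ together with separatedness of the Fr\'echet target. The paper packages this slightly more concisely by considering the single continuous ``failure map'' $(P,s)\mapsto P\alpha(s)-\alpha(Ps)$ and observing it vanishes on the dense subspace $\mathscr{D}_{\fr{P}\Q}\times\mathscr{F}$, hence everywhere, whereas you use a sequential approximation (legitimate since Fr\'echet spaces are metrizable); but the content is identical.
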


\begin{proof}
The map is clearly injective, we wish to show that it is surjective. In other words, we need to show that any continuous $\mathscr{D}_{\fr{P}\Q}$-linear map $\alpha:\mathscr{F}\rightarrow \mathscr{G}$ is automatically $\mathscr{D}^\bullet_{\fr{P}\Q}$-linear. We consider the map
\begin{align*}
\mathscr{D}^\bullet_{\fr{P}\Q} \times \mathscr{F} &\rightarrow \mathscr{G} \\
(P,s)&\mapsto P\alpha(s)-\alpha(Ps)
\end{align*} 
measuring the failure of $\alpha$ to be $\mathscr{D}^\bullet_{\fr{P}\Q}$-linear, and note that it vanishes on the dense subspace $\mathscr{D}_{\fr{P}\Q} \times \mathscr{F}$ of the source. Since the target $\mathscr{G}$ is separated, it therefore vanishes on all of $\mathscr{D}^\bullet_{\fr{P}\Q} \times \mathscr{F}$, and thus $\alpha$ is $\mathscr{D}^\bullet_{\fr{P}\Q}$-linear.
\end{proof}

We also need the following ind-version of affinoid-acyclicity.

\begin{definition} We say that an ind-sheaf $\{\mathscr{F}_i\}_{i\in I}$ on an analytic variety $\mathscr{X}\in \mathbf{An}_K$ is ind-affinoid-acyclic if for all affinoids $V\subset\mathscr{X}$ there exists a cofinal subcategory $J\subset I$ such that, for all $i\in J$, and all $q>0$, ${\rm H}^q(V,\mathscr{F}_i)=0$.
\end{definition}

Note that this is stronger than saying that $\mathrm{colim}_{i\in I}\mathscr{F}_i$ is affinoid acyclic, but weaker than saying $\{\mathscr{F}_i\}_{i\in I}$ is isomorphic (as an ind-object) to one whose individual pieces are affinoid-acyclic (since $J$ is allowed to vary with $V$). Another way of phrasing the definition would be to say that for each affinoid $V\subset \mathscr{X}$, the ind-object $\{{\rm H}^q(V,\mathscr{F}_i)\}_{i\in I}$ is zero.

\subsection{$\pmb{\mathscr{D}^\dagger}$ lifts of trace morphisms: the general setup} \label{sec: dear god}

In order to prove Theorem \ref{theo: trace Ddag}, we will consider the following situation:
\begin{itemize}
\item $\mathscr{F}$ is an affinoid-acyclic $\mathscr{D}_{\tube{Y}_\fr{P}}$-module;
\item $\mathscr{G}$ is an affinoid-acyclic $\mathscr{D}_{\tube{u^{-1}(Y)}_{\fr{P}'}}$-module;
\item $\alpha:\mathbf{R}u_{\dR*}\mathscr{G}[2d] \rightarrow \mathscr{F}$ is a morphism in ${\bf D}^b(\mathscr{D}_{\tube{Y}_\fr{P}})$;
\item $\{\mathscr{F}_i\}_{i\in I}$ is an ind-affinoid-acyclic pre-$\mathscr{D}^\dagger$-module structure on $\mathscr{F}$;
\item $\{\mathscr{G}_i\}_{i\in I}$ is an ind-affinoid-acyclic pre-$\mathscr{D}^\dagger$-module structure on $\mathscr{G}$ (for simplicity, the two ind-objects are assumed to be indexed by the same category).
\end{itemize}
We are interested in formulating what it means to give a pre-$\mathscr{D}^\dagger$ lift of $\alpha$.

\subsubsection{}\label{sec: Cech} In the situation of \S\ref{sec: dear god}, we may compute the de\thinspace Rham pushforward $\mathbf{R}u_{\dR*}\left\{\mathscr{G}_i\right\}_{i\in I}$ of the ind-object $\{\mathscr{G}_i\}_{i\in I}$ via the relative \v{C}ech complex associated to an open affine cover of $\fr{P}'$. Indeed, for any such cover $\mathcal{U}=\left\{ \fr{U}_\alpha \right\}_{\alpha=1}^n$, and any $0\leq j\leq d$, we can consider the \v{C}ech complex
\begin{align*} \prod_{\alpha=1}^n u_{*} &\left[ \left.\left(\Omega^j_{\tube{u^{-1}(Y)}_{\fr{P}'}/\tube{Y}_\fr{P}}\otimes_{\mathcal{O}_{\tube{u^{-1}(Y)}_{\fr{P}'}}} \mathscr{G}_i\right)\right\vert_{\tube{u^{-1}(Y)}_{\fr{U}_\alpha}} \right] \rightarrow \ldots  \\ 
\ldots &\rightarrow u_{*}\left[\left.\left(\Omega^j_{\tube{u^{-1}(Y)}_{\fr{P}'}/\tube{Y}_\fr{P}}\otimes_{\mathcal{O}_{\tube{u^{-1}(Y)}_{\fr{P}'}}} \mathscr{G}_i\right)\right\vert_{\tube{u^{-1}(Y)}_{\fr{U}_{1}\cap\ldots\cap\, \fr{U}_{n}}}\right]
\end{align*}
which is a complex of locally convex $\mathcal{O}_{\tube{Y}_\fr{P}}$-modules.\footnote{This is the reason for using \v{C}ech cohomology, since it is not clear how to endow the abstract higher cohomology groups (or the abstract higher direct images) of a topological sheaf with any `natural' topoology.} Letting $j$ vary, we therefore obtain a double complex in the category of locally convex $\mathcal{O}_{\tube{Y}_\fr{P}}$-modules. We denote the associated simple complex by $\check{\mathbf{R}}_\mathcal{U}u_{\dR*}\mathscr{G}_i$. We can therefore take the truncation 
\[ \left(\tau^{\leq 2d}\check{\mathbf{R}}_\mathcal{U}u_{\dR*}\mathscr{G}_i\right)^n = \begin{cases}
\left(\check{\mathbf{R}}_\mathcal{U}u_{\dR*}\mathscr{G}_i\right)^n & n<2d\\
 \ker {\rm d}^{2d} & n=2d \\
 0 & n>2d
\end{cases}\]
which, as a subcomplex of $\check{\mathbf{R}}_\mathcal{U}u_{\dR*}\mathscr{G}_i$, is a complex of locally convex $\mathcal{O}_{\tube{Y}_\fr{P}}$-modules. 

Letting $i$ vary, we obtain a complex of ind-objects $\left\{\tau^{\leq 2d}\check{\mathbf{R}}^{2d}_\mathcal{U}u_{\dR*}\mathscr{G}_i\right\}_{i\in I}$ in the category of locally convex $\mathcal{O}_{\tube{Y}_\fr{P}}$-modules.  Since $\fr{P}'\cong  \fr{P}\times_\mathcal{V}\fr{Q}$ is a product, there is a ring homomorphism $u^{-1}_K\mathscr{D}_{\tube{Y}_\fr{P}} \rightarrow \mathscr{D}_{\tube{u^{-1}(Y)}_{\fr{P}'}}$ which endows each term in the \v{C}ech complex  
\[ \prod_{\alpha_1<\ldots<\alpha_l} u_{*}\left[\left.\left(\Omega^j_{\tube{u^{-1}(Y)}_{\fr{P}'}/\tube{Y}_\fr{P}}\otimes_{\mathcal{O}_{\tube{u^{-1}(Y)}_{\fr{P}'}}} \mathscr{G}_i\right)\right\vert_{\tube{u^{-1}(Y)}_{\fr{U}_{\alpha_1}\cap\ldots\cap \,\fr{U}_{\alpha_l}}}\right] \]
with a continuous action of $\mathscr{D}_{\tube{Y}_\fr{P}}$ via its action on the second factor of the tensor product. We therefore obtain an object 
\[ \left\{\tau^{\leq 2d}\check{\mathbf{R}}_\mathcal{U}u_{\dR*}\mathscr{G}_i\right\}_{i\in I} \in \mathbf{Ch}^{+}(\mathrm{Ind}_{\mathbf{LC}_K}(\mathscr{D}_{\tube{Y}_\fr{P}}))\]
which, by the ind-affinoid acyclicity assumption on $\{\mathscr{G}_i\}_{i\in I}$, recovers the relative de\thinspace Rham cohomology $ \mathbf{R}u_{\dR*}\mathscr{G}$ upon taking the colimit.

\begin{definition} \label{defn: lift pre Ddag} In the situation of \S\ref{sec: dear god}, a pre-$\mathscr{D}^\dagger$ lifting of $\alpha$ (relative to $\mathcal{U}$) is a morphism
\[ \alpha^\dagger \colon \left\{\tau^{\leq 2d}\check{\mathbf{R}}_\mathcal{U}u_{\dR*}\mathscr{G}_i[2d]\right\}_{i\in I}  \rightarrow \left\{\mathscr{F}_i\right\}_{i\in I} \]
in $\mathbf{Ch}^{+}(\mathrm{Ind}_{\mathbf{LC}_K}(\mathscr{D}_{\tube{Y}_\fr{P}}))$, which recovers $\mathrm{\alpha}$ upon taking the colimit (and passing to the derived category).
\end{definition}

\begin{remark} \label{rem: ind cech general} The construction of the ind-object $\left\{\tau^{\leq 2d}\check{\mathbf{R}}_\mathcal{U}u_{\dR*}\mathscr{G}_i\right\}_{i\in I}$ works more generally if $\tube{Y}_\fr{P}$ is replaced by an arbitrary open subspace $V\subset \fr{P}_K$, and $\tube{u^{-1}(Y)}_{\fr{P}'}$ is replaced by $u^{-1}(V)$. However, it only makes sense to talk about pre-$\mathscr{D}^\dagger$-modules on tube of closed subspaces of the special fibre. 
\end{remark}

\begin{remark} \label{rem: Ddag lift complex} We shall also be interested in situations where $\mathscr{F},\mathscr{G}, \left\{\mathscr{F}_i\right\}_{i\in I}$ and $\left\{\mathscr{G}_i\right\}_{i\in I}$ are complexes of $\mathscr{D}$-modules, rather than just sheaves. In this case, we shall understand 
\[ \left\{\tau^{\leq 2d}\check{\mathbf{R}}_\mathcal{U}u_{\dR*}\mathscr{G}_i\right\}_{i\in I} \]
to mean the complex obtained by applying $\tau^{\leq 2d}\check{\mathbf{R}}_\mathcal{U}u_{\dR*}$ to each of the individual terms of the complex $\left\{\mathscr{G}_i\right\}_{i\in I}$, and then taking te associated simple complex. Thus we still recover the derived pushforward $\mathbf{R}u_{\dR*}\mathscr{G}$ upon taking the colimit. However, the reader should be warned that this is \emph{not} the same as truncating the relative \v{C}ech complex of $\left\{\mathscr{G}_i\right\}_{i\in I}$, in particular, it need not be concentrated in degrees $[0,2d]$.
\end{remark}

\subsection{Pushing forward along $\bm{\mathrm{sp}}$} \label{subsubsec: u+}

Suppose we are in the situation of \S\ref{sec: dear god}, but assume moreover that $Y=P$. Via affinoid acyclicity of $\mathscr{F}$ and $\mathscr{G}$, we can view $\mathbf{R}\mathrm{sp}_{\fr{P}*}\alpha$ as a morphism
\[ \mathbf{R}\mathrm{sp}_{\fr{P}*}\alpha\colon \mathbf{R}\mathrm{sp}_{\fr{P}'*}\mathbf{R}u_{ \dR }\mathscr{G}[2d] = \mathbf{R}u_{\dR *}\mathrm{sp}_{\fr{P}*}\mathscr{G}[2d] \rightarrow \mathrm{sp}_{\fr{P}*}\mathscr{F}, \]
or in other words as a morphism
\[ u_+\mathrm{sp}_{\fr{P}'*}\mathscr{G}[d] \rightarrow \mathrm{sp}_{\fr{P}*}\mathscr{F}  \]
in ${\bf D}^b(\mathscr{D}_{\fr{P}\Q})$. Since $\mathrm{sp}_{\fr{P}'*}\mathscr{G}$  is a $\mathscr{D}^\dagger_{\fr{P}'\Q}$-module, and $\mathrm{sp}_{\fr{P}*}\mathscr{F}$ is a $\mathscr{D}^\dagger_{\fr{P}'\Q}$, be Lemma \ref{lemma: pre Ddag sp}, it makes sense to ask for a $\mathscr{D}_{\fr{P}\Q}^\dagger$-linear lift of $\mathbf{R}\mathrm{sp}_{\fr{P}*}\alpha$. We will show that any pre-$\mathscr{D}^\dagger$ lifting of $\alpha$ can be used to construct just such a lift.

\subsubsection{} 

To do this, we first construct a similar \v{C}ech version of the $\mathscr{D}^\bullet$-module pushforward of the ind-object $\{\mathrm{sp}_{\fr{P}'*}\mathscr{G}_i\}_{i\in I}\in \mathrm{Ind}_{\mathbf{Fre}_K}(\mathscr{D}^\bullet_{\fr{P}'\Q})$ as follows. The fact that $\mathfrak{P}'\cong \fr{P}\times_\mathcal{V}\fr{Q}$ is a product means that there is a ring homomorphism 
\[ u^{-1}\mathscr{D}^\eta_{\fr{P}\Q}\rightarrow \mathscr{D}^\eta_{\fr{P}'\Q} \]
for each $\eta$, and these are compatible as $\eta$ varies. If $\mathcal{U}=\left\{ \fr{U}_\alpha \right\}_{\alpha=1}^n$ is an open cover of $\mathfrak{P}'$, this enables us to view each ind-object
\[  \left\{ u_* \left[\left.\left( \Omega^j_{\fr{P}'/\fr{P}} \otimes_{\mathcal{O}_{\fr{P}'}} \mathrm{sp}_{\fr{P}'*}\mathscr{G}_i \right)\right\vert_{\fr{U}_{\alpha_1}\cap\ldots\cap \,\fr{U}_{\alpha_l}} \right]\right\}_{i\in I}  \]  
as a $\mathscr{D}^\bullet_{\fr{P}\Q}$-module via the action on the second factor. As in \S\ref{sec: Cech} above, we can then form the \v{C}ech-de\thinspace Rham complex $\left\{\check{\mathbf{R}}_\mathcal{U}u_{\dR*}\mathrm{sp}_{\fr{P}'*}\mathscr{G}_i\right\}_{i\in I}$, and pass to the truncation $\left\{\tau^{\leq 2d}\check{\mathbf{R}}^{2d}_\mathcal{U}u_{\dR*}\mathrm{sp}_{\fr{P}'*}\mathscr{G}_i\right\}_{i\in I}$, which is naturally endowed with a continuous $\mathscr{D}^\bullet_{\fr{P}\Q}$-action. 

Thus we may view $\left\{\tau^{\leq 2d}\check{\mathbf{R}}_\mathcal{U}u_{\dR*}\mathrm{sp}_{\fr{P}'*}\mathscr{G}_i\right\}_{i\in I}$ as an object of $\mathbf{Ch}^{+}(\mathrm{Ind}_{\mathbf{LC}_K}(\mathscr{D}^\bullet_{\fr{P}\Q}))$. Since $\{\mathscr{G}_i\}_{i\in I}$ is ind-affinoid acyclic, taking the colimit (and passing to the derived category) then recovers the $\mathscr{D}^\dagger$-module pushforward $u_+\mathrm{sp}_{\fr{P}'*}\mathscr{G}[d]$. On the other hand, since $\left\{\tau^{\leq 2d}\check{\mathbf{R}}_\mathcal{U}u_{\dR*}\mathscr{G}_i\right\}_{i\in I}$ is a chain complex of objects in $\mathrm{Ind}_{\mathbf{LC}_K}(\mathscr{D}_{\fr{P}_K})$, we can also view $\left\{\mathrm{sp}_{\fr{P}*}\tau^{\leq 2d}\check{\mathbf{R}}_\mathcal{U}u_{\dR*}\mathscr{G}_i\right\}_{i\in I}$
as an object of $\mathbf{Ch}^{+}(\mathrm{Ind}_{\mathbf{LC}_K}(\mathscr{D}_{\fr{P}\Q}))$.

 \begin{lemma} There is an isomorphism of complexes
\[ \left\{\tau^{\leq 2d}\check{\mathbf{R}}_\mathcal{U}u_{\dR*}\mathrm{sp}_{\fr{P}'*}\mathscr{G}_i\right\}_{i\in I}\cong \left\{ \mathrm{sp}_{\fr{P}*} \tau^{\leq 2d}\check{\mathbf{R}}_\mathcal{U}u_{\dR*}\mathscr{G}_i\right\}_{i\in I} \]
in $\mathrm{Ind}_{\mathbf{LC}_K}(\mathscr{D}_{\fr{P}\Q})$.
 \end{lemma}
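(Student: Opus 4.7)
The plan is to establish the required isomorphism termwise, using the compatibility between specialisation and morphisms of formal schemes, and then to check that this termwise isomorphism is compatible with all the relevant structure (differentials, Čech boundaries, and $\mathscr{D}_{\fr{P}\Q}$-action). Since both sides are defined as simple complexes associated to a double complex, it suffices to exhibit a natural isomorphism between the individual bi-graded pieces and verify compatibility with the horizontal/vertical differentials.

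First I would use the commutative square
\[ \xymatrix{ \fr{P}'_K \ar[r]^{u_K} \ar[d]_{\mathrm{sp}_{\fr{P}'}} & \fr{P}_K \ar[d]^{\mathrm{sp}_{\fr{P}}} \\ \fr{P}' \ar[r]^{u} & \fr{P} } \]
to obtain the equality $\mathrm{sp}_{\fr{P}*}\circ u_{K*} = u_*\circ \mathrm{sp}_{\fr{P}'*}$ of functors. Combined with the projection formula (applied to the locally free $\cO_{\fr{P}'_K}$-module $\Omega^j_{\fr{P}'_K/\fr{P}_K} = \mathrm{sp}_{\fr{P}'}^*\Omega^j_{\fr{P}'/\fr{P},\Q}$) this gives
\[ \mathrm{sp}_{\fr{P}'*}\!\left(\Omega^j_{\fr{P}'_K/\fr{P}_K}\otimes_{\cO_{\fr{P}'_K}}\mathscr{G}_i\right) \;\cong\; \Omega^j_{\fr{P}'/\fr{P},\Q}\otimes_{\cO_{\fr{P}'\Q}}\mathrm{sp}_{\fr{P}'*}\mathscr{G}_i, \]
and the compatibility of specialisation with restriction to open subformal schemes then identifies, for any chain of indices $\alpha_1<\ldots<\alpha_\ell$, the $(j,\ell)$-piece on the RHS with its counterpart on the LHS.

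Next I would check that these termwise isomorphisms intertwine the Čech differentials (which is immediate since $\mathrm{sp}_*$ is a functor commuting with restriction maps between opens) and the de\thinspace Rham differentials (which follows from the fact that the connection on $\mathscr{G}_i$ descends to the one on $\mathrm{sp}_{\fr{P}'*}\mathscr{G}_i$, and $\mathrm{sp}_{\fr{P}'*}$ is compatible with the exterior derivative). The $\mathscr{D}_{\fr{P}\Q}$-module structures on both sides come from the ring map $u^{-1}\mathscr{D}_{\fr{P}\Q}\to \mathscr{D}_{\fr{P}'\Q}$ (acting on the second factor of each tensor product), and since this is compatible with specialisation, our termwise isomorphisms are automatically $\mathscr{D}_{\fr{P}\Q}$-linear. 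Finally, passage to the truncation $\tau^{\leq 2d}$ is compatible with the isomorphism because $\mathrm{sp}_*$, being a right adjoint, is left exact and hence commutes with taking kernels; so the two truncations match up.

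There is no real obstacle here beyond careful bookkeeping; the only mildly delicate point is verifying that the projection formula can be applied at the level of sheaves (not just in the derived category), which is justified by the local freeness of $\Omega^j_{\fr{P}'_K/\fr{P}_K}$ as an $\cO_{\fr{P}'_K}$-module of finite rank. Everything is then natural in $i\in I$, giving the claimed isomorphism of ind-objects in $\mathrm{Ind}_{\mathbf{LC}_K}(\mathscr{D}_{\fr{P}\Q})$.
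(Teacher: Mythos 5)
Your proof is correct and follows essentially the same route as the paper's: reduce to the untruncated complexes via left exactness of $\mathrm{sp}_{\fr{P}*}$, commute $\mathrm{sp}_*$ past the finite products and past $u_*$ using the commutative specialisation square, and reduce the termwise identification to the local finite freeness of $\Omega^j_{\fr{P}'/\fr{P}}$ together with $\Omega^j_{\fr{P}'_K/\fr{P}_K}=\mathrm{sp}_{\fr{P}'}^*\Omega^j_{\fr{P}'/\fr{P}}$. You spell out the compatibility with the \v{C}ech/de\thinspace Rham differentials and the $\mathscr{D}_{\fr{P}\Q}$-action a bit more explicitly than the paper, which simply declares the final term-level isomorphism ``clear,'' but the underlying argument is the same.
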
 
 
 \begin{proof}
 Since $\mathrm{sp}_{\fr{P}*}$ is left exact, it commutes with truncation $ \tau^{\leq 2d}$, so it suffices to show the claim for the untruncated \v{C}ech complexes. For each $i\in I$, the complex on the LHS is the total complex of a double complex with terms of the form
 \[ \prod_{\alpha_1<\ldots<\alpha_l}u_* \left[\left.\left( \Omega^j_{\fr{P}'/\fr{P}} \otimes_{\mathcal{O}_{\fr{P}'}} \mathrm{sp}_{\fr{P}'*}\mathscr{G}_i \right)\right\vert_{\fr{U}_{\alpha_1}\cap\ldots\cap \,\fr{U}_{\alpha_l}} \right] \]
 whereas the complex on the RHS is the total complex of a double complex with terms of the form
 \begin{align*} \mathrm{sp}_{\fr{P}*}\prod_{\alpha_1<\ldots<\alpha_l} u_{*}&  \left[\left.  \left(  \Omega^j_{\fr{P}'_K/\fr{P}_K}\otimes_{\mathcal{O}_{\fr{P}'_K}} \mathscr{G}_i\right)\right\vert_{\fr{U}_{\alpha_1K}\cap\ldots\cap \,\fr{U}_{\alpha_lK}}\right] 
 \\ &= \prod_{\alpha_1<\ldots<\alpha_l} u_*\left[\left.  \left( \mathrm{sp}_{\fr{P}'*}\left(\Omega^j_{\fr{P}'_K/\fr{P}_K}\otimes_{\mathcal{O}_{\fr{P}'_K}} \mathscr{G}_i \right) \right)  \right\vert_{\fr{U}_{\alpha_1}\cap\ldots\cap \,\fr{U}_{\alpha_l}} \right].
 \end{align*}
 It therefore suffices to show that
\[ \Omega^j_{\fr{P}'/\fr{P}} \otimes_{\mathcal{O}_{\fr{P}'}} \mathrm{sp}_{\fr{P}'*}\mathscr{G}_i  \cong \mathrm{sp}_{\fr{P}'*}\left(\Omega^j_{\fr{P}'_K/\fr{P}_K}\otimes_{\mathcal{O}_{\fr{P}'_K}} \mathscr{G}_i\right) \]
as locally convex $\mathscr{D}_{\fr{P}'\Q}$-modules. Since $\Omega^j_{\fr{P}'/\fr{P}}$ is a locally finite free $\mathcal{O}_{\fr{P}'}$-module, and $\Omega^j_{\fr{P}'_K/\fr{P}_K}=\mathrm{sp}_{\fr{P}'}^*\Omega^j_{\fr{P}'/\fr{P}}$ (via module pullback along $\mathrm{sp}_{\fr{P}'}$), this is clear.
 \end{proof}
 
\subsubsection{}\label{sec: this used to be shorter}
 
Hence, if we are given a pre-$\mathscr{D}^\dagger$-lifting of $\alpha$ to a morphism
\[ \alpha^\dagger :\left\{\tau^{\leq 2d}\check{\mathbf{R}}_\mathcal{U}u_{\dR*}\mathscr{G}_i\right\}_{i\in I}[2d] \rightarrow \left\{\mathscr{F}_i \right\}_{i\in I} \]
in $\mathrm{Ind}_{\mathbf{LC}_K}(\mathscr{D}_{\fr{P}_K})$, then we may consider the composite
\[  \left\{\tau^{\leq 2d}\check{\mathbf{R}}_\mathcal{U}u_{\dR*}\mathrm{sp}_{\fr{P}'*}\mathscr{G}_i[2d]\right\}_{i\in I} \cong\left\{ \mathrm{sp}_{\fr{P}*}\tau^{\leq 2d}\check{\mathbf{R}}_\mathcal{U}u_{\dR*}\mathscr{G}_i[2d]\right\}_{i\in I} \overset{\mathrm{sp}_{\fr{P}*}\alpha^\dagger}{\longrightarrow}  \left\{\mathrm{sp}_{\fr{P}*}\mathscr{F}_i \right\}_{i\in I}\]
in $\mathbf{Ch}^+(\mathrm{Ind}_{\mathbf{LC}_K}(\mathscr{D}_{\fr{P}\Q}))$. By Lemma \ref{lemma: ff top fre} this map lifts uniquely to $\mathbf{Ch}^+(\mathrm{Ind}_{\mathbf{LC}_K}(\mathscr{D}^{\bullet}_{\fr{P}\Q}))$, and by passing to the colimit, we therefore obtain a $\mathscr{D}^\dagger_{\fr{P}\Q}$-linear map 
\[ u_+\mathrm{sp}_{\fr{P}'*}\mathscr{G}[d] \rightarrow \mathrm{sp}_{\fr{P}*}\mathscr{F} \]
lifting the $\mathscr{D}_{\fr{P}\Q}$-linear map $\mathbf{R}\mathrm{sp}_*\alpha$. 

\subsection{$\pmb{\mathscr{D}^\dagger}$ lifts for overconvergent isocrystals} 

 \label{sec: pre Ddag lift start} To prove Theorem \ref{theo: trace Ddag}, we now want to apply the construction of \S\ref{subsubsec: u+} to the explicit complexes computing $\mathrm{sp}_!$ from \S\ref{sec: calc Rsp} and \S\ref{sec: pre Dd}. The starting point for this will be to consider the case of locally free isocrystals supported on locally closed subschemes of $\fr{P}$. So let $Y\hookrightarrow \fr{P}$ be a closed subscheme, $X\hookrightarrow Y$ a strongly affine open immersion, and $\mathscr{F}$ a coherent $j_X^\dagger\mathcal{O}_{\tube{Y}_\fr{P}}$-module with overconvergent connection. We will take $\mathscr{G}=u^*\mathscr{F}$. We then have the trace map
\[ \mathrm{Tr}_{\mathscr{F}}: \mathbf{R}u_{\dR*}u^*\mathscr{F}[2d] \rightarrow \mathscr{F} \]
thanks to Lemma \ref{lemma: projection formula constructible}.

Let $j_{\underline{\lambda}}\colon V_{\underline{\lambda}}\hookrightarrow \tube{Y}_\fr{P}$ for $\underline{\lambda}\in \mathcal{S}$ be the usual cofinal system of neighbourhoods of $\tube{X}_\fr{P}$ inside $\tube{Y}_\fr{P}$, as in \S\ref{sec: fund} above. Then $j_{\underline{\lambda}}\colon u^{-1}(V_{\underline{\lambda}})\rightarrow \tube{u^{-1}(Y)}_{\fr{P}'}$ is a cofinal system of neighbourhoods of $\tube{u^{-1}(X)}_{\fr{P}'}$ in $\tube{u^{-1}(Y)}_{\fr{P}'}$. After passing to a cofinal subset of such $\underline{\lambda}$, we may assume that $\mathscr{F}$ extends to a module with connection $\mathscr{F}_{\underline{\lambda}}$ on each $V_{\underline{\lambda}}$. Thus, by Theorem \ref{theo: pre Ddag over}, $\{j_{\underline{\lambda}*}\mathscr{F}_{\underline{\lambda}}\}_{\underline{\lambda}\in \mathcal{S}}$ is a pre-$\mathscr{D}^\dagger$-module structure on $\mathscr{F}$, and $\{j_{\underline{\lambda}*}u^*\mathscr{F}_{\underline{\lambda}}\}_{\underline{\lambda}\in \mathcal{S}}$ is a pre-$\mathscr{D}^\dagger$-module structure on $u^*\mathscr{F}$.

Both of these pre-$\mathscr{D}^\dagger$-modules are ind-affinoid acyclic, since on each affinoid subset $W\subset \tube{Y}_\fr{P}$ (resp. $W\subset \tube{u^{-1}(Y)}_{\fr{P}'}$) there is a cofinal system of $V_{\underline{\lambda}}$ (resp. $u^{-1}(V_{\underline{\lambda}})$) such that $W\cap V_{\underline{\lambda}}$ (resp. $W\cap u^{-1}(V_{\underline{\lambda}})$) is affinoid. Hence we are indeed in the situation of \S\ref{sec: dear god}. We want to produce a pre-$\mathscr{D}^\dagger$ lifting of $\mathrm{Tr}_\mathscr{F}$ as in Definition \ref{defn: lift pre Ddag}. 

\begin{lemma} \label{lem: cech commutes with jdag 1} For any open affine cover $\mathcal{U}=\{\fr{U}_\alpha\}$ of $\fr{P}'$, there is an isomorphism of complexes
\[ \left\{\tau^{\leq 2d}\check{\mathbf{R}}_\mathcal{U}u_{\dR*}j_{\underline{\lambda}*}u^*\mathscr{F}_{\underline{\lambda}}\right\}_{\underline{\lambda}\in \mathcal{S}} \cong \left\{ j_{\underline{\lambda}*}\tau^{\leq 2d}\check{\mathbf{R}}_\mathcal{U}u_{\dR*}u^*\mathscr{F}_{\underline{\lambda}} \right\}_{\underline{\lambda}\in \mathcal{S}}  \]
in $\mathrm{Ind}_{\mathbf{LC}_K}(\mathscr{D}_{\tube{Y}_\fr{P}})$.
\end{lemma}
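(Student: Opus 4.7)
My plan is to reduce this to the analogous commutation proved for $\mathrm{sp}_{\fr{P}*}$ in the preceding lemma, with $j_{\underline{\lambda}*}$ now playing the role of $\mathrm{sp}_{\fr{P}*}$. The key input will be that $j_{\underline{\lambda}}$ is an open immersion — both as a map $V_{\underline{\lambda}}\hookrightarrow \tube{Y}_\fr{P}$ and as $u^{-1}(V_{\underline{\lambda}})\hookrightarrow \tube{u^{-1}(Y)}_{\fr{P}'}$ — so $j_{\underline{\lambda}*}$ is exact, and restriction of $u$ to each \v{C}ech piece $\tube{u^{-1}(Y)}_{\fr{U}_{\alpha_1}\cap\ldots\cap\fr{U}_{\alpha_l}}$ fits into a Cartesian square obtained by pulling back $j_{\underline{\lambda}}\from V_{\underline{\lambda}}\hookrightarrow \tube{Y}_\fr{P}$. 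This yields the underived base change identity $u_* j_{\underline{\lambda}*} \cong j_{\underline{\lambda}*} u_*$ on each \v{C}ech term.

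First I would verify the termwise isomorphism before truncation: combine this base change with the projection formula $\Omega^j \otimes j_{\underline{\lambda}*}(-) \cong j_{\underline{\lambda}*}(j_{\underline{\lambda}}^*\Omega^j \otimes -)$ for the locally free sheaf $\Omega^j_{\tube{u^{-1}(Y)}_{\fr{P}'}/\tube{Y}_\fr{P}}$, together with the identification $j_{\underline{\lambda}}^*\Omega^j_{\tube{u^{-1}(Y)}_{\fr{P}'}/\tube{Y}_\fr{P}} \cong \Omega^j_{u^{-1}(V_{\underline{\lambda}})/V_{\underline{\lambda}}}$. This produces a term-by-term isomorphism of the two \v{C}ech-de\thinspace Rham double complexes, compatible with both the \v{C}ech differentials and the relative de\thinspace Rham differentials (both of which commute with restriction of sections, hence with $j_{\underline{\lambda}*}$).

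Next I would check that these identifications respect the relevant extra structure. At the level of sections, $j_{\underline{\lambda}*}$ is simply restriction, which is continuous between the corresponding Fr\'echet (or more generally locally convex) spaces, so the natural $\mathbf{LC}_K$-topologies match on both sides. The $\mathscr{D}_{\tube{Y}_\fr{P}}$-action is also preserved, since it acts solely through the coefficient factor $u^*\mathscr{F}_{\underline{\lambda}}$ of the tensor product (using the product structure $\fr{P}' = \fr{P}\times_\cV \fr{Q}$), and $j_{\underline{\lambda}*}$ manifestly intertwines this action.

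Finally, since $j_{\underline{\lambda}*}$ is exact, it commutes with the truncation $\tau^{\leq 2d}$, giving the desired isomorphism of complexes for each fixed $\underline{\lambda}$; letting $\underline{\lambda}$ range over $\mathcal{S}$ upgrades this to the isomorphism of ind-objects in $\mathbf{Ch}^+(\mathrm{Ind}_{\mathbf{LC}_K}(\mathscr{D}_{\tube{Y}_\fr{P}}))$. I do not expect any serious obstacle: the whole proof is a sequence of standard unpackings (base change for open immersions, the projection formula, exactness of $j_{\underline{\lambda}*}$), and the only point requiring care is checking compatibility with the locally convex topology and the $\mathscr{D}$-structure at every step, rather than just at the level of abelian sheaves.
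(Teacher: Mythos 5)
Your proposal is correct and follows essentially the same route as the paper: commute $j_{\underline{\lambda}*}$ past $\tau^{\leq 2d}$ using (left) exactness, then compare the \v{C}ech--de\thinspace Rham double complexes term by term via functoriality of pushforward along the Cartesian square and the projection formula for the locally finite free sheaf $\Omega^j$, keeping track of the locally convex topology and the $\mathscr{D}_{\tube{Y}_\fr{P}}$-action through the second tensor factor. One small imprecision: $j_{\underline{\lambda}*}$, being pushforward along an open immersion, is only \emph{left} exact rather than exact, but left exactness is exactly what is needed to preserve the kernel at degree $2d$ and hence to commute with the truncation $\tau^{\leq 2d}$, so the argument is unaffected.
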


\begin{remark} The RHS here makes sense by Remark \ref{rem: ind cech general}.
\end{remark}

\begin{proof}
Again, since $j_{\underline{\lambda}}$ is left exact, it suffices to prove the statement for the untruncated complexes. For each $\underline{\lambda}\in \mathcal{S}$, the complex on the LHS is the total complex of a double complex with terms of the form
 \[ \prod_{\alpha_1<\ldots<\alpha_l}u_{*}\left[\left.\left( \Omega^j_{\tube{u^{-1}(Y)}_{\fr{P}'}/\tube{Y}_\fr{P}}\otimes_{\mathcal{O}_{\tube{u^{-1}(Y)}_{\fr{P}'}}} j_{\underline{\lambda}*}u^*\mathscr{F}_{\underline{\lambda}} \right)\right\vert_{\tube{u^{-1}(Y)}_{\fr{U}_{\alpha_1}\cap\ldots\cap \,\fr{U}_{\alpha_l}}}\right] \]
 whereas the complex on the RHS is the total complex of a double complex with terms of the form
 \begin{align*}  j_{\underline{\lambda}*}\prod_{\alpha_1<\ldots<\alpha_l}u_{*}&\left[\left.\left( \Omega^j_{u^{-1}(V_{\underline{\lambda}})/V_{\underline{\lambda}}}\otimes_{\mathcal{O}_{u^{-1}(V_{\underline{\lambda}})}} u^*\mathscr{F}_{\underline{\lambda}} \right)\right\vert_{u^{-1}(V_{\underline{\lambda}})\,\cap\,\fr{U}_{\alpha_1K}\cap\ldots\cap \,\fr{U}_{\alpha_lK}} \right] \\
 &=\prod_{\alpha_1<\ldots<\alpha_l}u_{*}\left[ \left(j_{\underline{\lambda}*}\left.\left( \Omega^j_{u^{-1}(V_{\underline{\lambda}})/V_{\underline{\lambda}}}\otimes_{\mathcal{O}_{u^{-1}(V_{\underline{\lambda}})}} u^*\mathscr{F}_{\underline{\lambda}} \right)\right)\right\vert_{\tube{u^{-1}(Y)}_{\fr{U}_{\alpha_1}\cap\ldots\cap \,\fr{U}_{\alpha_l}}} \right].
 \end{align*}
 It therefore suffices to show that
\[  \Omega^j_{\tube{u^{-1}(Y)}_{\fr{P}'}/\tube{Y}_\fr{P}}\otimes_{\mathcal{O}_{\tube{u^{-1}(Y)}_{\fr{P}'}}} j_{\underline{\lambda}*}u^*\mathscr{F}_{\underline{\lambda}}  \cong j_{\underline{\lambda}*}\left( \Omega^j_{u^{-1}(V_{\underline{\lambda}})/V_{\underline{\lambda}}}\otimes_{\mathcal{O}_{u^{-1}(V_{\underline{\lambda}})}} u^*\mathscr{F}_{\underline{\lambda}} \right) \]
as locally convex $\mathscr{D}_{\tube{u^{-1}(Y)}_{\fr{P}'}}$-modules. Again, this follows from the fact that $\Omega^j_{\tube{u^{-1}(Y)}_{\fr{P}'}/\tube{Y}_\fr{P}}$ is locally finite free.
\end{proof}

Since each $\mathscr{F}_{\underline{\lambda}}$ is coherent on $V_{\underline{\lambda}}$, we have isomorphisms
\[ \mathcal{H}^{2d}(\check{\mathbf{R}}_\mathcal{U}u_{K\dR*}u^*\mathscr{F}_{\underline{\lambda}}) \cong \mathbf{R}^{2d}u_{*}u^*\mathscr{F}_{\underline{\lambda}} \]
between \v{C}ech and derived functor cohomology for the morphism $u:u^{-1}(V_{\underline{\lambda}})\rightarrow V_{\underline{\lambda}}$. Moreover, since $\mathscr{F}_{\underline{\lambda}}$ is locally free, we may apply the projection formula, Lemma \ref{lemma: proj form}, to define a trace map
\[ \mathrm{Tr}_{\mathscr{F}_{\underline{\lambda}}}\colon \mathcal{H}^{2d}(\check{\mathbf{R}}_\mathcal{U}u_{\dR*}u^*\mathscr{F}_{\underline{\lambda}}) \rightarrow  \mathscr{F}_{\underline{\lambda}}.\]

\begin{lemma} For each $\underline{\lambda}$, the trace map $\mathrm{Tr}_{\mathscr{F}_{\underline{\lambda}}}$ is continuous.
\end{lemma}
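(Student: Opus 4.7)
The plan is to reduce to a local, free, and $\mathcal{O}$-coefficient case, and then invoke the coherence of higher direct images under a proper smooth morphism. First, the question is local on $V_{\underline{\lambda}}$, so we may shrink to an open affinoid $W \subset V_{\underline{\lambda}}$ over which $\mathscr{F}_{\underline{\lambda}}$ (which is coherent with connection, hence locally free) is free of some finite rank $r$. Using the projection formula of Lemma \ref{lemma: proj form}, together with the very definition of $\Tr_{\mathscr{F}_{\underline{\lambda}}}$ as $\Tr\otimes \id$, we see that $\Tr_{\mathscr{F}_{\underline{\lambda}}}|_W$ decomposes as a direct sum of $r$ copies of the trace map with coefficients in $\mathcal{O}_W$. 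It therefore suffices to treat the case $\mathscr{F}_{\underline{\lambda}} = \mathcal{O}_{V_{\underline{\lambda}}}$.

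Next, since $\fr{Q}$ is proper, the morphism $u \colon u^{-1}(V_{\underline{\lambda}}) \to V_{\underline{\lambda}}$ is proper and smooth of relative dimension $d$. Kiehl's finiteness theorem for proper morphisms of rigid spaces then guarantees that $\mathbf{R}^{2d}u_{*}\Omega^\bullet_{u^{-1}(V_{\underline{\lambda}})/V_{\underline{\lambda}}}$ is a coherent $\mathcal{O}_{V_{\underline{\lambda}}}$-module, and the trace $\Tr$ is an $\mathcal{O}$-linear map between coherent sheaves on $V_{\underline{\lambda}}$. On sections over an affinoid $W$, any such morphism of coherent sheaves restricts to a linear map between finitely generated Banach modules over the affinoid algebra $\Gamma(W,\mathcal{O})$, and is therefore automatically continuous.

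The remaining, and main, step is to verify compatibility between the \v{C}ech topology on $\mathcal{H}^{2d}(\check{\mathbf{R}}_\mathcal{U}u_{\dR*}\Omega^\bullet)(W)$ and the coherent-sheaf topology on $\Gamma(W,\mathbf{R}^{2d}u_{*}\Omega^\bullet)$. The degree $2d$ part of the \v{C}ech complex, evaluated on $W$, is a finite product of Banach spaces (since the cover $\mathcal{U}$ can be taken to consist of affinoids), and the kernel of the differential $d^{2d}$ is a closed, hence Banach, subspace. The comparison between \v{C}ech and derived functor cohomology provides a natural continuous surjection from this kernel onto the finitely generated Banach module $\Gamma(W,\mathbf{R}^{2d}u_{*}\Omega^\bullet)$; composing with the continuous coherent-sheaf trace yields the desired continuity of $\Tr$ on \v{C}ech sections. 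The delicate point is that we do not need the quotient topology on cohomology to be separated: continuity of the composite map from the Banach kernel to $\Gamma(W,\mathcal{O})$ is all that is required to conclude continuity of $\Tr_{\mathscr{F}_{\underline{\lambda}}}$ as a morphism of locally convex sheaves.
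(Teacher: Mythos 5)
Your first reduction (localizing and passing to free coefficients) is fine, though unnecessary --- the paper's argument works just as directly with a general locally free $\mathscr{F}_{\underline{\lambda}}$, since Kiehl's theorem applies with coherent coefficients. You also correctly locate the delicate point. But the resolution you propose has a genuine gap: the assertion that ``the comparison between \v{C}ech and derived functor cohomology provides a natural continuous surjection'' from $\ker({\rm d}^{2d})$ onto $\Gamma(W,\mathbf{R}^{2d}u_*\Omega^\bullet)$ equipped with its canonical topology as a finitely generated Banach $A$-module. The \v{C}ech-to-derived comparison is an isomorphism of \emph{abstract} $A$-modules (a statement in the derived category) and carries no topological content on its own; continuity of this surjection is, for an $A$-linear map out of the Banach space $\ker({\rm d}^{2d})$, precisely equivalent to knowing that the quotient topology on $\mathcal{H}^{2d}$ refines the canonical one. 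You have therefore not bypassed anything --- you have re-asserted the thing that must be proved.

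Your closing remark that ``we do not need the quotient topology on cohomology to be separated'' is correspondingly misleading. As a matter of pure logic, only continuity of the composite $\ker({\rm d}^{2d})\to\Gamma(W,\mathcal{O})$ is required, but the mechanism for establishing that continuity in this situation is exactly separatedness: once one knows the image of ${\rm d}^{2d-1}$ is closed in $\ker({\rm d}^{2d})$, the quotient topology on $\mathcal{H}^{2d}$ is a Banach $A$-module topology on a finitely generated $A$-module, which by the open mapping theorem must coincide with the canonical topology; only then does the quotient map $\ker({\rm d}^{2d})\to\mathcal{H}^{2d}$ become continuous for the canonical topology on the target, and the continuity of the composite with the (automatically continuous, $A$-linear) trace follows. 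The closedness of the image is exactly what Lemma \ref{lemma: finite closed} supplies (applied to ${\rm d}^{2d-1}$ with target $\ker({\rm d}^{2d})$, whose cokernel is finitely generated by Kiehl), or alternatively it follows from the refinement built into Kiehl's original argument, as the paper notes. Your proof should invoke one of these rather than treating the continuity of the comparison map as automatic.
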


\begin{proof} Note that $u:u^{-1}(V_{\underline{\lambda}})\rightarrow V_{\underline{\lambda}}$ is smooth and proper, and that $\mathscr{F}_{\underline{\lambda}}$ and $u^*\mathscr{F}_{\underline{\lambda}}$ are coherent sheaves, endowed with their canonical topologies. Also note that we may localise on the base $V_{\underline{\lambda}}$ and therefore replace it by a suitable open affinoid $U=\spa{A,A^+} \subset V_{\underline{\lambda}}$. In this case, the sections of $\check{\mathbf{R}}_\mathcal{U}^{2d}u_{\dR*}u^*\mathscr{F}_{\underline{\lambda}}$ on $U$ form Banach spaces, and by Kiehl's theorem \cite{Kie67a} their cohomology groups are finitely generated $A$-modules. It therefore suffices to show that the given topology on these cohomology groups is separated. Indeed, if so, then it is the canonical topology as finitely generated $A$-modules, with respect to which any $A$-linear homomorphism is continuous. 

From here we could proceed in a couple of ways. Firstly, we could observe that Kiehl's original argument actually proves separatedness of the relative \v{C}ech cohomology groups (see the last paragraph before Satz 2.6 in \cite{Kie67a}). Although this is only for coherent cohomology, rather than de\thinspace Rham cohomology, it is easy to extend the result to the case of de\thinspace Rham cohomology. Alternatively, we can appeal to Lemma \ref{lemma: finite closed} below.
\end{proof}

\begin{lemma} \label{lemma: finite closed} Let $A$ be a Noetherian Banach $K$-algebra, and $f:M\rightarrow N$ a continuous morphism of Banach $A$-modules. If $\mathrm{coker}(f)$ is finitely generated as an $A$-module, then $f(M)$ is closed in $N$.
\end{lemma}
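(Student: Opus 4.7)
The plan is to use the non-archimedean open mapping theorem to reduce the statement to a closedness property of a finitely generated submodule of a finite free module. Pick a finite generating set $\bar n_1, \ldots, \bar n_k$ of $\mathrm{coker}(f)$, choose lifts $n_1, \ldots, n_k \in N$, and form the continuous $A$-linear map
$$\psi\colon A^k \oplus M \longrightarrow N, \qquad (a_1,\ldots,a_k, m) \longmapsto \sum_{i=1}^k a_i n_i + f(m).$$
This is surjective: for any $n \in N$ the class $\bar n \in \mathrm{coker}(f)$ can be written as $\sum a_i \bar n_i$, so $n - \sum a_i n_i \in f(M) = f(m)$ for some $m \in M$, whence $n = \psi(a,m)$. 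The open mapping theorem for Banach $K$-vector spaces then shows that $\psi$ is open; equivalently, the Banach topology on $N$ coincides with the quotient topology coming from $\psi$.

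Consequently, $f(M) \subseteq N$ is closed if and only if $\psi^{-1}(f(M)) \subseteq A^k \oplus M$ is closed. A direct computation, using the fact that $f(M)$ is stable under subtracting elements of $f(M)$, gives
$$\psi^{-1}(f(M)) \;=\; \ker(\pi) \oplus M,$$
where $\pi\colon A^k \twoheadrightarrow \mathrm{coker}(f)$ is the surjection $e_i \mapsto \bar n_i$. Since $M$ is trivially closed in itself, this reduces the lemma to showing that $\ker(\pi)$ is closed in $A^k$.

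Because $A$ is Noetherian, $\ker(\pi)$ is a finitely generated $A$-submodule of $A^k$, so the proof concludes by invoking the standard fact that every finitely generated submodule of $A^k$ is closed. In the affinoid case, which is the only setting we use in the application to the trace map, this closedness is classical and follows from the canonical Banach topology on finitely generated modules over an affinoid $K$-algebra, together with the automatic continuity of $A$-linear maps between such modules. The main (and really only non-formal) ingredient is this last input; the rest of the argument is a direct application of the open mapping theorem.
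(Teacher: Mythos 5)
Your proof is correct and follows the same overall strategy as the paper (and as Crew's argument, which the paper cites): use the open mapping theorem to present $N$ as a topological quotient of a Banach $A$-module obtained by adjoining a finitely generated piece covering $\mathrm{coker}(f)$, and thereby reduce closedness of $f(M)$ in $N$ to closedness of a finitely generated submodule in that auxiliary module. The one genuine (if small) difference is the choice of auxiliary module: you use the free module $A^k$ with the obvious Banach topology, whereas the paper uses an abstract finitely generated submodule $T\subset N$ equipped with its \emph{canonical} topology as a finite $A$-module (which need not agree with the subspace topology from $N$). Your choice makes the preimage computation cleaner---$\psi^{-1}(f(M)) = \ker(\pi)\oplus M$ falls out directly---while the paper goes through the slightly more roundabout chain $K\subset M+K\subset M\oplus T$ and identifies $M+K$ as the preimage of $T\cap f(M)$ under the projection to $T$. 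Both arguments ultimately rest on the same unproved input (finitely generated submodules of finite free, equivalently finitely generated, modules over a Noetherian Banach $K$-algebra are closed), and your explicit restriction of that input to the affinoid case is accurate as a description of where the lemma is actually applied in the paper (namely to $A = \Gamma(U,\mathcal{O}_U)$ for affinoid $U$), though strictly speaking the lemma as stated requires the general Noetherian-Banach version, which the paper also invokes without proof.
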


\begin{proof}
This is a straightforward generalisation of an argument of Crew \cite[Lemma 1.1]{Cre06}. Let $T\subset N$ be a finitely generated submodule surjecting onto $\mathrm{coker}(f)$. If we equip $T$ with its canonical (Banach) topology as a finitely generated $A$-module (so that any $A$-linear map out of $T$ is continuous), then the map $\pi: M\oplus T\rightarrow N$ is a continuous surjection of Banach spaces, and hence strict by the open mapping theorem. Let $K$ denote the kernel of $\pi$.

Then $M\subset M+K \subset M\oplus T$, and $M+K$ is the preimage of its image inside $(M\oplus T)/M\cong T$. Since $A$ is Noetherian, this image is finitely generated, and hence a closed submodule of $T$; it therefore follows that $M+K$ is closed in $M\oplus T$. Hence we have closed submodules $K\subset M+K \subset M\oplus T$, and so quotienting out by $K$ we see that $f(M)=(M+K)/K$ is closed in $N\cong (M\oplus T)/K$.
\end{proof}

\subsubsection{}  \label{sec: pre Ddag lift start2} 
Hence we can view each individual trace map $\mathrm{Tr}_{\mathscr{F}_{\underline{\lambda}}}$ as a morphism
\[ \tau^{\leq 2d}\check{\mathbf{R}}_\mathcal{U}u_{\dR*}u^*\mathscr{F}_{\underline{\lambda}}[2d] \rightarrow \mathscr{F}_{\underline{\lambda}} \]
of locally convex $\mathscr{D}_{\tube{Y}_\fr{P}}$-modules. Letting $\underline{\lambda}$ vary then gives rise to a morphism
\begin{align*} \left\{j_{\underline{\lambda}*}\mathrm{Tr}_{\mathscr{F}_{\underline{\lambda}}}\right\}_{\underline{\lambda}\in\mathcal{S}}\colon \left\{\tau^{\leq 2d}\check{\mathbf{R}}_\mathcal{U}u_{\dR*}j_{\underline{\lambda}*}u^*\mathscr{F}_{\underline{\lambda}}[2d]\right\}_{\underline{\lambda}\in\mathcal{S}} &\cong \left\{ j_{\underline{\lambda}*}\tau^{\leq 2d}\check{\mathbf{R}}_\mathcal{U}u_{\dR*}u^*\mathscr{F}_{\underline{\lambda}}[2d] \right\}_{\underline{\lambda}\in \mathcal{S}}  \\
&\rightarrow \{j_{\underline{\lambda}*}\mathscr{F}_{\underline{\lambda}}\}_{\underline{\lambda}\in\mathcal{S}}
\end{align*}
in $\mathbf{Ch}^+(\mathrm{Ind}_{\mathbf{LC}_K}(\mathscr{D}_{\tube{Y}_\fr{P}}))$. This is the pre-$\mathscr{D}^\dagger$-lifting of 
\[ \Tr_{\mathscr{F}}: \mathbf{R}u_{\dR*}u^*\mathscr{F}[2d] \rightarrow \mathscr{F}\]
we are after.

\subsection{Transporting $\pmb{\mathscr{D}^\dagger}$ lifts along $\bm{j_X^\dagger}$ and $\bm{\mathcal{R}_{i_Z,\infty}}$}

 \label{sec: pre Ddag lfit jdag} The key point now will be to show that pre-$\mathscr{D}^\dagger$ lifts of a morphism $\alpha$ as in Definition \ref{defn: lift pre Ddag} can be transported along the functors $j_X^\dagger$ and $\mathcal{R}_{i_Z,\infty}$ used to construct the complexes $\mathcal{R}_{\overline{P}_{\bullet},\infty}j^\dagger_{P_\bullet} \mathscr{F}$ appearing in the definition of the $\mathscr{D}^\dagger$-linear version of $\mathbf{R}\mathrm{sp}_*$. 

We first treat the case of $j_X^\dagger$. Suppose therefore that we are in the situation of \S\ref{sec: dear god}, with affinoid acyclic $\mathscr{D}$-modules $\mathscr{F}$ and $\mathscr{G}$ on $\tube{Y}_\fr{P}$ and $\tube{u^{-1}(Y)}_{\fr{P}'}$ respectively. . Suppose furthermore that we have a pre-$\mathscr{D}^\dagger$-lifting of some map 
\[ \alpha:\mathbf{R}u_{\dR*}\mathscr{G}[2d]\rightarrow \mathscr{F},\]
in the sense of Definition \ref{defn: lift pre Ddag}. Given a strongly affine open immersion $j:X\hookrightarrow Y$, we shall show how to construct a natural pre-$\mathscr{D}^\dagger$ lifting of the induced map
\[ j_X^\dagger\alpha:\mathbf{R}u_{\dR*}j_{u^{-1}(X)}^\dagger\mathscr{G}[2d] \cong j_{X}^\dagger\mathbf{R}u_{\dR*}\mathscr{G}[2d] \rightarrow \mathscr{F}. \]
(Note that commuting $\mathbf{R}u_{\dR*}$ and $j^\dagger$ is justified by Proposition \ref{prop: prop base changes}.)

We are given ind-affinoid-acyclic pre-$\mathscr{D}^\dagger$-module structures $\{\mathscr{F}_i\}_{i\in I}$ and $\{\mathscr{G}_i\}_{i\in I}$ on $\mathscr{F}$ and $\mathscr{G}$ as in \S\ref{sec: Cech}, an open cover  $\mathcal{U}=\left\{\fr{U}_\alpha\right\}$ of $\fr{P}'$, and a map
\[ \alpha^\dagger \colon \left\{\tau^{\leq 2d}\check{\mathbf{R}}_\mathcal{U}u_{\dR*}\mathscr{G}_i[2d]\right\}_{i\in I}  \rightarrow \left\{\mathscr{F}_i\right\}_{i\in I}   \]
in $\mathbf{Ch}^{+}(\mathrm{Ind}_{\mathbf{LC}_K}(\mathscr{D}_{\fr{P}\Q}))$ lifting $\alpha$.

Again, we let $j_{\underline{\lambda}}\colon V_{\underline{\lambda}}\hookrightarrow \tube{Y}_\fr{P}$ be the usual cofinal system of neighbourhoods of $\tube{X}_\fr{P}$ inside $\tube{Y}_\fr{P}$ as in \S\ref{sec: fund} above. Thus $j_{\underline{\lambda}}\colon u^{-1}(V_{\underline{\lambda}})\rightarrow \tube{u^{-1}(Y)}_{\fr{P}'}$ is a cofinal system of neighbourhoods of $\tube{u^{-1}(X)}_{\fr{P}'}$ in $\tube{u^{-1}(Y)}_{\fr{P}'}$. Then the pre-$\mathscr{D}^\dagger$-module structures $\{j_{\underline{\lambda}*}j_{\underline{\lambda}}^{-1}\mathscr{F}_i\}_{(\underline{\lambda},i)\in \mathcal{S}\times I}$ and $\{j_{\underline{\lambda}*}j_{\underline{\lambda}}^{-1}\mathscr{G}_i\}_{(\underline{\lambda},i)\in \mathcal{S}\times I}$ on $j_X^\dagger\mathscr{F}$ and $j_{u^{-1}(X)}^\dagger\mathscr{G}$ are also ind-affinoid-acyclic, since on each affinoid subset of $\tube{Y}_\fr{P}$ (resp. $\tube{u^{-1}(Y)}_{\fr{P}'}$) there is a cofinal system of $V_{\underline{\lambda}}$ (resp. $u^{-1}(V_{\underline{\lambda}})$) which are affinoid.

\begin{lemma} There is an isomorphism of complexes
\[\left\{\tau^{\leq 2d}\check{\mathbf{R}}_\mathcal{U}u_{\dR*} j_{\underline{\lambda}*}j_{\underline{\lambda}}^{-1}\mathscr{G}_i \right\}_{(\underline{\lambda},i)\in \mathcal{S}\times I} \cong \left\{ j_{\underline{\lambda}*}j_{\underline{\lambda}}^{-1}\tau^{\leq 2d}\check{\mathbf{R}}_\mathcal{U}u_{\dR*}\mathscr{G}_i \right\}_{(\underline{\lambda},i)\in \mathcal{S}\times I} \]
in $\mathrm{Ind}_{\mathbf{LC}_K}(\mathscr{D}_{\tube{Y}_\fr{P}})$. 
\end{lemma}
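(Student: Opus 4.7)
The plan is to imitate the proof of Lemma \ref{lem: cech commutes with jdag 1} essentially verbatim, noting that the two arguments are structurally identical: in both cases one wants to commute a left-exact pushforward functor past the \v{C}ech--de\thinspace Rham construction. The only substantive difference is that the role of the coherent extensions $\mathscr{F}_{\underline{\lambda}}$ is now played by the restrictions $j_{\underline{\lambda}}^{-1}\mathscr{G}_i$ to $u^{-1}(V_{\underline{\lambda}})$ and the outer layer $j_{X}^\dagger$ is applied to both sides.

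First, since $j_{\underline{\lambda}*}$ is left exact, it commutes with the truncation $\tau^{\leq 2d}$, so it suffices to exhibit the isomorphism at the level of the untruncated \v{C}ech--de\thinspace Rham double complexes. Concretely, for each pair $(\underline{\lambda},i)\in\mathcal{S}\times I$, the left hand side is the total complex of a double complex whose $(j,l)$-term is
\[ \prod_{\alpha_1<\ldots<\alpha_l}u_{*}\left[\left.\left( \Omega^j_{\tube{u^{-1}(Y)}_{\fr{P}'}/\tube{Y}_\fr{P}}\otimes_{\mathcal{O}_{\tube{u^{-1}(Y)}_{\fr{P}'}}} j_{\underline{\lambda}*}j_{\underline{\lambda}}^{-1}\mathscr{G}_i \right)\right\vert_{\tube{u^{-1}(Y)}_{\fr{U}_{\alpha_1}\cap\ldots\cap \,\fr{U}_{\alpha_l}}}\right], \]
while the right hand side has corresponding term
\[ \prod_{\alpha_1<\ldots<\alpha_l}u_{*}\left[ \left(j_{\underline{\lambda}*}\left.\left( \Omega^j_{u^{-1}(V_{\underline{\lambda}})/V_{\underline{\lambda}}}\otimes_{\mathcal{O}_{u^{-1}(V_{\underline{\lambda}})}} j_{\underline{\lambda}}^{-1}\mathscr{G}_i \right)\right)\right\vert_{\tube{u^{-1}(Y)}_{\fr{U}_{\alpha_1}\cap\ldots\cap \,\fr{U}_{\alpha_l}}} \right], \]
using that $u\circ j_{\underline{\lambda}}=j_{\underline{\lambda}}\circ u$ for the relevant morphisms.

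Thus the statement reduces to the local identification
\[ \Omega^j_{\tube{u^{-1}(Y)}_{\fr{P}'}/\tube{Y}_\fr{P}}\otimes_{\mathcal{O}_{\tube{u^{-1}(Y)}_{\fr{P}'}}} j_{\underline{\lambda}*}j_{\underline{\lambda}}^{-1}\mathscr{G}_i \;\cong\; j_{\underline{\lambda}*}\left( \Omega^j_{u^{-1}(V_{\underline{\lambda}})/V_{\underline{\lambda}}}\otimes_{\mathcal{O}_{u^{-1}(V_{\underline{\lambda}})}} j_{\underline{\lambda}}^{-1}\mathscr{G}_i \right) \]
as locally convex $\mathscr{D}_{\tube{u^{-1}(Y)}_{\fr{P}'}}$-modules, together with the analogous identification of the differentials. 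Since $\Omega^j_{\tube{u^{-1}(Y)}_{\fr{P}'}/\tube{Y}_\fr{P}}$ is a locally finite free $\mathcal{O}_{\tube{u^{-1}(Y)}_{\fr{P}'}}$-module, and restricts to $\Omega^j_{u^{-1}(V_{\underline{\lambda}})/V_{\underline{\lambda}}}$ over $u^{-1}(V_{\underline{\lambda}})$, the usual projection formula for an open immersion applies locally, and yields the desired isomorphism. The only potential subtlety is keeping track of the topologies and the compatibility with the $\mathscr{D}_{\tube{Y}_\fr{P}}$-action through the map $u^{-1}\mathscr{D}_{\tube{Y}_\fr{P}}\to\mathscr{D}_{\tube{u^{-1}(Y)}_{\fr{P}'}}$ coming from the product structure $\fr{P}'\cong\fr{P}\times_{\cV}\fr{Q}$; but this is precisely the same point as in Lemma \ref{lem: cech commutes with jdag 1}, and it works here too because pushforward and pullback along the open immersion $j_{\underline{\lambda}}$ are continuous for the canonical Fr\'echet topologies on sections over affinoids. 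Functoriality in both $\underline{\lambda}$ and $i$ is manifest from the construction, giving the claimed isomorphism in $\mathrm{Ind}_{\mathbf{LC}_K}(\mathscr{D}_{\tube{Y}_\fr{P}})$.
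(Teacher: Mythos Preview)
Your proposal is correct and follows exactly the same approach as the paper, which simply states that the proof is ``entirely similar to the proof of Lemma \ref{lem: cech commutes with jdag 1}.'' You have faithfully unpacked what this means: reduce to the untruncated complexes via left exactness of $j_{\underline{\lambda}*}j_{\underline{\lambda}}^{-1}$, compare the terms of the double complexes, and conclude from the local finite freeness of $\Omega^j_{\tube{u^{-1}(Y)}_{\fr{P}'}/\tube{Y}_\fr{P}}$.
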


\begin{proof}
This is entirely similar to the proof of Lemma \ref{lem: cech commutes with jdag 1}.
\end{proof}

Thus applying $j_{\underline{\lambda}*}j_{\underline{\lambda}}^{-1}$ to the given map
\[ \left\{\tau^{\leq 2d}\check{\mathbf{R}}_\mathcal{U}u_{\dR*}\mathscr{G}_i[2d]\right\}_{i\in I} \rightarrow \left\{\mathscr{F}_i\right\}_{i\in I}   \]
gives us a morphism
\[ \left\{\tau^{\leq 2d}\check{\mathbf{R}}_\mathcal{U}u_{\dR*} j_{\underline{\lambda}*}j_{\underline{\lambda}}^{-1}\mathscr{G}_i[2d] \right\}_{(\underline{\lambda},i)\in \mathcal{S}\times I} \rightarrow \left\{j_{\underline{\lambda}*}j_{\underline{\lambda}}^{-1} \mathscr{F}_i\right\}_{(\underline{\lambda},i)\in \mathcal{S}\times I} \]
in 
$\mathbf{Ch}^{+}(\mathrm{Ind}_{\mathbf{LC}_K}(\mathscr{D}_{\fr{P}\Q}))$
lifting
\[ j_X^\dagger \alpha \colon \mathbf{R}^{2d}u_{\dR*}j_{u^{-1}(X)}^\dagger\mathscr{G} = j_X^\dagger\mathbf{R}^{2d}u_{\dR*}\mathscr{G} \rightarrow j_X^\dagger\mathscr{F}.\]

\subsubsection{}\label{sec: pre Ddag lfit closed}

We now treat the case of $\mathcal{R}_{i_Z,\infty}$. So suppose that we are given a closed immersion $i_Z:Z\hookrightarrow Y$, with pullback $i_{u^{-1}(Z)}\colon u^{-1}(Z)\hookrightarrow u^{-1}(Y)$, and affinoid-acyclic $\mathscr{D}$-modules $\mathscr{F}$ and $\mathscr{G}$ on $\tube{Z}_\fr{P}$ and $\tube{u^{-1}(Z)}_{\fr{P}'}$ respectively. Let
\[ \alpha:\mathbf{R}u_{\dR*}\mathscr{G}[2d]\rightarrow \mathscr{F}\]
by a morphism in ${\bf D}^b(\mathscr{D}_{\tube{Z}_\fr{P}})$. Let $\{\mathscr{F}_i\}_{i\in I}$ and $\{\mathscr{G}_i\}_{i\in I}$ be ind-affinoid-acyclic pre-$\mathscr{D}^\dagger$-module structures  on $\mathscr{F}$ and $\mathscr{G}$, and let
\[\alpha^\dagger \colon \tau^{\leq 2d}\check{\mathbf{R}}_\mathcal{U}u_{\dR*}\left\{\mathscr{G}_i\right\}_{i\in I}  \rightarrow \left\{\mathscr{F}_i\right\}_{i\in I} \]
a pre-$\mathscr{D}^\dagger$-lifting of $\alpha$. Applying $\mathbf{R}i_{Z*}$ to $\alpha$ gives a morphism
\[ \mathbf{R}i_{Z*}\alpha: \mathbf{R}i_{Z*}\mathbf{R}u_{\dR*}\mathscr{G}[2d]=\mathbf{R}u_{\dR*}\mathbf{R}i_{u^{-1}(Z)*}\mathscr{G}[2d] \rightarrow \mathbf{R}i_{Z*}\mathscr{F} \]
and we shall use $\alpha^\dagger$ to construct a pre-$\mathscr{D}^\dagger$ lifting of $\mathbf{R}i_{Z*}\alpha$. Recall that the pre-$\mathscr{D}^\dagger$-module structure on $\mathbf{R}i_{u^{-1}(Z)*}\mathscr{G}$ is given by the complex
\[ \left\{ \prod_{n\geq n_0} i_{u^{-1}(Z),n*}\left( \mathscr{G}_{i_n}|_{[u^{-1}(Z)]_n}\right) \rightarrow \prod_{n\geq n_0} i_{u^{-1}(Z),n*} \left(\mathscr{G}_{i_n}|_{[u^{-1}(Z)]_n}\right) \right\}_{n_0,(i_n)_{n\geq n_0}} \]

\begin{lemma} For each integer $n_0\geq 1$, and each sequence $(i_n)_{n\geq n_0}$, there is an isomorphism of complexes
\[ \tau^{\leq 2d}\check{\mathbf{R}}_\mathcal{U}u_{\dR*}\prod_{n\geq n_0} i_{u^{-1}(Z),n*} \left(\mathscr{G}_{i_n}|_{[u^{-1}(Z)]_n} \right) \cong  \prod_{n\geq n_0} i_{Z,n*}  \left((\tau^{\leq 2d}\check{\mathbf{R}}_\mathcal{U}u_{\dR*}\mathscr{G}_{i_n})|_{[Z]_n} \right) \]
in $\mathrm{Ind}_{\mathbf{LC}_K}(\mathscr{D}_{\tube{Y}_\fr{P}})$. 
\end{lemma}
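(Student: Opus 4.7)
The plan is to follow the strategy used for Lemma \ref{lem: cech commutes with jdag 1}. Since $i_{Z,n*}$ is exact (being pushforward along a closed immersion) and the intelligent truncation $\tau^{\leq 2d}$ commutes with products (being defined as a kernel in degree $2d$), both operations commute with $\prod_{n\geq n_0}$, so it will suffice to exhibit a term-by-term isomorphism between the untruncated \v{C}ech--de\thinspace Rham double complexes on the two sides.

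After this reduction, the goal becomes an isomorphism, for each degree $j$ and each chain $\alpha_1<\ldots<\alpha_l$, between the $u_*$-pushforward of
\[ \Bigl(\Omega^j_{\tube{u^{-1}(Y)}_{\fr{P}'}/\tube{Y}_\fr{P}} \otimes \prod_{n\geq n_0} i_{u^{-1}(Z),n*}(\mathscr{G}_{i_n}|_{[u^{-1}(Z)]_n})\Bigr)\bigg|_{\tube{u^{-1}(Y)}_{\fr{U}_{\alpha_1}\cap\ldots\cap \,\fr{U}_{\alpha_l}}} \]
and the product $\prod_{n\geq n_0} i_{Z,n*}$ applied to the restrictions to $[Z]_n$ of the individual pushforwards $u_*\bigl((\Omega^j\otimes\mathscr{G}_{i_n})|_{\tube{u^{-1}(Y)}_{\fr{U}_{\alpha_1}\cap\ldots\cap\,\fr{U}_{\alpha_l}}}\bigr)$. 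The key identities I plan to use are: (i) the locally finite free sheaf $\Omega^j_{\tube{u^{-1}(Y)}_{\fr{P}'}/\tube{Y}_\fr{P}}$ tensors past arbitrary products; (ii) $u_*$ and restriction to an open subset both commute with products (being right adjoints); (iii) the projection formula $\Omega^j\otimes i_{u^{-1}(Z),n*}\mathscr{H}\cong i_{u^{-1}(Z),n*}(\Omega^j|_{[u^{-1}(Z)]_n}\otimes\mathscr{H})$ for the closed immersion $i_{u^{-1}(Z),n}$ and the locally free $\Omega^j$; and (iv) the identity $u\circ i_{u^{-1}(Z),n}=i_{Z,n}\circ u_n$, where $u_n\colon [u^{-1}(Z)]_n\to [Z]_n$ is the restriction of $u$.

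Chaining these manipulations together collapses both sides to the same expression, modulo a residual base-change identity $(u_*F)|_{[Z]_n}\cong u_{n*}(F|_{[u^{-1}(Z)]_n})$ for sheaves $F$ on $\tube{u^{-1}(Y)}_{\fr{P}'}$. Since $\fr{Q}$ is proper over $\mathcal{V}$, the induced map $u\colon\tube{u^{-1}(Y)}_{\fr{P}'}\to\tube{Y}_\fr{P}$ is proper, and $[u^{-1}(Z)]_n=u^{-1}([Z]_n)$ by construction, so this base change follows by the usual topological argument: every open neighbourhood in $\tube{u^{-1}(Y)}_{\fr{P}'}$ of a preimage $u^{-1}(V)$, for $V\subset [Z]_n$ open, contains $u^{-1}(V')$ for some open neighbourhood $V'\supset V$ in $\tube{Y}_\fr{P}$. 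I expect the main obstacle to be bookkeeping rather than substance---in particular, ensuring that each intermediate isomorphism respects the locally convex $\mathscr{D}_{\tube{Y}_\fr{P}}$-module structure---but this should be automatic since all maps involved are continuous and the product topology is the natural one on each product considered.
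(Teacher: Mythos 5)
Your overall strategy agrees with the paper's: reduce to the untruncated statement, commute $\check{\mathbf{R}}_\mathcal{U}u_{\dR*}$ past the product over $n$, and check term by term using local finite freeness of $\Omega^j$ and the base-change identity at the end. The conclusions you reach at each step are correct, but there is a persistent conceptual error about the geometry of tubes that propagates through your justifications.

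You repeatedly treat $i_{Z,n}\colon [Z]_n \hookrightarrow \tube{Y}_\fr{P}$ as a closed immersion. In fact the ``closed tube'' $[Z]_n = [Z]_{\fr{P}\eta_n}$, defined by the valuative condition $v_x(f_i)\leq\eta_n$, is an affinoid --- hence \emph{open} --- subset of $\fr{P}_K$, and $i_{Z,n}$ is an open immersion. (The terminology is misleading: ``closed'' refers to the weak inequality defining the tube, not to its topology as a subspace.) Two consequences. First, $i_{Z,n*}$ is not exact; being a right adjoint it is only left exact. This is still sufficient for the reduction, since $\tau^{\leq 2d}$ is a kernel in top degree and kernels are preserved by left exact functors, and products of sheaves are likewise left exact --- but the reason you gave is wrong. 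Second, your final base-change argument invokes properness of $u$ and reasons about cofinal systems of open neighbourhoods of $u^{-1}(V)$; that is the mechanism of proper base change along a \emph{closed} subspace, which is simply not what is happening here. Because $[Z]_n$ is open, the identity
\[
(u_*F)|_{[Z]_n} \cong u_{n*}\bigl(F|_{[u^{-1}(Z)]_n}\bigr)
\]
is immediate from the definition $(u_*F)(V) = F(u^{-1}(V))$ together with $[u^{-1}(Z)]_n = u^{-1}([Z]_n)$; properness plays no role. With these two corrections your argument lines up with the paper's proof.
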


\begin{proof}
As usual, since the functor $\prod_{n\geq n_0} i_{Z,n*}\left(-|_{[Z]_n}\right)$ is left exact, it suffices to prove the untruncated version, and since $\check{\mathbf{R}}_\mathcal{U}u_{\dR*}$ commutes with products, it suffices to show that
\[ \check{\mathbf{R}}_\mathcal{U}u_{\dR*}i_{u^{-1}(Z),n*} \left(\mathscr{G}_{i_n}|_{[u^{-1}(Z)]_n} \right) \cong i_{Z,n*} \left((\check{\mathbf{R}}_\mathcal{U}u_{\dR*}\mathscr{G}_{i_n})|_{[Z]_n}\right)  \]
for each $n$. The LHS consists of terms of the form
\[ \prod_{\alpha_1<\ldots<\alpha_l}u_{*}\left[\left.\left( \Omega^j_{\tube{u^{-1}(Y)}_{\fr{P}'}/\tube{Y}_\fr{P}}\otimes_{\mathcal{O}_{\tube{u^{-1}(Y)}_{\fr{P}'}}} i_{u^{-1}(Z),n*} \left(\mathscr{G}_{i_n}|_{[u^{-1}(Z)]_n} \right) \right)\right\vert_{\tube{u^{-1}(Y)}_{\fr{U}_{\alpha_1}\cap\ldots\cap \,\fr{U}_{\alpha_l}}}\right] \]
whereas the RHS consists of terms of the form
 \begin{align*} &i_{Z,n*}\left.\left(\prod_{\alpha_1<\ldots<\alpha_l}  u_{*}  \left[\left.\left( \Omega^j_{\tube{u^{-1}(Y)}_{\fr{P}'}/\tube{Y}_\fr{P}}\otimes_{\mathcal{O}_{\tube{u^{-1}(Y)}_{\fr{P}'}}}\mathscr{G}_{i_n} \right)\right\vert_{\tube{u^{-1}(Y)}_{\fr{U}_{\alpha_1}\cap\ldots\cap \,\fr{U}_{\alpha_l}}}\right] \right) \right\vert_{[Z]_n} \\
 =& \prod_{\alpha_1<\ldots<\alpha_l}  u_{*} i_{u^{-1}(Z),n*} \left.\left[\left.\left( \Omega^j_{\tube{u^{-1}(Y)}_{\fr{P}'}/\tube{Y}_\fr{P}}\otimes_{\mathcal{O}_{\tube{u^{-1}(Y)}_{\fr{P}'}}}\mathscr{G}_{i_n} \right)\right\vert_{\tube{u^{-1}(Y)}_{\fr{U}_{\alpha_1}\cap\ldots\cap \,\fr{U}_{\alpha_l}}}\right]\right\vert_{[u^{-1}(Z)]_n}
  \end{align*}
 Thus we need to prove that
 \begin{align*} &\left( \Omega^j_{\tube{u^{-1}(Y)}_{\fr{P}'}/\tube{Y}_\fr{P}}\otimes_{\mathcal{O}_{\tube{u^{-1}(Y)}_{\fr{P}'}}} i_{u^{-1}(Z),n*} \left(\mathscr{G}_{i_n}|_{[u^{-1}(Z)]_n} \right) \right)  \\ \cong  & i_{u^{-1}(Z),n*} \left.\left( \Omega^j_{\tube{u^{-1}(Y)}_{\fr{P}'}/\tube{Y}_\fr{P}}\otimes_{\mathcal{O}_{\tube{u^{-1}(Y)}_{\fr{P}'}}}\mathscr{G}_{i_n} \right)\right\vert_{[u^{-1}(Z)]_n}
 \end{align*}
 which again is a simple consequence of the local finite freeness of $\Omega^\bullet_{\fr{P}'_K/\fr{P}_K}$. 
\end{proof}

\subsubsection{} \label{sec: is it finally over?}

We thus obtain a pre-$\mathscr{D}^\dagger$ lifting of 
\[ \mathbf{R}i_{Z*}\mathbf{R}i_{Z*}\alpha: \mathbf{R}u_{\dR*}\mathscr{G}[2d]=\mathbf{R}u_{K\dR*}\mathbf{R}i_{u^{-1}(Z)*}\mathscr{G}[2d] \rightarrow  \mathbf{R}i_{Z*}\mathscr{F}\]
by applying the functor
\[ \left(\prod_{n\geq n_0}i_{Z,n*}(-)|_{[Z]_n}\rightarrow  \prod_{n\geq n_0}i_{Z,n*}(-)|_{[Z]_n}\right)_{n_0,(i_n)_{n\geq n_0}}  \]
 to the given map 
\[ \alpha^\dagger \colon \left\{\tau^{\leq 2d}\check{\mathbf{R}}_\mathcal{U}u_{\dR*}\mathscr{G}_i[2d]\right\}_{i\in I}  \rightarrow \left\{\mathscr{F}_i\right\}_{i\in I}. \]

\subsubsection{}

We can now finally complete the proof of Theorem \ref{theo: trace Ddag}.
 
\begin{proof}[Proof of Theorem \ref{theo: trace Ddag}] If we now take a constructible isocrystal $\mathscr{F}$ on $\fr{P}$, and choose a good stratification $\{P_\alpha\}$ for $\mathscr{F}$, then $\{P'_\alpha:=u^{-1}(P_\alpha)\}$ is a good stratification for $u^*\mathscr{F}$. If we choose an open affine cover of $\fr{P}'$, then starting with the construction of \S\ref{sec: pre Ddag lift start}, and repeatedly apply the constructions of \S\ref{sec: pre Ddag lfit jdag} (and bearing in mind Remark \ref{rem: Ddag lift complex}), we therefore obtain a pre-$\mathscr{D}^\dagger$-lifting of the trace map
\[ \Tr: \mathbf{R}u_{\dR*}\mathcal{R}_{\overline{P}'_{\bullet},\infty}j^\dagger_{P'_\bullet} u^*\mathscr{F}[2d] \rightarrow \mathcal{R}_{\overline{P}_{\bullet},\infty}j^\dagger_{P_\bullet} \mathscr{F}. \]
By \S\ref{subsubsec: u+} this then gives rise to a $\mathscr{D}_{\fr{P}\Q}^\dagger$-linear lifting of 
\[ u_+\mathbf{R}\mathrm{sp}_{\fr{P}'*}u^*\mathscr{F}[d]\rightarrow \mathbf{R}\mathrm{sp}_{\fr{P}*}\mathscr{F}. \]
This construction is compatible with refining the choice of open cover of $\fr{P}'$, as well as refining the choice of good stratification for $\mathscr{F}$, and hence is independent of these choices.
\end{proof}

\subsection{Commutativity of $\bm{\mathrm{sp}_!}$ with finite \'etale pushforward for overconvergent isocrystals} \label{sec: trace over}

As mentioned at the beginning of \S\ref{sec: trace Ddagger}, Theorem \ref{theo: trace Ddag} will form a key component of the proof of overholonomicity of the essential image of $\mathrm{sp}_!$. In the actual proof, however, we will mainly use a corollary of Theorem \ref{theo: trace Ddag}. Roughly speaking, this corollary states that if $(f,g,u)\colon (X',Y',\fr{P}')\rightarrow (X,Y,\fr{P})$ is a morphism of frames, then $\mathrm{sp}_!$ commutes with $u_+$ for objects supported on $X'$ (for a precise statement, see Theorem \ref{theo: tr conc fin et} below). Again, this result will take a little effort to set up carefully, and will require some additional hypotheses on the morphism of frames under consideration.

\begin{setup} \label{setup: conc tr} In particular, we will consider a morphism of frames
\[\xymatrix@C=1em@R=1em{  X'  \ar[rr]^{j} \ar[dd]^f & & Y' \ar[rr]^{i'} \ar[dd]^g &  & \fr{P}' \ar[dd]^u \\ & \square  \\  X \ar[rr]^{j} & & Y \ar[rr]^{i}  & & \fr{P} } 
\]
such that:
\begin{enumerate}
\item $\fr{P}$ is smooth and affine over $\mathcal{V}$, and $j\colon X\rightarrow Y$ is strongly affine;
\item $\mathfrak{P}'\cong \mathfrak{P}\times_\mathcal{V} \mathfrak{Q}$ with $\mathfrak{Q}$ smooth and projective over $\mathcal{V}$, and $u$ is the projection onto the first factor;
\item the left hand square is Cartesian, and $f$ is (finite) \'etale;
\item the conormal sheaf $\mathcal{C}_{X'/u^{-1}(X)}$ of $X'$ in $u^{-1}(X)$ is free.
\end{enumerate}
\end{setup}

\begin{warning} Note that, even assuming conditions (1), (2), and (3), the last condition (4) cannot in general be guaranteed locally on $X$. However, it can be guaranteed locally on $X'$, and therefore generically on $X$. \end{warning}

\subsubsection{}

Suppose that we are in the situation of Setup \ref{setup: conc tr}. Let $\fr{P}=\spf{R}$. Since $u$ is projective it is algebraisable, so there exists a smooth projective morphism of schemes $\mathcal{P}' \rightarrow \spec{R}$ which recovers $u$ upon taking formal completions. Since $X'$ is affine, we may choose a basis of the conormal sheaf $\mathcal{C}_{X'/u^{-1}(X)}$ and lift to obtain functions $x_1,\dots,x_d\in \mathcal{O}_{\mathcal{P}'}$, defined on an open neighbourhood of $X'$, and inducing a rational map $\mathcal{P}'\dashrightarrow \P^d_{R}$, which becomes \'etale in a neighbourhood of $X'$ upon taking formal completions. Resolving the indeterminacy locus of the map $\mathcal{P}'\dashrightarrow \P^d_{R}$
and formally completing we obtain a diagram of frames
\begin{equation} \label{eqn: diamond} 
\xymatrix{ & (X',Y'',\fr{P}'') \ar[dl]_{(\mathrm{id},g',u')} \ar[dr]^{(f,h,v)} \\ (X',Y',\fr{P}') \ar[dr]_{(f,g,u)} & & (X,Y,\widehat{\P}^d_\fr{P}) \ar[dl]^{(\mathrm{id},\mathrm{id},\pi)} \\ & (X,Y,\fr{P}) }
\end{equation} 
where:
\begin{itemize}
\item $Y$ is embedded in $\P^d_P$ via the zero section $x_1=\ldots =x_d=0$;
\item the map $g'\colon Y''\rightarrow Y'$ is projective;
\item the map $u'\colon\fr{P}''\rightarrow \fr{P}'$ is an isomorphism in a neighbourhood of $X'$;
\item the map $v\colon\fr{P}''\rightarrow \widehat{\P}^d_\fr{P}$ is \'etale in a neighbourhood of $X'$.
\end{itemize}
In particular, we have $\tube{X'}_{\fr{P}'}\cong\tube{X'}_{\fr{P}''}$ and $\tube{X}_{\widehat{\P}^d_\fr{P}}\cong \tube{X}_\fr{P}\times_K \mathbb{D}^d_K(0;1)$. To distinguish it from the map $\tube{f}\colon\tube{X'}_{\fr{P}'}\rightarrow \tube{X}_\fr{P}$, we shall denote by $\psi$ the induced map $\tube{X'}_{\fr{P}''}\rightarrow \tube{X}_{\widehat{\P}^d_\fr{P}}$.

\begin{lemma} \label{lemma: not quite finite etale} The complex $\mathbf{R}\psi_{\dR*}\mathcal{O}_{\tube{X'}_{\fr{P}''}}$ of $\mathscr{D}_{\tube{X}_{\widehat{\P}^{d}_\fr{P}}}$-modules is concentrated in degree $0$, where it is coherent as an $\cO_{\tube{X}_{\widehat{\P}^{d}_\fr{P}}}$-module.
\end{lemma}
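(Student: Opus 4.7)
The statement is local on $X$, so I would freely shrink $\fr{P}''$ around $X'$ (which does not affect $\tube{X'}_{\fr{P}''}$) and shrink $\widehat{\P}^d_\fr{P}$ around $X$. The strategy is to show that $\psi$ is in fact finite \'etale as a morphism of adic spaces. The conclusion then follows immediately: the relative differentials $\Omega^1_{\tube{X'}_{\fr{P}''}/\tube{X}_{\widehat{\P}^d_\fr{P}}}$ vanish, so $\mathbf{R}\psi_{\dR*}\cO_{\tube{X'}_{\fr{P}''}}\cong \mathbf{R}\psi_*\cO_{\tube{X'}_{\fr{P}''}}$; and since $\psi$ is finite this equals $\psi_*\cO_{\tube{X'}_{\fr{P}''}}$, which is a locally free $\cO_{\tube{X}_{\widehat{\P}^d_\fr{P}}}$-module of finite rank, hence coherent.

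The first step is to arrange that $v\colon\fr{P}''\to \widehat{\P}^d_\fr{P}$ is \'etale everywhere and that $v^{-1}(X) = X'$ scheme-theoretically. By hypothesis $v$ is \'etale in a neighbourhood of $X'$, so we may shrink $\fr{P}''$ to enforce \'etaleness on all of it. The base change $v^{-1}(X) \to X$ is then \'etale, and it contains $X'$ as an \'etale subscheme, hence as an open subscheme. Finiteness of $f\colon X' \to X$ makes $X'$ also proper over $X$, hence closed in $v^{-1}(X)$; removing the closed complement $v^{-1}(X) \setminus X'$ from $\fr{P}''$ achieves the desired scheme-theoretic equality without altering the tube of $X'$.

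With this reduction in hand, I would pass to the formal completions $\widehat{\fr{P}''}_{X'}$ and $\widehat{(\widehat{\P}^d_\fr{P})}_X$ along $X'$ and $X$ respectively. The induced map $\widehat{v}$ between them is \'etale (as a base change of $v$) and restricts to $f$ on the common special fibre. The key observation, and the principal subtlety of the argument, is that although $v$ itself need not be finite, $\widehat{v}$ automatically is: by the equivalence between finite \'etale covers of a formal scheme and of its special fibre (Hensel's lemma), the unique finite \'etale lift of $f$ to $\widehat{(\widehat{\P}^d_\fr{P})}_X$ must coincide with $\widehat{v}$, forcing $\widehat{v}$ itself to be finite \'etale. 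Taking generic fibres and using the canonical identifications $(\widehat{\fr{P}''}_{X'})_K \cong \tube{X'}_{\fr{P}''}$ and $(\widehat{(\widehat{\P}^d_\fr{P})}_X)_K \cong \tube{X}_{\widehat{\P}^d_\fr{P}}$, the induced map on generic fibres is precisely $\psi$, which is therefore finite \'etale, completing the proof. The only delicate point is the passage to the formal completion, which is exactly what is needed to discard the non-finite locus of $v$ and leave a genuinely finite \'etale morphism.
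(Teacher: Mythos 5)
Your proposal is correct in spirit and takes a genuinely different route from the paper. The paper never shrinks $\fr{P}''$; it exploits the \emph{properness} of $v$ crucially. Via Kiehl's finiteness theorem for proper morphisms of analytic spaces together with the base-change result Proposition~\ref{prop: prop base changes}, it first shows that $\mathbf{R}v_*\cO_{\tube{v^{-1}(X)}_{\fr{P}''}}$ has coherent cohomology, of which $\mathbf{R}\psi_*\cO_{\tube{X'}_{\fr{P}''}}$ is a direct summand, and then kills the higher cohomology by a somewhat delicate argument, extending the direct-summand decomposition over an open neighbourhood of $\tube{X}_{\widehat{\P}^d_\fr{P}}$ via Proposition~\ref{prop: coherent sheaves colimit neighbourhoods} and using quasi-finiteness of $\psi$ (through Proposition~\ref{prop: prop base changes} applied to maximal points) to conclude that the $q>0$ pieces are supported away from the tube. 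Your route is more structural: by shrinking, and thereby sacrificing properness of $v$, you isolate a situation in which $\psi$ is actually finite \'etale, which gives a sharper conclusion (local freeness of $\psi_*\cO$, not merely coherence) and avoids the stalk-wise argument.

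There is, however, a gap in your ``key observation''. You write that ``by the equivalence between finite \'etale covers of a formal scheme and of its special fibre, the unique finite \'etale lift of $f$ \dots must coincide with $\widehat{v}$''. As stated this is circular: the cited equivalence only governs morphisms \emph{already known} to be finite \'etale, so it cannot by itself place $\widehat{v}$ in that class. What you actually have at that point is an adic \'etale morphism $\widehat{v}$ of Noetherian formal schemes whose reduction modulo the ideal of definition is $f\colon X'\to X$, which is finite \'etale. To conclude finiteness of $\widehat{v}$ you need one of the following: either the stronger assertion that reduction modulo the ideal of definition gives an equivalence between adic \'etale formal schemes over $\widehat{(\widehat{\P}^d_\fr{P})}_X$ and \'etale $X$-schemes, and that this equivalence detects finiteness; or the level-by-level argument that for each $n$, the reduction of $\widehat{v}$ modulo $\mathcal{I}^{n+1}$ is \'etale over the $n$-th infinitesimal thickening and so, by topological invariance of the \'etale site, is determined by and inherits the finiteness of $f$, after which finiteness passes to the $\mathcal{I}$-adic limit by (topological) Nakayama. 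With one of these fixes in place, and after spelling out the compatibility of your shrinking with the germs $\tube{X'}_{\fr{P}''}$ and $\tube{X}_{\widehat{\P}^d_\fr{P}}$, and the identification of the Berthelot generic fibre of the $\mathcal{I}$-adic completion with the tube, your argument does go through.
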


\begin{proof}
Since $v:\fr{P}''\rightarrow \widehat{\P}^{d}_\fr{P}$ is \'etale in a neighbourhood of $X$, the non-\'etale locus of the induced map $\tube{h}:\tube{Y''}_{\fr{P}'} \rightarrow \tube{Y}_{\widehat{\P}^d_\fr{P}}$ is a closed analytic subspace disjoint from $\tube{X'}_{\fr{P}''}$. Thus $\Omega^\bullet_{\tube{X'}_{\fr{P}''}/\tube{X}_{\widehat{\P}^{d}_\fr{P}}}=\cO_{\tube{X'}_{\fr{P}''}}$, and so
\[ \mathbf{R}\psi_{\dR*}\mathcal{O}_{\tube{X'}_{\fr{P}''}}=\mathbf{R}\psi_{*}\mathcal{O}_{\tube{X'}_{\fr{P}''}}. \]
The map $X'\rightarrow v^{-1}(X)$ is proper, and hence a closed immersion. Moreover, since $v$ is \'etale in a neighbourhood of $X$, and $f$ is \'etale, it follows that $X'\rightarrow v^{-1}(X)$ is \'etale. Hence $X'$ is a union of connected components of $v^{-1}(X)$, and so $\tube{X'}_{\fr{P}''}$ is a union of connected components of $\tube{v^{-1}(X)}_{\fr{P}''}$. Since $v$ is proper, $v:\tube{v^{-1}(Y)}_{\fr{P}''} \rightarrow \tube{Y}_{\widehat{\P}^d_\fr{P}}$ is proper, and hence by Proposition \ref{prop: prop base changes} we deduce that $\mathbf{R}v_*\cO_{\tube{v^{-1}(X)}_{\fr{P}''}}$ has coherent cohomology sheaves. Hence so does its direct summand $\mathbf{R}\psi_*\cO_{\tube{X'}_{\fr{P}''}}$.

Consider the higher direct image $\mathbf{R}^qv_*\cO_{\tube{v^{-1}(Y)}_{\fr{P}''}}$ for some $q>0$, this is a coherent sheaf on $\tube{Y}_{\widehat{\P}^d_\fr{P}}$. We know that, after restricting to $\tube{X}_{\widehat{\P}^d_\fr{P}}$, this contains $\mathbf{R}^q\psi_*\cO_{\tube{X'}_{\fr{P}''}}$ as a direct summand. By Proposition \ref{prop: coherent sheaves colimit neighbourhoods} we see that there exists an open neighbourhood $V$ of $\tube{X}_{\widehat{\P}^d_\fr{P}}$ in $\tube{Y}_{\widehat{\P}^d_\fr{P}}$, and a decomposition
\[ \left.\left(\mathbf{R}^qv_*\cO_{\tube{v^{-1}(Y)}_{\fr{P}''}}\right)\right\vert_V \cong \mathscr{F}_1\oplus \mathscr{F}_2 \]
such that $\mathscr{F}_1|_{\tube{X}_{\widehat{\P}^d_\fr{P}}} = \mathbf{R}^q\psi_*\cO_{\tube{X'}_{\fr{P}''}}$. 

Since $\psi$ is \'etale, we know that every point $x\in \tube{X}_{\widehat{\P}^d_\fr{P}}$ has only finitely many preimages under $\psi$. Hence, by Proposition \ref{prop: prop base changes}, the stalk of $\mathbf{R}^q\psi_*\cO_{\tube{X'}_{\fr{P}''}}$ at any maximal point of $\tube{X}_{\widehat{\P}^d_\fr{P}}$ is zero. The support of $\mathscr{F}_1$ is therefore a closed analytic subspace of $V$, not containing any maximal point of $\tube{X}_{\widehat{\P}^d_\fr{P}}$. It is therefore disjoint from $\tube{X}_{\widehat{\P}^d_\fr{P}}$, which completes the proof.
\end{proof}

Since $f:X'\rightarrow X$ is finite \'etale, every object of $\mathrm{Isoc}(X',Y'')$ is a direct summand of an object in the essential image of $f^*:\mathrm{Isoc}(X,Y)\rightarrow \mathrm{Isoc}(X',Y'')$. Thus Lemma \ref{lemma: not quite finite etale} remains true with $\cO_{\tube{X'}_{\fr{P}''}}$ replaced by (the realisation of) any overconvergent isocrystal $E\in \mathrm{Isoc}(X',Y'')$. We can check using the condition in \cite[Theorem 4.3.9]{LS07} that overconvergence is preserved, thus $\psi_*=\mathbf{R}\psi_{\dR*}$ induces a functor
\[ f_* \colon\mathrm{Isoc}(X',Y')=\mathrm{Isoc}(X',Y'') \rightarrow \mathrm{Isoc}(X,Y)  \]
which is both a right and a left adjoint to $f^*$.

\subsubsection{} 
We now let $i_X=i\circ j:X\rightarrow P$ denote the given locally closed immersion, take $E\in \mathrm{Isoc}(X',Y')$ and consider the constructible isocrystal $i_{X!}(f_*E)_\fr{P}$ on $\fr{P}$. Thus by Theorem \ref{theo: trace Ddag} we have the $\mathscr{D}^\dagger_{\fr{P}\Q}$-linear trace map
\[ \Tr_{f_*E}:u_+\mathrm{sp}_{\fr{P}'!}u^*i_{X!}(f_*E)_\fr{P} \rightarrow \mathrm{sp}_{\fr{P}!}i_{X!}(f_*E)_\fr{P} \] 
of complexes of $\mathscr{D}^\dagger_{\fr{P}\Q}$-modules. On the other hand, if we similarly let $i_{X'}=i'\circ j':X'\rightarrow P'$ denote the given locally closed immersion, and compose the canonical map
\[ i_{X'!}(f^*f_*E)_{\fr{P}'} \rightarrow u^*i_{X!}(f_*E)_\fr{P} \]
with (the realisation on $\fr{P}'$ of) the adjunction map
\[ E\rightarrow f^*f_*E, \]
we obtain a map of constructible isocrystals
\[  i_{X'!}E_{\fr{P}'}  \rightarrow u^*i_{X!}(f_*E)_\fr{P}  \]
on $\fr{P}'_K$. Applying $\mathrm{sp}_{\fr{P}'!}$ gives us a $\mathscr{D}^\dagger_{\fr{P}\Q}$-linear map
\[  \mathrm{sp}_{\fr{P}'!}i_{X'!}E_{\fr{P}'}  \rightarrow \mathrm{sp}_{\fr{P}'!}u^*i_{X!}(f_*E)_\fr{P},\]
and, finally, applying $u_+$ and composing this with the above map $\Tr_{f_*E}$, we obtain a $\mathscr{D}^\dagger_{\fr{P}\Q}$-linear map
\[ \mathrm{Tr}_E: u_+\mathrm{sp}_{\fr{P}'!} i_{X'!}E_{\fr{P}'}\rightarrow \mathrm{sp}_{\fr{P}!}i_{X!}(f_*E)_{\fr{P}}.\]

\begin{theorem} \label{theo: tr conc fin et} Consider Setup \ref{setup: conc tr}, and let $E\in \mathrm{Isoc}(X',Y')$. Then the trace map 
\[ \mathrm{Tr}_E: u_+\mathrm{sp}_{\fr{P}'!} i_{X'!}E_{\fr{P}'}\rightarrow \mathrm{sp}_{\mathfrak{P}!}i_{X!}(f_*E)_{\fr{P}}. \]
is an isomorphism in ${\bf D}^b(\mathscr{D}^\dagger_{\fr{P}\Q})$. 
\end{theorem}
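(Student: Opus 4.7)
The plan is to exploit the diamond diagram (\ref{eqn: diamond}) to factor the pushforward $u_+$, in a manner compatible with $\mathrm{sp}_!$, as the composition of three pieces corresponding to $u'$, $v$, and $\pi$. The theorem will then reduce to two basic cases: a morphism which is étale in a neighbourhood of the support (handled by a variant of Proposition \ref{prop: comm finite etale}), and the projection $\pi\colon\widehat{\P}^d_\fr{P}\to\fr{P}$, for which the tube of $X$ carries an open polydisc bundle structure and the trace is essentially already known to be an isomorphism.

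First I will eliminate $u'$. Since $u'\colon\fr{P}''\to\fr{P}'$ is an isomorphism in a neighbourhood of $X'$, the induced map of germs $\tube{X'}_{\fr{P}''}\isomto\tube{X'}_{\fr{P}'}$ is an isomorphism, $(u')^{*}i_{X'!}E_{\fr{P}'}$ is canonically identified with $i_{X'!}E_{\fr{P}''}$, and all of the constructions underlying $\mathrm{sp}_!$ of an object supported on $X'$ depend only on the germ of the ambient formal scheme around $X'$. This should yield a canonical isomorphism $u'_+\mathrm{sp}_{\fr{P}''!}i_{X'!}E_{\fr{P}''}\isomto\mathrm{sp}_{\fr{P}'!}i_{X'!}E_{\fr{P}'}$ compatibly with the trace morphisms, reducing the theorem to the analogous statement for the composition $u\circ u' = \pi\circ v\colon\fr{P}''\to\fr{P}$.

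Next I will commute $\mathrm{sp}_!$ past $v$. By Lemma \ref{lemma: not quite finite etale} and its proof, $X'$ is a union of connected components of $v^{-1}(X)$, and one may choose an open neighbourhood $\fr{W}\subset \fr{P}''$ of $X'$ on which $v|_{\fr{W}}$ is étale and such that $X'$ is still a union of connected components of $W\cap v^{-1}(X)$. Imitating the argument of Proposition \ref{prop: comm finite etale} (passing to a Galois closure of the resulting finite étale cover and working with $G$-equivariant good stratifications) one should obtain a canonical isomorphism
\[ v_+\mathrm{sp}_{\fr{P}''!}i_{X'!}E_{\fr{P}''} \isomto \mathrm{sp}_{\widehat{\P}^d_\fr{P}!}i_{X!}(f_*E)_{\widehat{\P}^d_\fr{P}}, \]
where $f_*E$ is realised on $\widehat{\P}^d_\fr{P}$ via $\psi_{*}E_{\fr{P}''}$. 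This is the hardest step: since $v$ is only étale in a neighbourhood of $X'$ rather than globally, Proposition \ref{prop: comm finite etale} does not apply verbatim, and one must either localise the construction of $\mathrm{sp}_!$ around $X'$ or carefully manipulate good stratifications, while simultaneously tracking compatibility with the $\mathscr{D}^\dagger$-linear trace map from Theorem \ref{theo: trace Ddag}.

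For the remaining comparison with $\pi$, the tube $\tube{X}_{\widehat{\P}^d_\fr{P}}\cong \tube{X}_\fr{P}\times_K\D^d_K(0;1^-)$ is an open polydisc bundle, so by property (\ref{num: iso}) of the rigid trace, combined with the projection formula (Lemma \ref{lemma: projection formula constructible}), the underlying $\mathscr{D}_{\fr{P}\Q}$-linear map $\pi_+\mathrm{sp}_{\widehat{\P}^d_\fr{P}!}i_{X!}(f_*E)_{\widehat{\P}^d_\fr{P}} \to \mathrm{sp}_{\fr{P}!}i_{X!}(f_*E)_\fr{P}$ is an isomorphism. Since the forgetful functor ${\bf D}^b(\mathscr{D}^\dagger_{\fr{P}\Q})\to {\bf D}^b(\mathscr{D}_{\fr{P}\Q})$ is conservative on isomorphisms, the $\mathscr{D}^\dagger$-linear lift produced by Theorem \ref{theo: trace Ddag} is then also an isomorphism, completing the argument.
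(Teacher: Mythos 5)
Your raw ingredients — the diamond diagram (\ref{eqn: diamond}), the polydisc-bundle computation via property (\ref{num: iso}) of the rigid trace, the identification $\psi_*E_{\fr{P}'}\cong\pi^*(f_*E)_\fr{P}$, and the observation that the forgetful functor ${\bf D}^b(\mathscr{D}^\dagger_{\fr{P}\Q})\rightarrow{\bf D}^b(\mathscr{D}_{\fr{P}\Q})$ is conservative — are exactly those used in the paper. The problem is the order in which you deploy them. You only invoke conservativity at the very end, after the comparison with $\pi$; but it should be invoked at the very start. Since Theorem~\ref{theo: trace Ddag} already furnishes $\mathrm{Tr}_E$ as a morphism in ${\bf D}^b(\mathscr{D}^\dagger_{\fr{P}\Q})$, to check that it is an isomorphism it suffices to check the underlying map of $\mathcal{O}_{\fr{P}\Q}$-modules. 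Once you do this the entire problem becomes purely rigid-analytic: unravel the definition of the trace, restrict to the tube $\tube{X}_\fr{P}$, factor $\tube{f}$ as $\pi\circ\psi$ via Lemma~\ref{lemma: not quite finite etale}, use the equivalence of categories of overconvergent $\nabla$-modules across $\psi$ to identify $\psi_*E_{\fr{P}'}$ with $\pi^*(f_*E)_\fr{P}$, and conclude from the fact that $\mathbf{R}\pi_!\Omega^\bullet_{\tube{X}_\fr{P}\times_K\D^d_K(0;1^-)/\tube{X}_\fr{P}}[2d]\to\mathcal{O}_{\tube{X}_\fr{P}}$ is an isomorphism. No $\mathscr{D}^\dagger$-linear commutation of $\mathrm{sp}_!$ with $u'_+$ or $v_+$ is ever needed.

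By deferring conservativity, you instead saddle yourself with your second step — a $\mathscr{D}^\dagger$-linear commutation of $\mathrm{sp}_!$ with $v_+$ — and this step is a genuine gap, of roughly the same difficulty as the theorem itself. Proposition~\ref{prop: comm finite etale} requires $v$ to be finite \'etale as a morphism of formal $\mathcal{V}$-schemes, but $v$ is only \'etale in a neighbourhood of $X'$; restricting $v$ to an open $\fr{W}\subset\fr{P}''$ destroys the properness of $v|_\fr{W}$ over $\widehat{\P}^d_\fr{P}$ and hence the good behaviour of $v_+$; and Theorem~\ref{theo: trace Ddag} provides no $\mathscr{D}^\dagger$-linear trace for $v$, since $v$ is not a projection to a product. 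A similar but milder issue afflicts your first step: $u'_+$ is a proper pushforward whose underlying de\thinspace Rham complex sees all of $\fr{P}''$, not just a neighbourhood of $X'$, so the assertion that $\mathrm{sp}_!$ ``depends only on the germ of the ambient formal scheme around $X'$'' does not directly yield the claimed $\mathscr{D}^\dagger$-linear isomorphism compatible with traces. Finally, even if all three pushforwards could be commuted past $\mathrm{sp}_!$ at the $\mathscr{D}^\dagger$-level, you would still need to verify that the composite trace for $\pi\circ v\circ(u')^{-1}$ agrees with the given trace $\mathrm{Tr}_E$ for $u$, and you say nothing about how to check this compatibility. You should restructure: invoke conservativity first, and then carry out the remaining rigid-analytic computation at the $\mathcal{O}_{\fr{P}\Q}$-linear level, where all these difficulties disappear.
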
 

\begin{proof} It suffices to prove the claim after restricting along $\mathscr{D}_{\fr{P}\Q}\rightarrow \mathscr{D}^\dagger_{\fr{P}\Q}$, or indeed along $\mathcal{O}_{\fr{P}\Q}\rightarrow \mathscr{D}^\dagger_{\fr{P}\Q}$. In this case, we can describe the map alternatively as follow as follows. Let $d$ denote the relative dimension of $u$. Then we have the morphism of germs
\[ \psi \colon \tube{X'}_{\fr{P}'}=\tube{X'}_{\fr{P}''} \rightarrow \tube{X}_{\widehat{\P}^d_\fr{P}} = \tube{X}_{\fr{P}} \times_K \D^d_K(0;1^-) \]
factoring $\tube{f}\colon \tube{X'}_{\fr{P}'}\rightarrow \tube{X}_\fr{P}$ via the first projection $\pi$. Via the projection formula we obtain a trace map
\[ \mathbf{R}\pi_{!}\left( \Omega^\bullet_{\tube{X}_{\fr{P}} \times_K \D^d_K(0;1^-)/\tube{X}_{\fr{P}}} \otimes_{\mathcal{O}_{\tube{X}_{\fr{P}} \times_K \D^d_K(0;1^-)}}	 \pi^*(f_*E)_\fr{P} \right)[2d] \rightarrow (f_*E)_\fr{P}  \]
which is in fact an isomorphism, because the map
\[ \mathbf{R}\pi_{!}\left( \Omega^\bullet_{\tube{X}_{\fr{P}} \times_K \D^d_K(0;1^-)/\tube{X}_{\fr{P}}}  \right)[2d] \rightarrow \mathcal{O}_{\tube{X}_\fr{P}} \]
is an isomorphism by the results of \S\ref{subsec: f! prop}. Since the categories of coherent modules with overconvergent connection on $\tube{X}_\fr{P}$ and $\tube{X}_\fr{P}\times_K \D^d_K(0;1^-)$ are equivalent, via the (de\thinspace Rham) pushforward and pullback functors, we have a natural isomorphism
\[ \psi_*E_{\fr{P}'} \isomto \pi^*(f_*E)_\fr{P}, \]
and composing this with the above trace map gives an isomorphism
\begin{align*} \mathbf{R}\tube{f}_{!} & \left( \Omega^\bullet_{\tube{X'}_{\fr{P}'}/\tube{X}_\fr{P}}  \otimes_{\mathcal{O}_{\tube{X'}_{\fr{P}'}}} E_{\fr{P}'} \right)[2d] \\
 &\cong \mathbf{R}\pi_{!}\left( \Omega^\bullet_{\tube{X}_{\fr{P}} \times_K \D^d_K(0;1^-)/\tube{X}_{\fr{P}}}  \otimes_{\mathcal{O}_{\tube{X}_{\fr{P}} \times_K \D^d_K(0;1^-)}} \psi_*E_{\fr{P}'} \right) \isomto (f_*E)_\fr{P} .
\end{align*}
Taking the extension by zero along $i_X\colon X\rightarrow P$, and applying the projection formula, we thus obtain an isomorphism
\[ \mathbf{R}u_{*}\left(  \Omega^\bullet_{\fr{P}'_K/\fr{P}_K} \otimes_{\mathcal{O}_{\fr{P}_K'}}  i_{X'!}E_{\fr{P}'} \right)[2d] \isomto i_{X!}(f_*E)_\fr{P}, \]
which, after applying $\mathbf{R}\mathrm{sp}_{\fr{P}*}$ and shifting, is precisely the $\mathscr{D}_{\fr{P}\Q}$-linear map
\[ \mathrm{Tr}_E: u_+\mathrm{sp}_{\fr{P}'!} i_{X'!}E_{\fr{P}'}\rightarrow i_{X!}(f_*E)_{\fr{P}}. \]
Hence we are done.
\end{proof}

\section{Finiteness results for \texorpdfstring{$\mathrm{sp}_!$}{sp!}} \label{sec: finite}

In this section we introduce the dual constructible t-structure on the category of overholonomic complexes, and show that the essential image of 
\[ \mathrm{sp}_!: \mathrm{Isoc}_{\mathrm{cons}}(\mathfrak{P}) \rightarrow {\bf D}^b(\mathscr{D}^\dagger_{\fr{P}\Q}) \]
is contained in the heart of this t-structure. We also prove some functoriality properties of $\mathrm{sp}_!$, namely compatibility with pullbacks, and pushforwards along locally closed immersions. 

\subsection{t-structures on arithmetic \texorpdfstring{$\pmb{\mathscr{D}^\dagger}$}{D}-modules} 

For any prime $\ell\neq p$, and any variety $X/k$, there are two well-known t-structures on the triangulated category ${\bf D}^b_c(X_\et,\Q_\ell)$ of bounded constructible complexes of $\ell$-adic sheaves on $X_\et$.  The ordinary t-structure is the most straightforward to define, and has as its heart the category $\mathrm{Con}(X_\et,\Q_\ell)$ of constructible $\ell$-adic sheaves on $X$. One can also define the perverse t-structure on ${\bf D}^b_c(X_\et,\Q_\ell)$, whose heart $\mathrm{Perv}(X_\et,\Q_\ell)$ is called the category of perverse sheaves on $X$. In some ways this category is better behaved than $\mathrm{Con}(X_\et,\Q_\ell)$, for example it is both Artinian and Noetherian, and is stable under the Verdier dual functor $\mathbf{D}_X$. 

The fact that $\mathrm{Con}(X_\et,\Q_\ell)$ is \emph{not} stable under Verdier duality means that there is a third natural t-structure on ${\bf D}^b_c(X_\et,\Q_\ell)$, the dual constructible t-structure, whose heart is canonically anti-equivalent to $\mathrm{Con}(X_\et,\Q_\ell)$, via $\mathbf{D}_X$. This third t-structure appears less often in the literature than the other two, and is, in a sense, of lesser importance, since most of its relevant properties can be deduced from that of the constructible t-structure by dualising. However, it will be important for us, since it is the analogue of the heart of the \emph{dual constructible} t-structure in which our functor $\mathrm{sp}_{!}$ will land.\footnote{This is the reason for the shift in the definition of $\mathrm{sp}_!$.}

Just as in the case of constructible complexes of $\ell$-adic sheaves, there are three natural t-structures on ${\bf D}^b_{\mathrm{hol}}(X,Y)$. 

\subsubsection{Holonomic t-structure} 

The first t-structure on ${\bf D}^b_{\mathrm{hol}}(X,Y)$ is the one most closely related to the ordinary t-structure on ${\bf D}^b(\mathscr{D}^\dagger_{\fr{P}\Q})$. That is, we take $Z\subset P$ to be a closed subscheme such that $X$ is closed in $P\setminus Z$, and set $\fr{U}=\fr{P}\setminus Z$. Then the holonomic t-structure on ${\bf D}^b_{\mathrm{hol}}(X,Y)$ is defined to be the pullback of the ordinary t-structure via the restriction functor
\[ {\bf D}^b_{\mathrm{hol}}(X,Y) \rightarrow {\bf D}^b(\mathscr{D}^\dagger_{\mathfrak{U}\Q}),\]
that is, an object $\mathcal{M}\in {\bf D}^b_{\mathrm{hol}}(X,Y)$ is considered to be in ${\bf D}^{\geq0}$ or ${\bf D}^{\leq 0}$ if and only if it is so in ${\bf D}^b(\mathscr{D}^\dagger_{\mathfrak{U}\Q})$. This is indeed a t-structure, and is independent of the choice of $\mathfrak{U}$ \cite[\S1.2]{AC18a}. We denote by ${\bf D}^{\geq q},{\bf D}^{\leq q}$ the full subcategories of objects concentrated in degrees $\geq q$ and $\leq q$ respectvely, and by $\tau^{\geq q},\tau^{\leq q}$ the truncation functors. The holonomic t-structure is self-dual \cite[Proposition 1.3.1]{AC18a}, in the sense that 
\begin{align*}
 \mathcal{M}\in {\bf D}^{\geq0} &\Leftrightarrow \mathbf{D}_{X}\mathcal{M}\in  {\bf D}^{\leq0}.
\end{align*}
We denote the heart of the holonomic t-structure by $\mathrm{Hol}(X,Y)$, and refer to its objects as holonomic modules on $(X,Y)$. Cohomology functors will be denoted by
\[ \mathcal{H}^q: {\bf D}^b_{\mathrm{hol}}(X,Y) \rightarrow \mathrm{Hol}(X,Y). \]

\subsubsection{Constructible t-structure} The constructible t-structure on ${\bf D}^b_{\mathrm{hol}}(X,Y)$ is analogous to the t-structure on the derived category ${\bf D}^b_\mathrm{rh}(\mathscr{D}_X)$ of regular holonomic $\mathscr{D}$-modules on a smooth complex variety $X$ induced by the ordinary t-structure on ${\bf D}^b_c(X^\mathrm{an},\C)$ via the (covariant) Riemann--Hilbert correspondence
\[ \mathcal{M}\mapsto  \omega_{X^\mathrm{an}/\C} \otimes^{\mathbb{L}}_{\mathscr{D}_{X^\mathrm{an}}}\mathcal{M}^\mathrm{an} \cong \Omega^\bullet_{X^\mathrm{an}} \otimes_{\mathcal{O}_{X^\mathrm{an}}} \mathcal{M}^{\mathrm{an}}[\dim X].\]
Concretely, if $\mathcal{M}\in \mathrm{Hol}(X,Y)$, define the support $\mathrm{Supp}(\mathcal{M})$ to be the smallest closed subscheme $Z\subset X$ such that $\mathcal{M}|_{(X\setminus Z,Y)}=0$. Then define a pair of subcategories $({}^c{\bf D}^{\geq0},{}^c{\bf D}^{\leq0})$ of ${\bf D}^b_{\mathrm{hol}}(X,Y)$ as follows:
\begin{align*}
\mathcal{M} \in {}^\mathrm{c}{\bf D}^{\geq0} &\Leftrightarrow  \dim \mathrm{Supp} \mathcal{H}^n(\mathcal{M}) \leq n\;\;\forall n\geq 0, \;\;\mathcal{H}^n(\mathcal{M})  = 0 \;\;\forall n < 0 \\
\mathcal{M} \in {}^\mathrm{c}{\bf D}^{\leq0} &\Leftrightarrow \mathcal{H}^{n}( i^+\mathcal{M}) =0 \text{ for any closed immersion }i:(X',Y')\hookrightarrow (X,Y)\text{ with }\dim X' < n.
\end{align*}
Here $\mathcal{H}^n$ refer to the cohomology functors with values in $\mathrm{Hol}(X,Y)$ and $\mathrm{Hol}(X',Y')$ respectively, and $i$ being a closed immersion means that both $X'\hookrightarrow X$ and $Y'\hookrightarrow Y$ are closed immersions. That this does indeed define a t-structure is proved identically to \cite[Proposition 1.3.3]{Abe18a}, as are all of the various t-exactness properties of pullback and pushforward functors. We denote the heart of this t-structure by $\mathrm{Con}(X,Y)$, and will call objects of $\mathrm{Con}(X,Y)$ constructible modules on $(X,Y)$.

\subsubsection{Dual constructible t-structure} Whereas the holonomic t-structure is self-dual, the constructible t-structure is \emph{not}. We may therefore define a third t-structure $({}^{\mathrm{dc}}{\bf D}^{\geq0} ,{}^{\mathrm{dc}}{\bf D}^{\leq0} )$ by setting
\begin{align*}
 \mathcal{M}\in {}^{\mathrm{dc}}{\bf D}^{\geq0} &\Leftrightarrow \mathbf{D}_{X}\mathcal{M}\in  {}^{c}{\bf D}^{\leq0}  \\
  \mathcal{M}\in {}^{\mathrm{dc}}{\bf D}^{\leq0} &\Leftrightarrow \mathbf{D}_{X}\mathcal{M}\in  {}^{c}{\bf D}^{\geq0}.
\end{align*}
Again, we denote the heart of this t-structure by $\mathrm{DCon}(X,Y)$, and will call objects of the category  $\mathrm{DCon}(X,Y)$ dual constructible modules on $(X,Y)$.

\subsubsection{}

Thus the duality functor $\mathbf{D}_{X}$ induces anti-equivalences
\begin{align*}
\mathrm{Hol}(X,Y)^\mathrm{op} &\isomto \mathrm{Hol}(X,Y) \\
\mathrm{Con}(X,Y)^\mathrm{op} &\isomto \mathrm{DCon}(X,Y) \\
\mathrm{DCon}(X,Y)^\mathrm{op} &\isomto \mathrm{Con}(X,Y).
\end{align*}
When $Y$ is proper over $k$, we will write $\mathrm{Hol}(X)$, $\mathrm{Con}(X)$ and $\mathrm{DCon}(X)$ instead. We will also use the notation $\mathrm{Hol}(\fr{P})$, $\mathrm{Con}(\fr{P})$ and $\mathrm{DCon}(\fr{P})$ instead of $\mathrm{Hol}(P,P)$, $\mathrm{Con}(P,P)$ and $\mathrm{DCon}(P,P)$. Once more we warn the reader that all objects of these categories are assumed to be of Frobenius type, although this is not explicit from the notation.

\subsubsection{}
Suppose that $(X,Y,\mathfrak{P})$ is an l.p.\ frame, and let $a:X\rightarrow P$ denote the given immersion from $X$ into the special fibre of $\fr{P}$. The functor
\[ a_+:{\bf D}^b_{\mathrm{hol}}(X,Y) \rightarrow {\bf D}^b_{\mathrm{hol}}(\mathfrak{P}) \]
is then induced by the \emph{identity} functor
\[ {\bf D}^b(\mathscr{D}^\dagger_{\fr{P}\Q}) \rightarrow {\bf D}^b(\mathscr{D}^\dagger_{\fr{P}\Q}). \]

\begin{lemma} The functor $a_+$ is t-exact for the dual constructible t-structure.
\end{lemma}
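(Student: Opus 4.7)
The plan is to reduce this to the (constructible) t-exactness of $a_!$ via Verdier duality. Unwinding the definition of the dual constructible t-structure gives
\[ \cM \in {}^{\mathrm{dc}}\bD^{\geq 0}(X,Y) \iff \bD_X \cM \in {}^c \bD^{\leq 0}(X,Y), \]
and the analogous equivalence for ${}^{\mathrm{dc}}\bD^{\leq 0}$; the same applies on $\fr{P}$. Since $\bD_X$ is a duality (an anti-auto-equivalence of ${\bf D}^b_\hol(X,Y)$) that exchanges the two halves of each t-structure, t-exactness of $a_+$ for the dual constructible t-structure on both sides is formally equivalent to t-exactness of the composition $\bD_{\fr{P}} \circ a_+ \circ \bD_X$ for the constructible t-structure.

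The next step is to apply the standard six-functor compatibility
\[ \bD_{\fr{P}} \circ a_+ \cong a_! \circ \bD_X, \]
available in the formalism of \cite{AC18a}. Factoring $a = i \circ j$ with $j\from X \hookrightarrow Y$ an open immersion of pairs and $i\from Y \hookrightarrow \fr{P}$ a closed immersion, duality exchanges $j_+$ with $j_!$ and commutes with $i_+ = i_!$, yielding $\bD_{\fr{P}} a_+ \cong a_! \bD_X$. Combined with the previous paragraph, the lemma is reduced to showing that $a_!$ is t-exact for the constructible t-structure on ${\bf D}^b_\hol$.

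This last statement is the direct analogue of the fact that, for a locally closed immersion of $k$-varieties, the lower-shriek functor is t-exact for the ordinary t-structure on $\bD^b_c(-,\bar\Q_\ell)$. It belongs to the standard package of t-exactness properties for the six operations relative to the constructible t-structure, proved ``exactly as in \cite[Proposition 1.3.3]{Abe18a}'' per the paragraph immediately preceding this lemma. The only point that would require genuine care in a fully written-out argument is verifying the duality-pushforward compatibility above, but this is built into the definitions of the six operations in the $\sD^\dagger$-module setting; I foresee no serious obstacle.
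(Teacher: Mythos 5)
Your argument is correct and follows the same route as the paper: both reduce, via $\mathbf{D}_{\fr{P}} \circ a_+ \cong a_! \circ \mathbf{D}_X$ and the definition of the dual constructible t-structure, to constructible t-exactness of $a_!$, which is then imported from Abe's work (the paper cites \cite[Lemma 1.3.2]{Abe18a} for this last step rather than Proposition 1.3.3, a minor discrepancy). You simply spell out the duality unwinding in more detail than the paper's one-line proof.
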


\begin{proof}
By duality, this is equivalent to showing that $a_!$ is exact for the constructible t-structure, which is proved identically to \cite[Lemma 1.3.2]{Abe18a}.
\end{proof}

It therefore induces a functor
\[ a_+: \mathrm{DCon}(X,Y) \rightarrow \mathrm{DCon}(\mathfrak{P})\]
which is fully faithful, and identifies $\mathrm{DCon}(X,Y)$ with the intersection of $\mathrm{DCon}(\mathfrak{P})$ and ${\bf D}^b_{\mathrm{hol}}(X,Y)$ inside ${\bf D}^b(\mathscr{D}^\dagger_{\fr{P}\Q})$. Note that the analogous claim is false for the holonomic or constructible t-structures.

\begin{lemma} \label{lemma: hol fin et} Let
\[ \xymatrix{ X'\ar[r] \ar[d]_f & Y' \ar[d]^g \\ X \ar[r] & Y } \] 
be a morphism of pairs over $k$, with $g$ proper, $f$ \'etale, and the square Cartesian. Then:
\begin{enumerate}
\item an object $\mathcal{M}\in {\bf D}^b_{\mathrm{hol}}(X',Y')$ is holonomic, constructible, or dual constructible if and only if $f_+\mathcal{M}\cong f_!\mathcal{M}\in {\bf D}^b_{\mathrm{hol}}(X,Y)$ is so;
\item an object $\mathcal{N}\in {\bf D}^b_{\mathrm{hol}}(X,Y)$ is holonomic, constructible, or dual constructible if and only if $f^+\mathcal{N}\cong f^!\mathcal{N}\in {\bf D}^b_{\mathrm{hol}}(X',Y')$ is so.
\end{enumerate}
\end{lemma}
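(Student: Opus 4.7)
The plan is first to observe that since $g$ is proper and the square is Cartesian, $f$ is the base change of $g$ along $X \hookrightarrow Y$ and therefore also proper; combined with the étaleness hypothesis, this forces $f$ to be finite étale, and in particular of relative dimension zero. Consequently the canonical natural transformations $f^+ \to f^!$ and $f_! \to f_+$ (which in general differ by duality/shifts) are isomorphisms in this setting. I would spend a couple of lines justifying this at the outset, since it underlies the formulation of the lemma.

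The heart of the argument is then to establish two facts for each of the functors $f^+ \cong f^!$ and $f_+ \cong f_!$: (i) they are t-exact for each of the holonomic, constructible and dual constructible t-structures; (ii) they are conservative. T-exactness for the holonomic t-structure is standard: $f^+$ is t-exact because $f$ is flat (in fact étale), and $f_+$ is t-exact because for a finite étale morphism the relative Spencer / de Rham complex is concentrated in degree zero, so the pushforward is underived. For the constructible t-structure, the key observation is that $f$ preserves dimension of support (being étale and finite), and commutes with pullback along closed immersions $i \from (X'',Y'') \hto (X,Y)$ via smooth base change for the Cartesian square; this gives t-exactness of both $f^+$ and $f_+$ by the defining characterisation of ${}^c\bD^{\leq 0}$ and ${}^c\bD^{\geq 0}$ in terms of supports and $i^+$. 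T-exactness for the dual constructible t-structure is then automatic, since for a finite étale morphism $f$ one has $f^+ \cong f^!$ and $f_+ \cong f_!$, and therefore both functors commute with the duality $\bD$, which exchanges the constructible and dual constructible t-structures.

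For conservativity, I would use the standard fact that for a finite étale morphism $f$ of degree $n$ (after possibly passing to a Galois closure to reduce to the Galois case), the object $f_+\cO_{X'} \cong f_*\cO_{X'}$ is a locally free $\cO_X$-module containing $\cO_X$ as a direct summand via $\tfrac{1}{n}$ times the trace. The projection formula then identifies $f_+ f^+ \cN$ with $f_+\cO_{X'} \otimes_{\cO_X} \cN$, so $\cN$ is a direct summand of $f_+ f^+\cN$; dually, $\cM$ is a direct summand of $f^+ f_+ \cM$. Hence $f^+\cN = 0$ implies $\cN = 0$ and $f_+\cM = 0$ implies $\cM = 0$.

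Given these two properties, the lemma follows by the standard argument: a t-exact conservative functor between triangulated categories equipped with t-structures detects membership in the heart, because if $F\cN$ is concentrated in degree $0$ then t-exactness gives $F\mathcal{H}^i(\cN) \cong \mathcal{H}^i(F\cN) = 0$ for $i \neq 0$, and then conservativity gives $\mathcal{H}^i(\cN) = 0$. The main technical obstacle I anticipate is the verification of t-exactness of $f_+$ for the constructible t-structure; one must check compatibility of $f_+$ with the closed-immersion pullbacks $i^+$ appearing in the definition of ${}^c\bD^{\leq 0}$, which requires a base-change argument analogous to \cite[Lemma 1.3.2]{Abe18a}. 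Everything else is either a direct verification or a formal consequence of duality.
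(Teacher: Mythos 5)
Your proposal is correct and follows essentially the same route as the paper's proof: reduce to t-exactness of $f^+\cong f^!$ and $f_+\cong f_!$ for all three t-structures via the direct-summand observation ($\mathcal{M}$ a direct summand of $f^+f_+\mathcal{M}$, $\mathcal{N}$ a direct summand of $f_+f^+\mathcal{N}$), and handle the dual constructible t-structure by duality from the constructible one. The paper simply cites \cite[Propositions 1.3.2, 1.3.13]{AC18a} and \cite[Lemmas 1.3.4, 1.3.8]{Abe18a} for the t-exactness claims where you sketch the arguments (flatness/vanishing of the relative Spencer complex for the holonomic t-structure, preservation of dimension of support and proper/smooth base change for the constructible one), but the structure of the argument is the same, and your preliminary observation that $g$ proper and the square Cartesian force $f$ to be finite is a helpful explicit justification of a fact the paper uses implicitly.
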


\begin{proof}
Since $\mathcal{M}$ is a direct summand of $f^+f_+\mathcal{M}$, and $\mathcal{N}$ is a direct summand of $f_+f^+\mathcal{N}$, it suffices to prove that both $f^!\cong f^+$ and $f_!\cong f_+$ are t-exact for all three t-structures. For the holonomic t-structure, this follows from \cite[Proposition 1.3.2 and Proposition 1.3.13]{AC18a}. For the constructible t-structure, it follows from \cite[Lemmas 1.3.4 and 1.3.8]{Abe18a}, translated to apply to pairs. The case of the dual constructible t-structure then follows from the case of the constructible t-structure by duality.
\end{proof}

\subsection{Overholonomicity and dual constructibility}

We can now prove the required finiteness properties of $\mathrm{sp}_!$, by reducing to the fundamental results of Caro--Tsuzuki \cite{CT12}. Recall from Definitions \ref{defn: support cons} and \ref{defn: support Dmod} the notion of \emph{support} for constructible isocrystals and overholonomic $\mathscr{D}^\dagger$-modules. 

\begin{theorem} \label{theo: DCon} Let $\fr{P}$ be a realisable smooth formal scheme over $\mathcal{V}$, and $\mathscr{F}\in \mathrm{Isoc}_{\mathrm{cons}}(\mathfrak{P})$ a constructible isocrystal of Frobenius type. Then
\[ \mathrm{sp}_! \mathscr{F}\in \mathrm{DCon}(\mathfrak{P}) \subset {\bf D}^b(\mathscr{D}^\dagger_{\fr{P}\Q}) \]
is overholonomic and dual constructible. If $\mathscr{F}$ is supported on some locally closed subscheme $X\hookrightarrow P$, then so is $\mathrm{sp}_!\mathscr{F}$.
\end{theorem}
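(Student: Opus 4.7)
The proof plan is to reduce, in several stages, to the fundamental overholonomicity result of Caro--Tsuzuki \cite{CT12} applied after a suitable alteration, and then to transport the result back to $\fr{P}$ using the trace-compatibility statement in Theorem \ref{theo: tr conc fin et}. Since $\mathrm{DCon}(\fr{P})$ is the heart of a t-structure on ${\bf D}^b_{\mathrm{hol}}(\fr{P})$, and both overholonomicity and dual constructibility are stable under extensions and cones, it suffices to establish the theorem for a generating family. By the fully faithful embeddings $a_!\colon \mathrm{Isoc}(Z,\overline{Z})\hookrightarrow \mathrm{Isoc}_\mathrm{cons}(\fr{P})$ described at the end of \S\ref{sec: constructible}, every $\mathscr{F}\in \mathrm{Isoc}_\mathrm{cons}(\fr{P})$ is an iterated extension of objects of the form $a_!E$ for $a\colon Z\hookrightarrow P$ a locally closed immersion and $E\in \mathrm{Isoc}(Z,\overline{Z})$ of Frobenius type. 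So I would first reduce to the case $\mathscr{F}=i_{X!}E_\fr{P}$ with $E\in \mathrm{Isoc}(X,Y)$ of Frobenius type, where $(X,Y,\fr{P})$ is a frame enclosed in the fixed realisation of $\fr{P}$.

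Next I would apply de\thinspace Jong's alterations to the pair $(X,Y)$, together with the embedding of $\fr{P}$ into a smooth and proper formal scheme over $\cV$, to produce a diagram of frames
\[ \xymatrix{ X'\ar[r]\ar[d]^{f} & Y'\ar[r]\ar[d]^{g} & \fr{P}'\ar[d]^u \\ X\ar[r] & Y\ar[r] & \fr{P} } \]
in which $f$ is finite \'etale, $g$ is proper with $Y'$ smooth over $k$, and $u$ is smooth and projective. Passing to a Frobenius descent, we may assume $f^*E$ actually admits a Frobenius structure on the smooth compactification-like datum $(X',Y')$. The Caro--Tsuzuki theorem then says that $\mathrm{sp}_+(f^*E)\in {\bf D}^b_\mathrm{hol}(X',Y')$ is overholonomic, hence so is the associated object $\mathrm{sp}_{\fr{P}'!} i_{X'!}(f^*E)_{\fr{P}'}\in {\bf D}^b(\sD^\dagger_{\fr{P}'\Q})$ (up to shift, matching the conventions of this paper). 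Using de Jong alterations together with Zariski localisation and the stability statements of Lemma \ref{lemma: hol fin et}, I would also arrange that the diagram above satisfies the four conditions of Setup \ref{setup: conc tr} (working locally on $X$ to trivialise the conormal sheaf, and replacing $f$ by its Galois closure so that further descent through a finite \'etale quotient remains available).

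The decisive step is then Theorem \ref{theo: tr conc fin et}: it provides an isomorphism
\[ u_+ \mathrm{sp}_{\fr{P}'!} i_{X'!}(f^*E)_{\fr{P}'} \isomto \mathrm{sp}_{\fr{P}!} i_{X!}(f_*f^*E)_{\fr{P}} \]
in ${\bf D}^b(\sD^\dagger_{\fr{P}\Q})$. Since $u$ is smooth and projective, $u_+$ preserves overholonomicity (cf.\ \cite[Th\'eor\`eme 3.9]{Car09b}), so the left-hand side is overholonomic. Hence so is $\mathrm{sp}_{\fr{P}!} i_{X!}(f_*f^*E)_\fr{P}$; since $E$ is a direct summand of $f_*f^*E$ (via the adjunction unit, which is split because $f$ is finite \'etale, after passing to the Galois closure and taking invariants), we deduce that $\mathrm{sp}_{\fr{P}!}i_{X!}E_\fr{P}$ is an overholonomic direct summand, establishing the overholonomicity claim. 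For the support statement, note that $i_{X!}E_\fr{P}$ restricts to zero away from $\tube{X}_\fr{P}$; by construction $\mathrm{sp}_{\fr{P}!}$ is compatible with restriction to open tubes, so $\mathrm{sp}_{\fr{P}!}i_{X!}E_\fr{P}$ vanishes after restriction to $\mathfrak{P}\setminus \overline{X}$, and the usual localisation triangle then identifies it with an object supported on $X$.

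Finally, to see that $\mathrm{sp}_!\mathscr{F}$ lies in the heart of the dual constructible t-structure, I would argue by Noetherian induction on the support $X$. Working on a dense open $U\subset X_\mathrm{smooth}$ over which $E_\fr{P}$ is a locally free overconvergent isocrystal of some rank, one can compute $\mathrm{sp}_! i_{U!}E_\fr{P}$ explicitly using the resolution of \S\ref{sec: calc Rsp} together with \cite[Th\'eor\`eme 2.3.1]{Car09a}, and check by duality with the constructible t-structure (using Lemma \ref{lemma: hol fin et} and the reduction above) that the result is dual constructible. An excision triangle along $U\hookrightarrow X \hookleftarrow X\setminus U$, combined with the induction hypothesis for the closed complement, propagates dual constructibility to $\mathrm{sp}_!i_{X!}E_\fr{P}$, and then to general $\mathscr{F}$ by the d\'evissage in the first paragraph. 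The main obstacle is the third step: verifying that the alteration can be arranged to satisfy all four hypotheses of Setup \ref{setup: conc tr} simultaneously, and in particular the local-on-$X$ triviality of the conormal bundle, which forces an extra Zariski-local argument and a gluing step to conclude on all of $\fr{P}$.
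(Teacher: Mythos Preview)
Your overall architecture---reduce to an overconvergent isocrystal supported on a locally closed subscheme, alter so that the compactification becomes smooth, apply Caro--Tsuzuki upstairs, and push down via Theorem~\ref{theo: tr conc fin et}---matches the paper's Steps~4 and~5. But there is a genuine gap in your treatment of the step where you invoke Caro--Tsuzuki. You write that ``the Caro--Tsuzuki theorem then says that $\mathrm{sp}_+(f^*E)\in {\bf D}^b_\mathrm{hol}(X',Y')$ is overholonomic, hence so is the associated object $\mathrm{sp}_{\fr{P}'!} i_{X'!}(f^*E)_{\fr{P}'}$.'' This ``hence'' is precisely what remains to be proved. Caro--Tsuzuki is a statement about Caro's functor $\mathrm{sp}_+$, which is a different construction from the $\mathrm{sp}_!$ of this paper; the identification between them (up to shift and duality) is Theorem~\ref{theo: sp_+ comp}, which is proved \emph{after} Theorem~\ref{theo: DCon} and uses it. The only situation in which one can connect $\mathrm{sp}_!$ directly with Caro--Tsuzuki is when $Y=P$, because in that case $\mathrm{sp}_!i_!\mathscr{F}=\mathrm{sp}_*\mathscr{F}[d_\fr{P}]$ is literally the ordinary specialisation pushforward, and this is the object whose overholonomicity \cite{CT12} establishes.

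After the alteration you have a frame $(X',Y',\fr{P}')$ with $Y'$ smooth, but there is no reason for $Y'$ to equal $P'$, so Caro--Tsuzuki does not apply directly. The paper bridges this with three preliminary steps you have omitted: Step~1 is the base case $Y=P$; Step~2 treats $\fr{P}=\widehat{\A}^d_\fr{Y}$ with $Y$ embedded as the zero section, via a Koszul-type exact sequence
\[ 0\to i_!\mathscr{F}\to \mathscr{G}\to \bigoplus_i j^\dagger_{D(x_i)}\mathscr{G}\to\cdots\to j^\dagger_{D(x_1)\cap\cdots\cap D(x_d)}\mathscr{G}\to 0 \]
whose later terms fall under Step~1; Step~3 reduces arbitrary smooth $Y$ to Step~2 by choosing finite \'etale maps $\fr{P}\to\widehat{\A}^d_\mathcal{V}$ and $\fr{Y}\to\widehat{\A}^e_\mathcal{V}$ and applying Proposition~\ref{prop: comm finite etale} and Theorem~\ref{theo: tr conc fin et}. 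This chain of reductions is the substantive content of the proof and is also where dual constructibility is established concretely (via the Koszul complex in Step~2 and \cite[Proposition~4.3.1]{Car09a} in Step~1), rather than by a separate Noetherian induction as you propose.
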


We begin with the following straightforward observation. 

\begin{lemma} \label{lemma: triangle sp} Let $0\rightarrow \mathscr{F}\rightarrow \mathscr{G}\rightarrow \mathscr{H} \rightarrow 0$ be an exact sequence of constructible isocrystals on $\fr{P}$. Then the exact triangle
\[ \mathbf{R}\mathrm{sp}_*\mathscr{F}\rightarrow \mathbf{R}\mathrm{sp}_*\mathscr{G}\rightarrow \mathbf{R}\mathrm{sp}_*\mathscr{H}\overset{+1}{\rightarrow} \]
in ${\bf D}^b(\mathscr{D}_{\fr{P}\Q})$
lifts canonically to an exact triangle
\[ \mathrm{sp}_!\mathscr{F}\rightarrow \mathrm{sp}_!\mathscr{G}\rightarrow \mathrm{sp}_!\mathscr{H}\overset{+1}{\rightarrow}\]
in ${\bf D}^b(\mathscr{D}^\dagger_{\fr{P}\Q})$.
\end{lemma}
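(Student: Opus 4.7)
The plan is to lift the given short exact sequence $0\to \mathscr{F}\to \mathscr{G}\to \mathscr{H}\to 0$ to a short exact sequence of honest complexes of $\mathscr{D}^\dagger_{\fr{P}\Q}$-modules, rather than trying to construct the lift directly in the derived category. Since the category of good stratifications of $\fr{P}$ is filtered, and any two good stratifications admit a common refinement, we may choose a single finite good stratification $\{P_\alpha\}_{\alpha\in A}$ for $\fr{P}$ which is good with respect to all three constructible isocrystals $\mathscr{F},\mathscr{G},\mathscr{H}$ simultaneously.

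The first key point is exactness. The functors $j_X^\dagger = j_*j_X^{-1}$ (for a strongly affine open $X\hookrightarrow Y$) and $\overline{i}_*$ (for a closed immersion) are exact, and the Roos-type construction $\mathcal{R}_{\overline{i},\infty}$ is built from (direct limits of) countable products of restrictions and pushforwards along closed tubes, each of which preserves short exact sequences of constructible isocrystals (using flatness of such objects, Lemma \ref{lemma: flat}, to ensure there are no $\mathrm{Tor}$ obstructions). Consequently, the composite functor
\[ \mathscr{F}\mapsto \mathrm{Tot}_{\mathrm{sd}(A)}\mathcal{R}_{\overline{P}_{\bullet},\infty}j^\dagger_{P_\bullet}\mathscr{F} \]
is exact on constructible isocrystals admitting $\{P_\alpha\}$ as a good stratification, and produces a short exact sequence of complexes of sheaves on $\fr{P}_K$. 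The second point is that the pre-$\mathscr{D}^\dagger$-module structures constructed in Theorem \ref{theo: pre Ddag over} and transported by Propositions \ref{prop: Ddag jdag} and \ref{prop: Ddag Roos} are functorial: given a morphism $\alpha\colon \mathscr{F}\to\mathscr{G}$ of coherent modules with overconvergent connection on $\tube{X}_\fr{P}$, one may choose a cofinal subset of $\mathcal{S}$ on which both $\mathscr{F}$ and $\mathscr{G}$ extend compatibly and $\alpha$ extends as well, and apply $j_{\underline{\lambda}*}$ termwise; iterating this for $j^\dagger$ and for Roos complexes gives the desired functoriality at all stages.

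Applying $\mathrm{sp}_*$ to the resulting short exact sequence of complexes of pre-$\mathscr{D}^\dagger$-modules then yields a short exact sequence
\[ 0 \to \mathrm{sp}_*\mathrm{Tot}_{\mathrm{sd}(A)}\mathcal{R}_{\overline{P}_\bullet,\infty}j^\dagger_{P_\bullet}\mathscr{F} \to \mathrm{sp}_*\mathrm{Tot}_{\mathrm{sd}(A)}\mathcal{R}_{\overline{P}_\bullet,\infty}j^\dagger_{P_\bullet}\mathscr{G} \to \mathrm{sp}_*\mathrm{Tot}_{\mathrm{sd}(A)}\mathcal{R}_{\overline{P}_\bullet,\infty}j^\dagger_{P_\bullet}\mathscr{H} \to 0 \]
of complexes, since the terms are $\mathrm{sp}_*$-acyclic by construction. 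By Lemma \ref{lemma: pre Ddag sp} (and the functoriality of the pre-$\mathscr{D}^\dagger$-module structures), the maps in this sequence are automatically $\mathscr{D}^\dagger_{\fr{P}\Q}$-linear, since the $\mathscr{D}^\dagger_{\fr{P}\Q}$-action extending the $\mathscr{D}_{\fr{P}\Q}$-action is unique (by density and separatedness considerations as in Lemma \ref{lemma: ff top fre}). Shifting by $\dim \fr{P}$, we obtain a short exact sequence of complexes of $\mathscr{D}^\dagger_{\fr{P}\Q}$-modules which realises the triangle
\[ \mathrm{sp}_!\mathscr{F}\to \mathrm{sp}_!\mathscr{G}\to \mathrm{sp}_!\mathscr{H}\overset{+1}{\to} \]
in ${\bf D}^b(\mathscr{D}^\dagger_{\fr{P}\Q})$, and whose image under restriction of scalars to $\mathscr{D}_{\fr{P}\Q}$ is the given triangle. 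Canonicity follows from the fact that the construction is compatible with refinement of the chosen good stratification and independent of this choice up to canonical isomorphism. The main potential obstacle is the compatibility of pre-$\mathscr{D}^\dagger$-module structures with short exact sequences (the choice of a common indexing cofinal system $\underline{\lambda}\in\mathcal{S}$), but this reduces to elementary coherence arguments using Proposition \ref{prop: coherent sheaves colimit neighbourhoods}.
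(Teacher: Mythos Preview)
Your proof is correct and follows precisely the paper's approach: choose a simultaneous good stratification for all three objects and apply the explicit construction of $\mathrm{sp}_!$ from \S\ref{sec: pre Dd}. The paper's proof is a one-line remark to this effect, and your proposal is simply a careful unpacking of what that sentence entails.
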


\begin{proof}
This simply follows by taking a simultaneous good stratification for  $\mathscr{F},\mathscr{G},\mathscr{H}$ and applying the explicit construction of $\mathrm{sp}_!$ given in \S\ref{sec: pre Dd}.
\end{proof}

Hence by, \cite[Proposition 3.3]{Car09b}, it suffices to prove the following special case of Theorem \ref{theo: DCon}.

\begin{theorem}  \label{theo: DCon2} Let $\fr{P}$ be a realisable smooth formal scheme over $\mathcal{V}$, $i:Y\hookrightarrow P$ a closed immersion, and $j:X\hookrightarrow Y$ a strongly affine open immersion with $X$ smooth. Let $E\in \mathrm{Isoc}(X,Y)$ with realisation $E_\mathfrak{P}$ on $\tube{X}_\mathfrak{P}$, and set $\mathscr{F}=j_*E_\fr{P}$. Then $\mathrm{sp}_!i_{!}\mathscr{F}$ is overholonomic, dual constructible, and supported on $X$.
\end{theorem}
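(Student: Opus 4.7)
The strategy is to reduce to the fundamental overholonomicity result of Caro--Tsuzuki via a finite \'etale cover obtained from de~Jong's alterations, and to transfer the finiteness conclusions from the cover back to $\fr{P}$ using the trace map compatibility established in Theorem~\ref{theo: tr conc fin et}. First, the claim that $\mathrm{sp}_!i_!\mathscr{F}$ is supported on $X$ is essentially formal: the complex $\mathcal{R}_{\overline{P}_\bullet,\infty}j^\dagger_{P_\bullet}(i_!\mathscr{F})$ computing $\mathbf{R}\mathrm{sp}_*i_!\mathscr{F}$ vanishes on the tube of $P\setminus X$, so after applying the functor $\mathbf{R}\underline{\Gamma}^\dagger_X$ of sections with support on $X$ we obtain the same complex, whence $\mathrm{sp}_!i_!\mathscr{F}\isomto \mathbf{R}\underline{\Gamma}_X^\dagger\mathrm{sp}_!i_!\mathscr{F}$. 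The substantive content is overholonomicity and dual constructibility.

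For overholonomicity, after replacing $\fr{P}$ by a suitable open affine and applying de~Jong's theorem I would produce a diagram of frames fitting into Setup~\ref{setup: conc tr}, that is, a morphism $(X',Y',\fr{P}')\to(X,Y,\fr{P})$ with $f\colon X'\to X$ finite \'etale (hence $X'$ is smooth over $k$), $g\colon Y'\to Y$ proper, $u\colon\fr{P}'=\fr{P}\times_\cV\fr{Q}\to\fr{P}$ the projection with $\fr{Q}$ smooth and projective over $\cV$, and with $\cC_{X'/u^{-1}(X)}$ free (which can be arranged generically on $X$, and then globally by Noetherian induction and the d\'evissage triangle of Lemma~\ref{lemma: triangle sp}). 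On $\fr{P}'$ I would apply Caro's $\mathrm{sp}_+$ construction to $f^*E\in\mathrm{Isoc}(X',Y')$; since $X'$ is smooth and the whole setup is realisable, the result of Caro--Tsuzuki \cite{CT12} gives that $\mathrm{sp}_+f^*E$ is overholonomic on $\fr{P}'$. Using Verdier duality on $\fr{P}'$, I then obtain that $\mathrm{sp}_{\fr{P}'!}i_{X'!}(f^*E)_{\fr{P}'}$ is overholonomic, because the local description of $\mathrm{sp}_!$ outlined at the end of Section~\ref{sec: pre Dd} identifies it, up to the appropriate shift and dualisation of $f^*E$, with Caro's construction.

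Once overholonomicity is known on $\fr{P}'$, Theorem~\ref{theo: tr conc fin et} provides an isomorphism
\[
u_+\mathrm{sp}_{\fr{P}'!}i_{X'!}(f^*E)_{\fr{P}'}\isomto \mathrm{sp}_{\fr{P}!}i_{X!}(f_*f^*E)_{\fr{P}}
\]
in $\bD^b(\sD^\dagger_{\fr{P}\Q})$. Since $u$ is smooth and projective, $u_+$ preserves overholonomicity, so the left-hand side is overholonomic. Finally, because $f$ is finite \'etale, $E$ is a direct summand of $f_*f^*E$ in $\mathrm{Isoc}(X,Y)$, and so $\mathrm{sp}_{\fr{P}!}i_{X!}\mathscr{F}$ is a direct summand of $\mathrm{sp}_{\fr{P}!}i_{X!}(f_*f^*E)_{\fr{P}}$; overholonomicity being closed under direct summands, we conclude. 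The main technical obstacle in this part is verifying that the local Caro-style construction of $\mathrm{sp}_+$ really does identify (up to duality and shift) with our global $\mathrm{sp}_!$ on the cover---this is where the compatibility work of Sections~\ref{sec: trace} and \ref{sec: trace Ddagger} is indispensable.

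For dual constructibility, I would verify the support/dimension conditions defining the heart ${}^{\mathrm{dc}}{\bf D}^{=0}$, again by d\'evissage and pullback to the cover. Since the dual constructible t-structure is defined by dualising the constructible one, and since $\mathbf{D}_X$ intertwines $\mathrm{sp}_!$ (up to the shift by $\dim\fr{P}$ built into its definition) with Caro's $\mathrm{sp}_+$ on the overconvergent summand, it suffices to check that $\mathrm{sp}_+f^*E$ lies in the heart of the constructible t-structure on $(X',Y')$---which is a standard consequence of the fact that $\mathrm{sp}_+$ applied to a locally free overconvergent isocrystal on a smooth variety of dimension $n$ produces a holonomic module concentrated in the single degree $-n$, with support equal to the closure of $X'$. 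Transferring this dimension information along $f_+=f_!$ and along the closed immersion $a_+$ (which are both t-exact for the dual constructible t-structure, by Lemma~\ref{lemma: hol fin et} and the preceding discussion), the claim follows for $\mathrm{sp}_{\fr{P}!}i_{X!}\mathscr{F}$.
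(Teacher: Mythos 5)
Your outline of the outer reduction---de~Jong alterations down to Setup~\ref{setup: conc tr}, Theorem~\ref{theo: tr conc fin et} to transfer along $u_+$, the direct summand argument since $E \hookrightarrow f_*f^*E$, and Noetherian induction---matches Steps 4 and 5 of the paper's proof. However, there are two genuine gaps in the middle.

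The central problem is your identification of $\mathrm{sp}_{\fr{P}'!}i_{X'!}(f^*E)_{\fr{P}'}$ with Caro's $\mathrm{sp}_+f^*E$ up to shift and duality, which you attribute to ``the local description of $\mathrm{sp}_!$ outlined at the end of Section~\ref{sec: pre Dd}.'' That subsection gives an alternative presentation of the Roos-complex construction of $\mathrm{sp}_!$, but it does \emph{not} establish any comparison with Caro's $\mathrm{sp}_+$ (and the paper explicitly states that alternative is not used anywhere). More seriously, the frame $(X',Y',\fr{P}')$ produced by Setup~\ref{setup: conc tr} has $Y'\neq P'$ in general, so Caro--Tsuzuki applied to $f^*E$ does not directly tell you anything about the complex built from Roos resolutions and $j^\dagger$'s along a stratification of $P'$. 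Bridging this is exactly what Steps 1--3 of the paper's proof do: Step 1 handles the case $Y=P$ (where $\mathrm{sp}_!i_!\mathscr{F}$ literally equals $\mathrm{sp}_*\mathscr{F}[d_{\fr{P}}]$ and CT12 applies directly), Step 2 handles $\fr{P}=\widehat{\A}^d_{\fr{Y}}$ via a Koszul-type exact sequence $0\to i_!\mathscr{F}\to\mathscr{G}\to\bigoplus j^\dagger_{D(x_i)}\mathscr{G}\to\cdots$ (whose higher exactness for the dual constructibility claim needs its own argument), and Step 3 uses Proposition~\ref{prop: comm finite etale} to pass to general $\fr{P}$ via a finite \'etale map to $\widehat{\A}^d_{\cV}$ before invoking Theorem~\ref{theo: tr conc fin et}. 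Without these reductions there is simply no way to say that $\mathrm{sp}_!$ on $(X',Y',\fr{P}')$ produces something overholonomic; nor can ``Verdier duality on $\fr{P}'$'' be invoked, since $\mathbf{D}_{\fr{P}'}$ is only defined (and only preserves) complexes already known to be overholonomic.

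A secondary issue: you describe the support statement as ``essentially formal'' because the Roos complex vanishes over $\tube{P\setminus X}_{\fr{P}}$. But vanishing of sections over an open tube does not by itself imply the $\mathscr{D}^\dagger$-module condition $\mathcal{M}\isomto\mathbf{R}\underline\Gamma^\dagger_X\mathcal{M}$; one needs $\mathcal{M}$ to be coherent (hence overholonomic) first, since only then is na\"ive sheaf-theoretic support on a closed subset equivalent to $\mathscr{D}$-module-theoretic support. In the paper's proof the support claim is deduced \emph{after} overholonomicity, not independently of it (see the explicit remark in Step 2). So the logical order in your proposal needs to be reversed, and the support assertion is not a throwaway observation.
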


The proof of Theorem \ref{theo: DCon2} will proceed in several stages.

\medskip
\noindent \it{Step 1:} \rm{Theorem \ref{theo: DCon2} holds if $Y \isomto P$.}

\begin{proof}
In this case, we take a good stratification to be $(Y\setminus X,X)$. We therefore have $\mathrm{sp}_!i_{!}\mathscr{F}= \mathrm{sp}_*\mathscr{F}[d_\fr{P}]$, 
where $d_\fr{P}$ is the dimension of $\fr{P}$. That this is overholonomic now follows from \cite[Theorem 2.3.15]{CT12}, and that it is supported on $X$ from the fact that $\mathrm{sp}_*\mathscr{F}$ is a coherent $\mathcal{O}_{\fr{P}\Q}(^\dagger Y\setminus X)$-module. To see that it is dual constructible it suffices by \cite[Proposition 4.3.1]{Car09a} to show that $\mathrm{sp}_*\mathscr{F}^\vee[-d_\fr{P}]$ is constructible, which follows from a direct calculation using \cite[Proposition 4.1.8]{Car09a}.
\end{proof}

\noindent \it{Step 2:} \rm{In the situation of Theorem \ref{theo: DCon2}, assume $Y$ is smooth,  admits a smooth lifting $\fr{Y}$, and $\fr{P} = \widehat{\A}^d_\fr{Y}$, with $i:Y \hookrightarrow \fr{P}$ embedded via the zero section. Then Theorem \ref{theo: DCon2} holds.}

\begin{proof}
Let $\pi:\fr{P}=\widehat{\A}^d_\fr{Y}\rightarrow \fr{Y}$ the projection, and let $x_1,\ldots,x_d$ be co-ordinates on $\widehat{\A}^d_\fr{Y}$. We may consider $\pi^*E\in \mathrm{Isoc}(\A^d_X,\A^d_Y)$, with realisation $(\pi^*E)_\fr{P}$ on the tube of $j:\A^d_{X}\hookrightarrow P=\A^d_Y$. Write $\mathscr{G}=j_*(\pi^*E)_\fr{P}$. Thus
\[ 0\rightarrow i_{!}\mathscr{F} \rightarrow \mathscr{G} \rightarrow \bigoplus_i j_{D(x_i)}^\dagger \mathscr{G} \rightarrow \ldots \rightarrow j_{D(x_1)\cap\ldots \cap D(x_d)}^\dagger \mathscr{G} \rightarrow 0  \]
is an exact sequence of constructible isocrystals on $\mathfrak{P}$. By Step 1 above if we apply $\mathrm{sp}_!$ to any of the terms except the first, we obtain an overholonomic and dual constructible complex of $\mathscr{D}^\dagger_{\mathfrak{P}\Q}$-modules. Hence by Lemma \ref{lemma: triangle sp} and \cite[Proposition 3.3]{Car09b} we deduce that $\mathrm{sp}_!i_{!}\mathscr{F}$ is overholonomic. 

To see that $\mathrm{sp}_!i_{!}\mathscr{F}$ has the right support, we first note that its restriction to $\fr{P}\setminus Y$ vanishes. Since it is overholonomic, and thus in particular coherent, it follows that it is supported on $Y$ in the sense of arithmetic $\mathscr{D}$-modules. Finally, if we apply $\mathrm{sp}_!$ to any of the other terms in the above exact sequence, we get something supported on $\A^d_X$. Hence $\mathrm{sp}_!i_!\mathscr{F}$ is supported on $\A^d_X\cap Y=X$.

Finally, to see that $\mathrm{sp}_!i_{!}\mathscr{F}$ is dual constructible, we consider the complex
\[  0\rightarrow \mathrm{sp}_!\mathscr{G} \rightarrow  \bigoplus_i \mathrm{sp}_!j_{D(x_i)}^\dagger \mathscr{G} \rightarrow \ldots \rightarrow  \mathrm{sp}_!j_{D(x_1)\cap\ldots \cap D(x_d)}^\dagger \mathscr{G} \rightarrow 0 \]
of dual constructible modules on $\fr{P}$, starting in degree $0$. If we can show that this only has cohomology in degree $0$ (as a complex in the abelian category $\mathrm{DCon}(\fr{P})$), then this degree zero cohomology group will be precisely $\mathrm{sp}_!i_!\mathscr{F}$, which will therefore be dual constructible.

We let $\mathcal{M}=\mathrm{sp}_!\mathscr{G}\in \mathrm{DCon}(\A^d_X,\A^d_Y)$. Then by repeatedly combining the support claim from Step 1 with Lemma \ref{lemma: triangle sp}, we can identify our complex with the complex
\[ 0 \rightarrow \mathcal{M} \rightarrow \bigoplus_i j_{i+}j_{i}^+\mathcal{M} \rightarrow \ldots \rightarrow j_{1\ldots n+}j_{1\ldots n}^+\mathcal{M}\rightarrow 0 \]
where $j_{i_1\ldots i_l}:D(x_{i_1})\cap \ldots \cap D(x_{i_l})\hookrightarrow \A^d_X$ is the natural open immersion (note that $j_{i_1\ldots i_l+}$ and $j_{i_1\ldots i_l}^+$ are exact for dual constructible modules). We want to show that this complex is exact in higher degrees, which can be checked after applying the (exact) functor $i^!_x$ for the inclusion $i_x$ of any closed point of $\A^d_X$. But now this just boils down to a straightforward combinatorial calculation.
\end{proof}

\medskip
\noindent \it{Step 3:} \rm{Theorem \ref{theo: DCon2} holds if $Y$ is smooth.}

\begin{proof}
The question being local on $\fr{P}$, we may assume that $\fr{P}$ is affine, and admits \'etale co-ordinates $x_1,\ldots,x_d$. Hence, by combining \cite[Proposition 5.2]{Ach17} with formal smoothness of $\fr{P}$, we may suppose that there exists a finite \'etale map $u:\fr{P}\rightarrow \widehat{\A}^d_\mathcal{V}$. Since $\fr{P}$ is affine, so is $Y$, and hence there exists a smooth formal scheme $\fr{Y}$ lifting $Y$. Again, by formal smoothness, we may lift the closed immersion $i:Y\hookrightarrow P$ to a closed immersion $i:\fr{Y}\hookrightarrow \fr{P}$ of smooth formal schemes over $\mathcal{V}$. We now look at the following diagram.
\[ \xymatrix{ & & & & \fr{Y}\times_\mathcal{V} \fr{P} \ar[d]^-{\mathrm{id}\times u} \ar[r]^-{p_2} & \fr{P} \\ X \ar[r]^-{j} & Y \ar[r] & \fr{Y} \ar[urr]^-{\mathrm{id}\times i} \ar[rr]_-{\mathrm{id} \times ui} & & \fr{Y}\times_{\mathcal{V}} \widehat{\A}^d_\mathcal{V}   }\]
By Step 2 we know that $\mathrm{sp}_{\fr{Y}\times \widehat{\A}^d_\mathcal{V}!}(\mathrm{id}\times ui)_{!}j_{*}E_{\fr{Y}\times \widehat{\A}^d_\mathcal{V}}$ is overholonomic, dual constructible, and supported on $X$. Since $\mathrm{id}\times u:\fr{Y}\times_\mathcal{V} \fr{P}\rightarrow \fr{Y}\times_\mathcal{V} \widehat{\A}^d_\mathcal{V}$ is finite \'etale, we see from Proposition \ref{prop: comm finite etale} that
\[ \mathrm{sp}_{\fr{Y}\times \widehat{\A}^d_\mathcal{V}!}(\mathrm{id}\times ui)_{!}j_{*}E_{\fr{Y}\times \widehat{\A}^d_\mathcal{V}} \cong (\mathrm{id}\times u)_+\mathrm{sp}_{\fr{Y}\times \fr{P}!}(\mathrm{id}\times i)_{!}j_{*}E_{\fr{Y}\times \fr{P}},\]
and hence by Lemma \ref{lemma: hol fin et}, together with the fact that finite \'etale maps reflect overholonomicity, we can see that $\mathrm{sp}_{\fr{Y}\times \fr{P}!}(\mathrm{id}\times i)_{!}j_{*}E_{\fr{Y}\times \fr{P}}$ is overholonomic, dual constructible, and supported on $X$.

We can now further localise on $\mathfrak{P}$ to assume that there exists a finite \'etale map $v:\fr{Y}\rightarrow \widehat{\A}^e_\mathcal{V}$, and we therefore deduce again that
\[ \mathrm{sp}_{\widehat{\A}^e_\mathcal{V}\times \fr{P}!}(v\times i)_{!}j_{*}E_{\widehat{\A}^e_\mathcal{V}\times \fr{P}} \cong (v\times\mathrm{id})_+\mathrm{sp}_{\mathfrak{Y} \times \fr{P}!}(\mathrm{id}\times i)_{!}j_{*}E_{\mathfrak{Y}\times \fr{P}}\]
is overholonomic, dual constructible, and supported on $X$. By embedding $\widehat{\A}^e_\mathcal{V}$ inside $\widehat{\P}^e_\mathcal{V}$, and using the fact that $\mathrm{sp}_{\widehat{\P}^e_\mathcal{V}\times \fr{P}!}(v\times i)_!j_*E_{\widehat{\P}^e_\mathcal{V}\times \fr{P}}$ is clearly supported on $Y$ (in the na\"ive sheaf-theoretic sense) we deduce that $\mathrm{sp}_{\widehat{\P}^e_\mathcal{V}\times \fr{P}!}(v\times i)_!j_*E_{\widehat{\P}^e_\mathcal{V}\times \fr{P}}$ is overholonomic, and supported on $X$ in the sense of arithmetic $\mathscr{D}$-modules. Finally applying Theorem \ref{theo: tr conc fin et} to the frame
\[ \xymatrix{ & & \widehat{\P}^e_\mathcal{V} \times_\mathcal{V} \fr{P} \ar[d] \\ X \ar[r] & Y\ar[r]\ar[ur] & \fr{P} } \]
we can conclude.
\end{proof}

\medskip
\noindent \it{Step 4:} \rm{Theorem \ref{theo: DCon2} holds if there exists a morphism of pairs
\[ \xymatrix{ X' \ar[r]^{j'}\ar[d]_f & Y' \ar[d]^g \\ X\ar[r]^{j} & Y } \]
with $f$ \'etale, $g$ projective, $Y'$ smooth, and the square Cartesian.}

\begin{proof}
Note that $f$ is finite \'etale. Since $g$ is projective, we may extend the above square to a morphism of frames
\[\xymatrix{  X'  \ar[r] \ar[d]^f & Y' \ar[r]^{i'} \ar[d]^g & \fr{P}' \ar[d]^u \\   X \ar[r] & Y \ar[r]^{i}   & \fr{P}, } 
\]
with $\mathfrak{P}'\cong  \fr{P}\times_\mathcal{V} \widehat{\P}^d_\mathcal{V}$ and $u$ the first projection. By Noetherian induction on $X$, we can pass to a dense open subset of $X$, and therefore we can assume that we are in the situation of Setup \ref{setup: conc tr}. We therefore know by Step 3 that $\mathrm{sp}_{\fr{P}'!}i'_!\tube{g}^*\!\!\mathscr{F}$ is overholonomic on $\fr{P}'$, dual constructible, and supported on $X$, and by Theorem \ref{theo: tr conc fin et} that
\[ u_+\mathrm{sp}_{\fr{P}'!}i'_!\tube{g}^*\!\!\mathscr{F} \isomto \mathrm{sp}_{\fr{P}!}i_{!}\tube{g}_*\!\tube{g}^*\!\!\mathscr{F}. \]
Hence $\mathrm{sp}_{\fr{P}!}i_!\tube{g}_*\!\tube{g}^*\!\!\mathscr{F}$ is overholonomic and supported on $X$, again by \cite[Th\'eor\`eme 2.10]{Car09b}; it is dual constructible by Lemma \ref{lemma: hol fin et}. Finally, since $E$ is a direct summand of $f_*f^*E$, we see that $\mathrm{sp}_{\fr{P}!}i_{!}\mathscr{F}$ is a direct summand of $\mathrm{sp}_{\fr{P}!}i_{!}\tube{g}_*\!\tube{g}^*\!\!\mathscr{F}$, and the claim follows.
\end{proof}

\medskip
\noindent \it{Step 5:} \rm{Theorem \ref{theo: DCon2} holds in general.}

\begin{proof} Via Noetherian induction on $X$, we may use Lemma \ref{lemma: triangle sp} and de Jong's theorem on alterations \cite[Theorem 4.1]{dJ96} to reduce to Step 4 above.
\end{proof}

\subsection{Functoriality}

We can also now show some important functoriality properties of $\mathrm{sp}_{!}$, starting with a simple consequence of the localisation exact triangle Lemma \ref{lemma: triangle sp}.  

\begin{lemma} \label{lemma: support} Let $\mathfrak{P}$ be a realisable smooth formal scheme over $\mathcal{V}$, $i_Z:Z\hookrightarrow P$ a locally closed immersion, and $\mathscr{F}\in \mathrm{Isoc}_{\mathrm{cons}}(\mathfrak{P})$. Then there is a canonical isomorphism
\[ \mathrm{sp}_{!}i_{Z!}i_Z^{-1} \mathscr{F} \isomto \mathbf{R}\underline{\Gamma}^\dagger_Z \mathrm{sp}_{!}\mathscr{F}  \]
in $\mathrm{DCon}(\mathfrak{P})$. 
\end{lemma}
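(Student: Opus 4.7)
The plan is to first establish the isomorphism when $Z$ is closed in $P$, and then to bootstrap to the general locally closed case using an auxiliary distinguished triangle. Factor $i_Z$ as $Z\overset{j}{\hookrightarrow}\overline{Z}\overset{\bar\imath}{\hookrightarrow} P$, where $\overline{Z}$ is the closure of $Z$ in $P$ and $D=\overline{Z}\setminus Z$; by the definition in \S\ref{sec: constructible}, $\mathbf{R}\underline{\Gamma}^\dagger_Z=\mathbf{R}\underline{\Gamma}^\dagger_{\overline{Z}}\circ({}^\dagger D)$. Throughout, $\mathrm{sp}_!i_{Z!}i_Z^{-1}\mathscr{F}$ lies in $\mathrm{DCon}(\fr{P})$ by Theorem \ref{theo: DCon}, so the isomorphism we construct places $\mathbf{R}\underline{\Gamma}^\dagger_Z\mathrm{sp}_!\mathscr{F}$ there too.

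For the closed case, the open immersion $\tube{Z}_\fr{P}\hookrightarrow\fr{P}_K$ with closed-germ complement $\tube{P\setminus Z}_\fr{P}$ gives the standard sheaf-theoretic short exact sequence, which (after refining a good stratification for $\mathscr{F}$ if necessary) is a sequence of constructible isocrystals
\[ 0\to i_{Z!}i_Z^{-1}\mathscr{F}\to \mathscr{F}\to j^\dagger_{P\setminus Z}\mathscr{F}\to 0.\]
Applying Lemma \ref{lemma: triangle sp} yields a distinguished triangle in $\mathbf{D}^b(\mathscr{D}^\dagger_{\fr{P}\Q})$ whose outer terms are supported on $Z$ and $P\setminus Z$ respectively by Theorem \ref{theo: DCon}. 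Applying $({}^\dagger Z)$ to this triangle kills the first term and leaves the third invariant, producing a canonical iso $({}^\dagger Z)\mathrm{sp}_!\mathscr{F}\cong \mathrm{sp}_!j^\dagger_{P\setminus Z}\mathscr{F}$. Combined with the identity on $\mathrm{sp}_!\mathscr{F}$, this extends by the triangulated $5$-lemma to an isomorphism of the above triangle with the canonical triangle $\mathbf{R}\underline{\Gamma}^\dagger_Z\mathrm{sp}_!\mathscr{F}\to \mathrm{sp}_!\mathscr{F}\to ({}^\dagger Z)\mathrm{sp}_!\mathscr{F}\overset{+1}{\to}$, where the connecting map $\mathrm{sp}_!i_{Z!}i_Z^{-1}\mathscr{F}\to \mathbf{R}\underline{\Gamma}^\dagger_Z\mathrm{sp}_!\mathscr{F}$ is the canonical factorisation through the support filtration.

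For the general case, the standard exact sequence on $\tube{\overline{Z}}_\fr{P}$ attached to the open part $\tube{D}_\fr{P}$ and its closed-germ complement $\tube{Z}_\fr{P}$, pushed forward via $\bar\imath_!$ to $\fr{P}_K$, produces the sequence
\[ 0\to i_{D!}i_D^{-1}\mathscr{F}\to \bar\imath_!\bar\imath^{-1}\mathscr{F}\to i_{Z!}i_Z^{-1}\mathscr{F}\to 0\]
in $\mathrm{Isoc}_{\mathrm{cons}}(\fr{P})$ (using that $\bar\imath_!j_{D!}=i_{D!}$ and $\bar\imath_!j_{Z!}=i_{Z!}$). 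Applying Lemma \ref{lemma: triangle sp} and the closed case to the first two terms (for $D\hookrightarrow P$ and $\overline{Z}\hookrightarrow P$ respectively) produces a distinguished triangle
\[ \mathbf{R}\underline{\Gamma}^\dagger_D\mathrm{sp}_!\mathscr{F}\to \mathbf{R}\underline{\Gamma}^\dagger_{\overline{Z}}\mathrm{sp}_!\mathscr{F}\to \mathrm{sp}_!i_{Z!}i_Z^{-1}\mathscr{F}\overset{+1}{\to}.\]
This matches (on the first two terms, with the natural inclusion as the connecting map) the triangle obtained by applying $\mathbf{R}\underline{\Gamma}^\dagger_{\overline{Z}}$ to the canonical localisation triangle for $D\hookrightarrow \overline{Z}$, whose third term is $\mathbf{R}\underline{\Gamma}^\dagger_{\overline{Z}}({}^\dagger D)\mathrm{sp}_!\mathscr{F}=\mathbf{R}\underline{\Gamma}^\dagger_Z\mathrm{sp}_!\mathscr{F}$. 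A second application of the $5$-lemma then yields the desired canonical isomorphism.

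The main technical obstacle is verifying that the sheaf-level short exact sequences are genuinely sequences of constructible isocrystals (requiring one to refine the implicit stratification of $\mathscr{F}$ to one along which the $j^\dagger$ and $\underline{\Gamma}^\dagger$ operations preserve constructibility) and to check that the two connecting maps on the first two terms agree, so that the $5$-lemma actually applies. Once these compatibilities are in place, the argument is pure triangle-chasing driven by the support statement in Theorem \ref{theo: DCon}.
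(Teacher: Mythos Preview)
Your proof is correct and follows essentially the same approach as the paper: both use the localisation exact sequence together with Lemma~\ref{lemma: triangle sp} and the support claim in Theorem~\ref{theo: DCon} to identify the two triangles. The only cosmetic difference is that the paper factors $i_Z$ as an open immersion followed by a closed one and treats each case separately (the open case being ``entirely similar'' to the closed one), whereas you handle the general locally closed case by reducing to two instances of the closed case via the triangle for $D\hookrightarrow\overline{Z}\hookrightarrow P$; both routes amount to the same triangle-chasing.
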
 

\begin{proof}
We may assume that $i_Z:Z\hookrightarrow X$ is either a closed immersion or an open immersion. If it is a closed immersion, then this follows from the exact sequence
\[ 0\rightarrow i_{Z!}i_Z^{-1}\mathscr{F} \rightarrow \mathscr{F}\rightarrow j^\dagger_{P\setminus Z}\mathscr{F}\rightarrow0 \]
combined with Lemma \ref{lemma: triangle sp} and the part of Theorem \ref{theo: DCon} concerning the support of $ \mathrm{sp}_{!}\mathscr{F}$. The case of an open immersion is entirely similar.
\end{proof}

\begin{corollary} \label{cor: funct RSp} Let $u:\mathfrak{P}'\rightarrow \mathfrak{P}$ be a morphism of realisable smooth formal $\mathcal{V}$-schemes. Then there is a canonical isomorphism
\[ u^!\mathrm{sp}_{\mathfrak{P}!}\mathscr{F} \isomto \mathrm{sp}_{\mathfrak{P}'!}u^*\mathscr{F}\]
in $\mathrm{DCon}(\mathfrak{P}')$.
\end{corollary}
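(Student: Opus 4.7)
The plan is to reduce the statement via d\'evissage to a base case where $\mathscr{F}$ has coherent realisation, and then treat this base case by factoring $u$ through its graph. By the structural result recalled at the end of \S\ref{sec: constructible}, every object of $\mathrm{Isoc}_\mathrm{cons}(\mathfrak{P})$ is an iterated extension of objects of the form $i_{Z!}E_\mathfrak{P}$ for a locally closed immersion $i_Z\colon Z\hookrightarrow P$ and an overconvergent isocrystal $E\in\mathrm{Isoc}(Z,\overline{Z})$. Lemma \ref{lemma: triangle sp} converts such extensions into exact triangles compatible with $\mathrm{sp}_!$, and since $u^!$ preserves exact triangles, it suffices to establish the isomorphism when $\mathscr{F}=i_{Z!}E_\mathfrak{P}$. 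Combining Lemma \ref{lemma: support} with the compatibility $u^*\circ i_{Z!}\cong i_{u^{-1}(Z)!}\circ u^*$ on constructible isocrystals and the standard base-change isomorphism $u^!\circ \mathbf{R}\underline{\Gamma}^\dagger_Z\cong \mathbf{R}\underline{\Gamma}^\dagger_{u^{-1}(Z)}\circ u^!$ then further reduces the problem to producing a natural isomorphism $u^!\mathrm{sp}_{\mathfrak{P}!}\mathscr{G}\cong \mathrm{sp}_{\mathfrak{P}'!}u^*\mathscr{G}$ for a constructible isocrystal $\mathscr{G}$ on $\mathfrak{P}$ whose underlying $\mathcal{O}$-module is a coherent $j^\dagger_Z\mathcal{O}_{\tube{\overline{Z}}_\mathfrak{P}}$-module with overconvergent connection, extended by zero beyond $\tube{\overline{Z}}_\mathfrak{P}$.

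To treat this base case, I would factor $u$ as $\mathfrak{P}'\xrightarrow{\Gamma_u}\mathfrak{P}'\times_\mathcal{V}\mathfrak{P}\xrightarrow{p}\mathfrak{P}$, where $\Gamma_u$ is the graph (a closed immersion between realisable smooth formal schemes) and $p$ is the second projection (smooth of relative dimension $\dim\mathfrak{P}'$), and handle the two cases separately. For the smooth projection $p$, compatibility is essentially built into the construction of \S\ref{sec: calc Rsp} and \S\ref{sec: pre Dd}: a good stratification $\{P_\alpha\}$ for $\mathscr{G}$ on $P$ pulls back to a good stratification $\{p^{-1}(P_\alpha)\}$ for $p^*\mathscr{G}$ on $P'\times P$, the Roos complexes and $j^\dagger$-sheaves appearing in the $\mathrm{sp}_*$-acyclic resolution are compatible with pullback along the smooth morphism $p_K$, and their pre-$\mathscr{D}^\dagger$-module structures pull back via the functorial construction of \S\ref{sec: Ddagger stab}. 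Accounting for the shift by which $p^!$ exceeds $p^+$, which matches the difference between the $[\dim]$-shifts defining $\mathrm{sp}_!$ on $\mathfrak{P}'\times_\mathcal{V}\mathfrak{P}$ and on $\mathfrak{P}$, yields the required $\mathscr{D}^\dagger$-linear isomorphism.

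The remaining closed-immersion case $u=\Gamma_u$ is where I expect the main obstacle to lie. The plan here is to exploit Kashiwara's equivalence in the arithmetic setting together with the support statement of Theorem \ref{theo: DCon}: both $\Gamma_{u+}\Gamma_u^!\mathrm{sp}_{\mathfrak{P}'\times_\mathcal{V}\mathfrak{P}!}\mathscr{H}$ and $\Gamma_{u+}\mathrm{sp}_{\mathfrak{P}'!}\Gamma_u^*\mathscr{H}$ are dual constructible complexes on $\mathfrak{P}'\times_\mathcal{V}\mathfrak{P}$ supported on the graph $\mathfrak{P}'$ (where $\mathscr{H}=p^*\mathscr{G}$), and so may be compared via the equivalence between dual constructible modules on $\mathfrak{P}'$ and those on $\mathfrak{P}'\times_\mathcal{V}\mathfrak{P}$ supported along $\Gamma_u$. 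Extracting the desired isomorphism in this way from the smooth projection case already treated requires a careful tracing through of the pre-$\mathscr{D}^\dagger$-module resolutions of \S\ref{sec: pre Dd} along the closed embedding $\Gamma_u$, and is the most technical step of the argument.
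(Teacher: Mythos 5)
Your overall strategy --- factoring $u$ through a graph and treating the resulting immersion and smooth morphism separately --- is in the right spirit, but the way you have set it up diverges from the paper's proof in ways that create genuine obstacles.

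The paper factors $u$ as $\fr{P}'\hookrightarrow\fr{P}\times_\cV\fr{Q}\rightarrow\fr{P}$ where $\fr{Q}$ is a smooth \emph{proper} formal scheme containing $\fr{P}'$ as a locally closed subscheme, not as $\fr{P}'\rightarrow\fr{P}'\times_\cV\fr{P}\rightarrow\fr{P}$. This is not cosmetic: properness of $\fr{Q}$ makes the projection $p\colon\fr{P}\times_\cV\fr{Q}\rightarrow\fr{P}$ proper, which is exactly the hypothesis of Theorem \ref{theo: trace Ddag}. The paper then uses Theorem \ref{theo: trace Ddag} together with the $(p_+,p^!)$ adjunction to \emph{construct} the comparison map in the projection case, and checks it is an isomorphism by applying \cite[Lemma 1.3.11]{AC18a} to reduce to extraordinary pullback along a point $\spf{\cV'}\hookrightarrow\fr{P}'$, which by transitivity reduces to the closed immersion case already covered by Lemma \ref{lemma: support}. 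In your version $p\colon\fr{P}'\times_\cV\fr{P}\rightarrow\fr{P}$ has no reason to be proper, so Theorem \ref{theo: trace Ddag} is unavailable and you are forced to attempt a direct comparison of the two sides.

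That direct comparison is where the main gap lies. Your assertion that the smooth projection compatibility is ``essentially built into'' the constructions of \S\ref{sec: calc Rsp}--\ref{sec: pre Dd} is not justified by anything in those sections: \S\ref{sec: Ddagger stab} shows how pre-$\mathscr{D}^\dagger$-module structures transport along $j^\dagger_{-}$ and $\mathcal{R}_{-,\infty}$, but says nothing about pullback of such structures along a smooth morphism of formal schemes. In particular it is not at all automatic how $\eta$-admissibility and the radial filtration on $\fr{P}'\times_\cV\fr{P}$ relate to the pulled-back data from $\fr{P}$; verifying this would be a substantial new argument, and the paper deliberately circumvents it by producing the map via the trace and checking it is an isomorphism pointwise. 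Moreover, your treatment of the closed immersion $\Gamma_u$ contains a hidden circularity: after applying $\Gamma_{u+}$ you would need the pushforward compatibility $\Gamma_{u+}\mathrm{sp}_{\fr{P}'!}\Gamma_u^*\mathscr{H}\cong\mathrm{sp}_{\fr{P}'\times\fr{P}!}\,i_{P'!}\,i_{P'}^{-1}\mathscr{H}$, which is essentially Lemma \ref{cor: independence of P}, and that lemma's proof in turn invokes Corollary \ref{cor: funct RSp}. The paper's order of operations --- dispatch the locally closed immersion via Lemma \ref{lemma: support} first, then treat the proper projection via Theorem \ref{theo: trace Ddag} and a pointwise check --- is what breaks this circle. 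Your initial d\'evissage to the coherent case is harmless but unnecessary.
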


\begin{proof} Since $\fr{P}'$ admits a locally closed embedding into a smooth and proper formal $\mathcal{V}$-scheme, we may use Lemma \ref{lemma: support} to reduce to the case when $\mathfrak{P}'\cong \mathfrak{P}\times_\mathcal{V}\mathfrak{Q}$, with $\mathfrak{Q}$ smooth and proper over $\mathcal{V}$, and $u$ the first projection. In this case, Theorem \ref{theo: trace Ddag} together with the adjunction between $u^!$ and $u_+$ gives a canonical map
\[  u^!\mathrm{sp}_{\mathfrak{P}!}\mathscr{F} \rightarrow \mathrm{sp}_{\mathfrak{P}'!}u^*\mathscr{F}. \]
To check that it is an isomorphism, by applying \cite[Lemma 1.3.11]{AC18a} it suffices to do so after taking the extraordinary pullback along any section $\spf{\mathcal{V}'}\rightarrow \fr{P}'$ with $\mathcal{V}'$ the ring of integers in a finite extension of $K$. By transitivity of pullbacks, and after base changing to $\mathcal{V}'$, this then reduces to the already considered case when $u$ is a closed immersion.
\end{proof}

\subsection{Dual constructible modules on pairs and varieties}

Given an l.p.\ frame $(X,Y,\fr{P})$ over $\mathcal{V}$, we therefore have a functor
\[ \mathrm{sp}_{\fr{P}!}\colon\mathrm{Isoc}_\mathrm{cons}(X,Y)\rightarrow \mathrm{DCon}(X,Y)\]
which could, \emph{a priori}, depend on the choice of formal embedding of $(X,Y)$. To see that it does not, we prove the:

\begin{lemma} \label{cor: independence of P} Let
\[ \xymatrix{   & Y' \ar[r]^-{i'} \ar[d]^g & \fr{P}' \ar[d]^u \\   X \ar[ur]^-{j'} \ar[r]^-j & Y \ar[r]^-i & \fr{P} } 
\]
be a morphism of l.p.\ frames, with $u$ smooth and $g$ proper. Then, for all $E\in \mathrm{Isoc}_{\mathrm{cons}}(X,Y')$ there exists a canonical isomorphism
\[ u_+\mathrm{sp}_{\fr{P}'!}i'_{!}j'_*E_{\mathfrak{P}'} \isomto \mathrm{sp}_{\fr{P}!}i_{!}j_*E_\mathfrak{P} \]
in ${\bf D}^b_{\mathrm{hol}}(X,Y)$. 
\end{lemma}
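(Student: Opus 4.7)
The strategy is to use the l.p.\ structure of $(X,Y',\fr{P}')$ to factor $u$ as a locally closed immersion followed by a smooth proper projection, then handle each factor by the tools developed in Sections \ref{sec: trace Ddagger} and \ref{sec: finite}. Let $\fr{P}'\hookrightarrow\fr{Q}'$ be a locally closed immersion into a smooth proper formal $\cV$-scheme provided by the l.p.\ structure, and set $\fr{P}'':=\fr{P}\times_\cV\fr{Q}'$. Define $\iota:=(u,\text{incl})\colon\fr{P}'\to\fr{P}''$ and let $\pi\colon\fr{P}''\to\fr{P}$ be the first projection. Since $\fr{P}$ is separated over $\cV$, the map $(u,\id)\colon\fr{P}'\to\fr{P}\times_\cV\fr{P}'$ is a closed immersion, and composing with the base change of the l.p.\ inclusion shows $\iota$ is a locally closed immersion of smooth formal $\cV$-schemes. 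The projection $\pi$ is smooth and proper, and $u=\pi\circ\iota$, so $u_+\cong\pi_+\circ\iota_+$; hence it suffices to exhibit the canonical isomorphism separately for each factor.

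For $\iota$, form the l.p.\ frame $(X,Y'',\fr{P}'')$ where $Y''$ is the Zariski closure of $\iota(Y')$ in $P\times Q'$. Unwinding the definitions, the constructible isocrystal $i''_!j''_*E_{\fr{P}''}$ coincides, on the level of sheaves, with $\iota_! i'_!j'_*E_{\fr{P}'}$. Corollary \ref{cor: funct RSp} applied to $\iota$ gives a canonical isomorphism
\[ \iota^!\mathrm{sp}_{\fr{P}''!}(i''_!j''_*E_{\fr{P}''}) \isomto \mathrm{sp}_{\fr{P}'!}(\iota^* i''_!j''_*E_{\fr{P}''}) \cong \mathrm{sp}_{\fr{P}'!}(i'_!j'_*E_{\fr{P}'}). \]
By Theorem \ref{theo: DCon}, both $\iota_+\mathrm{sp}_{\fr{P}'!}(i'_!j'_*E_{\fr{P}'})$ and $\mathrm{sp}_{\fr{P}''!}(i''_!j''_*E_{\fr{P}''})$ are dual constructible complexes supported on $\iota(X)\subset\iota(\fr{P}')$. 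Factoring $\iota$ as a closed immersion followed by an open immersion and invoking Kashiwara's equivalence for the closed factor together with the standard adjunction for the open factor, we obtain the canonical isomorphism $\iota_+\mathrm{sp}_{\fr{P}'!}(i'_!j'_*E_{\fr{P}'}) \isomto \mathrm{sp}_{\fr{P}''!}(i''_!j''_*E_{\fr{P}''})$.

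For $\pi$, set $\mathscr{F}:=i_!j_*E_\fr{P}$ and observe that Theorem \ref{theo: trace Ddag} produces a $\sD^\dagger_{\fr{P}\Q}$-linear trace morphism $\pi_+\mathrm{sp}_{\fr{P}''!}(\pi^*\mathscr{F})\to\mathrm{sp}_{\fr{P}!}\mathscr{F}$. The natural map of constructible isocrystals $i''_!j''_*E_{\fr{P}''}\to\pi^*\mathscr{F}$ (the former supported on $\iota(X)$, the latter on $\pi^{-1}(X)$) combined with Lemma \ref{lemma: support} yields a canonical isomorphism
\[ \mathrm{sp}_{\fr{P}''!}(i''_!j''_*E_{\fr{P}''}) \isomto \mathbf{R}\underline{\Gamma}^\dagger_{\iota(X)}\,\mathrm{sp}_{\fr{P}''!}(\pi^*\mathscr{F}), \]
and composing with the trace after applying $\pi_+$ produces the required canonical map. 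The main obstacle is verifying this map is an isomorphism, and this is expected to be the most delicate part of the proof: the strategy mirrors the proof of Theorem \ref{theo: DCon}, using Noetherian induction on $\dim X$ and Lemma \ref{lemma: triangle sp} to reduce to the case when $X$ is smooth and a suitably compatible stratification of $Y,Y'$ exists, and then invoking de\thinspace Jong's theorem on alterations to reduce to the explicit geometric setting of Setup \ref{setup: conc tr}, in which Theorem \ref{theo: tr conc fin et} supplies the isomorphism directly. The hypothesis that $g\colon Y'\to Y$ is proper enters crucially here, ensuring that $\pi|_{\iota(Y')}$ is proper and that all support computations for the trace map can be carried out within the framework of Section \ref{sec: trace Ddagger}.
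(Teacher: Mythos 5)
The paper's proof is a one-line citation: it combines Lemma~\ref{lemma: support}, Corollary~\ref{cor: funct RSp}, and \cite[Lemme 2.5]{Car12}. Concretely, Corollary~\ref{cor: funct RSp} and Lemma~\ref{lemma: support} identify $\mathrm{sp}_{\fr{P}'!}i'_!j'_*E_{\fr{P}'}$ with $\mathbf{R}\underline{\Gamma}^\dagger_X u^!\mathrm{sp}_{\fr{P}!}i_!j_*E_\fr{P}$, and Caro's Lemme~2.5 is precisely the statement that, with $u$ smooth and $g$ proper, $u_+\mathbf{R}\underline{\Gamma}^\dagger_X u^!(-)\isomto(-)$ on overholonomic complexes supported on $X$; this is the frame-independence of $\bD^b_\hol(X,Y)$ built into the foundations of the theory. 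You are not using that input, and the gap this leaves is real.

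Your factorisation $u = \pi\circ\iota$ is legitimate (and in fact exactly the one used in the proof of Corollary~\ref{cor: funct RSp}), and your treatment of the locally closed factor $\iota$ is essentially sound. The problem is the smooth proper projection $\pi$: after using the trace map of Theorem~\ref{theo: trace Ddag} and Lemma~\ref{lemma: support} to produce the comparison morphism, you must show it is an isomorphism, and here your argument becomes a sketch rather than a proof. You propose to mirror the Noetherian induction plus alterations from the proof of Theorem~\ref{theo: DCon} and then invoke Theorem~\ref{theo: tr conc fin et}. But that d\'evissage was specifically engineered to establish overholonomicity of $\mathrm{sp}_!\mathscr{F}$ — a property stable under passing to direct summands and exact triangles — whereas you need to transport the \emph{specific isomorphism} from the alteration $X''\to U\subset X$ back down to $X$; the direct-summand and excision steps that carry overholonomicity through do not automatically carry a comparison isomorphism through without a separate verification of compatibility. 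Moreover, Theorem~\ref{theo: tr conc fin et} is stated in the confined geometry of Setup~\ref{setup: conc tr} (free conormal sheaf, a specific product frame, a Cartesian square with $f$ finite \'etale), and it is not shown that the inductive reduction puts you in that setup \emph{in a way compatible with the map you have already constructed}. So the crucial claim "Theorem~\ref{theo: tr conc fin et} supplies the isomorphism directly" is doing all the work and is not justified.

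The cleaner route — and the one the paper takes — is to observe that once Corollary~\ref{cor: funct RSp} and Lemma~\ref{lemma: support} are in hand, the remaining assertion $u_+\mathbf{R}\underline{\Gamma}^\dagger_X u^!\cong\mathrm{id}$ is exactly what makes $\bD^b_\hol(X,Y)$ independent of the l.p.\ frame in the first place, and this is already available as \cite[Lemme 2.5]{Car12}. Reaching instead for a new alteration argument both duplicates known foundations and, as written, does not close.
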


\begin{proof} Follows from combining Lemma \ref{lemma: support}, Corollary \ref{cor: funct RSp} and \cite[Lemme 2.5]{Car12}.
\end{proof}

For constructible isocrystals of Frobenius type, we therefore get well-defined functors
\begin{align*}
 \mathrm{sp}_! : \mathrm{Isoc}_\mathrm{cons}(X,Y) &\rightarrow \mathrm{DCon}(X,Y) \\
 \mathrm{sp}_! : \mathrm{Isoc}_\mathrm{cons}^\dagger(X) &\rightarrow \mathrm{DCon}(X)
\end{align*}
for realisable couples $(X,Y)$ and realisable varieties $X$ respectively. Since the \emph{abelian} categories $\mathrm{Isoc}_\mathrm{cons}(X,Y)$ and $\mathrm{DCon}(X,Y)$ satisfy descent under Zariski open covers, we can extend their definition to cover not-necessarily realisable couples $(X,Y)$, and using Corollary \ref{cor: funct RSp} we can then also extend the functor $\mathrm{sp}_!$ to the non-realisable case. It follows from Lemma \ref{lemma: support} and Corollary \ref{cor: funct RSp} that whenever $(f,g):(X',Y')\rightarrow (X,Y)$ is a morphism of couples, the diagram
\[ \xymatrix{ \mathrm{Isoc}_\mathrm{cons}(X,Y) \ar[r]^{\mathrm{sp}_!}\ar[d]_{f^*} & \mathrm{DCon}(X,Y) \ar[d]^{f^!} \\  \mathrm{Isoc}_\mathrm{cons}(X',Y') \ar[r]^{\mathrm{sp}_!} &\mathrm{DCon}(X',Y') } \]
commutates up to natural isomorphism, and that whenever $Y'\hookrightarrow Y$ is a closed immersion, the same is true of the diagram
\[ \xymatrix{ \mathrm{Isoc}_\mathrm{cons}(X',Y') \ar[r]^{\mathrm{sp}_!}\ar[d]_{f_!} & \mathrm{DCon}(X',Y') \ar[d]^{f_+} \\  \mathrm{Isoc}_\mathrm{cons}(X,Y)\ar[r]^{\mathrm{sp}_!} &\mathrm{DCon}(X,Y). } \]
There are of course similar statements for varieties in place of pairs. We then have the following precise formulation of Le Stum's conjecture on the $p$-adic Deligne--Kashiwara equivalence.

\begin{conjecture} For any pair $(X,Y)$ over $k$ the functor $\mathrm{sp}_!\colon \mathrm{Isoc}_\mathrm{cons}(X,Y)\rightarrow \mathrm{DCon}(X,Y)$ is an equivalence of categories.
\end{conjecture}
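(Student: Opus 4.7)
The plan is to verify the conjecture by reducing to known results on smooth varieties via the stratification structure common to both sides. Since both $\mathrm{Isoc}_\mathrm{cons}(X,Y)$ and $\mathrm{DCon}(X,Y)$ are abelian, and since by Theorem~\ref{theo: DCon} together with Lemma~\ref{lemma: support} the functor $\mathrm{sp}_!$ is exact and commutes with $f^*/f^!$ and with pushforward along locally closed immersions, the problem reduces by h-descent to the case where $(X,Y,\fr{P})$ is an l.p.\ frame. A further d\'evissage using \cite[Proposition 3.6]{LS16} on the constructible side (every object is an iterated extension of $a_!E$ for $E$ overconvergent on locally closed $Z\hookrightarrow X$) and the analogous stratification of $\mathrm{DCon}(X,Y)$ by closed--open excision triangles $i_+i^+ \mathcal{M} \to \mathcal{M}\to j^+\mathcal{M} \overset{+1}{\to}$ will reduce both full faithfulness and essential surjectivity to statements about overconvergent isocrystals on a smooth dense open subscheme of each stratum.

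For full faithfulness, after the reduction above it suffices to show that $\mathrm{sp}_!$ induces bijections $\mathrm{Hom}(a_!E,a_!E')\isomto \mathrm{Hom}(\mathrm{sp}_!a_!E,\mathrm{sp}_!a_!E')$ for $E,E'\in\mathrm{Isoc}(Z,\overline{Z})$ with $Z$ smooth. By compatibility of $\mathrm{sp}_!$ with $a_+\cong a_!$ for locally closed immersions and Lemma~\ref{lemma: support}, one further reduces to the case $Z=X=Y$ smooth, and then applies Theorem~\ref{theo: sp_+ comp} to rewrite $\mathrm{sp}_!$ in terms of Caro's functor $\mathrm{sp}_+$ and its dual; Caro's full faithfulness of $\mathrm{sp}_+$ on $\mathrm{Isoc}^\dagger(X)$ (see \cite[Th\'eor\`eme 2.3.1]{Car09a}) then finishes this part. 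Conservativity of $\mathrm{sp}_!$ (which follows from Lemma~\ref{lemma: support} and the smooth case) lets one propagate this through the d\'evissage steps.

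For essential surjectivity, one argues by Noetherian induction on $X$. Given $\mathcal{M}\in\mathrm{DCon}(X,Y)$, choose a dense smooth open $j\colon U\hookrightarrow X$ with closed complement $i\colon Z\hookrightarrow X$ such that $j^+\mathcal{M}$ lies in the heart of the holonomic t-structure shifted by $\dim U$; in other words $\mathbf{D}_U j^+\mathcal{M}[-\dim U]$ is a smooth overholonomic $\mathscr{D}^\dagger_{U}$-module. The crucial step is then to identify this with the image of an overconvergent isocrystal under $\mathrm{sp}_+$, i.e., to show that the essential image of $\mathrm{sp}_+\colon \mathrm{Isoc}^\dagger(U)\to \mathrm{Hol}(U)$ is precisely the smooth overholonomic modules (up to a shift); this is exactly the content of Caro's realisation theorem (see \cite[Th\'eor\`eme 2.3.15]{CT12} combined with the characterisation of unit-root objects), applied with Frobenius structure. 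Dualising and shifting, $j^+\mathcal{M}$ is therefore in the essential image of $\mathrm{sp}_!$ on $U$. By Noetherian induction $i^+\mathcal{M}$ lifts to some $\mathscr{G}\in\mathrm{Isoc}_\mathrm{cons}(Z,\overline{Z}\cap Y)$, and one then has to lift the extension class of $i_+i^+\mathcal{M}$ by $j_+j^+\mathcal{M}$ in $\mathrm{DCon}(X,Y)$ to an extension class in $\mathrm{Isoc}_\mathrm{cons}(X,Y)$; full faithfulness from the previous step ensures these Ext groups match, so the lift exists.

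The hard part will undoubtedly be the characterisation of the essential image of $\mathrm{sp}_+$ on smooth varieties, and more precisely the identification of smooth dual constructible $\mathscr{D}^\dagger$-modules of Frobenius type with overconvergent isocrystals. While Caro and Tsuzuki's work on overholonomicity handles one direction, the converse — that every smooth overholonomic $\mathscr{D}^\dagger$-module of Frobenius type is of the form $\mathrm{sp}_+E$ for an overconvergent isocrystal $E$ — rests on Kedlaya's semistable reduction theorem \cite{Ked11} and its $\mathscr{D}$-module incarnation due to Caro, and its compatibility with the $\mathrm{sp}_!$ construction developed in \S\ref{sec: pre Dd} will require a careful verification. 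The other subtle issue is controlling the extension-class lifting in the inductive step, since $\mathrm{Ext}^1$ comparisons between $\mathrm{Isoc}_\mathrm{cons}$ and $\mathrm{DCon}$ have not been addressed in this article and would need to be established as a preliminary.
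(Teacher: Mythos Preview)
This statement is a \emph{conjecture} in the paper, not a theorem: the authors explicitly do not prove it, and in the introduction write that they ``intend to return to this question in future work.'' There is therefore no proof in the paper to compare your attempt against.

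As for your sketch itself, you have correctly identified the two places where the argument breaks down, and these are precisely why the statement remains conjectural. First, the d\'evissage for full faithfulness does not go through with only $\mathrm{Hom}$-level comparisons on building blocks: to propagate bijectivity of $\mathrm{Hom}$ through iterated extensions one needs at minimum injectivity on $\mathrm{Ext}^1$ between building blocks, and your reduction to ``$Z=X=Y$ smooth'' only handles $\mathrm{Hom}$ between objects supported on the \emph{same} stratum, not $\mathrm{Hom}(a_!E,b_!E')$ for distinct immersions $a,b$. Second, the essential surjectivity step requires lifting an extension class in $\mathrm{DCon}$ to one in $\mathrm{Isoc}_\mathrm{cons}$, which again needs an $\mathrm{Ext}^1$ comparison that is nowhere established in the paper (and which you yourself flag). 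The characterisation of the essential image of $\mathrm{sp}_+$ on a smooth stratum is a third genuine input not supplied here. In short, your outline is a reasonable strategy, but the gaps you name are the substance of the problem, not technicalities.
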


\begin{remark}
\begin{enumerate}
\item It suffices to show that the functor
\[\mathrm{sp}_!\colon \mathrm{Isoc}_\mathrm{cons}(\fr{P})\rightarrow \mathrm{DCon}(\fr{P}) \]
is an equivalence, for any smooth formal $\mathcal{V}$-scheme $\fr{P}$.
\item Our condition on constructible isocrystals being `of Frobenius type' is stronger than that of the same name defined in \cite{LS14} in the case of curves. We did not try especially hard to formulate the least restrictive condition to put on constructible isocrystals in order to reasonably conjecture that $\mathrm{sp}_!$ is an equivalence. Some form of non--Liouville exponents condition would be a natural candidate.
\end{enumerate}
\end{remark}

\section{The case of overconvergent isocrystals} \label{sec: comp}

We can now prove our main comparison theorems for overconvegent isocrystals: our functor $\mathrm{sp}_!$ is the dual of Caro's functor $\mathrm{sp}_+$, and induces an isomorphism between the compactly supported cohomolgy of an overconvergent isocrystal and its associated arithmetic $\mathscr{D}^\dagger$-module.

\subsection{Recovering $\bm{\mathrm{sp}_+}$} As a special case of Caro's construction in \cite{Car11}, there exists, for $X$ smooth and realisable over $k$, a fully faithful functor
\[ \widetilde{\mathrm{sp}}_+: \mathrm{Isoc}^\dagger(X) \rightarrow \mathrm{Hol}(X). \]
Note that in \emph{loc. cit.} this functor is simply denoted $\mathrm{sp}_+$, but we will want to reserve this instead for the shifted functor
\[ \mathrm{sp}_+:= \widetilde{\mathrm{sp}}_+[-\dim X] \]
which lands in the category $\mathrm{Con}(X)$ of constructible modules on $X$, see for example \cite[Example 1.3.1]{Abe18a}. Since the category $\mathrm{Con}(X)$ is of a Zariski local nature on $X$, we can extend the defintion to not-necessarily realisable varieties, and it is then explained in \cite[\S3]{Abe19} how to use descent to extend $\mathrm{sp}_+$ to a fully faithful functor
\[ \mathrm{sp}_+ : \mathrm{Isoc}^\dagger(X) \rightarrow \mathrm{Con}(X) \]
for an arbitrary variety $X$, not necessarily smooth or realisable (note that in \cite{Abe19} this functor is denoted $\rho$). In fact, both of these constructions work for pairs, and provide a functor
\[ \mathrm{sp}_+ : \mathrm{Isoc}(X,Y) \rightarrow \mathrm{Con}(X,Y), \]
defined for an arbitrary pair $(X,Y)$ over $k$.

\begin{theorem} \label{theo: sp_+ comp}
Let $(X,Y)$ be a pair over $k$, and $E\in \mathrm{Isoc}(X,Y)$ a partially overconvergent isocrystal of Frobenius type. Then there exists a canonical isomorphism
\[ \mathrm{sp}_+E^\vee \cong \mathbf{D}_{X}\mathrm{sp}_!E \]
in $\mathrm{Con}(X,Y)$.
\end{theorem}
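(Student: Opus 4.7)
The plan is to exploit h-descent together with de Jong's alterations in order to reduce to a favorable realisable situation, and then to compare the two functors directly. First, I would observe that both $\mathrm{sp}_+(-^\vee)$ and $\mathbf{D}_X \circ \mathrm{sp}_!(-)$ send $\mathrm{Isoc}(X,Y)$ into the \emph{abelian} category $\mathrm{Con}(X,Y)$: the former by the construction of $\mathrm{sp}_+$ in \cite{Abe19}, and the latter because $\mathrm{sp}_!$ lands in $\mathrm{DCon}(X,Y)$ by Theorem \ref{theo: DCon}, which is anti-equivalent to $\mathrm{Con}(X,Y)$ via $\mathbf{D}_X$. Moreover both functors are compatible with pullback along morphisms of pairs---$\mathrm{sp}_+$ by its Zariski-local construction and the descent construction of \cite{Abe19}, and $\mathrm{sp}_!$ by Corollary \ref{cor: funct RSp}. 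Since $\mathrm{Con}(X,Y)$ satisfies h-descent (by \cite{Abe19}), it suffices to exhibit a natural isomorphism h-locally on $(X,Y)$, in a way that is compatible with pullback; descent then automatically glues the local isomorphisms to a global natural isomorphism.

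Using de Jong's theorem on alterations together with Zariski localization on the alteration, I would reduce to the case of a pair $(X, Y)$ such that $X$ is smooth, $Y$ is a smooth compactification of $X$, and $(X, Y)$ fits into an l.p.\ frame $(X, Y, \fr{Y})$ with $\fr{Y}$ a smooth formal $\mathcal{V}$-scheme lifting $Y$. In this situation the inclusion $j \colon X \hookrightarrow Y$ is (after refinement) a strongly affine open immersion of smooth schemes, and the argument in Step 1 of the proof of Theorem \ref{theo: DCon2} applies to give $\mathrm{sp}_! j_* E_{\fr{Y}} = \mathrm{sp}_* j_* E_{\fr{Y}}[\dim \fr{Y}]$ on $\fr{Y}$.

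In this favourable case, I would then compare $\mathrm{sp}_!$ with Caro's functor $\widetilde{\mathrm{sp}}_+$ directly. Caro's construction in \cite{Car09a} for $E$ overconvergent on a smooth realisable variety produces an object whose underlying $\mathscr{D}^\dagger$-module is exactly $\mathrm{sp}_* j_* E_{\fr{Y}}$ (up to a dimension shift), and \cite[Proposition~4.3.1]{Car09a} asserts that $\widetilde{\mathrm{sp}}_+$ commutes with duality in the sense that $\mathbf{D}_X \widetilde{\mathrm{sp}}_+ E \cong \widetilde{\mathrm{sp}}_+ E^\vee$. Combining this with the definitional shift $\mathrm{sp}_+ := \widetilde{\mathrm{sp}}_+[-\dim X]$ and the self-duality of the holonomic t-structure then identifies $\mathbf{D}_X \mathrm{sp}_! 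E$ with $\mathrm{sp}_+ E^\vee$ in $\mathrm{Con}(X,Y)$ in the smooth realisable case.

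The principal technical difficulty will lie not in any single step but in the bookkeeping: one must carefully match the various shift conventions (Caro's $\widetilde{\mathrm{sp}}_+$ versus the shifted $\mathrm{sp}_+$ and our $\mathrm{sp}_!$), verify that the duality compatibility of \cite{Car09a} transports through the shift conventions of \cite{AC18a}, and check that the isomorphism so constructed in the realisable smooth case is genuinely \emph{natural} in $E$ and compatible with all the pullbacks arising from the h-covers used in the descent. Once this naturality and compatibility with pullbacks is in place, the existence of the global natural isomorphism is automatic by the h-descent for morphisms in the abelian category $\mathrm{Con}(X,Y)$.
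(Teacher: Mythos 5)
You take essentially the same route as the paper: reduce by h-descent to a favourable realisable situation, identify $\mathrm{sp}_!$ with a shift of Caro's $\widetilde{\mathrm{sp}}_+$, and invoke \cite[Proposition 4.3.1]{Car09a}. One slip in the reduction: you cannot localize to ``$Y$ a smooth compactification of $X$'' lifting to a smooth formal scheme (a smooth proper variety need not lift)---the correct target of the reduction, and the one the paper uses, is $Y$ smooth, \emph{affine}, and irreducible, with $X$ affine open in $Y$ and $Y\setminus X$ a divisor; affineness guarantees both the smooth formal lift $\fr{Y}$ and the applicability of the Step~1 formula $\mathrm{sp}_! = \widetilde{\mathrm{sp}}_+[\dim X]$.
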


\begin{proof}
Note that the RHS commutes with pullback, by Corollary \ref{cor: funct RSp}, and the functor $\mathrm{sp}_+$ appearing on the LHS is \emph{defined} using h-descent. The claim is therefore local for the h-topology on $Y$. We may therefore assume that $Y$ is smooth, affine, and irreducible.

The claim is also local on $X$, which we may assume to be similarly smooth and affine, in particular $Y\setminus X$ is a divisor. We know that $Y$ lifts to a smooth formal scheme $\fr{Y}$ over $\mathcal{V}$, in which case $\mathrm{sp}_!=\widetilde{\mathrm{sp}}_+[\dim X]$, and the result follows from \cite[Proposition 4.3.1]{Car09a}.
\end{proof}

\begin{remark} More generally, the proof shows that if $F:\mathrm{Isoc}(X,Y)\rightarrow \mathrm{Con}(X,Y)$ is any collection of functors such that:
\begin{enumerate}
\item $F$ commutes with pullback;
\item $F\cong\mathrm{sp}_+$ whenever $Y$ is smooth and affine, and $Y\setminus X$ is a divisor,
\end{enumerate}
then $F\cong \mathrm{sp}_+$ in general.
\end{remark}

\subsection{Compactly supported cohomologies} Finally, we can show that compactly supported $\mathscr{D}^\dagger$-module cohomology coincides with compactly supported rigid cohomology, at least for realisable varieties. 

\begin{theorem} \label{theo: cohom comp 1}
Let $X$ be a realisable $k$-variety, $f:X\rightarrow\spec{k}$ the structure morphism, and $E\in \mathrm{Isoc}^\dagger(X)$ an overconvergent isocrystal of Frobenius type. Then there is a canonical isomorphism
\[ {\rm H}^*_{c,\rig}(X,E) \cong {\rm H}^*(f_!\mathrm{sp}_+E) \]
of finite dimensional, graded $K$-vector spaces.
\end{theorem}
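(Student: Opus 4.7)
The plan is to embed $X$ into a smooth and proper formal $\cV$-scheme $\fP$ with closure $Y$ of $X$ in $P$, and reduce the statement to a rigid-analytic computation via $p$-adic Poincar\'e duality. Since $\fP$ is proper, the pushforwards $f_{\fP+}$ and $f_{\fP!}$ coincide on $\bD^b_\hol(\fP)$, and because $\mathrm{sp}_+E \in \mathrm{Con}(X)\subset\mathrm{Con}(\fP)$ is already supported on $X$, the complex $f_!\,\mathrm{sp}_+E$ of the statement identifies canonically with $f_{\fP+}\,\mathrm{sp}_+E$. Theorem \ref{theo: sp_+ comp} then rewrites $\mathrm{sp}_+E \cong \mathbf{D}_X\,\mathrm{sp}_!E^\vee$, and combined with the six-operation compatibility $f_!\mathbf{D}_X \cong \mathbf{D}_{\spec k}f_+$ this yields
\[ f_!\,\mathrm{sp}_+E \;\cong\; \mathbf{D}_{\spec k}\,f_+\,\mathrm{sp}_!E^\vee. \]
Since $\mathbf{D}_{\spec k}$ on bounded complexes of finite-dimensional $K$-vector spaces is linear duality combined with degree reflection, it suffices to produce a canonical isomorphism ${\rm H}^{-i}(f_+\,\mathrm{sp}_!E^\vee)\cong{\rm H}^{\mathrm{BM}}_{i,\rig}(X,E^\vee)$; Proposition \ref{prop: BM} applied to $(X,E^\vee)$ will then deliver the desired identification with ${\rm H}^i_{c,\rig}(X,E)$.

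For this remaining isomorphism, I would unwind the construction of $\mathrm{sp}_!$ from \S\ref{sec: pre Dd}. On underlying $\sD_{\fP\Q}$-modules one has $\mathrm{sp}_!E^\vee \cong \mathbf{R}\mathrm{sp}_*(i_{Y!}j_*E^\vee_\fP)[\dim\fP]$, while the $\sD^\dagger$-module pushforward is computed via the Spencer resolution as $f_{\fP+}\cN \cong \mathbf{R}\Gamma(\fP,\Omega^\bullet_\fP \otimes_{\cO_\fP} \cN)[\dim\fP]$, depending only on the underlying $\sD_{\fP\Q}$-module structure. The identification ${\rm H}^q(\fP,\Omega^\bullet_\fP \otimes \mathbf{R}\mathrm{sp}_*(-))\cong {\rm H}^q(\fP_K,\Omega^\bullet_{\fP_K}\otimes(-))$ noted in the introduction, upgraded to a functorial quasi-isomorphism of complexes of sheaves on $\fP$, then yields
\[ f_+\,\mathrm{sp}_!E^\vee \;\cong\; \mathbf{R}\Gamma\bigl(\fP_K,\,\Omega^\bullet_{\fP_K}\otimes i_{Y!}j_*E^\vee_\fP\bigr)[2\dim\fP]. \]
Since $\fP$ is proper, $\fP_K$ is partially proper over $\spa{K,\cV}$, so $\mathbf{R}\Gamma(\fP_K,-)$ agrees with $\mathbf{R}\Gamma_c(\fP_K,-)$ on this complex, and by Definition \ref{defn: BM} the right-hand side is precisely $\mathbf{R}\Gamma_{\flat,\rig}(X,E^\vee)$; taking ${\rm H}^{-i}$ produces the required isomorphism with ${\rm H}^{\mathrm{BM}}_{i,\rig}(X,E^\vee)$.

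The main obstacle is promoting the pointwise identification ${\rm H}^q(\fP,\Omega^\bullet_\fP \otimes \mathbf{R}\mathrm{sp}_*\sF)\cong {\rm H}^q(\fP_K,\Omega^\bullet_{\fP_K}\otimes \sF)$ to a functorial quasi-isomorphism of complexes compatible with the $\sD^\dagger_{\fP\Q}$-module structure on $\mathrm{sp}_!E^\vee$ constructed in \S\ref{sec: pre Dd}, so that it slots correctly into the Spencer-resolution computation of $f_{\fP+}$. This should reduce to observing that tensoring with the locally finite free complex $\Omega^\bullet_\fP$ preserves the explicit $\mathrm{sp}_*$-acyclic, pre-$\sD^\dagger$-module resolutions of $i_{Y!}j_*E^\vee_\fP$ built in \S\ref{sec: pre Dd}, and that the induced pre-$\sD^\dagger$-structure on the tensored resolution matches the one implicit in the formula for $f_{\fP+}$. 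Once this routine but delicate compatibility is in place, the remainder of the argument is a formal chain of canonical isomorphisms.
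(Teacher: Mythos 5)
Your proposal is correct and follows essentially the same chain of isomorphisms as the paper's proof, just traversed in the opposite direction (from $f_!\,\mathrm{sp}_+E$ back to ${\rm H}^*_{c,\rig}(X,E)$ rather than forward). The paper likewise combines Theorem~\ref{theo: sp_+ comp} with $f_!\mathbf{D}_X\cong\mathbf{D}_{\spec{k}}f_+$, the Spencer resolution identification of $f_{\fP+}$, the projection formula $\mathbf{R}\mathrm{sp}_*(\Omega^\bullet_{\fP_K}\otimes i_!\mathscr{F}^\vee)\cong\Omega^\bullet_{\fP/\cV}\otimes\mathbf{R}\mathrm{sp}_*(i_!\mathscr{F}^\vee)$, Definition~\ref{defn: BM}, and Proposition~\ref{prop: BM}.

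One small point: the ``main obstacle'' you flag at the end is not actually an obstacle. The Spencer-resolution formula $f_{\fP+}\cN\cong\mathbf{R}\Gamma(\fP,\Omega^\bullet_{\fP/\cV}\otimes_{\cO_\fP}\cN)[\dim\fP]$ depends only on the underlying $\sD_{\fP\Q}$-module structure of $\cN$, so once $\mathrm{sp}_!E^\vee$ is known to carry a $\sD^\dagger_{\fP\Q}$-structure (so that $f_+$ and $\mathbf{D}_X$ apply), the remaining identification with $\mathbf{R}\Gamma(\fP_K,\Omega^\bullet_{\fP_K}\otimes i_!\mathscr{F}^\vee)$ is a projection formula at the level of $\cO$-module complexes alone; no compatibility with the pre-$\sD^\dagger$-module machinery of \S\ref{sec: pre Dd} is required. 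You in fact note the correct justification (local finite freeness of $\Omega^\bullet_\fP$, acting on the explicit $\mathrm{sp}_*$-acyclic resolutions), but overstate its delicacy. Likewise, the preliminary reduction $f_!\,\mathrm{sp}_+E\cong f_{\fP+}\mathrm{sp}_+E$ is true but unnecessary: the duality $f_!\mathbf{D}_X\cong\mathbf{D}_{\spec{k}}f_+$ already lands you on $f_+\,\mathrm{sp}_!E^\vee$ directly.
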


\begin{proof}
Choose a frame $(X\overset{j}{\hookrightarrow} Y \overset{i}{\hookrightarrow} \fr{P})$ with $\fr{P}$ smooth and proper over $\mathcal{V}$. Let $E^\vee_\fr{P}$ be the realisation of $E^|vee$ on $\tube{X}_\fr{P}$, and set $\mathscr{F}^\vee=j_*E^\vee_\fr{P}$. Then we have isomorphisms
\begin{align*}
{\rm H}^*_{c,\rig}(X,E)  &\cong  {\rm H}_*^{\mathrm{BM},\rig}(X,E^\vee)^\vee \\
&\cong {\rm H}^{2\dim \fr{P}-*}(\fr{P}_K, \Omega^\bullet_{\fr{P}_K}\otimes i_{!}\mathscr{F}^\vee )^\vee  \\
&\cong {\rm H}^{\dim \fr{P}-*}(\fr{P}, \Omega^\bullet_{\fr{P}/\mathcal{V}} \otimes \mathrm{sp}_{\fr{P}!}i_{!}\mathscr{F}^\vee )^\vee \\
&\cong  {\rm H}^{-*}(f_+\mathrm{sp}_!E^\vee)^\vee \\
&\cong {\rm H}^*(f_!\mathrm{sp}_+E ) 
\end{align*}
where the first isomorphism is Proposition \ref{prop: BM}, the second is Definition \ref{defn: BM}, the third follows from the isomorphism
\[ \mathbf{R}\mathrm{sp}_*\left(   \Omega^\bullet_{\fr{P}_K} \otimes i_{!}\mathscr{F}^\vee\right) \cong  \Omega^\bullet_{\fr{P}/\mathcal{V}}\otimes \mathbf{R}\mathrm{sp}_{*}\left(  i_{!}\mathscr{F}^\vee\right) \]
of complexes on $\fr{P}$, the fourth follows from the overconvergent Spencer resolution \cite[(4.3.6.3)]{Ber02}, and finally the fifth follows from Theorem \ref{theo: sp_+ comp}.
\end{proof}

\begin{corollary} \label{cor: cohom comp 2} Let $X$ be a realisable $k$-variety, which is either smooth \emph{or} proper, $f:X\rightarrow\spec{k}$ the structure morphism, and $E\in \mathrm{Isoc}^\dagger(X)$ an overconvergent isocrystal of Frobenius type. Then there is a canonical isomorphism
\[ {\rm H}^*_{\rig}(X,E) \cong {\rm H}^*(f_+\mathrm{sp}_+E) \]
of finite dimensional, graded $K$-vector spaces.
\end{corollary}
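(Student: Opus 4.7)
The plan is to reduce to Theorem \ref{theo: cohom comp 1} in each of the two cases, by exploiting Poincar\'e duality on both sides of the comparison.

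When $X$ is proper the argument is immediate. The structure morphism $f\from X\to \spec{k}$ is then proper, so on the arithmetic $\sD^\dagger$-module side we have $f_+\cong f_!$; while on the rigid side compactly supported and ordinary cohomology coincide for proper varieties, giving ${\rm H}^*_\rig(X,E)\cong {\rm H}^*_{c,\rig}(X,E)$. Theorem \ref{theo: cohom comp 1} applied to $E$ therefore yields the desired isomorphism directly, with no further work.

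For the smooth case, the idea is to dualise Theorem \ref{theo: cohom comp 1} applied to $E^\vee$. Set $d=\dim X$. On the rigid side, Proposition \ref{prop: BM} (which encodes Kedlaya's Poincar\'e duality for smooth $k$-varieties) supplies a perfect pairing identifying ${\rm H}^i_\rig(X,E)$ with ${\rm H}^{2d-i}_{c,\rig}(X,E^\vee)^\vee$. On the arithmetic side, Poincar\'e--Verdier duality in the formalism recalled from \cite{AC18a} yields a natural isomorphism $f_+\circ \bD_X\cong \bD_k\circ f_!$, and on cohomology $\bD_k$ turns $\mathcal{N}\in \bD^b_\hol(\spec{k})$ into its $K$-linear dual (up to the relevant degree shift). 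Meanwhile, Theorem \ref{theo: sp_+ comp}, combined with the fact that, for $X$ smooth, $\mathrm{sp}_!$ and $\mathrm{sp}_+$ differ only by a shift (implicit in the proof of Theorem \ref{theo: sp_+ comp}, since in that case both arise from Caro's $\widetilde{\mathrm{sp}}_+$), gives an identification $\bD_X\mathrm{sp}_+E\cong \mathrm{sp}_+E^\vee$ up to the appropriate shift. Substituting this identification into the Poincar\'e--Verdier formula converts ${\rm H}^*(f_+\mathrm{sp}_+E)$ into the $K$-linear dual of ${\rm H}^*(f_!\mathrm{sp}_+E^\vee)$, which by Theorem \ref{theo: cohom comp 1} is ${\rm H}^*_{c,\rig}(X,E^\vee)$. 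Composing with rigid Poincar\'e duality gives the required isomorphism.

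The main obstacle is not conceptual but technical: the careful bookkeeping of shifts. The functor $\mathrm{sp}_!$ is defined with a shift by $\dim\fP$; Caro's $\widetilde{\mathrm{sp}}_+$ and our $\mathrm{sp}_+$ differ by a shift of $\dim X$; the duality functor $\bD_X$ on holonomic complexes introduces further shifts; and passing from a complex to the cohomology of its $\bD_k$-dual reverses the sign of the degree. All of these shifts are ultimately governed by the single integer $d=\dim X$ and cancel in the end, but one must verify this consistency carefully, and check that the resulting isomorphism is genuinely canonical, independent of the realisation $(X,Y,\fr{P})$ chosen to define $\mathrm{sp}_!$.
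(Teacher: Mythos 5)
Your proposal is correct and follows essentially the same route as the paper, whose proof is simply that the proper case is trivial and the smooth case follows from Theorem~\ref{theo: cohom comp 1} and Poincar\'e duality. One small streamlining: rather than arguing that $\mathrm{sp}_!$ and $\mathrm{sp}_+$ differ by a shift for smooth $X$, you can apply Theorem~\ref{theo: sp_+ comp} directly to $E^\vee$ to get $\mathbf{D}_X\mathrm{sp}_+E\cong \mathrm{sp}_!E^\vee$, which plugs into the Poincar\'e--Verdier formula with less bookkeeping.
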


\begin{proof} In the proper case there is nothing to prove, in the smooth case this follows from Theorem \ref{theo: cohom comp 1} and Poincar\'e duality.
\end{proof} 

\bibliographystyle{../Templates/bibsty}
\bibliography{../Templates/lib.bib}

\end{document}